\DeclareMathOperator{\ind}{Index}
\DeclareMathOperator{\Vol}{Vol}
\begin{document}
\newcommand\lprime{l}
\newcommand\llprime{l'}
\newcommand\Mand{\ \text{and}\ }
\newcommand\Mor{\ \text{or}\ }
\newcommand\Mfor{\ \text{for}\ }
\newcommand\Real{\mathbb{R}}
\newcommand\RR{\mathbb{R}}
\newcommand\im{\operatorname{Im}}
\newcommand\re{\operatorname{Re}}
\newcommand\sign{\operatorname{sign}}
\newcommand\sphere{\mathbb{S}}
\newcommand\BB{\mathbb{B}}
\newcommand\HH{\mathbb{H}}
\newcommand\CH{\Cx\HH}
\newcommand\dS{\mathrm{dS}}
\newcommand\ZZ{\mathbb{Z}}
\newcommand\codim{\operatorname{codim}}
\newcommand\Sym{\operatorname{Sym}}
\newcommand\Ann{\operatorname{Ann}}
\newcommand\End{\operatorname{End}}
\newcommand\Span{\operatorname{span}}
\newcommand\Ran{\operatorname{Ran}}
\newcommand\ep{\epsilon}
\newcommand\Cinf{\cC^\infty}
\newcommand\dCinf{\dot \cC^\infty}
\newcommand\CI{\cC^\infty}
\newcommand\dCI{\dot \cC^\infty}
\newcommand\Cx{\mathbb{C}}
\newcommand\Nat{\mathbb{N}}
\newcommand\dist{\cC^{-\infty}}
\newcommand\ddist{\dot \cC^{-\infty}}
\newcommand\pa{\partial}
\newcommand\Card{\mathrm{Card}}
\renewcommand\Box{{\square}}
\newcommand\Ell{\mathrm{Ell}}
\newcommand\WF{\mathrm{WF}}
\newcommand\WFh{\mathrm{WF}_\semi}
\newcommand\WFb{\mathrm{WF}_\bl}
\newcommand\Vf{\mathcal{V}}
\newcommand\Vb{\mathcal{V}_\bl}
\newcommand\Vsc{\mathcal{V}_\scl}
\newcommand\Vz{\mathcal{V}_0}
\newcommand\Lb{L_{\bl}}
\newcommand\Hb{H_{\bl}}
\newcommand\Hbpm{H_{\bl,+-}}
\newcommand\Hbpmpm{H_{\bl,\pm\pm}}
\newcommand\Hbpp{H_{\bl,++}}
\newcommand\Hbmm{H_{\bl,--}}
\newcommand\Ker{\mathrm{Ker}}
\newcommand\Coker{\mathrm{Coker}}
\newcommand\Range{\mathrm{Ran}}
\newcommand\Hom{\mathrm{Hom}}
\newcommand\Id{\mathrm{Id}}
\newcommand\sgn{\operatorname{sgn}}
\newcommand\ff{\mathrm{ff}}
\newcommand\tf{\mathrm{tf}}
\newcommand\esssupp{\operatorname{esssupp}}
\newcommand\supp{\operatorname{supp}}
\newcommand\vol{\mathrm{vol}}
\newcommand\Diff{\mathrm{Diff}}
\newcommand\Diffsc{\mathrm{Diff}_\scl}
\newcommand\Diffb{\mathrm{Diff}_\bl}
\newcommand\DiffbI{\mathrm{Diff}_{\bl,I}}
\newcommand\Diffz{\mathrm{Diff}_0}
\newcommand\Psih{\Psi_{\semi}}
\newcommand\Psihcl{\Psi_{\semi,\cl}}
\newcommand\Psisc{\Psi_\scl}
\newcommand\Psib{\Psi_\bl}
\newcommand\Psibh{\Psi_{\bl,\semi}}
\newcommand\Psibc{\Psi_{\mathrm{bc}}}
\newcommand\TbC{{}^{\bl,\Cx} T}\
\newcommand\Tb{{}^{\bl} T}
\newcommand\Tbc{{}^{\bl} \overline{T}}
\newcommand\Sb{{}^{\bl} S}
\newcommand\Tsc{{}^{\scl} T}
\newcommand\Ssc{{}^{\scl} S}
\newcommand\Nb{{}^{\bl} N}
\newcommand\SNb{{}^{\bl} SN}
\newcommand\Lambdab{{}^{\bl} \Lambda}
\newcommand\zT{{}^{0} T}
\newcommand\Tz{{}^{0} T}
\newcommand\zS{{}^{0} S}
\newcommand\dom{\mathcal{D}}
\newcommand\cA{\mathcal{A}}
\newcommand\cB{\mathcal{B}}
\newcommand\cE{\mathcal{E}}
\newcommand\cG{\mathcal{G}}
\newcommand\cH{\mathcal{H}}
\newcommand\cU{\mathcal{U}}
\newcommand\cO{\mathcal{O}}
\newcommand\cF{\mathcal{F}}
\newcommand\cM{\mathcal{M}}
\newcommand\cQ{\mathcal{Q}}
\newcommand\cR{\mathcal{R}}
\newcommand\cI{\mathcal{I}}
\newcommand\cL{\mathcal{L}}
\newcommand\cK{\mathcal{K}}
\newcommand\cC{\mathcal{C}}
\newcommand\cX{\mathcal{X}}
\newcommand\cY{\mathcal{Y}}
\newcommand\cP{\mathcal{P}}
\newcommand\cS{\mathcal{S}}
\newcommand\cZ{\mathcal{Z}}
\newcommand\cW{\mathcal{W}}
\newcommand\Ptil{\tilde P}
\newcommand\ptil{\tilde p}
\newcommand\chit{\tilde \chi}
\newcommand\yt{\tilde y}
\newcommand\zetat{\tilde \zeta}
\newcommand\xit{\tilde \xi}
\newcommand\taut{{\tilde \tau}}
\newcommand\phit{{\tilde \phi}}
\newcommand\mut{{\tilde \mu}}
\newcommand\sigmat{{\tilde \sigma}}
\newcommand\sigmah{\hat\sigma}
\newcommand\zetah{\hat\zeta}
\newcommand\etah{\hat\eta}
\newcommand\loc{\mathrm{loc}}
\newcommand\compl{\mathrm{comp}}
\newcommand\reg{\mathrm{reg}}
\newcommand\bl{{\mathrm b}}
\newcommand\scl{{\mathrm{sc}}}
\newcommand{\sH}{\mathsf{H}}
\newcommand{\cte}{\digamma}
\newcommand\cl{\operatorname{cl}}
\newcommand\Div{\operatorname{div}}
\newcommand\hilbert{\mathfrak{X}}

\newcommand\Hh{H_{\semi}}

\newcommand\bM{\bar M}

\newcommand\xib{{\underline{\xi}}}
\newcommand\etab{{\underline{\eta}}}
\newcommand\zetab{{\underline{\zeta}}}

\newcommand\xibh{{\underline{\hat \xi}}}
\newcommand\etabh{{\underline{\hat \eta}}}
\newcommand\zetabh{{\underline{\hat \zeta}}}

\newcommand\psit{\tilde\psi}
\newcommand\rhot{{\tilde\rho}}

\newcommand\hM{\hat M}

\newcommand\Op{\operatorname{Op}}
\newcommand\Oph{\operatorname{Op_{\semi}}}

\newcommand\innr{{\mathrm{inner}}}
\newcommand\outr{{\mathrm{outer}}}
\newcommand\full{{\mathrm{full}}}
\newcommand\semi{\hbar}

\newcommand\elliptic{\mathrm{ell}}
\newcommand\even{\mathrm{even}}

\newcommand\past{\mathrm{past}}
\newcommand\future{\mathrm{future}}

\newcommand{\sfs}{\mathsf{s}}
\newcommand{\sC}{\mathsf{C}}
\newcommand{\sK}{\mathsf{K}}
\newcommand{\Ham}{\mathsf{H}}
\newcommand{\Hamb}{\mathsf{H}_\bl}

%Jesse commands
\newcommand \p {\partial}
\newcommand \absv [1]{\left \lvert #1 \right \rvert }
\newcommand \wt [1]{\widetilde{#1}}
\newcommand{\lp}{\left(}
\newcommand{\rp}{\right)}
\newcommand{\la}{\langle}
\newcommand{\ra}{\rangle}
\newcommand{\norm}[2][]{\left \| #2 \right \|_{#1} }
\newcommand \lra {\longrightarrow}
\newcommand{\set}[1]{\left\{ #1 \right\} }
\newcommand \specb {spec_\bl}
\newcommand \ov {\overline{u}}
\newcommand \notg {h}
\newcommand\cXpm{\mathcal{X}_{+-}}
\newcommand\cYpm{\mathcal{Y}_{+-}}
\newcommand\cXmp{\mathcal{X}_{-+}}
\newcommand\cYmp{\mathcal{Y}_{-+}}
\setcounter{secnumdepth}{3}
\newtheorem{lemma}{Lemma}[section]
\newtheorem{prop}[lemma]{Proposition}
\newtheorem{thm}[lemma]{Theorem}
\newtheorem{cor}[lemma]{Corollary}
\newtheorem{result}[lemma]{Result}
\newtheorem*{thm*}{Theorem}
\newtheorem*{prop*}{Proposition}
\newtheorem*{cor*}{Corollary}
\newtheorem*{conj*}{Conjecture}
\numberwithin{equation}{section}
\theoremstyle{remark}
\newtheorem{rem}[lemma]{Remark}
\newtheorem*{rem*}{Remark}
\theoremstyle{definition}
\newtheorem{Def}[lemma]{Definition}
\newtheorem*{Def*}{Definition}

\newcommand{\mar}[1]{{\marginpar{\sffamily{\scriptsize #1}}}}
\newcommand\av[1]{\mar{AV:#1}}
\newcommand{\jgr}[1]{{\mar{JGR:#1}}}
\newcommand{\nh}[1]{{\mar{NH:#1}}}

\renewcommand{\theenumi}{\roman{enumi}}
\renewcommand{\labelenumi}{(\theenumi)}

\title{The Feynman propagator on perturbations of Minkowski space}
\author{Jesse Gell-Redman}
\address{Department of Mathematics, Johns Hopkins University, MD
  21218}
\email{jgell@math.jhu.edu}
\author{Nick Haber}
\address{MSRI, Berkeley and McGill University, Montreal.}
\email{nhaber@stanford.edu}
\author[Andras Vasy]{Andr\'as Vasy}
\address{Department of Mathematics, Stanford University, CA 94305-2125, USA}
\email{andras@math.stanford.edu}

%\subjclass[2000]{Primary 58J50; Secondary 35P25, 35L05, 58J47}

\thanks{The third author gratefully
  acknowledges partial support from the NSF under grant number  
  DMS-1068742 and DMS-1361432.}

\begin{abstract}
In this paper we analyze the Feynman wave equation on Lorentzian
scattering spaces.  We prove that the Feynman propagator exists as a
map between certain Banach spaces defined by decay and microlocal
Sobolev regularity properties.  We go on to show that certain nonlinear
wave equations arising in QFT are well-posed for small data in the
Feynman setting.
\end{abstract}

\maketitle

\section{Introduction}
In this paper we use the method introduced in
\cite{VD2013}, extended in
\cite{Baskin-Vasy-Wunsch:Radiation} and \cite{HVsemi},
to analyze the Feynman propagator on spaces $(M,g)$, called spaces
with non-trapping Lorentzian
scattering metrics (a notion recalled in detail in Section~\ref{sec:geometry}), that at infinity resemble
Minkowski space in an appropriate manner. As the Feynman propagator is
of fundamental importance in quantum field theory, we expect that our
result and methods will be useful in a
systematic treatment of QFT on curved, non-static, Lorentzian backgrounds.

Here the Feynman propagator is defined as the inverse of the wave
operator acting as a map between appropriate function spaces that
generalizes the behavior of the standard Feynman propagator on exact
Minkowski space. Concretely, the distinguishing feature of propagators
from the perspective of function spaces is in terms of the differential order of
the Sobolev spaces at the two halves of the (b-)conormal bundle of the
`light cone at infinity' $S_\pm$; components at which the differential
order is higher, resp.\ lower, than a threshold value determine the inverse one obtains.
Thus, {\em we set up function spaces which are
weighted microlocal Sobolev spaces of variable order of an appropriate kind such that
the wave operator for any non-trapping Lorentzian scattering metric is Fredholm for all but a discrete set of
weights -- See Theorem~\ref{thm:fredholm}} for a precise
statement. Indeed, the same statement holds for more general
perturbations of Lorentzian scattering metrics in the sense of smooth
sections of $\Sym^2\Tsc^*M$, defined below in
Section~\ref{sec:geometry}. Further, we prove Theorem
    \ref{thm:invertibility} below, which we state roughly now.

\begin{thm*}[See Theorem~\ref{thm:invertibility}]
For perturbations of Minkowski space, in
the sense of smooth
    sections of $\Sym^2\Tsc^*M$, the
    Feynman wave operator, described above, is invertible for a
    suitable range of weights (rates of decay or growth of functions
    in the domain).  That is to say, its inverse, the Feynman
    propagator, exists for these space-times.
\end{thm*}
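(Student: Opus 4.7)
The plan is to upgrade the Fredholm statement of Theorem~\ref{thm:fredholm} to full invertibility by proving that both the kernel and cokernel of the Feynman wave operator vanish. By duality, the adjoint of the Feynman operator $\cXpm \to \cYpm$ is a Feynman-type operator with the roles of the two b-conormal components of the light cone at infinity interchanged, acting between appropriate dual spaces $\cXmp^* \to \cYmp^*$. Consequently the cokernel is identified with a kernel of an operator of the same structural type, and the entire question reduces to proving triviality of kernels in the Feynman setting on perturbations of Minkowski space.

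I would first establish invertibility directly for exact Minkowski space. There the Feynman propagator is the classical Fourier multiplier $(|\xi|^2 - \tau^2 - i0)^{-1}$, whose Schwartz kernel is explicit. Using a stationary-phase analysis of this kernel near the light cone at infinity, together with its symbolic decay, one verifies directly that it maps $\cYpm$ to $\cXpm$ continuously for the full admissible range of weights and variable-order thresholds. This exhibits a two-sided inverse in the Minkowski case and in particular shows that both kernel and cokernel of $\Box_{g_0}$ are trivial on these spaces.

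Next, I would run a deformation argument to transport this to the perturbed setting. Writing the perturbed metric as $g_1$, interpolate via $g_s = (1-s) g_0 + s g_1$ for $s \in [0,1]$; since $g_1 - g_0 \in \CI(M;\Sym^2 \Tsc^* M)$, each $g_s$ remains a non-trapping Lorentzian scattering metric (if necessary after chaining several small homotopies, since the non-trapping condition is open). Theorem~\ref{thm:fredholm} then applies uniformly along the path, so the Fredholm index is locally constant in $s$, hence identically zero. Invertibility is thereby equivalent to injectivity at each $s$.

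The main obstacle is precisely this kernel triviality for the perturbed operators. An element $u \in \cXpm$ with $\Box_{g_s} u = 0$ is forced, by the Feynman microlocal setup, to sit above the threshold regularity at one b-conormal component of $S_\pm$ and below it at the other. The strategy is to combine propagation of singularities along the null bicharacteristic flow with the radial-point estimates already underlying Theorem~\ref{thm:fredholm}, and then to exploit a pairing/positivity argument at $S_\pm$ to conclude $u\equiv 0$. A clean way to package this is to use continuity of $s \mapsto \Box_{g_s}$ in the Fredholm scale: invertibility at $s=0$ is open, vanishing of the index everywhere plus a uniform kernel-triviality bound along $[0,1]$ makes it closed, and hence invertibility holds on all of $[0,1]$. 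Establishing the uniform ruling-out of Feynman kernel elements along the homotopy is where I expect the heart of the work to lie.
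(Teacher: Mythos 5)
Your high-level structure (Fredholm $\to$ Minkowski invertibility $\to$ perturbation) matches the paper's, but the two hardest steps are handled quite differently, and both places where you hand-wave are where the real difficulty lives.

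\textbf{The Minkowski case is not as easy as you suggest.} You propose to show invertibility directly via a stationary-phase analysis of the classical Fourier multiplier $(|\xi|^2-\tau^2 - i0)^{-1}$. The obstruction is that the target spaces $\cXpm^{m,l}$, $\cYpm^{m-1,l}$ are \emph{variable-order} weighted b-Sobolev spaces, with the order jumping across the threshold $1/2 - l$ at the two halves of $\SNb^*S_\pm$; verifying that an explicit oscillatory-integral kernel maps boundedly between such spaces (and that its inverse does too, exhibiting surjectivity) is not a routine computation. The paper even flags this explicitly: ``we need invertibility on our variable order function spaces, which would not be so easy to show using very simple techniques.'' Instead, the paper proves the Minkowski case by Wick rotation (Section~\ref{sec:Wick}): $\Box_{g,\theta} = e^{-2\theta}D_{z_n}^2 - D_{z_1}^2 - \cdots - D_{z_{n-1}}^2$ is elliptic for $\im\theta\in(0,\pi)$ and equals $\Delta_{\RR^n}$ at $\theta = i\pi/2$; it then shows (Theorem~\ref{thm:constantindex}) that the Fredholm index is continuous down to $\theta=0$, computes it via Melrose's relative index theorem for $\Delta_{\RR^n}$ (Corollary~\ref{thm:explicitindex}), and finally runs a density-of-analytic-vectors argument (Theorem~\ref{thm:scalinglemma}) to rule out poles of $\Box_{g,\theta}^{-1}$ in the closed strip $0\le\im\theta\le\pi/2$. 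None of this is in your proposal, and the direct approach you suggest would have to reproduce this work by hand.

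\textbf{The perturbation step.} The paper's Theorem~\ref{thm:invertibility} claims invertibility only for metrics $g$ in an \emph{open neighborhood} $\cU$ of the Minkowski metric, and the proof is simply that the estimates hold uniformly on a small neighborhood, so that invertibility — an open condition among bounded operators — persists. You instead propose a homotopy $g_s = (1-s)g_0 + s g_1$ and a closed-and-open argument along $[0,1]$. This would require (i) showing each $g_s$ remains non-trapping and of Lorentzian scattering type, which is not automatic for a straight-line interpolation unless $g_1$ is already close to $g_0$, in which case you are back to the openness argument anyway; and (ii) the ``uniform kernel-triviality bound along $[0,1]$,'' which you correctly identify as ``where the heart of the work lies'' but leave unproved. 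The ``pairing/positivity argument at $S_\pm$'' you gesture at has no analogue in the paper and is not obviously available: the Feynman operator is not formally self-adjoint as a map between the relevant spaces (its adjoint is the anti-Feynman operator), so there is no Dirichlet-form or energy-estimate positivity to exploit. In short, the proposal leaves both essential nontrivial steps as gaps, and the tool that actually closes them in the paper — the Wick rotation / complex scaling — is absent.
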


In order to give a rough idea for what {\em the} Feynman propagator is
we recall that in their groundbreaking paper \cite{FIOII} Duistermaat and H\"ormander constructed distinguished parametrices
for wave equations, i.e.\ distinguished solution operators for $\Box
u=f$ modulo $\CI(M^\circ)$. Recall that by H\"ormander's theorem \cite{Hormander:Existence},
singularities of solutions of wave equations propagate along
bicharacteristics inside the characteristic set in phase space, i.e.\ $T^*M^\circ$; the projections of
these to the base space are null-geodesics. Here a bicharacteristic is
an integral curve of the Hamilton vector field of the principal symbol
of the wave operator, which is the dual metric function on
$T^*M^\circ$. For the inhomogeneous wave equation, $\Box u=f$, if,
say, $f$ has wave front set (i.e.\ is singular) at only one point in
$T^*M^\circ$, the different distinguished parametrices produce
solutions with different wave front sets, namely either the forward or
the backward bicharacteristic through the point in question. Here
forward and backward are measured relative to the vector field whose
integral curves they are, i.e.\ the Hamilton vector field. Note,
however, that there is a different notion of forward and backward,
which one may call future- or past-orientedness, namely whether the
underlying time function is increasing or decreasing along the
flow. The relative sign between these notions is the opposite in the
two halves of the characteristic set of the wave operator over
each point. We point out that from the perspective of microlocal
analysis the natural direction of propagation {\em is} given by the Hamilton flow.

As explained by Duistermaat and H\"ormander, a distinguished parametrix is obtained by choosing a
direction of propagation (of singularities, or estimates) {\em in each
  connected component of the characteristic set of the wave
  operator}. Here the direction of propagation is relative to the
Hamilton flow, as above. If the underlying manifold is connected, as one may assume, the characteristic
set has two connected components, and there are $2^2=4$ choices:
propagation forward relative to the Hamilton flow everywhere,
propagation backward along the Hamilton flow everywhere (these are the
Feynman and anti-Feynman propagators), resp.\ propagation in the
future direction everywhere (the retarded propagator) and in the past
direction everywhere (the advanced propagator).
A parametrix, however, is only an approximate
inverse, modulo smoothing --- smoothing operators are not even compact
on such a manifold; for actual applications (such as any computations in physics)
one would need an actual inverse, and most importantly a {\em notion
  of an inverse}.  This is exactly what we provide in Theorem
\ref{thm:invertibility} below.

The historically usual setup for
wave equations, and more generally evolution equations, is that of
Cauchy problems: one specifies initial data at a time slice, and then
one studies local or global solvability. In this sense wave equations
are always locally well-posed due to the finite speed of propagation,
which in turn is proved by energy estimates.  Global well-posedness follows if the local solutions 
can be pieced together well: global hyperbolicity is a notion that 
allows one to do so. If one turns this into a
setup of inhomogeneous wave equations, $\Box u=f$, by cutting $f$ into
two pieces, located in the future, resp.\ the past, of a Cauchy
surface, the choice one is making is that the {\em support} of $u$ be
in the future, resp.\ the past, of that of $f$. This necessarily
implies, indeed is substantially stronger than, the statement that singularities of solutions are accordingly propagated, so
two of the Duistermaat-H\"ormander parametrices correspond to
these. Thus, due to the energy estimates, even when one considers
global solutions, the Cauchy problem, or equivalently the future (or
past) oriented problem, for the wave equation is essentially local in
character, though, as discussed in \cite{VD2013,HVsemi}, in order to
understand the global behavior of solutions, it is extremely useful to
work directly in a global framework in any case.

What we achieve here is to give an analogous well-posedness framework
for the Feynman problems (as opposed to the Cauchy problems). These
problems are {\em necessarily} global in character, very much unlike
the Cauchy problems. Thus, they behave  similarly, in a certain sense,
to elliptic PDE. Indeed, from our perspective, it is an accident
(happening for
good reasons) that the future/past oriented wave equations are local;
one should not normally expect this for any PDE. To be more precise,
singularities of solutions behave just as predicted by the
Duistermaat-H\"ormander construction, but this has no content for
$\CI$ solutions --- the $\CI$ `part' of solutions is globally
determined.

There has been extensive work in the mathematical physics literature
on such QFT problems, often from the perspective of trying to make
sense of division by functions with zeros on the characteristic set:
for Minkowski space, the Fourier transform gives rise to a multiplier
$\xi_n^2-(\xi_1^2+\ldots+\xi_{n-1}^2)$; in a $\pm i0$ sense division
by this is well-behaved away from the origin, but at the origin
delicate questions arise. This is usually thought of as a degree of
freedom in defining propagators: precisely how one extends the
distribution to $0$ even in this constant coefficient setting. (See
\cite[Section~5]{Brunetti-Fredenhagen:Microlocal} for a discussion of
this in the QFT context, and
\cite{Viet:Renormalization} for a recent treatment of renormalization
as such extensions.) From our perspective, this is due to
translational invariance of the problem being emphasized at the
expense of its homogeneity; Mellin transforming in the radial variable
gives rise to a much better behaved problem. Indeed, a generalization
of this is what Melrose's framework of b-analysis \cite{Melrose:Atiyah} relies on; we
further explore it here in the non-elliptic setting following
\cite{VD2013,HVsemi}. When the microlocal structure of the function
spaces on which the wave
operator acts corresponds to the above propagation statements
(propagation in the direction of the Hamilton flow in the Feynman
case, and in the opposite direction in the anti-Feynman case), the remaining choice is
that of a weight: in the case of Minkowski space it turns out that
weights $l$ with $|l|<\frac{n-2}{2}$ give rise to invertibility, while
outside this range the index of the operator changes, with jumps at weight values
corresponding to resonances of the Mellin transformed wave operator
family, which in turn correspond to eigenvalues of the Laplacian on
the sphere $\Delta_{\sphere^{n-1}}$ as we show by a complex scaling
(Wick rotation) argument in Section \ref{sec:Wick}.

For QFT on curved space-times, the work of Duistermaat and H\"ormander was
used to introduce a microlocal characterization of Hadamard states,
which are considered as physical states of non-interacting QFT, by
Radzikowski \cite{Radzikowski:Microlocal}. (Indeed, part of the paper
of Duistermaat and H\"ormander was motivated by QFT questions.) This in turn was then extended by
Brunetti, Fredenhagen and K\"ohler
\cite{Brunetti-Fredenhagen-Kohler:Microlocal,
  Brunetti-Fredenhagen:Microlocal}. G\'erard and Wrochna gave a new
pseudodifferential construction of Hadamard states
\cite{Gerard-Wrochna:Construction, Gerard-Wrochna:Yang-Mills}. In a different direction, Finster and Strohmaier extended the general theory to Maxwell fields \cite{Finster-Strohmaier:Gupta}. However, in all these cases, there
is no way of fixing a preferred state: one is always working modulo
smoothing operators. Our framework on the other hand gives exactly
such a preferred choice.  Note also that the Feynman propagator we
construct relates to an Hadamard-type condition; see Remark
\ref{rem:hadamard} below.

In the settings with extra structure, involving time-like Killing
vector fields, one can construct Feynman propagators in terms of
elliptic operators, e.g.\ via Cauchy data. Other constructions
(such as extensions across null-infinity) in similar
settings are investigated by Dappiaggi, Moretti and Pinamonti
\cite{Dappiaggi-Moretti-Pinamonti:Rigorous, Moretti:Quantum,
  Dappiaggi-Moretti-Pinamonti:Cosmological}. In fact, these latter results
bear the closest connections to ours in that a canonical state is
constructed using the structure on null-infinity. Our results deal
directly with the `bulk', thanks to the Fredholm formulation, with the
linear results having considerable perturbation stability in
particular. (It is due to the module structure required in
Section~\ref{sec:semi} that the non-linear problem is more restrictive.)

Along with setting up such a Fredholm framework, we also study
semilinear wave equations, following the general scheme of
\cite{HVsemi}; we think of these as a first step towards
interacting QFT in this setting. However, being fully microlocal, the
necessary framework requires more sophisticated function spaces than
those discussed in \cite{HVsemi}.  We prove small data
well-posedness results in the Feynman setting for certain semilinear
wave equations in Theorems \ref{thm:semilinear} and
\ref{thm:semilinearthree} below.  In particular, Theorem
\ref{thm:semilinearthree} can be summarized as follows
\begin{thm*}
In $\RR^{3 + 1}$, if $g$ is a perturbation of the Minkowski metric for
which both the
invertibility statements in Theorem \ref{thm:invertibility} and
Theorem \ref{thm:modulereginverse} hold, the problem
 \begin{equation*}
   \Box u + \lambda u^3 = f
 \end{equation*}
is well-posed for small $f$, where $f$ lies in the range, and $u$ in
the domain, of the Feynman wave operator, in particular $u =
\Box^{-1}_{g, fey}(f - \lambda u^3)$ where $\Box^{-1}_{g, fey}$ is the
Feynman propagator mapping as in \eqref{eq:feynmanprop} with $l \ge 0$
sufficiently small.
\end{thm*}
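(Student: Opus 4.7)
The plan is to solve the semilinear problem by a standard contraction mapping argument, with the Feynman propagator $\Box^{-1}_{g,fey}$ from Theorem~\ref{thm:invertibility} (strengthened by Theorem~\ref{thm:modulereginverse} to include module regularity) playing the role that an elliptic inverse plays in standard semilinear theory. Concretely, let $\cX$ denote the domain Banach space and $\cY$ the range Banach space for the Feynman wave operator with weight $l\geq 0$ sufficiently small and with the additional module regularity built in as in Theorem~\ref{thm:modulereginverse}. Rewrite the equation as a fixed point problem for
\begin{equation*}
T(u)=\Box^{-1}_{g,fey}\bigl(f-\lambda u^3\bigr),
\end{equation*}
and aim to apply the Banach fixed point theorem in a small closed ball $B_r(0)\subset \cX$.

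The first main step is to establish the nonlinear estimate $\|u^3\|_\cY \lesssim \|u\|_\cX^3$, together with the associated difference estimate $\|u^3-v^3\|_\cY\lesssim (\|u\|_\cX^2+\|v\|_\cX^2)\|u-v\|_\cX$. This is where the module regularity in $\cX$ is essential and where the dimensional assumption $n=3+1$ enters: the inclusion of sufficiently many powers of b- (or scattering) vector fields in the defining seminorms of $\cX$ promotes $\cX$ to a Banach algebra via a Moser/Sobolev multiplication estimate, once one knows the underlying Sobolev order together with the module order suffices to embed into $L^\infty$ (this is exactly the reason $n=4$ is tractable in the cubic case, as in \cite{HVsemi}). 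One also needs the weight compatibility $3l\geq l$, which is automatic for $l\geq 0$ and forces the condition that $l$ be small in order to keep $u^3$ in the appropriate component of $\cY$ associated with the chosen Feynman polarization at the radial sets.

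The second step is the boundedness $\Box^{-1}_{g,fey}\colon \cY\to \cX$ with norm controlled by the invertibility theorems; combining this with the nonlinear estimate yields
\begin{equation*}
\|T(u)\|_\cX \leq C\|f\|_\cY + C|\lambda|\,\|u\|_\cX^3,\qquad \|T(u)-T(v)\|_\cX\leq C|\lambda|\bigl(\|u\|_\cX^2+\|v\|_\cX^2\bigr)\|u-v\|_\cX.
\end{equation*}
Choosing $r$ small enough that $C|\lambda|r^2<\tfrac{1}{2}$ and $\|f\|_\cY$ small enough that $C\|f\|_\cY\leq r/2$, both the self-map and contraction properties hold on $B_r(0)$. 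The Banach fixed point theorem then furnishes a unique $u\in B_r(0)$ solving $u=T(u)$, i.e. $\Box u+\lambda u^3=f$; uniqueness in the small ball gives the well-posedness claim, and continuous dependence on $f$ follows from the standard fixed-point stability estimate.

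The principal obstacle is the first step, verifying that the nonlinear map $u\mapsto u^3$ sends $\cX$ into $\cY$ without losing decay, module regularity, or the variable-order microlocal structure at the radial sets $S_\pm$. Unlike the standard energy space setting, the Feynman domain is defined by microlocal conditions (high regularity on one half of the radial set, low on the other), and it is not a priori obvious that such spaces are algebras. This is handled by choosing the module regularity order high enough to guarantee $\cX\hookrightarrow L^\infty$ through Sobolev embedding adapted to the b- or scattering structure in dimension $4$, and by checking that the Leibniz rule respects the variable-order and polarization structure, since multiplication by a function at worst distributes derivatives and does not move wavefront components. Once this algebra property is in hand, the rest of the argument is a direct application of the contraction mapping principle.
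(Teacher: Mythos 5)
The proposal correctly identifies the overall architecture (rewrite as a fixed-point problem for $T(u)=\Box^{-1}_{g,fey}(f-\lambda u^3)$, establish a nonlinear estimate showing $u\mapsto u^3$ maps the domain into the range, and run a Banach contraction argument in a small ball), and this matches the paper's strategy. However, the crucial step---the multiplicative estimate---is where there is a genuine gap.

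You assert that the algebra property is obtained by embedding $\cX\hookrightarrow L^\infty$ ``through Sobolev embedding adapted to the b- or scattering structure in dimension $4$'', with the module regularity supplying enough derivatives. This cannot work in the relevant regime. Because $l\geq 0$ is required by the weight bookkeeping ($\llprime = 3l \geq l$ forces $l\geq 0$), and because the Feynman polarization requires $m + l < 1/2$ at the low-regularity radial sets $\SNb^*_+S_\pm$, the regularity function $m$ must satisfy $m < 1/2$ there. Moreover, at those very radial sets the module operators in $\cM_{+-}$ are \emph{characteristic}---they vanish, so they confer no additional regularity in the normal direction to the radial Lagrangian. Consequently one has only $m < 1/2$ derivatives in that single bad direction, which is far short of the anisotropic Sobolev threshold $> 1/2$ needed for an $L^\infty$ embedding or the standard algebra property (Lemma~\ref{thm:basicalgebra} needs $m>d/2=1/2$). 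The space $\Hbpm^{m,l,k}$ is, in this regime, genuinely not an algebra, and no amount of module order $k$ fixes this.

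The paper's resolution is Lemma~\ref{thm:reallemmaimproved} and Proposition~\ref{thm:bwaveimproved}: a non-algebra multiplication estimate $\cY^{m_0,a}_d\cdot\cY^{m,a}_d\subset\cY^{m',a}_d$ with a strict loss $m'<m$, valid as long as $m-m'+m_0>d/2$. This allows $m$ slightly below $1/2$ (write $m=1/2-\delta$) at the price of losing a few multiples of $\delta$ in the output regularity; the loss is then absorbed by the full unit of Sobolev order separating the domain $\Hbpm^{m,\cdot,k}$ from the range $\Hbpm^{m-1,\cdot,k}$ of $L$. The proof then chooses $\delta<1/5$ so the cumulative loss $5\delta$ from forming $u^3$ and subtracting still lands below $m-1$. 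A second, smaller inaccuracy in your write-up: you describe the module as generated by ``powers of b- (or scattering) vector fields'', but for the Feynman problem the module $\cM_{+-}$ must be characteristic at $\SNb^*_+S_+\cup\SNb^*_+S_-$ (and elliptic at $\SNb^*_-S_\pm$), and no vector field has this fiber-dependent property; the module is necessarily pseudodifferential, as the paper emphasizes. This is one of the main contrasts between the Feynman and the retarded/advanced settings.
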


While as far as we are aware non-linear problems have not been
considered in the Feynman context,
for the usual Cauchy problem, i.e.\ the retarded and advanced
propagators, non-linear problems on Minkowski space, as well as
perturbations of Minkowski space (as opposed to the more general
Lorentzian scattering metrics considered in the linear parts of the
paper here), have been very well studied. In particular, even
quasilinear equations are well 
understood due to the work of Christodoulou \cite{Christodoulou:Global} and
Klainerman \cite{Klainerman:Uniform, Klainerman:Null}, with their
book on the global stability of Einstein's equation
\cite{Christodoulou-Klainerman:Global} being one of the main
achievements. Lindblad and Rodnianski
\cite{Lindblad-Rodnianski:Global-existence,
  Lindblad-Rodnianski:Global-Stability} simplified some of their
arguments, and Bieri \cite{Bieri:Extensions, Bieri-Zipser:Extensions}
relaxed some of the decay conditions. We also mention the work of Wang
\cite{Wang:Thesis} obtaining asymptotic expansions, of Lindblad \cite{Lindblad:Global} for results on a
class of quasilinear equations, and of Chru\'sciel and {\L}{\c{e}}ski \cite{Chrusciel-Leski:Polyhomogeneous}
on improvements when there are no derivatives in the non-linearity. H\"ormander's book
\cite{Hormander:Nonlinear} provides further references in the general
area, while the work of Hintz and Vasy \cite{HVsemi} develops the
analogue of the framework we use here in the general Lorentzian
scattering metric setting (but still for the Cauchy problem). Works for the linear problem with implications for
non-linear ones, e.g.\ via Strichartz estimate include the recent work of Metcalfe and Tataru
\cite{Metcalfe-Tataru:Global} where a parametrix construction is
presented in a low
regularity setting.

The structure of the paper is as follows. In
Section~\ref{sec:geometry} we describe the underlying geometry and
study the wave operator microlocally in the sense of smoothness (as
opposed to decay). Estimates modulo compact errors, and thus Fredholm
properties, are established in Section~\ref{sec:fred}. In
Section~\ref{sec:Wick} we show that in Minkowski space, the Feynman
propagator is the limit of the inverses of elliptic problems,
achieved by a `Wick rotation'; this means that from the perspective of
spectral theory the Feynman and anti-Feynman propagators are the
natural replacement for resolvents. This in particular establishes the
invertibility of the Minkowski wave operator on the appropriately
weighted function spaces. Finally, in Section~\ref{sec:semi} we study
semilinear wave equations in the Feynman framework.

The third author is grateful to Jan Derezi\'nski, Christian G\'erard
and Michal Wrochna for very helpful discussions, and the authors are
grateful to Peter Hintz for comments on the manuscript. The authors
are also grateful to the anonymous referees for comments that helped to improve the presentation.

\section{Geometry and the d'Alembertian}\label{sec:geometry}
The basic object of interest
is a manifold $M$ with boundary $\pa M$ equipped with a Lorentzian
metric $g$ (which we take to be signature $(1,n-1)$) in its interior
which has a certain form at the boundary (which is geometrically
infinity) modelled on the Minkowski metric.  In order to define the
precise class of metrics, it is useful to introduce a more general structure. Thus, $\Tsc^{*}M$ is
the scattering cotangent bundle, which we describe presently,
originally defined in \cite{RBMSpec}.  If $\rho$
is a boundary defining function, meaning a function in $C^\infty(M)$
which is non-negative, has $\{ \rho = 0 \} = \p M$, and such that
$d\rho$ is non-vanishing on $\p M$,
smooth sections of $\Tsc^*M$ near the boundary are locally given by $C^{\infty}(M)$ linear
combinations of the differential forms
\begin{equation*}
  \frac{d\rho}{\rho^{2}} ,\  \frac{dw_{i}}{\rho},
\end{equation*}
where $w_{1}, \dots, w_{n-1}$
form local coordinates on $\p M$.
A non-degenerate smooth section of $\Sym^2\Tsc^*M$ of Lorentzian
signature (which we take to be $(1,n-1)$) is called a {\em Lorentzian sc-metric}. The
smooth topology on sc-metrics is the $\CI$ topology on sections of
$\Sym^2\Tsc^*M$, i.e.\ locally in $M$ (which recall is a manifold {\em
  with} boundary, i.e.\ smoothness is {\em up to} the boundary) is induced by the $\CI$ topology
of the coefficients of the basis
\begin{equation}\label{eq:symmetric-tensor-basis}
  \frac{d\rho}{\rho^{2}}\otimes \frac{d\rho}{\rho^{2}} ,\
  \frac{d\rho}{\rho^{2}}\otimes_s\frac{dw_{i}}{\rho},\
  \frac{dw_{i}}{\rho}\otimes_s\frac{dw_{j}}{\rho},\ i,j=1,\ldots,n-1,\
  i\leq j,
\end{equation}
where $\otimes_s$ is the symmetric tensor product.

When $M^{int} = \mathbb{R}^n$, the objects above can be described
along more familiar lines.  Indeed, in this case, the radial
compactification of $\RR^n$ to a ball $\BB^n$ gives the manifold with
boundary $M$, see \cite{RBMSpec}, e.g.\ by using `reciprocal
spherical coordinates' to glue the sphere at infinity $\sphere^{n-1}$
to $\RR^n$. Then if $z_1, \dots, z_n$ are the standard Euclidean
coordinates, setting as usual $r = (z_1^2 + \dots + z_n^2)^{1/2}$, we
can take $\rho = 1/r$ outside the unit ball $B_1(0)$ (recall
that $\rho$ is to be a smooth function on all of $M$), and thus 
\begin{equation}\label{eq:Euclidean-radial-scattering-form}
\frac{d\rho}{\rho^2} = - dr \mbox{ on } \mathbb{R}^n\setminus B_1(0),
\end{equation}
and the $w_i$ can be taken to be some set of $n-1$ angular variables on
$\mathbb{S}^{n-1}$ which give local coordinates on the sphere.  Thus in this
case $dw_i / \rho = r \,d\omega_i$ and one sees that the volume form
$dz = r^{n-1}\,dr \,d\Vol_{\mathbb{S}^{n-1}}$ is a smooth non-vanishing section
of the top degree form bundle, $\wedge^n\Tsc^*M$, up to and including $\p M$.  (Put differently, the volume
form is equal to a wedge product of $d\rho /\rho^2$ and the $dw_i
/\rho$ times a smooth non-vanishing function $a$ which is smooth up to
$r = \infty$.)
Moreover, $\CI(\BB^n)$ consists exactly of the
space of classical (one step polyhomogeneous) symbols of order $0$,
while the standard coordinate differentials $dz_j$ lift to $\BB^n$ to
give a basis, over $\CI(\BB^n)$, of all smooth sections of
$\Tsc^*\BB^n$. In particular, any translation invariant Lorentzian
metric on $\RR^n$ is (after this identification) a sc-metric; and
remains so under perturbations of its coefficients by classical symbols of order
$0$. Moreover, $dz_i\otimes_s dz_j$, $i,j=1,\ldots,n$, $i\leq j$,
forming a basis of $\Sym^2\Tsc^*\BB^n$, the $\CI$ topology on sections
of $\Sym^2\Tsc^*\BB^n$ is simply the $\CI(\BB^n)$ topology on the
$n(n+1)/2$-tuple of coefficients with respect to this basis.

We next recall the
definition of the more refined structure of a {\em Lorentzian scattering space} from
\cite{Baskin-Vasy-Wunsch:Radiation} (see also \cite[Section~5]{HVsemi}), of which the Minkowski metric is
an example via the radial compactification of $\RR^n$, depicted in
Figure \ref{spacetime}. For this, we
assume that there is a $\CI$ function $v$ defined near $\pa M$, with
$v|_{\pa M}$ having a
non-degenerate differential at the zero-set $S=\{v=0,\rho=0\}$ of $v$
in $\pa M$ (which we call the {\em light cone at infinity}); here
$\rho$ is a boundary defining function with the property that the
scattering normal vector field $V=\rho^2\pa_\rho$ modulo $\rho\Vsc(M)$
(it is well-defined in this sense) satisfies that $g(V,V)$ has the
same sign as $v$ at each point in $\pa M$, $g$ has the form
\begin{equation}\label{eq:metric}
g=v\frac{d\rho^2}{\rho^4}-\left(\frac{d\rho}{\rho^2}\otimes\frac{\alpha}{\rho}+\frac{\alpha}{\rho}\otimes\frac{d\rho}{\rho^2}\right)-\frac{\tilde g}{\rho^2},
\end{equation}
where $\tilde g\in\CI(M;\Sym^2 T^*M)$, $\alpha\in\CI(M;T^*M)$,
$\alpha|_S=\frac{1}{2}\,dv$ and
$\tilde g|_{\Ann(d\rho,dv)}$ at $S$ is positive definite, where, for a
set of one forms $\beta_1, \dots, \beta_k$,
$\Ann(\beta_1, \dots, \beta_k)$ is the set of vectors in the
intersection of the kernels of the $\beta_i$.  Apart from the fact
that the Minkowski metric is of this form, the assumption in \eqref{eq:metric} is
natural because it guarantees the structure of the Hamiltonian
dynamics in the cotangent bundle which is required for our analysis.

This is not quite a statement about $g|_{\pa M}$ as a metric on $\Tsc
M$, i.e.\ as a section of $\Sym^2\Tsc^*M$, because of the implied
absence of a
$O(\rho)\frac{d\rho^2}{\rho^4}$ term. Adding such a term results in a {\em
  long-range Lorentzian scattering metric}, the whole theory relevant
to the discussion below goes
through in this setting, as shown in the work of Baskin, Vasy and
Wunsch \cite{Baskin-Vasy-Wunsch:Long-range}; e.g.\ Schwarzschild space-time is of this
form near the boundary of the light cone at infinity. (The difference
is in the precise form of the asymptotics of the linear waves; they
are well-behaved on a logarithmically different blow-up of $M$ at
$S$.) 

Note that a perturbation of a Lorentzian scattering metric in the
sense of sc-metrics (smooth sections of $\Sym^2\Tsc^*M$) is a Lorentzian sc-metric, but it need {\em not}
be (even a long-range) Lorentzian scattering metric, since the above form of the metric
\eqref{eq:metric} need not be preserved. However, the subspace of
sc-metrics of the form \eqref{eq:metric} is a closed subset in the
$\CI$ topology of sc-metrics within the open set of Lorentzian
sc-metrics (in the space of smooth sections of $\Sym^2\Tsc^*M$); by a
perturbation in the sense of Lorentzian scattering metrics we
mean a perturbation within this closed subset.

We remark here that, as
is generally the case, only finite regularity (not being $\CI$) is
relevant in any of the discussion below, though the specific
regularity needed would be a priori rather high. However,
using the low regularity results of Hintz \cite{Hintz:Quasilinear} on
b-pseudodifferential operators one could easily obtain rather precise
low-regularity versions of
the linear results presented here.

For statements beyond Fredholm properties, based on the work in Section~\ref{sec:Wick}, $M$ will be the ball $\mathbb{B}^n$, i.e.\ the
radial compactification of $\mathbb{R}^n$, equipped with a smooth
perturbation of the Minkowski metric,
\begin{equation}
  \label{eq:minkowski}
  g = dz_n^{2} - dz_{1}^{2} - dz_2^2 -   \dots
- dz_{n - 1}^{2},
\end{equation}
 with perturbation understood in the set of sc-metrics. (Later, in
 Section~\ref{sec:semi}, it will be important to have perturbations
 within scattering metrics to preserve the module structure discussed there.)
To see that this takes the form in
\eqref{eq:metric}, following \cite[Sect.\
3.1]{Baskin-Vasy-Wunsch:Radiation}, write $g$ in the coordinates $(\rho,
v, \omega)$ defined by $z_n = \rho^{-1} \cos \theta,
z_{j} = \rho^{-1}\omega_{j} \sin\theta$ for $1 \le j \le n - 1$, where $\rho = |z|^{-1}$,
where $|z| = \sqrt{z_1^2 + \dots + z_n^2}$ and $\omega_{j} = z_{j}/ (|z|^{2} -
z_n^{2})^{1/2}$, and take $v = \cos 2 \theta$.   In this case $\alpha = dv / 2$ identically.

The main object of study here is the wave operator, defined in local
coordinates by
\begin{equation}
  \label{eq:box}
  \Box_{g} := \frac{1}{\sqrt{g}} \p_{i} G^{ij} \sqrt{g} \p_{j},
\end{equation}
where $G$ denotes the inverse of $g$, i.e.\ the dual metric on
$1-$forms defined by $g$.

We further
assume that $g$ is \emph{non-trapping}, which is to say we assume that $S=S_+\cup S_-$
(each $S_\pm$ being the disjoint union of possibly several connected
components),
$$
\{\rho=0,\
v>0\}=C_+\cup C_-,
$$
$C_\pm$ open, $\pa C_\pm=S_\pm$, and such that the
null-geodesics of $g$ tend to $S_+$ as the parameter goes to
$+\infty$, $S_-$ as the parameter goes to $-\infty$, or vice versa. We
also let
$$
C_0=\{\rho=0,\ v<0\}.
$$
We then consider $\Box_g$, on functions (or in the future differential forms or various other
squares of Dirac-type operators), and we wish to analyze the
invertibility of the Feynman propagator.

For this purpose it is convenient, as we explain further in the next paragraph, to consider
\begin{equation}\label{eq:Ldef}
L=\rho^{-(n-2)/2}\rho^{-2}\Box_g\rho^{(n-2)/2};
\end{equation}
then $L\in\Diffb^2(M)$, the space of b-differential operators, meaning
that locally near $\p M$, using
coordinates $(\rho, w_{1}, \dots, w_{n - 1})$ where $\rho$ is the boundary defining function
from \eqref{eq:metric} and $w_i$ are any coordinates on $\p M$, there
are smooth functions $a_{i, \alpha} \in C^\infty(M)$, such that
\begin{equation}\label{eq:b-diff-op-form}
L = \sum_{j + |\alpha| \le 2} a_{j, \alpha}(\rho \p_\rho)^j \p^\alpha_{w}.
\end{equation}
Its principal symbol is the dual metric $\hat G$ of the Lorentzian
b-metric 
\begin{equation}\label{eq:bmetric}
\hat g=\rho^2 g.
\end{equation}
In general, $\Diffb^*(M)$ is the algebra of differential operators generated by
\begin{equation}\label{eq:b-vector-fields}
\Vb := C^\infty(M; \Tb(M)),
\end{equation}
which more
concretely is the
$C^{\infty}(M)$ span of the vector fields
\begin{equation}
  \label{eq:bfields}
  \rho \p_{\rho}, \quad \p_{w_{i}},
\end{equation}
That $L$ is indeed in $\Diffb^2(M)$ can be checked directly from \eqref{eq:metric} and \eqref{eq:box}.
In the definition of $L$ in \eqref{eq:Ldef}, $\rho^{(n-2)/2}$ is introduced to
make $L$ formally self-adjoint with respect to the b-metric $\hat g$. The conformal factor $\rho$ merely
reparameterizes null-bicharacteristics, so our assumption is
equivalent to the statement that null-bicharacteristics of $L$ tend to
$S_\pm$.

While we could consider $\Box_g$ or in fact $\Box_g+\lambda$,
$\lambda\in\Cx$, as a scattering differential operator, corresponding
to the sc-structure, we instead work with $L$ because $\Box_g$ is
rather degenerate as a sc-operator due to the quadratic vanishing of
its principal symbol at the zero section at $\pa M$ (i.e.\
infinity). Note that the option of the b-framework is not available if a non-zero spectral
parameter $\lambda$ is added (it would result in a singular term), but
on the other hand in these cases there is no degeneracy in the
operator in the sc-framework!  The latter phenomenon appears also in
analogous work in the elliptic setting, namely in analysis of
$\Delta_g + \lambda$ for $g$ a \emph{Riemannian} scattering metric, in
particular in Melrose's work on scattering manifolds \cite{RBMSpec},
which incidentally proves and uses radial points estimates.  See
also \cite{GH2008}.  

One of the main features of our analysis, parallel to the recent work
\cite{HVquasi, HVsemi} as well as much other work on analysis on non-compact
spaces going back to Melrose \cite{Melrose:Atiyah}, is that we use an
extension of the vector bundle $T^*(M^{int})$ up to the boundary which
is better suited
to the analysis than $T^*M$, and for which in particular the beginnings
and ends of null-bicharacteristics become tractable objects.
Concretely, we use the b-cotangent bundle, $\Tb^*M$, the dual bundle of
the b-tangent bundle $\Tb M$, whose local sections near the boundary
are $C^\infty(M)$ linear combinations of 
\begin{equation*}
  \frac{d\rho}{\rho} ,\qquad  dw_{i},
\end{equation*}
with
coordinates as above.  Notation as in the paragraph containing
\eqref{eq:Euclidean-radial-scattering-form}, in $\mathbb{R}^n$ we can
assume that near infinity the
differential form $d\rho / \rho = - dr /r$.  (The $w_i$ remain
coordinates on the sphere.)

For an operator $P\in\Diffb^m(M)$, the (b-)principal symbol
$\sigma_{\bl,m}(P)$ is a smooth function on $\Tb^*M$ which is a
homogeneous polynomial of degree $m$ on the fibers of
$\Tb^*M$, extending the standard principal symbol from $T^*M^{int}$ to $\Tb^*M$.
Notice that a vector in $\Tb_q M$, $q\in M$, defines a linear function on $\Tb^*_qM$;
the principal symbol of a vector field is given by $i$ times this fiber-linear
function; it is extended to differential operators by making it
multiplicative, in the process keeping only the leading ($m$th order)
terms of the operator. Concretely, writing b-covectors as
$\sigma\,\frac{d\rho}{\rho}+\sum_j\zeta_j\,dw_j$,
$(\rho,w,\sigma,\zeta)$ are local coordinates on $\Tb^*M$ (global in
the fibers), the b-principal symbol of an operator of the form
\eqref{eq:b-diff-op-form} is
$$
\sum_{j+ |\alpha| = 2} a_{j, \alpha}(\rho,w)(i\sigma)^j (i\zeta)^\alpha,
$$
since the symbol of $\rho \p_\rho$ is $i \sigma$ in the coordinates
$(\rho,w,\sigma,\zeta)$, because $\rho \p_\rho$ acting on $\sigma\,\frac{d\rho}{\rho}+\sum_j\zeta_j\,dw_j$
covector is $\sigma$.

We describe the structure of the
null-bicharacteristics at the boundary in detail now.  The Hamilton
flow on null-bicharacteristics
corresponding to $L$ descends from a flow on $T^* (M^{int})$ to a flow
on the spherical cotangent bundle $S^*(M^{int})$.  On can think of
$S^*(M^{int})$ as either the quotient $(T^* (M^{int}) - o) / \mathbb{R}_+$,
where $o$ denotes the zero section and the action of $\mathbb{R}_+$ is
the standard dilation action on the fibers, or as the bundle obtained
by radially compactifying the fibers of $T^*(M^{int})$ to obtain a
ball bundle $\overline{T}^* (M^{int})$ and taking the corresponding sphere
bundle whose fibers are the boundary of fibers of
$\overline{T}^*(M^{int})$.  The latter process can just as well be done on
on $\Tb^* M$ to obtain $\Tbc^*M$ and taking the boundaries of the
fibers gives the spherical b-conormal bundle $\Sb^*M$.  

For the convenience of the reader, we will give a brief summary of the
properties of the Hamilton flow used in the propagation estimates,
which are discussed in detail in \cite[Section
3]{Baskin-Vasy-Wunsch:Radiation}.  The null-bicharacteristic flow of
$\Box_g$ is the flow of the Hamilton
vector field $\Ham = (\p_\zeta p) \p _z - (\p_z  p) \p_\zeta$
restricted to $p = 0$, where
$z$ are the coordinates on $\mathbb{R}^{n}$, $\zeta$ is dual
to $z$, and $p$ is the principal symbol of $\Box_g$, which is in fact
just the dual metric function $g^{-1} \colon T^* M \lra \mathbb{R}$,
$g^{-1}(z, \zeta) = | \zeta|^2_{g^{-1}(z)}$.  (Thus on Minkowski space $p(\zeta)
= \zeta_n^2 - \zeta_1^2 - \dots - \zeta_{n-1}^2$ and the
null-bicharacteristic flow is a straight line flow in phase space
on the space of null vectors, $\zeta$ with $p(\zeta) = 0$, keeping
$\zeta$ fixed and evolving $z$ affinely.)
The b-principal symbol of $L$, $\sigma_b(L) = \lambda$, can be understood as the
extension of the principal symbol of $L$ in the standard sense to the
b-cotangent bundle $\Tb^*M$, thus as discussed
$\lambda$ is equal to the dual metric function of $\hat g =
\rho^2g$, which extends smoothly to all $\Tb^*M$.
Concretely, in the variables
$\rho, v, y$, writing forms as 
$$
\sigma \frac{d\rho}{\rho} + \gamma\, dv + \eta\, dy,
$$ 
$\lambda$ satisfies
\begin{equation}\begin{aligned}\label{eq:b-princ-symbol}
  \lambda=\sigma_{\bl} (L) &= g^{\rho\rho}\sigma^{2} - (4v - \beta v^{2} +
  O(\rho v) + O(\rho^{2})) \gamma^{2} - 2(2-\alpha v+ O(\rho))\sigma
  \gamma \\
  &\quad + 2g^{\rho y}\cdot \eta \sigma + \big(2 v \Upsilon + O(\rho)
  \big) \cdot \eta \gamma + g^{y_{i}y_{j}}\eta_{i}\eta_{j},
\end{aligned}\end{equation}
where all the $O(.)$ terms are smooth, and $\beta,\Upsilon$ are smooth
as well as are $g^{\rho\rho}$, etc., which are the dual metric
components (in the b-basis, for $\hat g$, or equivalently in the
sc-basis for $g$); see
\cite[Equation~(3.18)]{Baskin-Vasy-Wunsch:Radiation}. (Thus, this
formula specifies the structure of certain dual metric components for
$\hat g$,
such as $\pa_v^2$ and $\pa_v(\rho\pa_\rho)$.)

Consider the
b-Hamilton vector field, i.e.\ the Hamilton vector field of $L$
thought of as a vector field on $\Tb^* M$. (More precisely, this is
the (unique) smooth extension of the Hamilton vector field of $L$ from
a vector field on $T^*M^{int}$ to $\Tb^*M$.) 
Almost as in \cite{Baskin-Vasy-Wunsch:Radiation} (which used $\xi$ in
place of $\sigma$),
this is given in the variables $(\rho, v, y, \sigma, \gamma, \eta)$ by
\begin{equation}
  \begin{aligned}\label{eq:hamilton}
    \Hamb := (\p_\sigma \lambda) \rho \p_\rho + (\p_{\gamma} \lambda)
    \p_v + (\p_\eta \lambda) \p_y - (\rho \p_\rho\lambda) \p_\sigma -
    (\p_v \lambda)\p_{\gamma} - (\p_y \lambda) \p_\eta
  \end{aligned}
\end{equation}
where $\lambda$ is the dual metric function of $\hat g$ on
$\Tb^* M$; see \cite[Equation~(3.20)]{Baskin-Vasy-Wunsch:Radiation}. Note that $\Hamb$ is automatically a vector field tangent to $\Tb^*_{\pa
  M}M$, i.e.\ a b-vector field on $\Tb^*M$ (thus a section of
$\Tb\Tb^*M$).
Since taking the Hamilton vector field is a derivation on 
functions on the cotangent bundle, the flow of $\Hamb$ (this being the
Hamilton vector field of $\hat g=\rho^2 g$) restricted to 
the set $\Sigma = \{ \lambda = 0 \} $ over $M^{int}$ (where $\rho>0$) is a rescaling of the Hamilton flow of $\Box_g$ restricted to 
$\Sigma$; the rescaling becomes singular at $\rho=0$. In our case,
\begin{equation}\label{eq:bHamvf1}\begin{aligned}
\Hamb=&\big(2g^{\rho\rho}\sigma+2g^{\rho y}\eta-2\gamma(2-\alpha
v+O(\rho))\big)(\rho\pa_\rho)\\
&-2\big((4v-\beta v^2+O(\rho
v)+O(\rho^2))\gamma\\
&\qquad\qquad\qquad+(2-\alpha
v+O(\rho))\sigma+(v\Upsilon+O(\rho))\eta\big)\pa_v\\
&+2\big(g^{\rho y}\sigma+(v\Upsilon+O(\rho))\gamma+g^{y_iy_j}\eta_j\big)\pa_y\\
&-(\rho\pa_\rho \lambda)\pa_\sigma-(\pa_v \lambda) \pa_\gamma-(\pa_y \lambda) \pa_\eta;
\end{aligned}\end{equation}
see \cite[Equation~(3.21)]{Baskin-Vasy-Wunsch:Radiation}.

%  In fact, for perturbations of the Minkowski metric satisfying $\p_v (\hat{g}(\rho \p_\rho, \rho
% \p_\rho)  = 1 + O(\rho)$ and $\p_v \hat{g}(X, Y) = O(\rho)$ for $X, Y
% \in \{ \rho \p_\rho, \p_v, \p_y \}$ not both $\rho \p_\rho$, we have
% \begin{equation}
%   \label{eq:dual-metric-function}
%   \lambda = - 4 v(\gamma)^2 - 4 \gamma \xi - \wt{g}^{ij}
%  \eta_i \eta_j + O(\rho), 
% \end{equation}
% where $\wt{g}^{ij}$ are the components of the inverse of $\wt{g}$
% restricted to the annihilator of $\{ d\rho, dv \}$.

To describe how the null-bicharacteristic flow acts at infinity we
must define and analyze the b-conormal bundle of the submanifold $S$.
There is a
natural map of the b-tangent space $\Tb M \lra TM$
defined on sections, i.e.\ elements of $\Vb$, by considering a b-vector
field as a standard vector field.  (Thus the map is not
surjective over the boundary; $\rho \p_\rho$ vanishes there.)  We can use the dual map $T^* M \lra
\Tb^* M$ to define the b-conormal bundle of submanifolds; specifically,
for our submanifold $S$, the conormal bundle $\Nb^*S$ equal to the
image in $\Tb^* M$ of covectors in $T^*M$ annihilating the image of $T S \subset T_S
M$ in $\Tb M$.  It turns out that the null-bicharacteristics of $L$
(see Figure \ref{choices}) terminate both at $S_+$ and $S_-$ at the
spherical b-conormal bundle 
$$
\Sb N^* S_{\pm}  = (\Nb^* S_{\pm}  \setminus o) / \mathbb{R}_+.
$$
Before we describe this in more detail, we point out that $\Nb^* S$ in
fact has one dimensional fibers, since in coordinates $\rho, v, y$
with $\rho, v$ (so $S = \{ \rho = 0 = v \}$) as above and $y$ local
coordinates on $S$, so vectors in $TS$ are multiples of $\p_{y}$, are annihilated by
forms $a \,dv  + b \,d\rho$ in $T^*M$, which map to forms $a \,dv + b \rho
(\rho^{-1} \,d\rho)$ in $\Tb^* M$ and thus restrict to $a\,dv $ since $S$ lies in the boundary $\rho = 0$.
More concretely, the b-conormal bundle of $S$ is generated by $dv$,
i.e.\ in the coordinates above is given by the vanishing of
$\rho,v,\sigma,\eta$, and thus $y,\gamma$ are (local) coordinates along it.
This means that at each point $p \in S$,
\begin{equation}\label{eq:spherical-conormal-bundle}
\Sb N^*_p S  = (\Nb^*_p S \setminus \{ 0 \} )/ \mathbb{R}_+  = \{
\gamma \, dv
: \gamma
\neq 0 \} / \mathbb{R}_+, 
\end{equation}
so in fact $\Nb^* S$ is a line bundle over $S$ and $\Sb N^* S$ is an
$\mathbb{S}^0$ (two points) bundle generated by the images of $dv$ and
$-dv$ in $\Tb^*_S M$.

The flow on null-bicharacteristics, in view of the structure of the
operator at $S_\pm$, as shown in \cite[Section~3]{Baskin-Vasy-Wunsch:Radiation},
see also \cite[Section 5]{HVsemi}, makes the two halves of
the spherical b-conormal bundle of $S$, $\SNb^*
S=\SNb^*_+S\cup\SNb^*_-S$, into a family of
sources ($-$) or sinks ($+$) for the Hamilton flow,  meaning that the null-bicharacteristics approach
$\SNb_+^*S_+$ as their parameter goes to $+\infty$ and $\SNb^*_-S_-$
as the parameter goes to $-\infty$, or $\SNb_+^*S_-$ as their parameter goes to $+\infty$ and $\SNb^*_-S_+$
as the parameter goes to $-\infty$. Correspondingly, the
characteristic set $\Sigma\subset\Tb^*M\setminus o$, which we also
identify as a subset of $\Sb^*M$, of $L$ globally splits into the
disjoint union $\Sigma_+\cup\Sigma_-$, with the first class of
bicharacteristics contained in $\Sigma_+$, the second in $\Sigma_-$.
One computes, cf.\ the discussion after \cite[Equation~(3.22)]{Baskin-Vasy-Wunsch:Radiation},
that the b-Hamilton vector field in \eqref{eq:bHamvf1}  at
$\Nb^*S$, modulo terms vanishing there quadratically, is
satisfies
$$
- 4 \gamma (\rho \p_\rho) - (8v \gamma + 4\sigma) \p_v +2\mu_i\p_{y_i} + 4 \gamma^2 \p_{\gamma}.
$$
with $\mu_i$ vanishing on $\Nb^*S$. Thus, b-Hamilton vector field is
indeed radial at $\Nb^* S$ (it is a multiple of $\pa_\gamma$ as
$\sigma,\rho,v,\mu_i$ vanish there by \eqref{eq:spherical-conormal-bundle});
furthermore within $\Nb^* S$ one sees that for $\gamma>0$ fiber
infinity is a sink (the flow tends towards it), while for $\gamma<0$ a source.
One also sees that in fact $\Sb^*NS$ is a source/sink bundle depending
on the sign of $\gamma$, i.e.\ in the normal directions to $\Nb^*S$ the flow
behaves the same way as within $\Nb^*S$, by checking the eigenvalues
of the linearization, see \cite[Equation~(3.23)]{Baskin-Vasy-Wunsch:Radiation}.

 \begin{figure}\label{spacetime}
    \centering
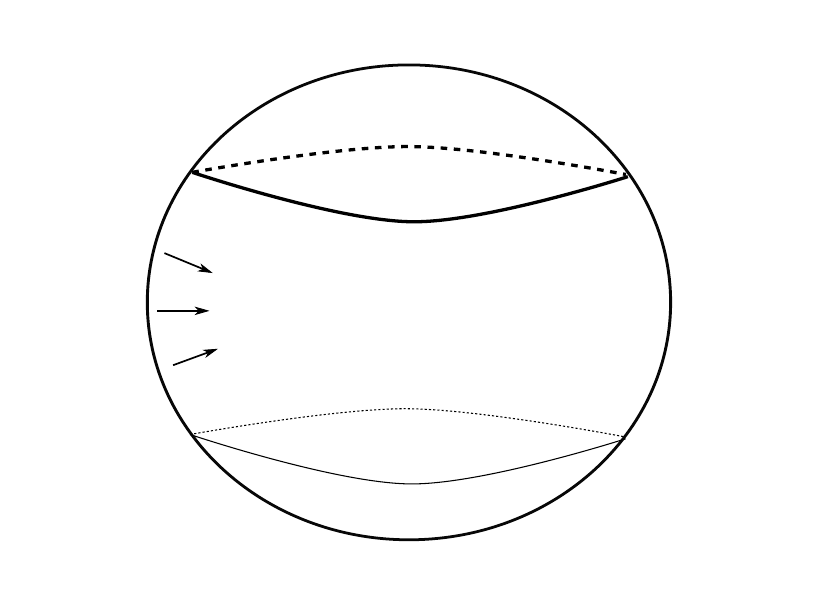
    \caption{$\rho$ equals zero exactly on the boundary and has
      non-vanishing differential there.}
      \end{figure}

Recall that the basic result for elliptic problems on compact
manifolds without boundary is elliptic
regularity estimates, which in turn imply Fredholm properties.
Indeed, if $P$ is an elliptic operator of order $k$ on a compact manifold without
boundary $X$, then for any $m'<m+k$ one has the estimate
\begin{equation}
  \label{eq:ellipticestimates}
  \norm[H^{m + k}(X)]{u} \le C ( \norm[H^{m}(X)]{P u} + \norm[H^{m'}(X)]{u} ).
\end{equation}
That $P$ is a Fredholm map from $H^{m + k}(X)$ to $H^{m}(X)$ is an
immediate consequence of this estimate and the fact that
$H^{m + k}(X)$ is a compact subspace of $H^{m'}(X)$, together with the
fact that $P^*$, the formal adjoint of $P$, is then also elliptic, so
analogous estimates hold for $P^*$.

Here we
have real principal type points over $M^\circ$ as $\Box_g$ is
non-elliptic, as well as radial points at $\SNb^*_\pm
S_\pm$.  Recall that real principal type estimates simply propagate
regularity along null-bicharacteristics, i.e.\ given that the estimate
holds at a point, one gets it elsewhere as well. The basic result at
radial points which are sources or sinks, see
\cite[Proposition~4.4]{Baskin-Vasy-Wunsch:Radiation}, \cite[Proposition~5.1]{HVsemi} and indeed
\cite{Haber-Vasy:Radial} for a precursor in the boundaryless setting
(in turn based on \cite{VD2013}, which further goes
back to \cite{RBMSpec}), in terms of b-Sobolev spaces, which we
proceed to describe in detail, 
is that subject to restrictions on the decay and regularity orders, in
the high regularity regime, one has a real principal type estimate but
without an assumption that one has the regularity anywhere, provided
one has at least a minimum amount of a priori regularity at the point in
question. To clarify, \textit{away} from radial points, real principal
type propagation estimates control the norm in $H^s$ (microlocally)
near a point on a null bicharacteristic in terms of the $H^{s -1}$
norm of $Lu$ and the $H^{s}$ norm of $u$ \textit{elsewhere on the
  bicharacteristic} (together with arbitrarily low regularity norms of
$u$); for high regularity radial points estimates, the
``elsewhere...'' can be removed, as one gets an estimate for the $H^s$
norm near the radial point in terms of the $H^{s-1}$ norm of $Lu$
(provided one knows the wavefront set does not intersect the radial
set at the point in question.)
 On the other hand, in the low regularity setting, one can
propagate estimates into the radial points, much as in the case of
real principal type estimates.  See Theorem \ref{thm:propofsing}. 

To describe this concretely, we must first say what we mean precisely
by regularity and vanishing order.  For any manifold with boundary $M$, fix a non-vanishing b-density $\mu$, i.e. a
non-vanishing smooth section of the density bundle of $\Tb M$, which
necessarily takes the form $\rho^{-1}\wt{\mu}$ for a non-vanishing
density $\wt{\mu}$ on the manifold with boundary $M$ (so for $M =
\overline{\mathbb{R}^n}$, the radial compactification of
$\mathbb{R}^n$, notation as in the paragraph containing
\eqref{eq:Euclidean-radial-scattering-form}, $\mu = a \rho^{-1}
|\,d\rho\, dw|$ where $|dw| = |\prod_{i = 1}^{n-1} dw_i|$ and $a$ is a
smooth non-vanishing function up to $\rho = 0$.) The density $\mu$ is
natural here as it can be taken to be the absolute value of the volume
form of a b-metric, e.g.\ on $\mathbb{R}^n$, $|\rho^2 \,dz^2|$ is such a
b-metric, and
we define
$L^2_b$ to be the Hilbert space induced by $\mu$, so
\begin{equation}
  \label{eq:l2bdef}
  \la u, v \ra_{L^{2}_{b}} = \int_{M} u \,\overline{v} \, \mu.
\end{equation}
We define the weighted b-Sobolev spaces, first for integer orders $k \in
\mathbb{N}$ by letting $u \in \Hb^{k}(M)$ if and only if $V^1 \dots
V^{k'} u \in L^2_b$ for every $k'-$tuple of b-vector fields $V_i \in
\mathcal{V}_b$ with $k' \le k$.  (Recall that $\Vb$ is the space of
b-vector fields discussed in \eqref{eq:b-vector-fields}; thus to $u
\in \Hb^k(M)$ one can apply in particular $k'$-fold derivatives of the
form $\rho \p_\rho$ and $\p_{w_i}$ and remain in $L^2_b$.)  For $m\geq
0$ real we have
\begin{equation}\label{eq:bsobolevdef-pos}
  \begin{split}
    \Hb^{m}(M) &= \{u \in C^{-\infty}(M) \ | \ A u \in \Lb^2(M) \
    \forall A \in \Psib^{m}(M) \},\\
    \Hb^{m,l}(M) &= \rho^{l}\Hb^{m}(M)
  \end{split}
\end{equation}
where $\Psib^{m}(M)=\Psib^{m,0}(M)$ is the space of b-pseudodifferential operators,
described in Section \ref{sec:fred}. For $m<0$ this can be extended by
duality, or instead for all real $m$, demanding that $u$ lie in $\Diffb^N(M)\Lb^2(M)$
for some $N$ (i.e.\ be a finite sum of at most $N$th b-derivatives of
elements of $\Lb^2(M)$) and satisfy the regularity under application of
b-ps.d.o's:
\begin{equation}\label{eq:bsobolevdef}
  \begin{split}
    \Hb^{m}(M) &= \{u \in \Diffb^*(M)\Lb^2(M) \ | \ A u \in \Lb^2(M) \
    \forall A \in \Psib^{m}(M) \},\\
    \Hb^{m,l}(M) &= \rho^{l}\Hb^{m}(M)
  \end{split}
\end{equation}
In general, we will allow a variable $m \in
C^\infty(\Sb^*M;\RR)$, in which case the same definition can be applied,
namely one simply takes variable order ps.d.o's (see
\cite[Appendix]{Baskin-Vasy-Wunsch:Radiation}). (Such variable order
spaces have a long history, starting with Unterberger
\cite{Unterberger:Resolution} and Duistermaat
\cite{Duistermaat:Carleman}; see the work of Faure and Sj\"ostrand \cite{Faure-Sjostrand:Upper} and
Dyatlov and Zworski \cite{Dyatlov-Zworski:Dynamical} for other recent applications.) Taking $M =
\overline{\mathbb{R}^n}$, note that we
may choose the measure $\mu$ in \eqref{eq:l2bdef} so that $\Hb^{0,
  n/2} = L^2$ where $L^2$ here and below denotes the standard Hilbert
space on $\RR^n$, indeed we can take $\mu = \rho^{-n} |dz|$ there; we
remark that the equality as Banach spaces up to equivalence of norms
(which is what matters mostly) is automatic.  Note that the $L^2_b$ pairing gives an isomorphism
\begin{equation}
  \label{eq:l2bduality}
  (\Hb^{m, l})^* \simeq \Hb^{-m, -l}.
\end{equation}

For $s \in
\mathbb{R}$, the weighted b-Sobolev wavefront sets of a
distribution $u$, denoted $\WFb^{s, l}(u)$ are the directions in phase
space in which $u$ fails to be in $\Hb^{s,l}(M)$.  A concrete
definition using explicit b-pseudodifferential operators is given in
\eqref{eq:goodwavefrontdef} below, but for the moment we state that it is defined for $u \in
\Hb^{-N, l}$ by
\begin{equation}
  \label{eq:wfdef}
  \WFb^{s, l}(u) = \bigcap \set{\Sigma(A) \subset \Tb M : A u \in \Hb^{s, l}(M)},
\end{equation}
where the intersection is taken over all $A \in \Psib^{0, 0}(M)$, i.e.\ $A$ is a $(0, 0)$ order
b-pseudodifferential operator (again, see Section \ref{sec:fred}) and
$\Sigma(A)$ is the characteristic
set (vanishing set of the principal symbol) of $A$. Equivalently, a
point $(p,\xi)\notin\WFb^{s,l}(u)$ (where $\xi \in \Tb^*_p M\setminus o$) if there exists
$A\in\Psib^{0,0}(M)$ which is elliptic at $(p,\xi)$ such that $Au\in \Hb^{m,l}(M)$.
We say that $u$
is in $\Hb^{s, l}$ microlocally if $(p, \xi) \not \in \WFb^{s, l}(u)$
where $\xi \in \Tb^*_p M$.
There is a completely analogous definition of $\WFb^{m,l}$ for varying $m \in
C^\infty(^bS^*M)$ and for $ l \in \mathbb{R}$.

We have the following result, which is essentially \cite[Proposition~5.1]{HVsemi}. For the
following statement, let $\mathcal R$ be any of the above discussed
connected components of radial sets $\SNb_\pm^*S_\pm$.
\begin{prop}\label{thm:propofsing}
Let $(M, g)$ be a Lorentzian scattering space as in
\eqref{eq:metric}.  Let $L$ be as above and $u \in \Hb^{-\infty, l}(M)$.

If $m + l < \frac{1}{2}$ and $m$ is nonincreasing along the Hamilton
flow in the direction that approaches $\mathcal R$, then $\mathcal R$
is disjoint from $\WFb^{m, l}(u)$ provided that $\mathcal R \cap
\WFb^{m-1, l}(Lu) = \varnothing$ and a punctured neighborhood in
$\Sigma \cap \Sb^*M$ of $\mathcal R$ (i.e.\ a neighborhood of
$\mathcal{R}$ with $\mathcal R$ removed) is
disjoint from $\WFb^{m, l}(u)$.

On the other hand, suppose that $m' + l > \frac{1}{2}, m \geq m'$ and $m$ is nonincreasing along the Hamilton flow in the direction that leaves $\mathcal R$. Then if $\WFb^{m', l}(u)$ and $\WFb^{m - 1, l}(Lu)$ are both disjoint from $\mathcal R$, then $\WFb^{m, l}(u)$ is disjoint from $\mathcal R$.
\end{prop}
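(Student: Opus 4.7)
The proof is a positive commutator argument anchored at the radial set $\mathcal R$, following \cite[Proposition~4.4]{Baskin-Vasy-Wunsch:Radiation} and \cite[Proposition~5.1]{HVsemi}. The crucial geometric input, already given above, is that $\Hamb$ is radial at $\mathcal R$: one reads from the displayed formula just after \eqref{eq:bHamvf1} that $\Hamb|_{\mathcal R}$ reduces to $4\gamma^2\,\p_\gamma$ and that $\Hamb\rho/\rho|_{\mathcal R} = -4\gamma$, so regularity at $\mathcal R$ cannot be obtained by transporting it along the Hamilton flow from nearby points; one must work directly at $\mathcal R$.

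My plan is to construct a commutant $A\in\Psib^{m-1/2,\,l}(M)$ whose principal symbol is $a = \chi(q)\,\psi(\rho)\,\tilde a$, where $\tilde a$ carries the homogeneity of order $m-1/2$ on the fibers, $q$ is a smooth function that defines $\mathcal R$ in a neighborhood of $\mathcal R$ in $\Sigma\cap\Sb^*M$ and is strictly monotone along the Hamilton flow leaving $\mathcal R$, and $\psi,\chi$ are cutoffs. Using that $L$ is formally self-adjoint in $L^2_b$ --- which is precisely why the conjugation by $\rho^{(n-2)/2}$ is inserted in \eqref{eq:Ldef} --- one obtains the commutator identity
\[
\langle i[L,A^*A]u,u\rangle_{L^2_b} = 2\im\langle Au,ALu\rangle_{L^2_b},
\]
whose left side has principal symbol $\Hamb(|a|^2\rho^{-2l})$ in the weighted-symbol sense. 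Using the two radial identities above, at $\mathcal R$ this principal symbol equals $4\gamma\cdot(2m+2l-1)\cdot|\tilde a|^2$ up to an overall positive factor: the term $2m-1$ comes from $\Hamb$ acting on the fiber-homogeneity of $|a|^2$ via $\Hamb|_{\mathcal R}=4\gamma^2\p_\gamma$, while the term $2l$ comes from $\Hamb$ acting on the $\rho^{-2l}$ weight. Since $\gamma$ has a definite sign on each connected component $\SNb^*_\pm S_\pm$, this single algebraic identity produces the threshold $m+l=\tfrac{1}{2}$.

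In the high-regularity case $m'+l>\tfrac{1}{2}$ I would take $a$ so that the additional contribution $\Hamb|a|^2$ coming from $\Hamb q$ on a neighborhood of $\mathcal R$ has the same sign as the principal contribution at $\mathcal R$; monotonicity of $m$ along the Hamilton flow leaving $\mathcal R$ is precisely what is required to make this choice consistent on a neighborhood. The commutator identity then bounds $\|Au\|_{L^2_b}^2$ via Cauchy--Schwarz on the right side, modulo error terms from commuting cutoffs which are in turn controlled by the a priori assumption $\WFb^{m',l}(u)\cap\mathcal R=\varnothing$. In the low-regularity case $m+l<\tfrac{1}{2}$ the principal commutator sign at $\mathcal R$ is reversed, so I would instead arrange $\Hamb|a|^2$ to be supported off $\mathcal R$ in the punctured neighborhood where $\WFb^{m,l}(u)$ is disjoint by hypothesis; the indefinite boundary contribution is then absorbed using that a priori assumption, and the resulting estimate again yields $\|Au\|_{L^2_b}<\infty$ and hence $\mathcal R\cap\WFb^{m,l}(u)=\varnothing$.

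The main technical obstacle is making this formal computation rigorous for $u$ of limited a priori regularity. One introduces a regularizer $\Lambda_\ep\in\Psib^{-1}(M)$ with $\Lambda_\ep\to\Id$ in $\Psib^{0}(M)$ as $\ep\to 0$, replaces $A$ by $A_\ep=\Lambda_\ep A$ so that $A_\ep u$ has one extra order of regularity uniformly in $\ep$, verifies that $\{[L,A_\ep^*A_\ep]\}_\ep$ is uniformly bounded in the appropriate symbol class so that the identity holds for $u$, and then passes to the limit $\ep\to 0$ by Fatou. One also checks that the subprincipal symbol of $L$ at $\mathcal R$ contributes no imaginary part that would shift the threshold, which is guaranteed by the self-adjoint form \eqref{eq:Ldef}. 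These steps are carried out verbatim as in \cite{Baskin-Vasy-Wunsch:Radiation} and \cite{HVsemi}, since the b-Hamilton dynamics at $\mathcal R$ recorded in \eqref{eq:bHamvf1} are structurally identical to those analyzed there.
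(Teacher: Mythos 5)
The paper itself gives no proof of Proposition~\ref{thm:propofsing} --- it is stated as ``essentially \cite[Proposition~5.1]{HVsemi}'' and rests on \cite[Proposition~4.4]{Baskin-Vasy-Wunsch:Radiation}. Your positive-commutator sketch is precisely the argument underlying those references: you correctly read off $\Hamb\rho/\rho|_{\mathcal R}=-4\gamma$ and $\Hamb|_{\mathcal R}=4\gamma^2\partial_\gamma$ from the displayed vector field after \eqref{eq:bHamvf1}, and correctly combine the fiber-homogeneity contribution $(2m-1)$ with the weight contribution $2l$ to obtain the factor $4\gamma(2m+2l-1)$ and hence the threshold $m+l=\tfrac12$; the regularization step to handle limited a priori regularity is also the standard one. (Only a minor point: with the $L^2_b$ pairing of \eqref{eq:l2bdef} and $L=L^*$ one gets $\langle i[L,A^*A]u,u\rangle_{L^2_b}=-2\im\langle Au,ALu\rangle_{L^2_b}$, i.e.\ opposite sign to what you wrote, but this is a convention that does not affect the structure of the argument.)
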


%with $m$ the regularity order, $l$ the decay, high
%regularity means $m+l>1/2$, low regularity means $m+l<1/2$. These are
%actually microlocal results, so {\em for fixed l} (this is required because the
%smoothing terms $\Psib^{-\infty}(M)$ in $\Psib(M)$ do not improve
%decay) one can even let $m$ be a function on $\Sb^*M$ and require
%these assumptions only microlocally. 
For elliptic regularity, the
variable order $m$ is completely arbitrary, but for real principal type
estimates it has to be non-increasing in the direction along the
Hamilton flow in which we wish to propagate the estimates.

So now fixing $l$ and taking $m$ satisfying $m+l>1/2$ at exactly one
of $\SNb^*_+S_+$ or $\SNb^*_-S_-$ and $m + l < 1/2$ at the other
(e.g. if $> 1/2$ at $\SNb^*_+S_+$ then $< 1/2$ at $\SNb^*_-S_-$), and
similarly $m+l>1/2$ at either $\SNb^*_-S_+$ or $\SNb^*_+S_-$ and $m +
l < 1/2$ at the other,
% So now fixing $l$ and taking $m$ satisfying $m+l>1/2$, resp.\
% $m+l<1/2$ at exactly one of $\SNb^*_+S_+$, resp.\ $\SNb^*_-S_-$, and
% similarly $m+l>1/2$, resp.\
% $m+l<1/2$ at exactly one of $\SNb^*_-S_+$, resp.\ $\SNb^*_+S_-$, 
we
obtain estimates for $u$ in $\Hb^{m,l}$ in terms of the  $\Hb^{m-1,l}$
norm of $Lu$ plus the $\Hb^{m',l}$ norm of $u$ for
$m'<m$ (a weaker norm).  To make this precise we work with varying order Sobolev
spaces $\Hb^{m,l}(M)$.  These are discussed in detail in
\cite[Appendix]{Baskin-Vasy-Wunsch:Radiation} in the setting of
standard Sobolev spaces (i.e.\ without the ``b''), but since the
development is nearly identical we discuss them only briefly.
Specifically, given a function $m \in C^{\infty}(\Sb^{*}M)$ that is
monotonic along the Hamilton flow, 
$u \in \Hb^{m, l}(M)$ if and only if $A u \in L^{2}_\bl(M)$ for any $A
\in \Psib^{m,l}=\rho^l\Psib^m$, where for $l=0$ membership of
$\Psib^{m,l}$ means that $A$ is the
quantization of a symbol $a \in C^{\infty}(\Tb^{*}M)$ satisfying
(among other standard symbol conditions elaborated in
\cite[Appendix]{Baskin-Vasy-Wunsch:Radiation}) that
$\absv{a(\rho, w, \sigma, \omega) } \le C (1 + \sigma^{2} +
\absv{\omega}^{2})^{m/2}$; here $\rho$ is again a boundary defining
function, and coordinates on $\Tb^{*}M$ are obtained by parametrizing
b-covectors as
\begin{equation*}
  \sigma \frac{d \rho}{\rho} + \omega^{i} dw_{i}.
\end{equation*}
For $l \in \mathbb{R}$, we thus have $\Hb^{m,l}(M) := \rho^{l}\Hb^{m,
  0}(M)$; the norm on these spaces is given by any elliptic
$A\in\Psib^{m,l}$ together with the $\Hb^{m',l}$ norm, where $m'<\inf
m$. (This is only defined up to equivalence of norms, but
that is all we need.) Thus, given any $s, r$ with $s$ monotone along the Hamilton flow and
$r \in \mathbb{R}$, consider the spaces
\begin{equation}
\label{eq:spaces}
\cY^{s,r}=\Hb^{s,r}(M),\ \cX^{s,r}=\{u\in\Hb^{s,r}(M):\ Lu\in\Hb^{s-1,r}(M)\}.
\end{equation}
Then $\cX^{s,r}$ is a Hilbert space with norm
$$
  \norm[\cX^{s,r}]{u} =  \norm[\Hb^{s, r} (X)]{u} + \norm[\Hb^{s
    - 1, r} (X)]{Lu} .
$$
With $m, l$ and $m'$ as above (in particular $m$ is a function),
we have the estimates
\begin{equation}
  \label{eq:propagationestimates}
  \norm[\Hb^{m, l}(X)]{u} \le C ( \norm[\Hb^{m-1, l} (X)]{L u} + \norm[\Hb^{m', l} (X)]{u} ).
\end{equation}
(Here $m' < m$ can be taken to be a function, but this is not important.
It can, for instance, be taken to be an integer $N < \inf m$.)  

Note that the `end' of the bicharacteristics at which
$m+l<1/2$ is the direction in which the estimates are propagated, thus
the choices
\begin{equation}\label{eq:choices}
  \begin{split}
    \pm(m+l-1/2)&<0 \mbox{ at } \SNb^*_+S_+\ \text{(sinks)},\ \mbox{ and }\\
    \pm(m+l-1/2)&<0 \mbox{ at }   \SNb^*_-S_+\ \text{(sources)}
  \end{split}
\end{equation}
determine what (if any) type of inverse we get for $L$; we denote $L$ on
the corresponding spaces by $L_{\pm\pm}$ with the two $\pm$
corresponding to the two $\pm$ as in \eqref{eq:choices}, i.e.\ the first to the
direction of propagation in $\Sigma_+$, the second to that in
$\Sigma_-$, with the signs being positive if the propagation is
towards $S_+$ and negative if the propagation is towards $S_-$. Notice
that by our requirements of $m+l>1/2$ at exactly one end of each
bicharacteristic and $m+l<1/2$ at the other, \eqref{eq:choices} is
{\em equivalent} to
\begin{equation}\label{eq:choices-S-}
  \begin{split}
    \mp(m+l-1/2)&<0 \mbox{ at } \SNb^*_-S_-\ \text{(sources)}\ \mbox{ and }\\
    \mp(m+l-1/2)&<0 \mbox{ at }   \SNb^*_+S_-\ \text{(sinks)}.
  \end{split}
\end{equation}
That is to say, 
\begin{equation*}
L_{\pm \pm} \mbox{ denotes \textit{any} map }  L \colon \cX^{m,l} \lra \cY^{m-1,l}
\end{equation*}
for which the pair $(m, l)$ satisfy \eqref{eq:choices} with the given 
$\pm, \pm$ combination (the first sign in the first inequality and
the second in the second).  (To be clear, the fact that we write the
choices in \eqref{eq:choices} as taking place at $S_+$ is arbitrary,
as we could just as easily make the choices at $S_-$.  Whichever signs
are chosen at $S_+$, the opposite sign is chosen on the other end of
the flow at $S_-$.  For
example, for $L_{+-}$, the condition on $m$ at $S_-$ is that $
-(m+l-1/2)<0$ at $\SNb^*_-S_-$ and $m+l-1/2 <0$ at $\SNb^*_+S_-$.)
Strictly
speaking, $L_{\pm \pm}$ depends on $m$, but in fact we will see that
the choice of $m$ satisfying a particular version of
\eqref{eq:choices} is irrelevant.  Thus we use the notation
\begin{equation}
  \label{eq:spacenotation}
  \cX^{m, l}_{\pm\pm} = \cX^{m,l},\ \cY^{m,l}_{\pm\pm}=\cY^{m,l} \mbox{ for \emph{any} $(m, l)$ satisfying \eqref{eq:choices} }
\end{equation}
with the given $\pm$, $\pm$ combination.  See Figure~\ref{choices}.

 \begin{figure}\label{choices}
    \centering
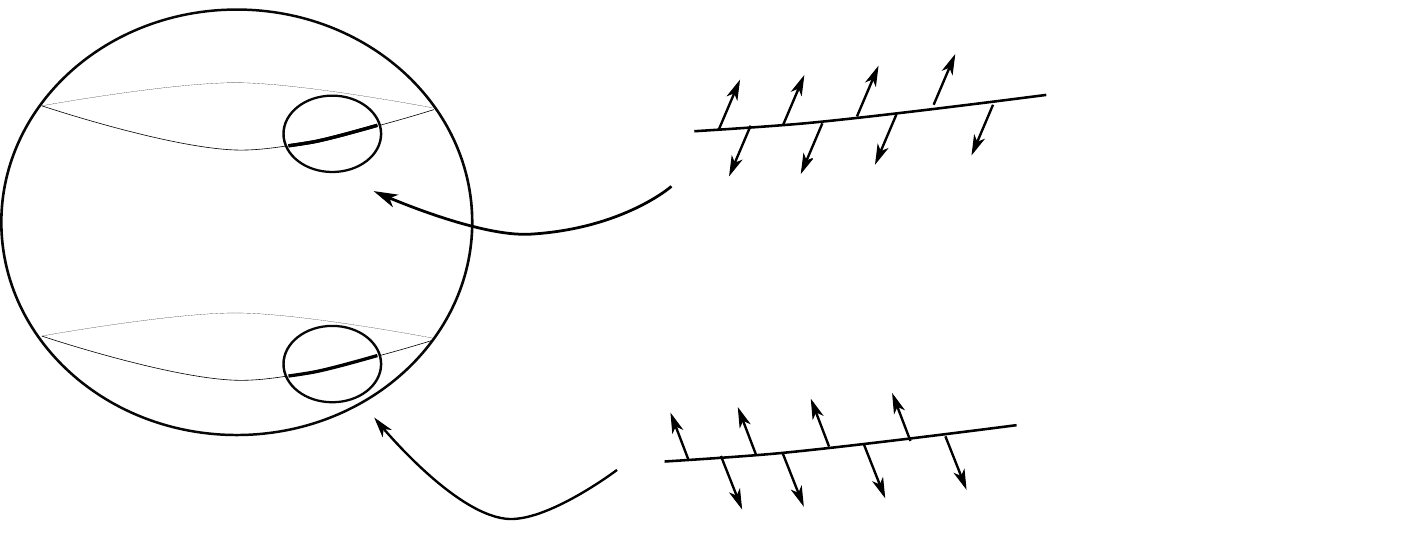
    \caption{For the operator $L_{+-}$, corresponding to the forward Feynman
      problem, high regularity is imposed at the `beginning' (near
      $\SNb^*_- S$) of each
      null bicharacteristic, whether they begin at $S_{+}$ or $S_{-}$.}
      \end{figure}
We call $L_{++}$ the forward wave operator
(corresponding to the forward solution), $L_{--}$ the backward wave
operator, $L_{+-}$ and $L_{-+}$ the Feynman and anti-Feynman wave
operators (sometimes we simply call them both Feynman), with
$L_{+-}$ propagating forward along the Hamilton flow (i.e.\ to the sinks), and $L_{-+}$
backward along the Hamilton flow (i.e.\ to the sources) in {\em both} $\Sigma_+$ and $\Sigma_-$. Here we point out that either of the forward
and backward wave operators propagate estimates in the {\em opposite}
directions relative to the Hamilton flow in $\Sigma_+$, resp.\
$\Sigma_-$; the propagation is in the same direction relative to a
time function in the underlying space $M$.

 \begin{figure}\label{smooth}
    \centering
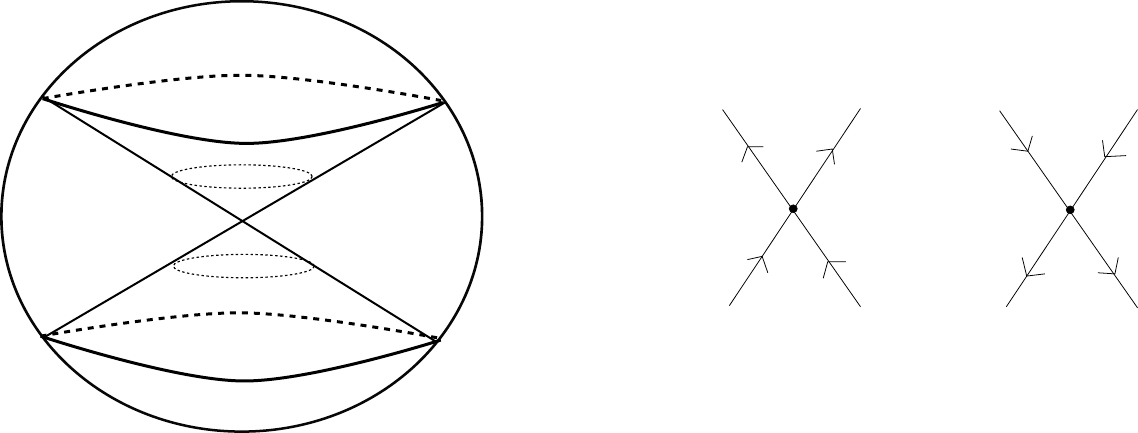
    \caption{The traces (i.e.\ projections from the cotangent space) of
      the light rays passing through an arbitrary
      point $p$.  In the cotangent space these separate into the
      forward and backward pointing null-bicharacteristics, depicted
      heuristically at right.  The operator $L_{+-}$ corresponds to propagation of
      singularities along the flow, and corresponds to the choice of
      $+$ in the first and $-$ in the second inequality in \eqref{eq:choices}}
      \end{figure}

\section{Mapping properties of the Feynman propagator}\label{sec:fred}
The main result of this section is Theorem  \ref{thm:fredholm} below,
which asserts that $L_{\pm \pm}$ are Fredholm maps between appropriate
Hilbert spaces.  As mentioned, the estimates in \eqref{eq:propagationestimates} are {\em
  not} sufficient to conclude that $L$ is Fredholm, 
since the weaker norm does not possess additional decay.  Thus the
main technical result of this section is the following.  For
$(m, l)$ chosen as in \eqref{eq:choices} for any choice of signs
$\pm \pm$, and  \textit{for certain choices of} $l$ (see the theorem),
we have
\begin{equation}
  \label{eq:propagationestimatesimproved}
  \begin{split}
    \norm[\Hb^{m, l}]{u} &\le C ( \norm[\Hb^{m-1, l} ]{L u} +
    \norm[\Hb^{m', l'} ]{u} ) \\
    \norm[\Hb^{1 - m, -l}]{v} &\le C ( \norm[\Hb^{-m, -l} ]{L v} +
    \norm[\Hb^{1 -m'', -l''} ]{v} )
  \end{split}
\end{equation}
where $m' < m < m''$ and $l' < l < l''$.  As explained in the proof of
Theorem \ref{thm:fredholm}, it is
then a simple exercise using the fact that $\Hb^{m, l} \subset
\Hb^{m', l'} $ is compact provided $m' < m$ and $l' < l$ to show that
$L_{\pm \pm}$ is Fredholm on the spaces in the theorem.  

To obtain the improved estimates in
\eqref{eq:propagationestimatesimproved}, as in elliptic problems, we
also need to consider the Mellin
transformed normal operator $\hat N(L)(\sigma)$ of $L$, which is a
family of differential operators on $\pa M$, parameterized by
$\sigma\in\Cx$.  Given an arbitrary $P \in \Diffb^*(M)$ of order $k$, 
\begin{equation}
  \label{eq:boparbitrary}
  P = \sum_{i + \absv{\alpha} \le k} a_{i, \alpha}(\rho, x) (\rho
  \p_{\rho})^{i} \p_{x}^{\alpha},
\end{equation}
the normal operator is locally given by
\begin{equation}
  \label{eq:normal}
  N(P) := \sum_{i + \absv{\alpha} \le k} a_{i, \alpha}(0, x)
  (\rho \p_{\rho})^{i} \p_{x}^{\alpha} \in \Diffb^{k}([0,
  \infty)_{\rho} \times \p M).
\end{equation}
The Mellin transform is defined, initially on compactly supported smooth
functions $u \in C^{\infty}(\mathbb{R}^{+}; \mathbb{C})$, by
\begin{equation*}
  \cM(u)(\sigma) = \hat u(\sigma) = \int_{0}^{\infty} \rho^{-i\sigma} u(\rho) \frac{d\rho}{\rho}.
\end{equation*}
Note that $\cM u(\sigma) = \mathcal{F} v (\sigma)$ where $\mathcal{F}$
is the Fourier transform and $v(x) = u(e^{x})$.  Writing complex
numbers $\sigma = \xi + i \eta$, it extends to a unitary isomorphism
\begin{equation}\label{eq:mellindef}
  \cM \colon \rho^{l}L^{2}(\mathbb{R}^{+}, d\rho/\rho) \lra L^{2}(\set{\im
    \sigma = - l } , d\xi).
\end{equation}
The inverse map of \eqref{eq:mellindef} is given by
\begin{equation}
  \label{eq:1}
  \cM_{l}^{-1} f(\rho) = \frac{1}{2\pi} \int_{\set{\im \sigma = -l}} \rho^{i \sigma}
  f(\rho) d\sigma.
\end{equation}
Moreover, conjugating $N(P)$ by the Mellin transform in $\rho$ gives
\begin{equation}
  \label{eq:reducednormal}
  \hat{N}(P)(\sigma) = \sum_{i + \absv{\alpha} \le k} a_{i, \alpha}(0, x)
  \sigma^{i} \p_{x}^{\alpha}
\end{equation}

We digress briefly to describe following typical example of a
b-pseudodifferential operator which is elliptic at a point $p \in \Tb
M$ lying over the boundary, and how it relates to the b-wavefront
set discussed above. If $p \in \Tb^*M$ lies over the boundary, then in coordinates $(\rho, y, \xi, \eta)$
on $\Tb M$ where $\rho$ is a boundary defining function, $\xi$ is dual to
$\rho$ and $\eta$ to $y$, we have $p = (0, y_{0}, \xi_{0}, \eta_{0})$.  We
obtain a b-pseudodifferential operator that is elliptic at $p$ by
choosing a cutoff function $\chi(\rho, y)$ with
$\chi(0,y_0) \neq 0$ and such that $\chi$ is supported in $\set{\rho
  < \epsilon}$ for small $\epsilon$, in particular small enough so
that $\set{\rho < \epsilon} \simeq \p M \times [0, \epsilon)$. Let
$\phi(\xi,\eta)$ be a symbol, homogeneous near infinity, non-zero in
the cone given by positive multiples of
$(\xi_0,\eta_0)$. With $\mathcal{F}$ the Fourier transform in the $y$
variables, we define
\begin{equation}
  \label{eq:boperator}
  \begin{split}
  A u &:= \mathcal{M}^{-1}_{0} \mathcal{F}^{-1} \phi \mathcal{F}\mathcal{M} (\chi u).
\end{split}
\end{equation}
Then $A \in \Psib^{0, 0}(M)$, and the b-principal symbol of $A$ at
order and weight $(m, l) = (0, 0)$ is:
\begin{equation}
  \label{eq:bsymbol}
  \sigma_{0, 0}(A) \colon \Tb^* M \lra \mathbb{C}, \quad \sigma_{0,
    0}(A) = \chi \phi
\end{equation}
where we think of $\chi \phi= \chi(\rho, y) \phi(\xi,\eta)$ as a function on $\Tb^* M$, which
near the boundary and with our coordinates is diffeomorphic to $\{
\rho < \epsilon, \xi \} \times T^* \p M$, supported on the
neighborhood of $(0, y_0, \xi_0, \eta_0)$ under consideration.
In fact, such operators can be used to neatly describe the b-wavefront
sets of distributions.  Given a distribution $u \in \Hb^{-N, l}(M)$,
then for $m, l \in \mathbb{R}$,
\begin{equation}\label{eq:goodwavefrontdef}
(0, y_0, \xi_0, \eta_0) \not \in \WFb^{m, l}(u) \iff \exists \chi, \phi
\mbox{ with } A \rho^{- l} u  \in \Hb^{m, 0}(M),
\end{equation}
where $A$ is formed from $\chi$ and $\phi$ as in \eqref{eq:boperator}.
(The $\rho^{-l}$ in the front is there so that the inverse Mellin
transform $\mathcal{M}_0^{-1}$ of the resulting object is well defined.)

The structure and properties of $\hat{N}(L)(\sigma)$ are discussed at length in
\cite{HVsemi}.  To briefly summarize, for each $\sigma$,
$\hat{N}(L)(\sigma)$ is a second order differential operator which is
elliptic in the interior of the regions $C_{\pm}$, and hyperbolic on
their complement $\p M \setminus (C_+ \cup C_-)$ whose
characteristic set splits into two components $\wt{\Sigma}_{\pm}$,
each of which contains a Lagrangian submanifold of radial points lying
over $S = \p C_+ \cup \p C_-$,  and which split the conormal bundle
$N^*S$ (in $\p M$) into four components
$N^*_{\pm} S_{\pm}$ which are sources ($N^*_-S$) and sinks
($N^*_+S$) for the Hamilton flow.

The estimates corresponding to those of the previous section allow one
to conclude that $\hat N(L)(\sigma)$ is Fredholm
for each $\sigma$ on the induced Sobolev spaces, where $\im\sigma=-l$,
i.e.\ provided 
\begin{equation}\label{eq:boundarychoices}
  \begin{split}
    \pm(m- \im \sigma-1/2)&<0  \mbox{ at } N^*_+S_{+} , \mbox{ and }\\
    \pm(m- \im \sigma-1/2)&<0 \mbox{ at }    N^*_-S_{+}.
  \end{split}
\end{equation}
More precisely here $m$ is replaced by $m|_{S^* \pa M}$, which is a
well-defined subbundle of $\Sb^*_{\pa M} M$. (Indeed, the map $\Tb M
\lra TM$, restricts to a surjection $\Tb_{\p M} M
\lra T\p M$, and the dual injection $T^*\p M \lra \Tb_{\p M}^* M$
gives the desired inclusion of $S^* \p M$ after modding out by the
action of $\mathbb{R}_+$).   Thinking of $\sigma$ as
the b-dual variable of $\rho$ (which thus depends on the choice of
$\frac{d\rho}{\rho}$ in a neighborhood of $\p M$ but is invariant at $\pa M$), covectors are of the form
$\beta+\sigma\frac{d\rho}{\rho}$, $\beta\in T^*M$, thus (identifying
functions on $\Sb^*M$ with homogeneous degree $0$ functions on
$\Tb^*M\setminus o$, where $o$ denotes the zero section) for each
$\sigma\neq 0$ one actually has a function on $T^* \pa M$. One thus
obtains a family of large parameter norms (as described in the theorem
just below), analogous to the usual
semiclassical norms: for $\sigma$ in a compact set, the norms are
uniformly equivalent to each other, but as $\sigma\to\infty$ this
ceases to be the case.  In fact, we have the following applications of
\cite[Proposition
5.2]{Baskin-Vasy-Wunsch:Radiation}, \cite[Theorem 2.14]{VD2013}.

\begin{thm}
  \label{thm:indexzeroboundary}(Statement of \cite[Theorem
  2.14]{VD2013} in our setting.)
In strips in which $\mathrm{Im} \,\sigma$ is bounded, $\hat
N(L)(\sigma)^{-1}$ has finitely many poles.  
\end{thm}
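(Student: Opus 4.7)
My plan is to prove this as a standard consequence of analytic Fredholm theory together with high-energy (semiclassical) invertibility of the Mellin-transformed family $\hat N(L)(\sigma)$ for $|\mathrm{Re}\,\sigma|$ large in the given strip.

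First, I would verify that $\hat N(L)(\sigma)$ is an analytic family of Fredholm operators between appropriate Sobolev spaces on $\pa M$. Concretely, one fixes a variable order function $m$ satisfying \eqref{eq:boundarychoices} on $S^*\pa M$ for the given value of $l = -\mathrm{Im}\,\sigma$ in the strip (this can be chosen uniformly by taking $l$ in a compact set and $m$ depending only on weak inequalities that persist under small perturbations of $l$); then the propagation, radial point, and elliptic estimates on $\pa M$ for $\hat N(L)(\sigma)$ and its formal adjoint, exactly parallel to Proposition~\ref{thm:propofsing}, yield that
\begin{equation*}
\hat N(L)(\sigma)\colon \{u\in H^{m}(\pa M):\hat N(L)(\sigma)u\in H^{m-1}(\pa M)\}\lra H^{m-1}(\pa M)
\end{equation*}
is Fredholm for each $\sigma$. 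Analyticity of the family in $\sigma$ follows because $\hat N(L)(\sigma)$ is polynomial in $\sigma$ by \eqref{eq:reducednormal}.

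Next, I would establish invertibility of $\hat N(L)(\sigma)$ for at least one $\sigma$ in the strip; this is the heart of the matter and is obtained via a semiclassical rescaling. Writing $\sigma = h^{-1}z$ with $|\mathrm{Im}\,z|=O(h)$ (so that $\mathrm{Im}\,\sigma$ stays bounded) and $|\mathrm{Re}\,z|$ of order one, one rescales $\hat N(L)(\sigma)$ into a semiclassical family $h^2\hat N(L)(h^{-1}z)\in\Psih^2(\pa M)$ with principal symbol a non-degenerate second-order polynomial in $(z,\eta)$ inheriting the source/sink radial point structure at $N^*S_\pm$ from the parent operator. The semiclassical versions of elliptic estimates, real principal type propagation, and the radial point estimates (whose semiclassical form is exactly the content needed to promote the propagation results of \cite{VD2013,Baskin-Vasy-Wunsch:Radiation} to the semiclassical calculus) then give an estimate of the form
\begin{equation*}
\|u\|_{H_\semi^{m}(\pa M)}\leq C\|h^2\hat N(L)(h^{-1}z)u\|_{H_\semi^{m-1}(\pa M)}
\end{equation*}
without any error term on the right, and similarly for the adjoint, for $h<h_0$ sufficiently small. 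This yields invertibility of $\hat N(L)(\sigma)$ for $|\mathrm{Re}\,\sigma|\geq h_0^{-1}C_0$ in the strip.

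With these two ingredients in place, analytic Fredholm theory (applied to the compact part of the strip $\{|\mathrm{Re}\,\sigma|\leq h_0^{-1}C_0\}$, where $\hat N(L)(\sigma)^{-1}$ is a meromorphic family of finite-rank-residue operators because it is invertible for at least one $\sigma$ by the semiclassical argument) gives that the poles of $\hat N(L)(\sigma)^{-1}$ inside the strip form a discrete set. Since outside the compact set $\{|\mathrm{Re}\,\sigma|\leq h_0^{-1}C_0\}$ in the strip there are \emph{no} poles by semiclassical invertibility, the total number of poles in the strip is finite. The main obstacle is the semiclassical invertibility step: one has to verify that the threshold conditions \eqref{eq:boundarychoices} (which involve $\mathrm{Im}\,\sigma$) are compatible with the semiclassical order function $m$ chosen uniformly on the strip, and that the radial point estimates propagate in the correct direction with the right threshold in the semiclassical calculus; everything else is then a routine application of the analytic Fredholm theorem.
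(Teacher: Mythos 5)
Your proposal is correct and follows essentially the same route as the paper: establish that $\hat N(L)(\sigma)$ is an analytic Fredholm family on suitable variable-order Sobolev spaces on $\pa M$, prove invertibility for large $|\mathrm{Re}\,\sigma|$ in the strip via the semiclassical rescaling $h=|\sigma|^{-1}$, $z=\sigma/|\sigma|$, $P_{h,z}=h^2\hat N(L)(h^{-1}z)$ with semiclassical radial point and propagation estimates, and then conclude by the analytic Fredholm theorem. The only cosmetic difference is that you write the absorbed semiclassical estimate directly, whereas the paper first records the version with an $h\|u\|_{H^{-N}_h}$ error term and then absorbs it for small $h$.
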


\begin{proof}
Since this is the statement of \cite[Theorem
  2.14]{VD2013}, with the underlying analysis being carried out in
  \cite[Section~2]{VD2013}, we only give a brief sketch.

As we will see momentarily, our family $\hat N(L)(\sigma)$ forms an analytic
  Fredholm family
  \begin{equation}
    \label{eq:normalfamily}
    \hat N(L)(\sigma) \colon \cX^{m}(\p M) \lra \cY^{m-1}(\p M),
  \end{equation}
  where $\cX^{m}(\p M) = \{ \phi : \phi \in H^{m}(\p M), \hat
    N(L)(\sigma) \phi \in H^{m-1}(\p M)\}$, and $\cY^{m}(\p M) = \set{
    \phi : \phi \in H^{m}(\p M)}$ provided that $\sigma$ and $m$ are
  related as in \eqref{eq:boundarychoices}, whose inverse is thus
  meromorphic if $\hat N(L)(\sigma)$ is invertible for at least one
  $\sigma=\sigma_0$. For bounded $\im \sigma$, we can see that
  $\hat N(L)(\sigma)$ is invertible for sufficiently large
  $\re \sigma$. This follows exactly as in
  \cite[Proposition 5.2]{Baskin-Vasy-Wunsch:Radiation}, which in turn
  follows directly from \cite[Theorem
  2.14]{VD2013}. The key to this is to consider the semiclassical problem
  obtained by letting $h = |\sigma|^{-1}$ and $z =
  \frac{\sigma}{|\sigma|}$, and, letting $P_\sigma = \hat N
  (L)(\sigma)$, studying
$$P_{h, z} := h^2 P_{h^{-1} z} \in \Psi^2_h(\partial M),$$
where $\Psi^{2}_{h}(\partial M)$ denotes the space of semiclassical
pseudodifferential operators of order $2$ on $\partial M$.
This semiclassical family on $\partial M$ has Lagrangian submanifolds of radial
points (coming from the b-radial points of $L$), and, as described in
\cite[Section~2.8]{VD2013}, the standard positive commutator
proof of propagation of singularities around Lagrangian submanifolds
of radial points carries over to the semiclassical regime without
difficulty. This allows us to obtain estimates
\begin{equation*}
  \begin{split}
    \| u \|_{H^m_h} &\leq C (h^{-1} \|P_{h, z} u \|_{H^{m - 1}_h} +
    h\|u\|_{H^{-N}_h}) \\
    \| v \|_{H^{1 - m}_h} &\leq C (h^{-1} \|P^*_{h, z} v \|_{H^{-m}_h} +
    h\|u\|_{H^{-N}_h})
  \end{split}
\end{equation*}
for arbitrarily large $N$, within strips of
bounded $\im \sigma$. 
As described at the beginning of the proof of Theorem
\ref{thm:fredholm} below, these estimates imply that $N(L)(\sigma)$
mapping in \eqref{eq:normalfamily} is Fredholm.  
Hence, for sufficiently small $h$, the
$-N$ norm can be absorbed into the left hand side, giving
by the first inequality injectivity and by the second surjectivity.
(This point is also elaborated in Theorem \ref{thm:fredholm}.)
Note that the statement of \cite[Proposition
5.2]{Baskin-Vasy-Wunsch:Radiation} is for only the
forward and backward propagators, as the results come from microlocal
positive commutator estimates which are sufficiently microlocal, the
conclusion, with the same proof, also holds for the Feynman operators.\end{proof}

\begin{rem}
  We point out that analogues of
the estimates used so far go through if $L$ has sufficiently weak
trapping with slight modifications: so-called b-normally hyperbolic
trapping, as introduced in \cite{Hintz-Vasy:Normally}, gives essentially the same estimates for
$\sigma$ real and large. (However, we do not study this here.)
\end{rem}

Following \cite{HVsemi} we will prove the following Fredholm mapping
result for $L$, mapping between spaces $\cX^{m,l}$ and $\cY^{m-1,l}$
which satisfy not only the threshold properties in \eqref{eq:choices},
but furthermore that in the high regularity regime we assume a full
order more Sobolev regularity.
Specifically, we will assume that $(m, l)$ are chosen as in \eqref{eq:choices} for any
choices $\pm, \pm$, with the additional property that when the $-$ sign is valid on the
left hand side, i.e.\ $-(m+l-1/2)<0$, then in fact $-(m+l-3/2)<0$ as
well, and thus the
complete set of options for $m, l$ shall be
\begin{equation}
    \label{eq:full-reg-list}
\begin{tabular}{|c||c|c|c|c|} \hline 
Region & Feynman & Anti-Feynman & Retarded & Advanced \\ \hline\hline
$\SNb^*_+S_+$ & 
	$m + l < 1/2$ & 
	$m + l > 3/2$ & $m + l < 1/2$ & $m + l > 3/2$ \\ \hline
$\SNb^*_-S_+$ & 
	$m + l > 3/2$ &
	$m + l < 1/2$ & $m + l < 1/2$ & $m + l > 3/2$\\ \hline
$\SNb^*_+S_-$ &
	$m + l < 1/2$ &
	$m + l > 3/2$ & $m + l > 3/2$ & $m + l < 1/2$\\ \hline
$\SNb^*_-S_-$ &
	$m + l > 3/2$ & 
	$m + l < 1/2$  & $m + l > 3/2$ & $m + l < 1/2$\\ \hline
\end{tabular}
\end{equation}
\begin{thm}\label{thm:fredholm}
Assume that $(m, l)$ satisfy \eqref{eq:choices}, and in addition the
properties of the previous paragraph, i.e.\ one of the four columns in
\eqref{eq:full-reg-list}.  Moreover, assume that, subject to this choice,
there are no poles of $\hat N(L)(\sigma)^{-1}$ on the line
$\im\sigma=-l$, (where $\hat N(L)$ maps as in \eqref{eq:normalfamily}
with $m = m \rvert_{T^* \p M}$.)  Then $L$ is Fredholm as a map
\begin{equation}\label{eq;fredholmmap}
L:\cX^{m,l}\to\cY^{m-1,l}.
\end{equation}
% In other words, if $(m, l)$ are chosen to correspond to the signs $\pm
% \pm$ in \eqref{eq:choices}, then $L_{\pm \pm}$ is a Fredholm map for
% $l$ satisfying the given condition on the poles.
\end{thm}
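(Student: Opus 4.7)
The strategy is to establish the pair of improved estimates \eqref{eq:propagationestimatesimproved}---one for $L$, one for the formal adjoint $L^* = L$ acting on the dually-ordered b-Sobolev space $\Hb^{1-m,-l}(M)$---and then deduce Fredholmness from a standard compactness argument. The propagation estimates \eqref{eq:propagationestimates} already bound $\|u\|_{\cX^{m,l}}$ by $\|Lu\|_{\Hb^{m-1,l}(M)}$ plus the same-weight remainder $\|u\|_{\Hb^{m',l}(M)}$; the core task is therefore to replace this remainder with one having strictly better decay, $\|u\|_{\Hb^{m',l'}(M)}$ with $l' < l$, after which the inclusion of $\cX^{m,l}$ into the error space becomes compact.

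The improvement comes from normal-operator theory. Choose a boundary cutoff $\chi \in \CI(M)$ supported in a collar $[0, \epsilon)_\rho \times \pa M$, so that $u = \chi u + (1 - \chi) u$; the interior piece is handled by real principal type propagation inside $M^\circ$, where Sobolev inclusions are locally compact. Near $\pa M$, write
\begin{equation*}
L(\chi u) = N(L)(\chi u) + (L - N(L))(\chi u) + [L, \chi] u.
\end{equation*}
By definition $L - N(L) \in \rho \Diffb^2(M)$, so $(L - N(L))(\chi u) \in \Hb^{m-2, l+1}(M)$ has strictly better weight, and $[L, \chi]$ is supported in a compact subset of $M^\circ$. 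It therefore suffices to invert $N(L)$ on the collar as a map $\Hb^{m,l} \to \Hb^{m-1,l}$. Via the Mellin transform \eqref{eq:mellindef} this amounts to inverting $\hat N(L)(\sigma)$ uniformly on $\{\im \sigma = -l\}$ with polynomial bounds in $|\re \sigma|$. The no-pole hypothesis gives pointwise invertibility on compact subsets of the line, and the semiclassical rescaling $h^2 \hat N(L)(h^{-1} z)$, $h = |\sigma|^{-1}$, employed in the proof of Theorem~\ref{thm:indexzeroboundary}, supplies the required uniform bounds as $|\re \sigma| \to \infty$ through semiclassical radial-point estimates at the radial sets governed by \eqref{eq:boundarychoices}. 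Plancherel for $\cM$ then promotes these parameter-dependent bounds into a bounded inverse of $N(L)$ on weighted b-Sobolev spaces, yielding the first inequality in \eqref{eq:propagationestimatesimproved}.

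The identical argument applied to $L^* = L$ on $\Hb^{1-m, -l}(M)$ produces the second inequality, with the shifted orders $(1-m, -l)$ satisfying the sign conditions of Proposition~\ref{thm:propofsing} for the reversed Hamilton-flow direction at each of the four radial components $\SNb^*_\pm S_\pm$; the "full order" gap in \eqref{eq:full-reg-list}---pushing the high-regularity threshold from $m + l > 1/2$ up to $m + l > 3/2$---provides the margin required so that both the normal-operator step and the radial-point step remain applicable to $L^*$ with the same pair $(m, l)$. With both inequalities in hand, Fredholmness follows by the textbook compactness argument: an estimate $\|u\|_X \le C(\|Lu\|_Y + \|u\|_Z)$ with $X \hookrightarrow Z$ compact forces $\Ker L$ to be finite-dimensional and $\Range L$ to be closed in $Y$, while the dual estimate identifies the annihilator of $\Range L$, via \eqref{eq:l2bduality}, with $\Ker L^*$, likewise finite-dimensional. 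I expect the main obstacle to be the uniform large-$|\re \sigma|$ control of $\hat N(L)(\sigma)^{-1}$ in the \emph{variable-order} setting, which requires threading the semiclassical radial-point estimates on $\pa M$ through the variable-order b-calculus of \cite[Appendix]{Baskin-Vasy-Wunsch:Radiation}; once that is established, the rest of the argument is relatively mechanical.
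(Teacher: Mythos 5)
Your proposal follows essentially the same approach as the paper: both proofs establish the improved estimates \eqref{eq:propagationestimatesimproved} via a boundary cutoff $\chi$, invert the normal operator $N(L)$ through the Mellin-transformed family $\hat N(L)(\sigma)$ on the line $\im\sigma=-l$, absorb the $(L-N(L))$ error using the extra factor of $\rho$ (i.e.\ $L-N(L)\in\rho\Diffb^2(M)$), use the full-order gap in \eqref{eq:full-reg-list} to guarantee that the leftover term has both lower order and lower weight, and then run the standard compactness/duality argument for Fredholmness, including the dual estimate for $L^*$.

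One algebraic slip worth flagging: the displayed identity
\begin{equation*}
L(\chi u) = N(L)(\chi u) + (L - N(L))(\chi u) + [L,\chi]u
\end{equation*}
is false as written, since $L = N(L) + (L-N(L))$ already makes the first two terms on the right sum to $L(\chi u)$; the extra commutator term should not be there. What you want is the rearrangement
\begin{equation*}
N(L)(\chi u) = \chi\, Lu + [L,\chi]u - (L-N(L))(\chi u),
\end{equation*}
which is the form the estimate actually uses (compare the paper's $N(L)\chi = [N(L),\chi] + \chi(N(L)-L) + \chi L$). With that correction, the remaining bookkeeping---the commutator supported in $M^\circ$ so its weight can be lowered freely, the $\rho$-gain from $L-N(L)$, and the requirement $m'+1<m$ forced by the $3/2$ threshold---goes through exactly as you describe. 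The rest of the proposal is correct.
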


\begin{rem}
Note that the Fredholm property is stable under b-perturbations of 
$L$, in $\Psib^{2,0}$, meaning for operators of the form $L + P$ where
$P \in \Psib^{2, 0}$ is small. For $P$ in $\Diffb^2$ this means that
$P$ also has an expression as in \eqref{eq:b-diff-op-form} but with
coefficient functions $a_{j, \alpha}$ which are small in $C^\infty$.  In particular, any perturbation of a Lorentzian 
scattering metric in the sense of sc-metrics gives rise to a similarly 
Fredholm problem.  Indeed, for such a perturbation, the poles of the Feynman resolvent family for the normal operator of the
perturbation of $L$ are themselves perturbed in a continuous fashion from the
poles of those of the normal operator of $L$.
Since obtaining a Fredholm problem at a weight corresponds to having
a line $\{ \im \zeta = c \}$ with no poles, the result follows.
\end{rem}

\begin{rem}\label{rem:hadamard}
Microlocal elliptic regularity states that
$\WFb^{m_0,l}(u)\setminus\Sigma\subset\WFb^{m_0-2,l}(f)$ if $Lu=f$ and
$u\in\Hb^{\tilde m,l}$ for some $\tilde m$ (i.e.\ $u\in\Hb^{-\infty,l}$).
  Propagation of singularities, in the sense of $\WFb$, implies that if
  $L u = f$, where $u\in\Hb^{m,l}$, $f\in\Hb^{m-1,l}$ for some $m$, $l$ satisfying
  \eqref{eq:choices} for the $+-$ signs, then a point
  $\alpha\in\Sigma\setminus(\Sb N_+^*S_+\cup\Sb N_+^*S_-)$ (i.e.\
  $\alpha$ is not at the radial sink, at which the function spaces have
  low regularity) is not in $\WFb^{m_0,l}(u)$ provided that
  the backward bicharacteristic through $\alpha$ is disjoint from
  $\WFb^{m_0-1,l}(f)$ and provided $\WFb^{m_0-1,l}(f)$ is disjoint
  from $\Sb N^*_- S_+\cup\Sb N^*_-S_-$, i.e.\ the radial sources at
  which high regularity is imposed. In particular, if
$f$ is compactly supported in $M^\circ$, then
$\WFb^{m_0,l}(u)\setminus (\Sb N_+^*S_+\cup\Sb N_+^*S_-)$
  contained in the order $m_0-1$ wavefront set of $f$ together with the flowout of
  the intersection of the wavefront set of $f$ with the characteristic
  set $\Sigma$ of $L$. Restricted to the interior, where $\WFb$ is
  just the standard wave front set $\WF(u)\subset S^* M^\circ$, this states that
  \begin{equation}
    \label{eq:hadamard-for-single-space}
    \WF^{m_0}(u)\subset \WF^{m_0-1}(f) \bigcup (\cup_{t \ge 0} \Phi_t(\WF^{m_0-1}(f) \cap \Sigma))
  \end{equation}
where $\Phi_t$ is the time $t$ Hamilton flow (on the cosphere bundle).
In particular this applies to $u=L_{+-}^{-1}f$ when $L_{+-}$ is
actually invertible, so within the characteristic set the wave front set of $u$ is a subset of the
forward flowout of that of $f$.

There are analogous conclusions for the other choices of signs in
\eqref{eq:choices} with the wavefront sets of solutions
contained in the direction of the Hamilton flowout
of the wavefront set of $f$ corresponding to the choice of direction on each
component of the characteristic set. In particular, for the $-+$ sign, $\cup_{t \ge 0} \Phi_t(\WF^{m_0-1}(f) \cap
\Sigma)$ is replaced by $\cup_{t \le 0} \Phi_t(\WF^{m_0-1}(f) \cap
\Sigma)$.

Further, it is not hard to show that, provided $L_{\pm\pm}^{-1}$ exists, the
Schwartz kernel $K_{\pm\pm}$ of $L_{\pm\pm}^{-1}$ satisfies a corresponding wave
front set conclusion in $M^\circ\times M^\circ$. For instance, for $L_{+-}^{-1}$,
$\WF(K_{+-})\setminus N^*\mathrm{diag}$ is contained in the
forward flowout of $N^*\mathrm{diag}$, the conormal
bundle of the diagonal, with respect to the Hamilton vector field in the left factor.
\end{rem}

\begin{proof}
We wish to obtain the improvements to
  \eqref{eq:propagationestimates} in the
  estimates in
  \eqref{eq:propagationestimatesimproved}.  These estimates imply that
  the map in \eqref{eq;fredholmmap} is Fredholm.  Indeed, using the
  fact that the containment $\Hb^{m, l} \subset
\Hb^{m', l'} $ is compact provided $m' < m$ and $l' < l$, the first
  estimate in \eqref{eq:propagationestimatesimproved} shows
that the map has closed range and finite dimensional kernel.  Assuming
that $v$ lies in $(\mbox{image}(L:\cX^{m,l}\to\cY^{m-1,l}))^\perp,$
where the orthogonal complement is taken with respect to the $L^2_b$
pairing (see \eqref{eq:l2bdef}) between $\Hb^{1 - m, -l}$ and $\cY^{m - 1, l} = \Hb^{m - 1,
  l}$, it follows that $Lv = 0$ and thus the second estimate in
\eqref{eq:propagationestimatesimproved} shows that the space of such
$v$ is finite dimensional.

Thus we need only obtain the improved estimate in
\eqref{eq:propagationestimatesimproved}.    The proof is essentially the proof of \cite[Proposition
  2.3]{HVsemi}, and we recall it briefly for the convenience of the
  reader.  The condition on
  $\hat N(L)(\sigma)^{-1}$ on the line $\im(\sigma) = -l$ implies by
  taking the inverse Mellin transform that the map
  \begin{equation}
    \label{eq:15}
    N(L) \colon \cX^{m',l}(\p M \times \mathbb{R}_+) \lra \cY^{m' -1,
      l}(\p M \times \mathbb{R}_+)
  \end{equation}
is bounded and invertible, \textit{where $m' \colon T^* \p M \lra
  \mathbb{R}$ is any function satisfying the constraints in
  \eqref{eq:choices} that $m$ satisfies}.  Thus there is a $C$ such that 
\begin{equation*}
  \| u \|_{\Hb^{m', l}(\p M \times \mathbb{R}_+)} \le C   \| N(L) u
  \|_{\Hb^{m' - 1, l}(\p M \times \mathbb{R}_+)},
\end{equation*}
and we may furthermore choose $m'$ so that it satisfies the constraint
\textit{and} that $m' < m$.  Choosing a cutoff function $\chi$ that is supported near $\p M$ and
equal to $1$ in a neighborhood thereof, we have (with a constant whose
value changes from line to line)
\begin{equation*}
  \begin{split}
    \| u \|_{\Hb^{m, l}(M)} &\le C (\| L u
    \|_{\Hb^{m - 1, l}(M)} + \| u \|_{\Hb^{m', l}(M)}) \\
 &\le C (\| L u
    \|_{\Hb^{m - 1, l}(M)} + \| \chi u \|_{\Hb^{m', l}(M)} + \| (1 -
    \chi) u \|_{\Hb^{m', l}(M)}) \\
 &\le C (\| L u
    \|_{\Hb^{m - 1, l}(M)} + \| N(L) \chi u \|_{\Hb^{m' - 1, l}(M)} + \| u
    \|_{\Hb^{m', l'}(M)}).
  \end{split}
\end{equation*}
Now, writing $N(L) \chi = [N(L), \chi] + \chi(N(L) - L) + \chi L$, and
using $N(L) - L = \rho P$ where $P \in \Diffb^2(M)$, and
$[N(L), \chi] = \rho P'$ where $P' \in \Diffb^1(M)$, note that
\begin{equation*}
      \| u \|_{\Hb^{m, l}(M)}  \le C (\| L u
    \|_{\Hb^{m - 1, l}(M)} + \|  u \|_{\Hb^{m' + 1, l'}(M)}),
\end{equation*}
so to obtain the improved estimate in
\eqref{eq:propagationestimatesimproved} we need only make sure that
$m' + 1 < m$ which can be done due to the $-(m+l-3/2)<0$ assumption at
appropriate radial sets.  
%\jgr{seems like it puts a condition on $m$}
\end{proof}

It is important to remark here that $L_{\pm\pm}$ are rather different
operators for different choices of $\pm\pm$, on the other hand the
choice of $m,l$ {\em satisfying the constraints} corresponding to a
given $\pm$ (i.e.\ a given one of the two constraints) matter much
less.  For example, the invertibility of the normal operator
$\hat N(L)(\sigma)$ is {\em independent} of these additional choices, 
so long as the $m$ satisfies that $m-\im\sigma-1/2$ has the correct
sign at the
relevant locations and has the correct monotonicity.  In the Feynman
case see Proposition
\ref{thm:boundaryscaling} below; the regularity theory shows that the
potential kernel of the operator,
as well as of the adjoint, is indeed independent of these choices. The
choice of $l$ does affect the index of $L$, however as a Fredholm
operator, as we show for the Feynman operator in Theorem
\ref{thm:constantindex}.

% Indeed, the change in index between values of $l$ is given
% by Melrose's relative index theorem.
% \begin{thm}[Melrose]\label{thm:relativeindex}
%   Given $l_{i}$, $i = 1, 2$ such that $l_{i} \not \in \im \specb(L_{\pm \pm})$, let
%   $\ind_{\pm \pm}(l_{i})$ denote the index of the map in
%   \eqref{eq;fredholmmap}.  Then if $l_{1} > l_{2}$,
%   \begin{equation}
%     \label{eq:relativeindex}
%     \ind_{\pm \pm}(l_{1}) - \ind_{\pm \pm}(l_{2}) = \sum_{z_{j} \in
%       \specb(L_{\pm \pm})} \rank \nul \hat N(L)(z_{j})
%   \end{equation}
% \end{thm}
% \begin{proof}
%   As we outline, the proof of Melrose's relative index theorem
% \cite[Chapter~6]{Melrose:Atiyah} goes through without any essential
% change.  Boundary pairing.
% \end{proof}

We also note that the adjoint of $L_{++}$ is $L_{--}$, while
that of $L_{+-}$ is $L_{-+}$, so one should {\em not} think of $L$ as
a self-adjoint operator even though it is of course formally self-adjoint.

The standard setting in which $\Box_g$ is considered is that of
evolutionary problems, in which the forward or backward propagator
$L_{++}^{-1}$ and $L_{--}^{-1}$ are considered. On the other hand, the
Feynman propagator arises for instance by Wick-rotating suitable Riemannian
problems. Here we are interested in the Feynman propagator, but we
first explain the more studied forward and backward problems in order
to be able to contrast these.

For the forward or backward problems the usual tools of evolutionary
problems, namely standard energy estimates, can be used to compute the
index in some cases, as discussed in \cite[Theorem~5.2]{HVsemi}. Since
in this paper we focus on the Feynman propagator, we shall be brief. Thus, recall first from
\cite[Section~3.2.1]{Baskin-Vasy-Wunsch:Radiation} that the Lorentzian
scattering metric $g$ in fact induces (even) asymptotically hyperbolic
metrics $k_+$, resp.\ $k_-$, on $C_+$, resp.\ $C_-$, with $S_+$, resp.\ $S_-$ being conformal
infinity for these. Similarly, an asymptotically de Sitter Lorentzian
metric is induced on $C_0$, for which $S_+$ is future and $S_-$ is
past conformal infinity. One thus can consider the spectral family
$\Delta_{C_\pm}-\frac{(n-2)^2}{4}-\sigma^2$, as well as its inverse
$\cR_{C_\pm}(\sigma)$ for $\im\sigma\gg 0$, which continues
meromorphically to the complex plane (in $\sigma$).
Then, as discussed in \cite[Section~7]{Baskin-Vasy-Wunsch:Radiation}
and more systematically in \cite{Vasy:Resolvents-AH-dS},
for
the forward problem, the poles of $\hat N(L)(\sigma)^{-1}$ consist of
the poles of the meromorphically continued resolvents
$\cR_{C_+}(\sigma)$ (i.e.\ its resonances) and
$\cR_{C_-}(-\sigma)$ on the asymptotically hyperbolic caps $C_\pm$, as
well as possibly a subset of $i\ZZ\setminus \{0\}$. (The latter correspond to
possible differentiated delta distributional resonant states, which
exist e.g.\ in even dimensional Minkowski space and which are
responsible for the strong Huygens principle on the one hand and for
the absence of poles of the meromorphically continued resolvent on odd dimensional
hyperbolic spaces on the other hand.) Further, the resonant states
and dual states have a certain support structure (this corresponds to
$C_0$ being a hyperbolic region), namely for $\phi$ supported in
$C_0\cup\overline{C_+}$, $\hat N(L)(\sigma)^{-1}\phi$ can only have
poles if $\sigma$ is either a pole of $\cR_{C_+}(\sigma)$ or is in
$-i\Nat^+$, see \cite{Baskin-Vasy-Wunsch:Radiation,
  Vasy:Resolvents-AH-dS}. Thus, see \cite{HVsemi}, suppose that $|l|<1$ (one could take
$l$ larger if one also excludes the possible imaginary integer poles of
$\hat N(L)(\sigma)^{-1}$), and $\cR_{C_\pm}(\sigma)$ have no poles in
$\im\sigma\geq -|l|$, and that
there is a boundary-defining function $\rho$ which is globally
time-like (in the sense that $\frac{d\rho}{\rho}$ is such with respect
to $\hat g$) near $\overline{C_+}\cup\overline{C_-}$. (These
assumptions hold e.g.\ on perturbations of Minkowski space.) Then any element
of $\Ker L$ would be vanishing to infinite order at $\overline{C_-}$
(and the same for $\Ker L^*$, where $L^*$ is the adjoint of $L$ with
respect to the $\Lb^2 = L^2(\RR^n, \mu)$ pairing in \eqref{eq:l2bdef}, with $C_-$ replaced by $C_+$)
by the first hypothesis and vanishing in a neighborhood of
$\overline{C_-}$ by the second. Finally, a result of
Geroch's \cite{Geroch:Domain} (relying on a construction of Hawking's)
shows that $M$ is globally hyperbolic (there is a Cauchy surface for
which every timelike curve intersects it exactly one time) under these assumptions, and in particular $L_{++}$
and $L_{--}$ are invertible since any element of $\Ker L_{++}$ would
vanish globally, and similarly for elements of $\Ker L_{++}^*$. One can then use the relative index
theorem of Melrose \cite[Chapter~6]{Melrose:Atiyah} to compute the index of $L$ on other weighted spaces.

For the Feynman propagator there is no simple direct identification of
the poles of $\hat N(L)(\sigma)$. However, in Minkowski space, one can
compute these exactly by virtue of a Wick rotation (Proposition
\ref{thm:boundaryscaling}), and further even
show the invertibility of $L$ on appropriate weighted spaces (Theorem \ref{thm:invertibility}). Namely,
the poles of $\hat N(L)(\sigma)$ are exactly those values of $\sigma$
for which the operator $\Delta_{\sphere^{n-1}}+(n-2)^2/4+\sigma^2$ is {\em not}
invertible, i.e.\ $\sigma$ is of the form $\pm
i\sqrt{\lambda+(n-2)^2/4}$, $\lambda$ an eigenvalue of
$\Delta_{\sphere^{n-1}}$, i.e.\ $\lambda=k(k+n-2)$, $k\in\Nat$, so
$\lambda+(n-2)^2/4=(k+(n-2)/2)^2$, and thus $\sigma=\pm
i(\frac{n-2}{2}+k)$. For future reference, we define
\begin{equation}
  \label{eq:indicialroots}
  \Lambda = \set{ \pm
(\frac{n-2}{2}+k) : k \in \mathbb{N}_0
}
\end{equation}
This gives a gap between
the two strings of poles with positive and negative imaginary parts,
and for $|l|<\frac{n-2}{2}$, $L_{+-}$ and $L_{-+}$ are invertible (and
are adjoints of each other on dual spaces). Since the framework we set
up is stable under {\em general b-ps.d.o.\ perturbations}, we conclude
that for general sc-metric perturbations $g$ of the Minkowski metric
$g_0$, $L_{g,+-}$ and $L_{g,-+}$ have the same properties, provided
the $|l|$ is taken slightly smaller:

\begin{thm}\label{thm:invertibility}
Let $\delta\in (0,\frac{n-2}{2})$. Then there exists a neighborhood $\cU$ of the Minkowski
metric $g_0$ in $\CI(M;\Sym^2\Tsc^*M)$ (i.e.\ in the sense of
sc-metrics, see \eqref{eq:symmetric-tensor-basis} and the paragraph
following it) such that for $g\in\cU$,
\begin{equation}\label{eq:invertiblemap}
L_{g,+-} \colon \cXpm^{m, l} \lra \cYpm^{m - 1,l},
\end{equation}
with $\cXpm^{m, l},\cYpm^{m - 1,l}$ as in \eqref{eq:spacenotation},
is invertible for
$|l|<\frac{n-2}{2}-\delta$ and $m$ satisfying the forward Feynman condition
for $+-$ in \eqref{eq:choices}, strengthened as in Theorem~\ref{thm:fredholm}, and where $\cXpm^{m,l}$ is the domain
of the Feynman wave operator defined in \eqref{eq:spacenotation}.  The same is true for $L_{g,-+}$ with
$+-$ replaced by $-+$ in all the spaces.
\end{thm}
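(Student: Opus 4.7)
The plan is to combine the exact analysis of the Minkowski d'Alembertian carried out in Section \ref{sec:Wick} (via Wick rotation) with a perturbation-of-index argument built on the Fredholm framework of Theorem \ref{thm:fredholm}.

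First I would appeal to Section \ref{sec:Wick} to dispatch the base case $g = g_0$. The Wick rotation identifies $\hat N(L_{g_0})(\sigma)$, up to simple conjugations, with the spherical family $\Delta_{\sphere^{n-1}} + (n-2)^2/4 + \sigma^2$, whose inverse has poles exactly at $\sigma = \pm i((n-2)/2 + k)$ for $k \in \Nat_0$, as listed in \eqref{eq:indicialroots}. In particular, the strip $|\im \sigma| < (n-2)/2$ is pole-free, so Theorem \ref{thm:fredholm} gives that $L_{g_0, +-}$ (and symmetrically $L_{g_0, -+}$) is Fredholm for every weight $|l| < (n-2)/2$. Moreover, the Wick rotation exhibits the Feynman propagator as a genuine limit of inverses of elliptic problems, producing an actual two-sided inverse and thereby upgrading the Fredholm statement to invertibility at $g_0$ throughout this range of $l$.

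Next I would invoke the perturbation remark following Theorem \ref{thm:fredholm}. Any sc-metric perturbation $g$ of $g_0$ gives $L_g - L_{g_0} \in \Psib^{2,0}$ with norm controlled by the distance of $g$ to $g_0$ in the $\CI$ topology on $\Sym^2 \Tsc^* M$. The normal operator $\hat N(L_g)(\sigma)$ is analytic in $\sigma$ and depends continuously on $g$, so by analytic Fredholm theory the poles of its inverse move continuously in $g$ within any compact subset of $\Cx$. Hence for $g$ in a sufficiently small neighborhood $\cU_1$ of $g_0$ (depending on $\delta$), the strip $|\im \sigma| \leq (n-2)/2 - \delta$ remains pole-free, and Theorem \ref{thm:fredholm} yields the Fredholm property for $L_{g, +-} : \cXpm^{m, l} \to \cYpm^{m-1, l}$ at every weight $|l| < (n-2)/2 - \delta$ and every $(m, l)$ satisfying the Feynman column of \eqref{eq:full-reg-list}.

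To pass from Fredholm to invertibility, I would use continuity of the Fredholm index together with a compactness argument for the kernel. Uniform versions of the estimates in \eqref{eq:propagationestimatesimproved} hold on $\cU_1$, since all threshold conditions on $(m, l)$ hold with strict inequality and the positive-commutator proof is stable under small b-pseudodifferential perturbations; in particular, the Fredholm index is locally constant on $\cU_1$, equal to $\ind(L_{g_0, +-}) = 0$. If some sequence $g_n \to g_0$ in $\cU_1$ produced nonzero $u_n \in \ker L_{g_n, +-}$ normalized by $\|u_n\|_{\Hb^{m, l}} = 1$, the uniform estimate gives $1 \leq C \|u_n\|_{\Hb^{m', l'}}$ for some $m' < m$, $l' < l$; extracting a convergent subsequence via the compact inclusion $\Hb^{m, l} \hookrightarrow \Hb^{m', l'}$ and reapplying the uniform estimate yields convergence in $\Hb^{m, l}$ to some $u_\infty \in \ker L_{g_0, +-}$ with unit norm, contradicting invertibility at $g_0$. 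Shrinking $\cU_1$ to a neighborhood $\cU$ on which $\ker L_{g, +-}$ is trivial, and combining with index zero, forces invertibility of $L_{g, +-}$ on $\cU$. The analogous argument for $L_{g, -+}$ proceeds symmetrically, with the flipped source/sink roles from \eqref{eq:choices-S-}.

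The main technical obstacle is the Wick rotation analysis of Section \ref{sec:Wick} itself, which is what produces both the exact pole locations and the invertibility (not just Fredholmness) for Minkowski. Granted that input, the only residual care required here is verifying that the Fredholm estimates of Theorem \ref{thm:fredholm} hold with constants uniform over a small $\cU_1$; this is routine, since the principal symbol, the radial sets $\SNb_\pm^* S_\pm$, and the source/sink character of the Hamilton flow all vary continuously under sc-metric perturbations, and the strict threshold inequalities in \eqref{eq:full-reg-list} survive small perturbations.
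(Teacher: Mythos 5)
Your proposal is correct and takes essentially the same approach as the paper: establish invertibility at $g_0$ via the Wick rotation (Theorem~\ref{thm:scalinglemma}), then use the uniformity of the Fredholm estimates over a small sc-metric neighborhood of $g_0$ to propagate invertibility. The paper's own proof is just a two-sentence version of this — it observes that the estimates in \eqref{eq:propagationestimatesimproved} hold uniformly near $g_0$, so $L_{g,+-}$ is a continuous bounded family and invertibility persists on a slightly smaller neighborhood — and your extra steps (pole continuity, index constancy, compactness of the kernel) are just an unpacking of what that compressed statement implicitly relies on.
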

\begin{proof}
  For the actual Minkowski metric $g_0$, the invertibility is a restatement of
  Theorem \ref{thm:scalinglemma} below.  Since the estimates in
  \eqref{eq:propagationestimatesimproved} hold uniformly on a
  sufficiently small neighborhood $\cU'$ of $g_0$, $L_{g, +-}$ defines a
  continuous bounded family mapping as in \eqref{eq:invertiblemap},
  and thus is invertible on a possibly smaller neighborhood $\cU$.
\end{proof}

Taking into account the construction of $L$ (see \eqref{eq:Ldef}), for
metrics $g$ in the neighborhood $\cU$ in the theorem, we deduce that 
\begin{equation}\label{eq:realinvertiblemap}
\Box_{g,+-}:\cXpm^{m,l+\frac{n-2}{2}}\to \cYpm^{m,l+\frac{n-2}{2}+2}
\end{equation}
is invertible for $|l|<\frac{n-2}{2}-\delta$.  Its inverse is indeed
the forward Feynman propagator (which
is well defined on space $\cYpm^{m,l+\frac{n-2}{2}+2}$ with weight
$l$ in the stated range,
\begin{equation}\label{eq:feynmanprop}
\Box^{-1}_{g,fey}:\cYpm^{m,l+\frac{n-2}{2}}\to \cXpm^{m,l+\frac{n-2}{2}+2}.
\end{equation} 
The same for $+-$ replaced by $-+$ and ``forward'' replaced by ``backward''.

\begin{rem}
The class of perturbations we consider {\em does not} preserve the
radial point structure at $\SNb^*S_\pm$. Nonetheless, the {\em
  estimates} the radial point structure implies for $L$ and $L^*$ are
preserved, much as discussed for Kerr-de Sitter spaces in \cite{VD2013}.
\end{rem}

\section{Wick rotation (complex scaling)}\label{sec:Wick}

In this section we work only with the Minkowski metric, which we
continue to denote by $g$.  We now explain Wick rotations in Minkowski
space, where it amounts to
replacing $\Box_{g}=D_{z_n}^2-D_{z_1}^2-\ldots-D_{z_{n-1}}^2$ by
\begin{equation}\label{eq:rotatedbox}
\Box_{g,\theta}=e^{-2\theta}D_{z_n}^2-D_{z_1}^2-\ldots-D_{z_{n-1}}^2
\end{equation}
where $\theta$ is a complex parameter. While it may seem that we are
using rather sophisticated techniques for a simple operator, this is
in some ways necessary since we need invertibility on our variable
order function spaces, which would not be so easy to show using very
simple techniques!

Concretely, consider complex
scaling, corresponding to pull-back by the diffeomorphism
$\Phi_\theta(z)=(z_1,\ldots,z_{n-1},e^{\theta}z_n)$ for
$\theta\in\RR$, i.e.\ considering
$U_\theta^*\Box(U_\theta^{-1})^*$, where $U_\theta = (\det D
\Phi_\theta)^{1/2} \Phi_\theta^* f$, extending the result to an
analytic family of operators in
$\theta\in\Cx$ (near the reals). This gives rise to the family
$\Box_{g,\theta}$.  Letting
\begin{equation}\label{eq:Ldeftheta}
L_\theta=\rho^{-(n-2)/2}\rho^{-2}\Box_{g,\theta}\rho^{(n-2)/2},
\end{equation}
as
soon as $\im\theta\in(-\pi,\pi)\setminus\{0\}$, $L_\theta$ 
is a an elliptic b-differential operator; when $\theta=\pm i \pi/2$,
one obtains the Euclidean
Laplacian $\Box_{g, \pm i \pi/2} = \Delta_{\RR^n} $. In the elliptic region the corresponding operator $L_\theta$ satisfies the
Fredholm estimates uniformly for $L_{\theta,+-}$ (and its adjoint, for which
the imaginary part switches sign, but one propagates estimates
backwards)
when $\im\theta\geq 0$, and for $L_{\theta,-+}$ when $\im\theta\leq
0$. 

The main analytic property that we will use below for the operators
$L_\theta$ is that for regularity functions $m$ chosen to satisfy say
the forward ($+-$) Feynman condition, the corresponding operators $L_{\theta,
  +-}$ satisfy estimates
  \begin{equation}
    \label{eq:uniformfredholm}
    \norm[\Hb^{m, l}]{u} \le C ( \norm[\Hb^{m-1, l} ]{L_{\theta} u} +
    \norm[\Hb^{m', l'} ]{u} ).
  \end{equation}
  uniformly in $\theta$ for $m, l$ corresponding to $+-$ and $m'
  < m$, $l' < l$, meaning precisely that there is a constant $C$ such that for $|\theta| < \delta_0, \im
  \theta \ge 0$
  for $u \in \Hb^{m, l}$,
  \eqref{eq:uniformfredholm} holds provided $m, l$ satisfy the $+-$
  Feynman condition and $- l \not \in \Lambda$.  For $|\theta| <
  \delta_0, \im \theta \le 0$ they hold provided $m, l$ satisfy the $-+$
  Feynman condition and $l \not \in \Lambda$.  (Note that $\Lambda = -
  \Lambda$ so actually the conditions on $l$ are the same.) The
  reason for the uniformity is that all of the ingredients are
  uniform; this is standard for elliptic estimates. On the other hand, it holds for real
  principal type estimates where the imaginary part of the principal
  symbol amounts to complex absorption, provided one propagates
  estimates in the {\em forward} direction of the Hamilton flow if the
  imaginary part of the principal symbol is $\leq 0$ (which is the
  case for $\im\theta\geq 0$, $\theta$ small) and backwards
  along the Hamilton flow if the imaginary part of the principal
  symbol is $\geq 0$, as shown by Nonnenmacher and
  Zworski \cite{Nonnenmacher-Zworski:Quantum} and Datchev and Vasy
  \cite{Datchev-Vasy:Gluing-prop} in the semiclassical microlocal setting
  and, as is directly relevant here, extended to the general b-setting by Hintz and Vasy
  \cite[Section~2.1.2]{HVsemi}. Moreover, at radial points in the
  standard microlocal setting this was shown by Haber and Vasy
  \cite{Haber-Vasy:Radial}, and the proof of
  Proposition~\ref{thm:propofsing} can be easily modified in the same
  manner so that non-real principal symbol is also allowed at the
  b-radial points. Finally, the normal operator constructions are also
  uniform since they rely on estimates for the Mellin transformed
  family which are uniform as we stated; the resonances (poles) of the inverse
  of this family thus a priori vary continuously, so in particular
  near an invertible weight for $\theta=0$ one has uniform estimates.
  (In fact we will show in Proposition \ref{thm:boundaryscaling}
  below that the poles of the complex scaled normal families are
  constant, i.e.\ do not vary with $\theta$.)

Note that the estimates in \eqref{eq:uniformfredholm} are \emph{not}
the standard elliptic estimates.  Indeed, the term on the left hand
side is in a space of differentiability order one lower than
ellipticity provides.  The point is that the estimates in
\eqref{eq:uniformfredholm} are exactly those which are uniform down to
$\im \theta = 0$.

The family of operators $L_\theta$ defines a family of Mellin
transformed normal operators on the boundary, $\hat
N(L_\theta)(\sigma)$ as above, and we have, still for $g$ equal to the
Minkowski metric, that
\begin{equation}
  \label{eq:8}
  \hat{N}(L_{\pm i \pi / 2})(\sigma) = \Delta_{\sphere^{n-1}}+(n-2)^2/4+\sigma^2.
\end{equation}

We recall the theorem of Melrose describing the behavior of the
elliptic operators $L_\theta$ for $\im \theta \neq 0$, which is a
special case of our more general framework in that elliptic operators
are also Fredholm on variable order Sobolev spaces in view of our results.

\begin{thm}[Melrose \cite{Melrose:Atiyah}, with Theorem~\ref{thm:fredholm} here
  giving the variable order version] \label{thm:bopfred}
  Let $P$ be an elliptic b-differential operator of order $k$ on a manifold with
  boundary $M$, and assume that $\hat N(P)^{-1}(\sigma)$ has no poles on
  the line $\im \sigma = -l$.  Then the operator $P$ satisfies
  \begin{equation}
    \label{eq:Fredholmelliptic}
  \| u \|_{\Hb^{s + k, l}} \le C(\| P u \|_{\Hb^{s , l}} + \| u
  \|_{\Hb^{- N, l'}}),
  \end{equation}
for any $N > 0$ and some $l' < l$.  In particular, 
$$
P \colon \Hb^{s + k, l} \lra \Hb^{s, l}
$$
is Fredholm.
\end{thm}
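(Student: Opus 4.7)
The plan is to run the two-step strategy of the proof of Theorem~\ref{thm:fredholm} in the simpler elliptic setting: a basic b-elliptic parametrix estimate, improved at the boundary by inverting the normal operator, followed by compactness. Ellipticity of $P\in\Diffb^k(M)$ gives a b-parametrix $Q\in\Psib^{-k,0}(M)$ with $QP-I$, $PQ-I\in\Psib^{-\infty,0}(M)$, yielding
\begin{equation*}
\|u\|_{\Hb^{s+k,l}}\le C(\|Pu\|_{\Hb^{s,l}}+\|u\|_{\Hb^{-N,l}})
\end{equation*}
for every $N$. The error term has the same weight $l$, and $\Hb^{s+k,l}\hookrightarrow\Hb^{-N,l}$ is not compact at equal weight, so this estimate alone is insufficient for Fredholmness.

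To gain decay in the error term, I would invert the Mellin-transformed normal family $\hat N(P)(\sigma)$. For each $\sigma$ it is an elliptic operator on the closed manifold $\p M$, hence Fredholm of index zero; invertibility for $|\re\sigma|$ large within a strip of bounded $|\im\sigma|$ follows from a semiclassical rescaling $h=|\sigma|^{-1}$ as in Theorem~\ref{thm:indexzeroboundary}, and analytic Fredholm theory then shows $\hat N(P)(\sigma)^{-1}$ is meromorphic on $\Cx$. The hypothesis of no poles on $\im\sigma=-l$, combined with the Mellin isomorphism \eqref{eq:mellindef}, yields invertibility of
\begin{equation*}
N(P)\colon\Hb^{s+k,l}(\p M\times\RR_+)\to\Hb^{s,l}(\p M\times\RR_+).
\end{equation*}
Choosing $\chi\in\CI(M)$ equal to $1$ in a small collar of $\p M$ and supported in a larger one, I would apply $N(P)^{-1}$ to $\chi u$ using
\begin{equation*}
N(P)(\chi u)=\chi Pu+\chi(N(P)-P)u+[N(P),\chi]u.
\end{equation*}
Here $N(P)-P\in\rho\Diffb^k(M)$ is bounded $\Hb^{s+k,l-1}\to\Hb^{s,l}$ (it gains one power of $\rho$ over a b-operator of order $k$), while $[N(P),\chi]$ has coefficients supported away from $\p M$, hence is bounded $\Hb^{s+k-1,l''}\to\Hb^{s,l}$ for any $l''$. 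Estimating $(1-\chi)u$ trivially in the interior and then trading $\|u\|_{\Hb^{s+k,l-1}}$ for $\|Pu\|_{\Hb^{s,l}}+\|u\|_{\Hb^{-N,l-1}}$ via the basic parametrix estimate at weight $l-1$ produces \eqref{eq:Fredholmelliptic} with $l'=l-1<l$.

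Compactness of $\Hb^{s+k,l}\hookrightarrow\Hb^{-N,l'}$ for $l'<l$ then furnishes finite-dimensional kernel and closed range for $P$, and the identical argument applied to the formal adjoint $P^*$ (whose Mellin-transformed normal family is the fiberwise adjoint of $\hat N(P)(\overline\sigma)$, so the no-pole hypothesis transfers to the analogous condition appropriate for $P^*$, matched to the $\Lb^2$-duality \eqref{eq:l2bduality}) supplies finite-dimensional cokernel. The only nonroutine step is the normal-operator inversion, requiring meromorphy of $\hat N(P)(\sigma)^{-1}$ together with invertibility at at least one point; everything else is cutoff and commutator bookkeeping exactly parallel to the proof of Theorem~\ref{thm:fredholm}.
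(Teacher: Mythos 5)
Your argument is correct and is essentially the constant-order specialization of the paper's proof of Theorem~\ref{thm:fredholm}, which the paper itself identifies as the variable-order version of this result of Melrose; the small-calculus parametrix estimate, the normal-operator inversion via the commutator identity $N(P)\chi = \chi N(P)+[N(P),\chi]$ (with the weight gain from $N(P)-P\in\rho\Diffb^k$), and the final compactness step all match. One minor slip worth flagging: ellipticity of $\hat N(P)(\sigma)$ on the closed manifold $\partial M$ gives Fredholmness, but not index zero by itself (elliptic operators on closed manifolds, e.g.\ of Dirac type, need not have index zero); the index-zero property instead follows from the semiclassical invertibility at large $|\re\sigma|$ that you establish next together with continuity of the index, so the application of analytic Fredholm theory and the rest of the argument are unaffected.
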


Thus the set $\Lambda$ in \eqref{eq:indicialroots} gives the set of
weights $l$ for which 
$$
\Delta_{\RR^n} \colon \Hb^{m + 1, l + (n - 2)/2} \lra \Hb^{m - 1, l +
  (n - 2)/2 + 2}
$$
is Fredholm; indeed by the definition of $L_\theta$ in \eqref{eq:Ldeftheta},
we see that
$$
L_{i\pi/2}=\rho^{-(n-2)/2}\rho^{-2}\Delta_{\RR^n}\rho^{(n-2)/2} \colon \Hb^{m + 1, l} \lra \Hb^{m - 1, l}
$$
is Fredholm exactly when $- l \not \in \Lambda$.
Consider the elliptic operators $L_{\theta}$  as maps between forward
Feynman b-Sobolev spaces
\begin{equation}\label{eq:ellipticmaps}
  L_{\theta, +-} \colon \Hb^{m + 1, l} \lra \Hb^{m-1,l}.
\end{equation}
In Section \ref{sec:normalfamily} below, we will prove in Proposition
\ref{thm:boundaryscaling} that for the
$\theta-$dependent family of  Mellin transformed normal operators of the
complex scaled Feynman operators,
$\hat{N}(L_{\theta, +-})(\sigma)$, \textit{the inverse families have equal
  poles.}  Thus the set $\Lambda$ in \eqref{eq:indicialroots} is in
fact the set of all poles of the inverse families $\hat{N}(L_{\theta, +-})$ in the forward Feynman setting.  The same holds for $-+$.
As a corollary to Theorem \ref{thm:bopfred} and the fact that the
index of a continuous family of Fredholm operators is constant (see
\cite{TaylorI}), we obtain the following:
\begin{lemma}\label{thm:constantindexelliptic}
For $\Lambda$ as in \eqref{eq:indicialroots} and $-l \not \in
\Lambda$, the maps in
  \eqref{eq:ellipticmaps} form a continuous
  Fredholm family and thus have constant index for $\theta \in (0, \pi/2]$.
\end{lemma}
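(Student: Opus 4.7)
The plan is to apply the standard fact that the index of a norm-continuous Fredholm family is locally constant, together with connectedness of the parameter set, so there are essentially two things to verify: (a) each $L_{\theta, +-}$ in \eqref{eq:ellipticmaps} is Fredholm for $\theta \in (0, \pi/2]$ (interpreted as $\theta = is$, $s \in (0, \pi/2]$, so that $\im \theta > 0$ and $L_\theta$ is elliptic), and (b) the map $\theta \mapsto L_{\theta, +-}$ is continuous in the operator norm topology of bounded maps $\Hb^{m+1, l} \to \Hb^{m - 1, l}$.

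For (a), I would invoke Theorem \ref{thm:bopfred} in its variable-order form, whose hypothesis is that $\hat N(L_\theta)(\sigma)^{-1}$ have no pole on the line $\im \sigma = -l$. This is exactly where Proposition \ref{thm:boundaryscaling} (stated just above) enters: it identifies the poles of $\hat N(L_{\theta, +-})(\sigma)^{-1}$ as the $\theta$-independent set $\Lambda$ from \eqref{eq:indicialroots}. Thus the assumption $-l \not\in \Lambda$ guarantees the Fredholm mapping property of $L_{\theta, +-}$ on $\Hb^{m+1, l} \to \Hb^{m-1, l}$ for each individual $\theta$ in the range. The variable-order version of the estimate needed is covered by the more general framework developed in Section~\ref{sec:fred}, applied to the elliptic setting where the radial point analysis is subsumed by elliptic regularity.

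For (b), from the explicit form \eqref{eq:rotatedbox} of $\Box_{g,\theta}$ and the definition \eqref{eq:Ldeftheta} of $L_\theta$, we have
\begin{equation*}
L_\theta - L_{\theta'} = (e^{-2\theta} - e^{-2\theta'}) \, \rho^{-(n-2)/2}\rho^{-2} D_{z_n}^2 \rho^{(n-2)/2},
\end{equation*}
and the conjugated operator on the right lies in $\Diffb^2(M)$, hence is bounded $\Hb^{m+1, l} \to \Hb^{m-1, l}$ with norm independent of $\theta, \theta'$. Thus $\theta \mapsto L_{\theta, +-}$ is even (real-)analytic in the operator norm, which more than suffices for continuity.

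Combining (a) and (b), the family $L_{\theta, +-}$ is a norm-continuous family of Fredholm operators on the connected parameter interval $\theta \in (0, \pi/2]$, so its index is constant (see \cite{TaylorI}). The only mild subtlety, and the point I would be most careful about, is confirming that the variable-order $m$ used to define the forward Feynman space can be chosen independently of $\theta$; this is immediate because for $\im \theta > 0$ the operator is elliptic so \emph{any} monotonicity choice of $m$ along the Hamilton flow of $L_0$ works, and the ellipticity-based estimates behind Theorem \ref{thm:bopfred} depend only on $m$ through its range, uniformly in $\theta$ on compact subsets of $(0, \pi/2]$. No additional work is required to handle the endpoint $\theta = i\pi/2$, where $L_\theta$ reduces to the conjugated Euclidean Laplacian and the Fredholm property is classical.
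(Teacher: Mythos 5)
Your proof is correct and takes the same route the paper intends: Fredholmness of each $L_{\theta,+-}$ for $\im\theta\in(0,\pi/2]$ comes from Theorem~\ref{thm:bopfred} once Proposition~\ref{thm:boundaryscaling} identifies the pole set of $\hat N(L_{\theta,+-})(\sigma)^{-1}$ as the $\theta$-independent set $\Lambda$, and then constancy of index follows from continuity of the Fredholm family. The paper states the lemma as an immediate corollary without further details; your explicit observation that $L_\theta - L_{\theta'} = (e^{-2\theta}-e^{-2\theta'})\,\rho^{-(n-2)/2}\rho^{-2}D_{z_n}^2\rho^{(n-2)/2}$, with the conjugated operator a fixed element of $\Diffb^2(M)$, is a clean way to verify the operator-norm continuity (indeed analyticity) that the paper leaves implicit, and your remark that the variable order $m$ plays no real role in the elliptic regime is also correct.
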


 \begin{figure}\label{scalingspace}
    \centering
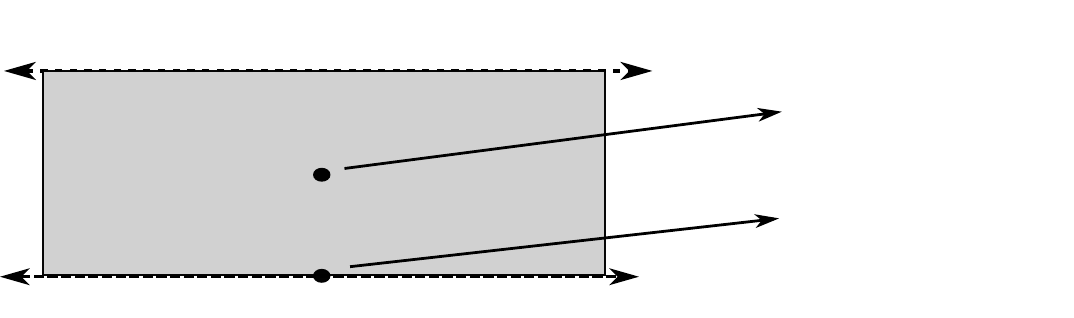
    \caption{The index of $L_{\theta}$ is constant in the grey region
      by the standard continuity of the index for Fredholm families.
      In Theorem \ref{thm:constantindex} we show the continuity of the
    index extends up to the dashed line at bottom, i.e.\ to $L_{+-}$.}
      \end{figure}

\subsection{Index of $L_{\theta, +-}$}

To prove the invertibility theorem, we will first establish the
following 
\begin{thm}\label{thm:constantindex}
For fixed $m, l$ satisfying the forward Feynman condition in $L_{+-}$, and such that $l
\not \in \Lambda$, for $\mathrm{Im} \ \theta \in (0, \pi/2)$,
  \begin{equation}
    \label{eq:equalindices}
    \ind(L_{\theta} \colon \Hb^{m + 1, l} \lra \Hb^{m-1,l}) = \ind(L_{+-}
    \colon \cX^{m, l} \lra \cY^{m-1,l}).
  \end{equation}
% Moreover, for $\theta$ sufficiently close to $0$,
% \begin{equation} \label{eq:locallyconstantdims}
%   \begin{split}
%     \dim \Ker (L_{\theta} \colon \Hb^{m + 1, l} \lra \Hb^{m-1,l}) =
%     \dim \Ker (L_{0}
%     \colon \cX^{m, l} \lra \cY^{m-1,l}) \mbox{ and } \\
%     \dim \Coker (L_{\theta} \colon \Hb^{m + 1, l} \lra \Hb^{m-1,l}) =
%     \dim \Coker (L_{0}
%     \colon \cX^{m, l} \lra \cY^{m-1,l}) 
%   \end{split}
% \end{equation}
\end{thm}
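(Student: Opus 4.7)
\medskip

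\noindent\textbf{Proof proposal.} The plan is to interpolate between the two operators by viewing \emph{both} on the same variable-order spaces $\cX^{m,l}_{+-}\to\cY^{m-1,l}_{+-}$, use the uniform estimates \eqref{eq:uniformfredholm} together with their adjoint analogues to get a continuous Fredholm family in $\theta\in[0,i\pi/2]$, and then separately identify the variable-order index with the standard $\Hb^{m+1,l}\to\Hb^{m-1,l}$ index when $\im\theta>0$ by elliptic regularity. Constancy of the Fredholm index for a norm-continuous family then closes the loop.

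\smallskip

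First I would check that the family $\theta\mapsto L_\theta$ is continuous as a bounded family $\cX^{m,l}_{+-}\to\cY^{m-1,l}_{+-}$ on $\theta\in[0,i\pi/2]$. Since $L_\theta-L_0$ is a second-order b-differential operator whose coefficients depend smoothly on $\theta$ and vanish at $\theta=0$ (by the explicit form \eqref{eq:rotatedbox}--\eqref{eq:Ldeftheta}), it maps $\Hb^{m,l}$ into $\Hb^{m-2,l}\subset\Hb^{m-1,l}$ with operator norm $O(|\theta|)$, so $L_\theta$ does indeed send $\cX^{m,l}_{+-}$ into $\cY^{m-1,l}_{+-}$ continuously in $\theta$. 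The uniform estimate \eqref{eq:uniformfredholm}, and its dual for $L_\theta^*$ (available uniformly for $\im\theta\geq 0$ since Feynman signs propagate in the forward Hamilton direction and the imaginary part of the symbol of $L_\theta$ has the favorable sign $\leq 0$), then exactly as in the proof of Theorem~\ref{thm:fredholm} yields that $L_\theta\colon \cX^{m,l}_{+-}\to\cY^{m-1,l}_{+-}$ is Fredholm for every such $\theta$, and the continuity just noted makes this a norm-continuous Fredholm family. Hence
\begin{equation*}
\ind\bigl(L_\theta\colon\cX^{m,l}_{+-}\to\cY^{m-1,l}_{+-}\bigr)=\ind\bigl(L_{+-}\colon\cX^{m,l}\to\cY^{m-1,l}\bigr)
\end{equation*}
for all $\theta\in[0,i\pi/2]$ with $\im\theta\geq 0$.

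\smallskip

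Next, for fixed $\theta$ with $\im\theta>0$, the operator $L_\theta$ is elliptic in $\Diffb^2(M)$, so b-elliptic regularity tells us that any $u\in\Hb^{m,l}$ with $L_\theta u\in\Hb^{m-1,l}$ in fact lies in $\Hb^{m+1,l}$. Since here $\cY^{m-1,l}_{+-}=\Hb^{m-1,l}$ is already the standard space, this means that, as maps into the common target $\Hb^{m-1,l}$, the operator $L_\theta$ on $\cX^{m,l}_{+-}$ has the \emph{same} kernel and the \emph{same} range as $L_\theta$ on $\Hb^{m+1,l}$; consequently the two indices agree. Combining this with the first step and with Lemma~\ref{thm:constantindexelliptic} (constancy of index on $\Hb^{m+1,l}\to\Hb^{m-1,l}$ for $\im\theta\in(0,\pi/2]$) yields \eqref{eq:equalindices}.

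\smallskip

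I expect the main technical point to verify carefully is the adjoint half of the uniform estimate at $\theta=0$: one must propagate the dual (low-regularity) estimate for $L_\theta^*$ in the right Hamilton direction when $\theta$ crosses the real axis, using the complex-absorption propagation results of Nonnenmacher--Zworski and Datchev--Vasy in the b-setting of \cite{HVsemi}, and the boundary-defining-function-independent statement for radial points from \cite{Haber-Vasy:Radial} adapted as indicated in the paragraph after \eqref{eq:uniformfredholm}. Everything else (domain continuity of $\theta\mapsto L_\theta$, matching of kernels/ranges via elliptic regularity, constancy of the Fredholm index along a norm-continuous family) is standard once the uniform Fredholm estimates are in place.
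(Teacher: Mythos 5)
There is a genuine gap in your first step. You write that $L_\theta-L_0$ ``maps $\Hb^{m,l}$ into $\Hb^{m-2,l}\subset\Hb^{m-1,l}$'' --- but that containment is reversed: since lowering the first index enlarges the space, $\Hb^{m-1,l}\subset\Hb^{m-2,l}$, not the other way around. Consequently $L_\theta$ does \emph{not} map $\cX^{m,l}_{+-}$ into $\cY^{m-1,l}_{+-}=\Hb^{m-1,l}$ when $\im\theta>0$: for $u\in\cX^{m,l}_{+-}$ one knows $u\in\Hb^{m,l}$ and $L_0u\in\Hb^{m-1,l}$, which by elliptic regularity gives microlocal $\Hb^{m+1,l}$ membership only away from $\Sigma=\{\sigma_{\bl}(L_0)=0\}$; on $\Sigma$ the Feynman domain only has the base regularity $\Hb^{m,l}$, so there $L_\theta u$ (with $L_\theta$ elliptic on $\Sigma$ for $\im\theta\neq 0$) is a priori only in $\Hb^{m-2,l}$. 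Thus the proposed ``$\theta\mapsto L_\theta$ on $\cX^{m,l}_{+-}\to\cY^{m-1,l}_{+-}$'' is not a well-defined bounded family, let alone a norm-continuous Fredholm one, and invariance of the index under a norm-continuous Fredholm deformation cannot be invoked. This is exactly the obstruction the paper flags when it says, in the proof of Theorem~\ref{thm:scalinglemma}, that ``$\Box_{g,\theta}$ is not strictly speaking an analytic Fredholm family on an open set containing $\theta=0$ (the domain changes according to whether $\im\theta=0$ or not\dots).''

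The paper gets around this by not trying to set up a single Fredholm family across $\im\theta=0$. Instead it first assumes $L_{+-}$ invertible; in that case the error terms in the uniform estimates \eqref{eq:uniformfredholm} can be dropped to yield the two inequalities \eqref{eq:invest}, and a compactness/contradiction argument (sequences $\theta_j\to0$, $u_j$ of unit norm, strongly convergent subsequence in the lower-order space) shows that the same two inequalities persist for $L_\theta$ with $\im\theta>0$ small. Then for $\im\theta>0$ elliptic regularity upgrades these to invertibility of $L_\theta\colon\Hb^{m+1,l}\to\Hb^{m-1,l}$, matching the index. When $L_{+-}$ is only Fredholm, it performs the standard finite-dimensional stabilization by $V=\Ker L_{+-}$ and $W=\Coker L_{+-}$ and applies the invertible-case argument to the augmented operator $\wt L_\theta$. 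If you want to salvage your strategy, you would need either to use these $\theta$-dependent domains $\cX^{m,l}_{+-}(\theta)$ (as the paper does later for $\Box_{g,\theta}$) together with an argument comparing indices across the jump, or else directly pass through the estimate-based argument as the paper does. The remark about the adjoint estimates near $\theta=0$ and the direction of propagation relative to the sign of the imaginary part of the symbol is correct, but it only enters after the domain issue is resolved.
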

\begin{proof}
  This follows from the mere fact that the estimates in
  \eqref{eq:uniformfredholm} hold uniformly in $\theta$ for $m, l$
  corresponding to $+-$ and $m'  < m$, $l' < l$.

  Assume first that $L_{+-}$ on the right hand side of
  \eqref{eq:equalindices} is invertible.  Then one can drop the compact
  error terms, and thus then the estimates take the form
  \begin{equation}\label{eq:invest}
\|u\|_{\Hb^{m,l}}\leq C\|Lu\|_{\Hb^{m-1,l}},\ \|v\|_{\Hb^{1-m,-l}}\leq
C\|L^* v\|_{\Hb^{-m,-l}},
\end{equation}
where again $L^*$ is the adjoint of $L$ with respect to the $\Lb^2$
pairing (see \eqref{eq:l2bdef}).  To see that the estimate on the
right follows,
since $\Hb^{-m, -l}$ is dual
to $\Hb^{m, l}$ with respect to
the $\Lb^2$ pairing, using the surjectivity of $L$ to go to
the second line we have
  \begin{gather*}
    \| L^*v \|_{\Hb^{-m, -l}} = \sup_{ \| w \|_{\Hb^{m,l}} = 1} \la L^* v,
    w \ra_{\Lb^2} \ge \sup_{ \| w \|_{\cX^{m,l}} = 1} \la v, L w
    \ra_{\Lb^2} \\ \ge \frac 1C \sup_{ \| g \|_{\Hb^{m - 1,l}}} \la v, g
    \ra_{\Lb^2} = \frac 1C \| v \|_{\Hb^{1 - m , -l}}.
 \end{gather*}

We claim that the estimates in \eqref{eq:invest} imply the
analogous estimates also hold for $L_\theta$, $\im\theta$ small
with $\im \theta > 0$, namely that
\begin{equation}\label{eq:Lthetainj}
\|u\|_{\Hb^{m,l}}\leq C\|L_\theta u\|_{\Hb^{m-1,l}},\ \|v\|_{\Hb^{1-m,-l}}\leq
C\|L_\theta^* v\|_{\Hb^{-m,-l}}.
\end{equation}
Otherwise, for example for the first estimate, we would have a
sequence $\theta_j \to 0$ with $\im \theta_j > 0$ and $u_j$ with 
$$
\|u_j \|_{\Hb^{m, l}} = 1 \mbox{ and } L_{\theta_j} u_j \to 0  \mbox{
  in } \Hb^{m - 1,
  l}.
$$
  Extracting a strongly convergent subsequence of the $u_j$ in $\Hb^{m', l'}$
for $m' < m$ and $l' < l$, by the uniform estimates in
\eqref{eq:uniformfredholm} we would obtain a limit $\wt{u}$ with
$\wt{u} \neq 0$ and $L \wt{u} = 0$, a contradiction.  A similar
argument shows that the second estimate also holds for small $\theta$
with $\im \theta > 0$.

Now as soon as $\im\theta\neq 0$, these give
improved estimates by elliptic regularity, namely
$$
\|u\|_{\Hb^{m+1,l}}\leq C\|L_\theta u\|_{\Hb^{m-1,l}},\
\|v\|_{\Hb^{2-m,-l}}\leq C\|L_\theta^* v\|_{\Hb^{-m,-l}}.
$$
Indeed these follow since for $\im \theta > 0$, $L_\theta$ and
$L^*_\theta$ are Fredholm
maps from $\Hb^{m' + 1, l}$ to $\Hb^{m' - 1, l}$ for any $m'$ and by
\eqref{eq:Lthetainj} are injective for the given $m$ and $l$ and thus
for any $m'$ by elliptic regularity.
Thus, for example taking $m = s$ to be constant in the first inequality
and $m = -s + 1$ in the second inequality gives that $L_\theta$ is
injective and surjective with domain $\Hb^{m, l}$  (which again by
elliptic regularity means that $L_\theta$ is an isomorphism for any
$m$ and the given $l$).  This establishes the theorem in the case that
$L_{+-}$ is invertible on the spaces under consideration.

If $L_{+-}$ is not invertible but is Fredholm, one can get back to the
same setting by adding finite dimensional function spaces to the
domain and target as usual, showing that the index is stable under
this deformation. Concretely, let
\begin{equation*}
  \begin{split}
    V &:= \Ker(L_{+-} \colon \cX^{m, l} \lra \Hb^{m-1, l}) \\
    W &:= \Coker(L_{+-} \colon \cX^{m, l} \lra \Hb^{m-1, l}),
  \end{split}
\end{equation*}
where by definition the cokernel in the second line is the orthogonal
complement of the range with respect to some (fixed) inner product.  The map $\wt{L}_\theta$ from $W \oplus
\cX^{m, l} = W \oplus V \oplus V^\perp $ to $V \oplus \Hb^{m
  -1, l} = V \oplus W \oplus  W^\perp$ which takes $w + v + v'$ to $v +
w + L_\theta v'$ is an isomorphism for $\theta = 0$, and by the above
analysis is also an isomorphism for $\theta$ small with $\im \theta > 0$.
Therefore the Fredholm index of the Feynman
propagators for Minkowski space is the same as that of
$\Delta_{\RR^n}$ acting on a weighted b-space with the same
weight. \end{proof}

We can then use the relative index
theorem of Melrose \cite[Chapter~6]{Melrose:Atiyah}, which expresses
the difference in the index of a b-differential operator at different
weights as the sum of the residues of the normal operator at
appropriate indicial roots.  This can be
extended from the elliptic setting considered there to ours without
any difficulties, to compute the index of $L$ on other weighted
spaces; here in fact because of the Wick rotation we can use the
elliptic result directly.
\begin{cor}\label{thm:explicitindex}
Under assumptions as in Lemma \ref{thm:constantindexelliptic}, 
  \begin{equation}
    \label{eq:12}
    \ind(L_{\theta, +-} \colon \cX^{m, l} \lra \cY^{m - 1, l}) =
    - \sgn(l) N(\Delta_{\mathbb{S}^{n -1} }  + (n-2)^2/4 ; l),
  \end{equation}
where $N(\Delta_{\mathbb{S}^{n -1} }  + (n-2)^2/4; l)$ is the number of
eigenvalues $\lambda$ of $\Delta_{\mathbb{S}^{n -1} }  + (n-2)^2/4$
with $\lambda < l^2$.  In particular, 
\begin{equation*}
|l| < (n - 2)/2 \implies  \ind(L_{g,+-} \colon \cX^{m, l} \lra \cY^{m - 1, l}) = 0.
\end{equation*}
\end{cor}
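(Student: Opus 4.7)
My plan is to combine Theorem~\ref{thm:constantindex} with Melrose's relative index theorem, using the Euclidean Laplacian as an intermediary. First, by Theorem~\ref{thm:constantindex} and Lemma~\ref{thm:constantindexelliptic},
\begin{equation*}
\ind\bigl(L_{+-} \colon \cX^{m,l} \to \cY^{m-1,l}\bigr) = \ind\bigl(L_\theta \colon \Hb^{m+1,l} \to \Hb^{m-1,l}\bigr)
\end{equation*}
for any $\theta = it$ with $t \in (0, \pi/2]$ (note that for $\im\theta > 0$ elliptic regularity gives $\cX^{m,l} = \Hb^{m+1,l}$). Taking $\theta = i\pi/2$ and expanding the conjugation~\eqref{eq:Ldeftheta} in polar coordinates via $\partial_r = -\rho^2\partial_\rho$ yields
\begin{equation*}
L_{i\pi/2} = -(\rho\partial_\rho)^2 + \Delta_{\sphere^{n-1}} + (n-2)^2/4,
\end{equation*}
so that $\hat N(L_{i\pi/2})(\sigma) = \sigma^2 + \Delta_{\sphere^{n-1}} + (n-2)^2/4$, confirming~\eqref{eq:8}. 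The poles of $\hat N(L_{i\pi/2})(\sigma)^{-1}$ form exactly the set $\Lambda$ from~\eqref{eq:indicialroots}, each pole at $\sigma = \pm i(\tfrac{n-2}{2}+k)$ being simple with multiplicity equal to $\dim\ker\bigl(\Delta_{\sphere^{n-1}} - k(k+n-2)\bigr)$.

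Next I would establish the base case $\ind\bigl(L_{i\pi/2} \colon \Hb^{m+1,0} \to \Hb^{m-1,0}\bigr) = 0$. The closed form above shows that $L_{i\pi/2}$ is formally self-adjoint with respect to the $\Lb^2$ pairing, since $(\rho\partial_\rho)^\ast = -\rho\partial_\rho$ on the b-density and $\Delta_{\sphere^{n-1}}$ is symmetric. Using the duality~\eqref{eq:l2bduality} at weight $l = 0$, the cokernel of $L_{i\pi/2}$ on $\Hb^{m-1,0}$ is the kernel of the $\Lb^2$-adjoint $L_{i\pi/2}^\ast = L_{i\pi/2}$ acting between the appropriate dual spaces; by elliptic regularity (Theorem~\ref{thm:bopfred}) both the kernel and cokernel are independent of the differential order at fixed weight $0$, so $\dim\Ker = \dim\Coker$ and the index vanishes. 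This requires $0 \notin \Lambda$, i.e., $n \ge 3$, which is automatic within the stated range $|l| < (n-2)/2$.

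Finally I would apply the relative index theorem of Melrose~\cite[Chapter~6]{Melrose:Atiyah} for b-elliptic operators: if $l_1 < l_2$ and neither $-l_1$ nor $-l_2$ lies in $\Lambda$, then $\ind_{l_1}(L_{i\pi/2}) - \ind_{l_2}(L_{i\pi/2})$ equals the sum of ranks of the poles of $\hat N(L_{i\pi/2})^{-1}$ in the strip $\im\sigma \in (-l_2, -l_1)$. For $l > 0$, passing from weight $0$ to weight $l$ crosses the simple poles at $\sigma = -i(\tfrac{n-2}{2}+k)$ with $\tfrac{n-2}{2}+k < l$; for $l < 0$ the symmetric poles with positive imaginary part are crossed instead. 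Summing the multiplicities gives precisely the number (with multiplicity) of eigenvalues $\lambda = (k+\tfrac{n-2}{2})^2$ of $\Delta_{\sphere^{n-1}} + (n-2)^2/4$ satisfying $\lambda < l^2$, yielding
\begin{equation*}
\ind_l = -\sgn(l)\, N\bigl(\Delta_{\sphere^{n-1}}+(n-2)^2/4;\, l\bigr).
\end{equation*}
The sub-case $|l| < (n-2)/2$ is immediate since then no poles lie in the relevant strip. The main obstacle I anticipate is fixing the sign convention in Melrose's relative index formula for this setup and verifying that each pole of $\hat N(L_{i\pi/2})^{-1}$ contributes exactly its spherical-harmonic multiplicity (i.e., that the poles are simple with no Jordan block contributions); the self-adjointness of $\Delta_{\sphere^{n-1}}$ makes this clean, but the bookkeeping between $l > 0$ and $l < 0$ must be tracked carefully to match the formula as stated.
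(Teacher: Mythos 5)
Your proposal is correct, and it begins with exactly the same reduction as the paper: applying Theorem~\ref{thm:constantindex} (together with Lemma~\ref{thm:constantindexelliptic}) to replace $L_{+-}$ by the elliptic operator $L_{i\pi/2}$ on the fixed-order spaces $\Hb^{m+1,l}\to\Hb^{m-1,l}$. Where you diverge is in the final computational step. The paper stops there and simply cites the known index formula for $\Delta_{\RR^n}$ on weighted b-Sobolev spaces, namely Melrose \cite[Section~6.2]{Melrose:Atiyah} and its reformulation in \cite[Theorem~2.1]{GH2008}, which yields the right-hand side of \eqref{eq:12} directly. You instead re-derive that index formula from scratch: you pin down $L_{i\pi/2}$ in closed polar form so that its reduced normal operator is $\sigma^2+\Delta_{\sphere^{n-1}}+(n-2)^2/4$, you establish the base case $\ind_{l=0}=0$ from formal self-adjointness of $L_{i\pi/2}$ together with the duality \eqref{eq:l2bduality} and elliptic regularity in the differential order (valid since $0\notin\Lambda$ for $n\geq3$, which is automatic in the range of interest), and you then apply Melrose's relative index theorem to count the poles of $\hat N(L_{i\pi/2})(\sigma)^{-1}$ crossed as the Mellin contour moves from $\im\sigma=0$ to $\im\sigma=-l$. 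This is a legitimate and more self-contained route; what it buys is transparency about where the count $N(\Delta_{\sphere^{n-1}}+(n-2)^2/4;l)$ comes from, and what it costs is precisely the bookkeeping you flag: pinning down the sign in the relative index formula and verifying that the poles contribute only their rank (i.e.\ are simple, which you correctly observe follows from $\partial_\sigma(\sigma^2+\lambda+(n-2)^2/4)=2\sigma\neq0$ at each pole for $n\geq3$). Both derivations are sound; the paper's is shorter because the Euclidean computation is already in the literature.
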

\begin{proof}
  By Theorem \ref{thm:constantindex}, we have that
  \begin{equation*}
    \begin{split}
     & \ind(L_{\theta, +-} \colon \cX^{m, l} \lra \cY^{m - 1, l}) \\
     & \qquad =
      \ind(L_{i\pi/2} \colon \Hb^{m + 1, l} \lra
      \Hb^{m - 1, l}) \\
      & \qquad =
      \ind(\Delta_{\RR^{n}} \colon \Hb^{m + 1, l + (n - 2)/2} \lra
      \Hb^{m - 1, l + (n - 2)/2 + 2}),
    \end{split}
  \end{equation*}
and the latter was computed by Melrose, see \cite[Section
6.2]{Melrose:Atiyah}, or the interpretation in \cite[Theorem
2.1]{GH2008} where it is shown to be exactly the right hand side of \eqref{eq:12}.
\end{proof}

\subsection{Invertibility of the Feynman problem for $\Box_{g,
    \theta}$ down to $\theta = 0$.}

It follows from Theorem \ref{thm:bopfred} and \eqref{eq:8}, together
with the spectral theory of the sphere discussed above, that
$$\Delta_{\RR^n} \colon \Hb^{m+1, l + (n-2)/2} \lra \Hb^{m-1, l +
  (n-2)/2 + 2}$$ is Fredholm as long as $-l \not \in \Lambda$ where
$\Lambda$ is defined in \eqref{eq:indicialroots}.  In fact, we have
\begin{thm}\label{thm:laplaceinverse}
The map $\Delta_{\RR^n} \colon \Hb^{m+1, l + (n-2)/2} \lra \Hb^{m-1, l +
 (n-2)/2 + 2}$ is invertible provided $|l| < (n-2)/2,$ $m\in\CI(\Sb^*\overline{\RR^n})$.
\end{thm}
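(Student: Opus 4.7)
The plan is to combine b-elliptic Fredholmness with an index count and then rule out a nontrivial kernel directly. By \eqref{eq:Ldeftheta}, $L_{i\pi/2}=\rho^{-(n-2)/2}\rho^{-2}\Delta_{\RR^n}\rho^{(n-2)/2}$ is a b-elliptic element of $\Diffb^2(M)$, and conjugation by $\rho^{(n-2)/2}$ identifies the map in the statement with $L_{i\pi/2}\colon\Hb^{m+1,l}\to\Hb^{m-1,l}$. By Theorem~\ref{thm:bopfred}, the latter is Fredholm provided the line $\{\im\sigma=-l\}$ avoids the poles of $\hat N(L_{i\pi/2})(\sigma)^{-1}$. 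From \eqref{eq:8} the normal family equals $\Delta_{\sphere^{n-1}}+(n-2)^2/4+\sigma^2$, which is non-invertible exactly at $\sigma=\pm i(k+(n-2)/2)$, $k\in\Nat_0$, i.e.\ at imaginary heights in $\Lambda$. The hypothesis $|l|<(n-2)/2$ places $-l$ strictly in the open gap between the two strings of poles, giving Fredholmness, and Corollary~\ref{thm:explicitindex} gives index zero in this range, so it suffices to prove that the kernel is trivial.

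Suppose $u\in\Hb^{m+1,l+(n-2)/2}$ satisfies $\Delta_{\RR^n}u=0$. Since $m\in\CI(\Sb^*\overline{\RR^n})$ is bounded, $\Hb^{m+1,l+(n-2)/2}\subset\Hb^{-N,l+(n-2)/2}$ for some $N$, and this space sits inside the space of polynomially weighted tempered distributions on $\RR^n$. Fourier transforming $\Delta u=0$ gives $|\xi|^2\hat u(\xi)=0$, so $\supp\hat u\subset\{0\}$ and hence $u$ is a polynomial on $\RR^n$.

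It remains to verify that no nonzero polynomial lies in $\Hb^{m+1,l+(n-2)/2}$. With the convention $\mu=\rho^{-n}|dz|$ (so that $\Hb^{0,n/2}=L^2(\RR^n)$), a direct weight computation shows that a function of order $r^k$ at infinity lies in $\Hb^{0,s}$ if and only if $s<-k$; the same threshold governs $\Hb^{m+1,s}$ for a polynomial, since polynomials are classical conormal at $\pa M$ and applying any b-ps.d.o.\ preserves their order $\rho^{-k}$ at the boundary. For every $l\in(-(n-2)/2,(n-2)/2)$ the weight $s=l+(n-2)/2$ is strictly positive, so the condition $s<-k$ fails for all $k\geq 0$, and no nonzero polynomial is admitted. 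Hence $u\equiv 0$, and combined with the vanishing index this yields bijectivity. The only step with real analytic content is the Fourier-side identification of the kernel with polynomials; the remainder is weight bookkeeping and the Fredholm dichotomy.
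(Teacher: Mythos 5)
Your proof is correct, and it takes a genuinely different route from the paper's. The paper establishes invertibility without an index count: it cites \cite[Lemma~3.2]{CCH2006}, where the maximum principle together with elliptic regularity shows $\Delta$ has no nullspace in $\Hb^{m,l'}$ for constant $m$ and any $l'>0$; combined with b-elliptic regularity to reduce variable $m$ to constant $m$, and with the observation that the formal self-adjointness of $L_{i\pi/2}$ with respect to the $L^2_b$ pairing reduces the cokernel to a kernel on a dual space whose weight is again positive when $|l|<(n-2)/2$, invertibility follows from Fredholmness. You instead invoke index zero (which, via Theorem~\ref{thm:constantindex}, is really Melrose's relative index theorem / \cite[Theorem~2.1]{GH2008}) and then rule out the kernel by a more elementary route: a harmonic tempered distribution on $\RR^n$ is a polynomial (Fourier support at the origin), and no nonzero polynomial of degree $k$ lies in a b-Sobolev space of weight $s>0$ since the threshold is $s<-k$. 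What each buys: your argument is self-contained and avoids the maximum principle and the external reference, while the paper's avoids any explicit index computation. One cosmetic point: the sentence claiming that any b-ps.d.o.\ ``preserves the order $\rho^{-k}$'' of a polynomial is looser than it needs to be; it is cleaner (and what the paper does) to apply b-elliptic regularity to $\Delta u=0$ to conclude $u\in\Hb^{\infty,s}\subset\Hb^{0,s}$ and then run the weight computation there, sidestepping any worry about negative or variable differential order.
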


\begin{proof}
This is shown in the proof of \cite[Lemma 3.2]{CCH2006}, for
$m\in\RR$.  Indeed, they show
using the maximum principle and elliptic regularity that there can be
no nullspace of $\Delta$ in $\Hb^{m, l}$ for any $l > 0$ (and the same must be
true for the formal adjoint), from which the result follows since the
operator is Fredholm.  Our results give the
general Fredholm statement for arbitrary
$m\in\CI(S^*\overline{\RR^n})$, and elliptic regularity then gives
that any element of the kernel is in $\Hb^{\infty,l+(n-2)/2}$, with an
analogous statement for the cokernel, and these are trivial in turn by the
constant $m$ result.
\end{proof}

Consider the map
\begin{equation}
  \label{eq:scaledboxmap}
  \Box_{g, \theta} \colon \cX^{m,(n-2)/2 + l}_{+-}(\theta) \lra
\Hb^{m - 1,(n-2)/2 + l + 2 }
\end{equation}
where $\cX^{m, l}_{+-}(\theta) = \{  u \in
  \Hb^{m, l} :  \Box_{g, \theta} u \in \Hb^{m - 1, l +  2} \}$ is a
  $\theta$-dependent space with the graph norm,
$$
\|u\|^2_{\cX^{m,l}_{+,-}(\theta)}=\|u\|^2_{\Hb^{m,l}}+\|\Box_{g, \theta} u\|^2_{\Hb^{m - 1, l +  2}},
$$
so  by the elliptic
  estimates discussed above, 
$$
\cX^{m, l}_{+-}(\theta) = \left\{
    \begin{array}{ccc}
      \cX^{m, l + (n - 2)/2}_{+-} & \mbox{if} & \theta \in \mathbb{R}
      \\
      \Hb^{m + 1, l + (n - 2)/2} & \mbox{if} & \im \theta \in (0, \pi) 
    \end{array}
\right.,
$$
with the equivalence of norms uniform for $\theta$ in compact
subsets of $\RR\times(0,\pi)$.
(Here the $+-$ is just to remind us that $m + l$ satisfies the
conditions corresponding to $L_{g, +-}$, although this makes no
difference in the elliptic region.)  We will now study the set 
$$
\mathcal{D}_l = \{\theta : \im(\theta) \in [0, \pi/2] \mbox{ and }
\Box_{g, \theta} \mbox{ mapping as in \eqref{eq:scaledboxmap} is
  invertible}.
\}
$$
We see that for $|l| < (n-2)/2$, $\mathcal{D}_l$ contains $i \pi /2 $
and is thus non-empty.
\begin{thm}\label{thm:scalinglemma}
  Let $|l| < (n-2)/2$.  The set $\mathcal{D}_l$ contains the entire
  closed strip $\{ \im \theta \in
  [0, \pi/2] \} $.  In particular $\Box_{g, +-}$ mapping as in
  \eqref{eq:realinvertiblemap} is invertible for $g$
  equal to the Minkowski metric and $|l| < (n - 2)/2$. 
\end{thm}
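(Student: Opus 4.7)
The plan is a connectedness argument in the closed strip $S = \{\theta \in \Cx : \im \theta \in [0, \pi/2]\}$. By Theorem~\ref{thm:laplaceinverse}, $\Delta_{\RR^n}$ is invertible on the relevant weighted b-Sobolev spaces when $|l| < (n-2)/2$; hence $i\pi/2 \in \mathcal{D}_l$, so $\mathcal{D}_l$ is non-empty. Since $S$ is connected, it suffices to prove that $\mathcal{D}_l$ is both open and closed in $S$, which forces $\mathcal{D}_l = S$ and thereby the invertibility of $\Box_{g,+-}$ as in \eqref{eq:realinvertiblemap}.

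Openness of $\mathcal{D}_l$ is standard. At $\theta_0 \in \mathcal{D}_l$, the uniform Fredholm estimates \eqref{eq:uniformfredholm} hold with constants uniform in $\theta$ near $\theta_0$. Invertibility at $\theta_0$ lets one absorb the compact error term via a compactness-contradiction argument of the type used in the proof of Theorem~\ref{thm:constantindex}, yielding $\|u\|_{\Hb^{m, l+(n-2)/2}} \leq C\|\Box_{g,\theta} u\|_{\Hb^{m-1, l+(n-2)/2+2}}$ uniformly for $\theta$ in a neighborhood of $\theta_0$; the analogous estimate for $\Box_{g,\theta}^*$ gives surjectivity, hence invertibility, throughout that neighborhood.

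For closedness, fix $\theta_j \in \mathcal{D}_l$ with $\theta_j \to \theta_0 \in S$. The Fredholm index of $\Box_{g,\theta_0}$ acting as in \eqref{eq:scaledboxmap} is zero by Corollary~\ref{thm:explicitindex} combined with the index constancy of Lemma~\ref{thm:constantindexelliptic} and Theorem~\ref{thm:constantindex}, so it suffices to establish injectivity at $\theta_0$. For $\theta_0$ in the open strip $\im \theta \in (0, \pi/2)$, this is immediate via the analytic Fredholm theorem: the family $\Box_{g,\theta} \colon \Hb^{m+1, l+(n-2)/2} \to \Hb^{m-1, l+(n-2)/2}$ is an analytic family of elliptic Fredholm operators on a fixed space, invertible at $i\pi/2$, so the non-invertible set is discrete in the open strip and $\theta_0$ must itself be invertible (again by openness of $\mathcal{D}_l$ together with discreteness).

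The main obstacle is the case $\theta_0 \in \RR$, where the character of the domain changes from $\Hb^{m+1, l+(n-2)/2}$ for $\im \theta > 0$ to the non-elliptic Feynman space $\cXpm^{m, l+(n-2)/2}$ at $\theta_0$. Suppose for contradiction that there exists $u_0 \neq 0$ in $\cXpm^{m, l+(n-2)/2}$ with $\Box_{g,\theta_0} u_0 = 0$. Since $\Box_{g,\theta_j} - \Box_{g,\theta_0} = (e^{-2\theta_j} - e^{-2\theta_0}) D_{z_n}^2$ has coefficient tending to zero, one has $\Box_{g,\theta_j} u_0 \to 0$ in a suitable b-Sobolev norm. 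Choosing the $\theta_j$ from within the open strip (where $\mathcal{D}_l$ is dense by the analytic Fredholm result just used) and invoking the uniform estimates \eqref{eq:uniformfredholm}, it remains to produce uniform bounds $\|\Box_{g,\theta_j}^{-1}\| \leq C$ in a norm that controls $u_0$; these follow by ruling out blow-up sequences through a compactness extraction, whose limit would be a non-trivial kernel element of $\Box_{g,\theta_0}$, contradicting the density of invertible $\theta_j$ and hence forcing $u_0 = 0$.
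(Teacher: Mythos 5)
Your framework of establishing that $\mathcal{D}_l$ is non-empty, open, and closed in the connected strip $S$ is a reasonable alternative organization of the argument, and the openness step is essentially sound: uniform Fredholm estimates plus invertibility at $\theta_0$ allow absorption of the compact error term, exactly in the manner of Theorem~\ref{thm:constantindex}. Unfortunately, your closedness argument has a genuine gap, and it is precisely there that the paper does the hard work.

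In the open strip, you invoke the analytic Fredholm theorem to conclude that the non-invertible set is discrete, and then claim that openness of $\mathcal{D}_l$ plus discreteness forces $\theta_0 \in \mathcal{D}_l$. This is a logical fallacy. If $\tilde\theta$ were an isolated non-invertible point in the open strip, then $\mathcal{D}_l$ would contain a punctured neighborhood of $\tilde\theta$ but not $\tilde\theta$ itself; this is entirely compatible with $\mathcal{D}_l$ being open and the non-invertible set being discrete, yet it means $\mathcal{D}_l$ is \emph{not} closed. Discreteness alone cannot rule out poles. For the boundary case $\theta_0 \in \mathbb{R}$, your argument is circular: you assume for contradiction that $\Ker\Box_{g,\theta_0} \neq 0$, and then a blow-up/compactness extraction yields a nontrivial kernel element of $\Box_{g,\theta_0}$, which you claim is a ``contradiction'' --- but it is exactly the assumption you started from, not a contradiction of anything. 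Nowhere do you produce a quantity that is both bounded (for some a priori reason) and forced to diverge, which is the skeleton that any proof-by-contradiction here needs.

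The missing ingredient is the Wick rotation at the level of \emph{matrix elements}. The paper constructs a dense space $\mathcal{A}$ of analytic vectors together with dilation-type maps $U_\theta$ satisfying the intertwining relation $U_\theta \Box_{g,\theta_0} U_\theta^{-1} = \Box_{g,\theta+\theta_0}$ on $\mathcal{A}$, so that the crucial identity \eqref{eq:crux},
$$
\langle f, \Box^{-1}_{g,\tilde\theta+\theta} h\rangle_{L^2} = \langle U_{\bar\theta} f, \Box^{-1}_{g,\tilde\theta} U_\theta^{-1} h\rangle,
$$
exhibits the matrix elements $\langle f, \Box_{g,\theta}^{-1}h\rangle$ as \emph{analytic} in $\theta$ for $f, h \in \mathcal{A}$. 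Combined with meromorphy from analytic Fredholm and density of $\mathcal{A}$, this kills any pole in the open strip. At the boundary $\theta = 0$, the same identity shows that matrix elements stay bounded as $\theta \to 0$, while the assumption that $\Box_{g,+-}$ is not surjective would force $\|\Box_{g,\theta}^{-1}h\|_{\cX^{m,l+(n-2)/2}_{+-}}$ to diverge, and a further argument (replacing the test function $\tilde f$ by one of the form $(P-i)^{-1}\langle z\rangle^{-2l+2}(P+i)^{-1}f$ with $f \in \mathcal{A}$, matching the $\Hb$ inner product to the $L^2$ one) upgrades that divergence to a divergence of an honest $L^2$ matrix element $\langle f, \Box_{g,\theta}^{-1}h\rangle$ with analytic $f,h$, yielding the actual contradiction. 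Without this analyticity-in-$\theta$ of matrix elements — or some substitute for it — the closedness step cannot be completed, and the connectedness argument does not go through.
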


We will prove Theorem \ref{thm:scalinglemma} by arguing along lines
similar to those in \cite{MV2004, MV2005}, which in turn follow the
development in \cite{HS1996}.  

\begin{proof}[Proof of Theorem \ref{thm:scalinglemma}]

We will define a subspace $\mathcal{A} \subset L^2 = L^2(\RR^n)$ of so-called
analytic vectors and a family of maps
\begin{equation}
  \label{eq:9}
  U_\theta \colon \mathcal{A} \lra L^2,
\end{equation}
for $\theta$ in an open neighborhood $\mathcal{D} \subset \mathbb{C}$
of $0$ with the following properties:
\begin{enumerate}
\item For $\theta \in \mathbb{R}$, $U_\theta$ is unitary on $L^2$.
\item For $f \in \mathcal{A}$ and $\theta \in\mathcal{D}$, $$U_\theta
  \Box_{g, \theta_0} U_\theta^{-1} f = \Box_{g, \theta + \theta_0} f.$$
In particular, $U_\theta$ is injective and $\mathcal{A}$ is in the
range of $U_\theta$ for $\theta \in \mathcal{D}$.
\item $U_\theta \mathcal{A}$ is dense in $\Hb^{m, l}$ for all $\theta
  \in \mathcal{D}$ and any $m \colon \Sb^* M \lra \mathbb{R}$, $l \in \mathbb{R}$.
\end{enumerate}

We will then leverage the properties of $\mathcal{A}$ and $U_\theta$
to prove Theorem \ref{thm:scalinglemma} as follows.  Recall that, by
Theorem \ref{thm:constantindex},
$\Box_{g, \theta}$ as in \eqref{eq:scaledboxmap} is a Fredholm map of
index zero.
Since it is invertible for $\theta' = i \pi/2$, it is invertible for
$\theta$ near $\theta'$.  It follows by the analytic Fredholm theorem that 
\begin{equation}
  \label{eq:11}
  \Box_{g, \theta}^{-1} \colon \Hbpm^{m - 1,l} \lra \Hbpm^{m + 1,l}
\end{equation}
extends to a meromorphic family of operators in the strip
$\{0 < \im \theta < \pi \}$ with finite rank poles.  In particular, if
$\wt{\theta}$ is a putative pole, then for $\theta$ near $0$,
\begin{equation}
  \label{eq:16}
  \Box^{-1}_{g, \wt{\theta} + \theta} = \sum_{j =-N}^{-1} A_j \theta^j + M_\theta, \mbox{ where } M_\theta \mbox{ is holomorphic.}
\end{equation}
Thus if $\wt{\theta}$ is indeed a pole, by the density of
$\mathcal{A}$ we may choose $f, h$ such that, e.g.\ $\la f, A_1 h\ra
\neq 0$ and thus $\la f, \Box^{-1}_{g,
  \wt{\theta} + \theta} h\ra$ has a pole at $\theta = \wt{\theta}$.
On the other hand the matrix elements satisfy
\begin{equation}\label{eq:crux}
\la f, \Box^{-1}_{g,\wt{\theta} +  \theta} \notg \ra_{L^2}  = \la U_{\overline{\theta}} f ,
\Box^{-1}_{g, \wt{\theta}} U^{-1}_{\theta} \notg\ra.
\end{equation}
We will see that for $h \in \mathcal{A}$, both $U_\theta h$ and
$U_\theta^{-1} h$ are analytic for $\theta \in \mathcal{D}$, so the
matrix elements of $\Box_{g,\wt{\theta} + \theta}^{-1}$ are analytic
functions for
$\theta \in \mathcal{D}$, and thus
$\Box_{g,\wt{\theta} +  \theta}^{-1}$ has no poles in $0 < \im \theta \le
\pi/2.$

We have proven that $\Box_{g, \theta}$ is invertible only for those $\theta$ with $0 < \im
\theta < \pi$.  Since $\Box_{g,
  \theta}$ is not strictly speaking an analytic Fredholm family on an open set
containing $\theta = 0$ (the domain changes according to whether
$\im \theta = 0$ or not and $\theta = 0$ lies on the boundary of $0
\le \im \theta \le \pi /2$) we need a different argument there.  Pick
$\theta_0$ close enough to $0$ so that the density statements
for $U_\theta \mathcal{A}$ hold on an open set including $\theta= -  \theta_0$.  
Assuming for contradiction that $\Box_{g, +-}$ is not invertible for
$l$ in the given range, it will suffice to construct
elements $f, \notg \in \mathcal{A}$ such that 
\begin{equation}\label{eq:moredivergence}
\la f, \Box^{-1}_{g, \theta_0 +
  \theta} \notg\ra_{L^2} \mbox{ diverges as } \theta_0 + \theta \to 0 \mbox{
  in } \im(\theta
+ \theta_0) > 0,
\end{equation}
since then by
\eqref{eq:crux} with $\wt{\theta} = \theta_0$ we will have a contradiction.
Note that by this assumption there exists $u_0$ lying in $\Hbpm^{m - 1, l +
  (n-2)/2 + 2}$ such that
$$
u_0 \not \in \Ran(\Box_{g,+-} \colon \cX_{+-}^{m,
  l + (n-2)/2} \lra \Hb^{m - 1,   l + (n-2)/2 + 2})
$$ 
since by Theorem \ref{thm:constantindex} the map has index zero.  Since
\eqref{eq:scaledboxmap} is
Fredholm and $\mathcal{A}$ is dense, we may instead choose $\notg \in
\mathcal{A}$ such that also $\notg \not \in \Ran(\Box_{g,
  0})$.  Using the invertibility proved above, for $\im (\theta_0 +
\theta) > 0$, we consider
$\Box^{-1}_{g, \theta_0 + \theta} \notg \in \Hb^{m + 1, l} \subset
\cX^{m, l+(n-2)/2}_b$.  We claim that 
$$
\| \Box^{-1}_{g, \theta_0 + \theta}  \notg \|_{\cX^{m, l + (n-2)/2}_{+-}}
\mbox{ diverges as } \theta + \theta_0 \to 0
$$ 
Indeed, otherwise $\Box^{-1}_{g, \theta_0 + \theta}  \notg$ converges subsequentially
to some $u \in \cX^{m, l}_b$ weakly, and by a standard
argument we must have $L_0 u = \notg$, which is impossible by
assumption.
% \begin{equation}
%   \label{eq:18}
%   \begin{split}
%     \la \Box_0 u - h, v \ra_{\Hb^{m - 1, l}}  &= \la (\Box_0 - \Box_\theta) u ,
%     v \ra_{\Hb^{m - 1, l}} + \la u - \Box_\theta^{-1} h, \Box_0^* v
%     \ra_{\cX^{m, l}_b} \\
%     &\quad + \la u - \Box_\theta^{-1} \notg, (\Box_0^* -
%     \Box_\theta^*) v \ra_{\cX^{m, l}_b},
%   \end{split}
% \end{equation}
% which goes to zero by weak convergence.

Note that this does not guarantee that
\eqref{eq:moredivergence} holds for any $f \in \mathcal{A}$; this
requires a further argument.  To see this, we use the uniform Fredholm
estimates in \eqref{eq:uniformfredholm}, which in terms of $\Box_{g,
  \theta_0 + \theta}$ and applied to $\Box_{g,
  \theta_0 + \theta}^{-1} \notg$ take the form
  \begin{equation}
    \label{eq:uniformfredholmbox}
    \begin{split}
      \|\Box_{g, \theta_0 +
          \theta}^{-1} \notg \|_{\Hb^{m, l + (n- 2)/2}} &\le C ( \norm[\Hb^{m-1, l + (n - 2)/2 +
        2} ]{\notg} \\
     &\qquad + \|\Box_{g, \theta_0 + \theta}^{-1} \notg \|_{\Hb^{m', l' + (n - 2)/2 } } ),
    \end{split}
  \end{equation}
where $m' < m$ and $l' < l$.  Letting $\theta_j$ be a sequence with
$\theta_j \to -\theta_0$, let $c_j = \|\Box_{g, \theta_0 +
          \theta_j}^{-1} \notg \|_{\Hb^{m, l + (n- 2)/2}} $, and
        let 
$$
u_j = c_j^{-1}\Box_{g, \theta_0 +
          \theta_j}^{-1} \notg.
$$ 
 By \eqref{eq:uniformfredholmbox} and
        the compact containment of $\Hb^{s, \ell} \subset \Hb^{s',
          \ell'}$ when $s' < s$ and $\ell' < \ell$,
        the $u_j$ converge subsequentially (dropped from the notation) to a non-zero element $u \in \Hb^{m, l +
          (n- 2)/2}$.  It follows that 
 $$
\la \Box_{g, \theta_0 + \theta_j}^{-1} \notg, u
      \ra_{\Hb^{m, l +
          (n- 2)/2}} = c_j(1 + o(1)),
$$
where $o(1) \to 0$ as $j \to \infty$.
We claim that there is a $\delta_0 > 0$ such that for any $\wt{f}$ with
        $\|\wt{f} - u \|_{\Hb^{m, l + (n- 2)/2}} < \delta_0$,
that $\la \Box_{g, \theta_0 + \theta_j}^{-1} \notg, \tilde f
      \ra_{\Hb^{m, l +
          (n- 2)/2}}$ is also divergent.
        Indeed, 
        \begin{equation}\label{eq:closetou}
          \begin{split}
            \la \wt{f}, \Box_{g, \theta_0 + \theta_j}^{-1} \notg\ra_{\Hb^{m, l +
          (n- 2)/2}} &= \la u,  \Box_{g, \theta_0 + \theta_j}^{-1} \notg
      \ra_{\Hb^{m, l +
          (n- 2)/2}}   \\
&\qquad  + \la \wt{f} - u, \Box_{g, \theta_0 + \theta_j}^{-1} \notg
      \ra_{\Hb^{m, l +
          (n- 2)/2}} \\
      &\ge c_j(1 + o(1)) -  C c_j \delta_0 \ge  \frac 12 c_j,
          \end{split}
        \end{equation}
for $C \delta_0 < 1/3$  and $j$ large.  This is not exactly the desired
divergence in \eqref{eq:moredivergence} since the inner product is not
$L^2$.  Define
\begin{equation}
  \label{eq:japanese}
  \la z \ra = (z_1^2 + \dots + z_n^2 + 1)^{1/2},
\end{equation}
and let $P \in \Psib^{m}(\mathbb{R}^n)$ be elliptic and
self-adjoint.  Then (since by the paragraph below \eqref{eq:l2bdef} we have
$\Hb^{0, n/2} = L^2(\RR^n)$)
$$
\la z \ra^{l - 1} (P + i) \colon \Hb^{m, l + (n-2)/2} \lra L^2(\RR^n).
$$
and we may take the $\Hb^{m, l + (n - 2)/2}$ inner product to be 
$$
\la u, v \ra_{\Hb^{m, l + (n - 2)/2}} = \la \la z \ra^{l - 1} (P + i) u, \la z \ra^{l - 1} (P + i) v\ra_{L^2}.
$$ 
Using the density of $\mathcal{A}$ in all weighted b-Sobolev spaces,
we choose $$\wt{f} = (P - i)^{-1}\la z \ra^{-2l + 2} (P + i)^{-1} f$$
for some $f \in \mathcal{A}$ such that $\wt{f}$ within $\delta_0$ of
$u$ in $\Hb^{m, l + (n - 2)/2}$.  Thus
$$
 \la \wt{f}, \Box_{g, \theta_0 + \theta_j}^{-1} \notg\ra_{\Hb^{m, l +
          (n- 2)/2}} =  \la f, \Box_{g, \theta_0 + \theta_j}^{-1} \notg\ra_{L^2},
$$
and \eqref{eq:moredivergence} is established, which means that up to
the construction of $\cA$ and $U_\theta$ and showing that the
properties claimed for them hold, the proof is complete.

% then $\la z \ra^2\Delta_{\RR^n}$ is invertible as a map from $\Hb^{m+1,
%   (n-2)/2}$ to $\Hb^{m-1, (n-2)/2}$ for any $m$.  Thus, recalling from
% the paragraph below \eqref{eq:l2bdef} that $\Hb^{0, n/2} =
% L^2(\RR^n)$, for even integer $M$
% and real $L$, 
% $$
% u \in \Hb^{M,L} \iff \la z \ra^{-1} (\la z \ra^2 \Delta)^{M/2} \la z
% \ra^{L - (n-2)/2} u \in \Hb^{0, n/2} = L^2
% $$
% Defining
% $$
% P_{M, L} = \la z \ra^{-1} (\la z \ra^2 \Delta)^{M/2} \la z
% \ra^{L - (n-2)/2},
% $$
% we may thus take the $\Hb^{M,L}$ inner product to be 
% $$
% \la u, v \ra_{\Hb^{M,L}} = \la P_{M,L} u, P_{M, L} v\ra_{L^2}.
% $$ 
% Using the density of $\mathcal{A}$ in all weighted b-Sobolev spaces,
% we choose $$\wt{f} = (P^*_{m, l + (n - 2)/2})^{-1} (P_{m, l + (n -
%   2)/2})^{-1} f$$ for some $f \in \mathcal{A}$ such that $\wt{f}$ is
% close to $u$.  Thus
% $$
%  \la \wt{f}, \Box_{g, \theta_0 + \theta_j}^{-1} \notg\ra_{\Hb^{m, l +
%           (n- 2)/2}} =  \la f, \Box_{g, \theta_0 + \theta_j}^{-1} \notg\ra_{L^2},
% $$
% and \eqref{eq:moredivergence} is established.

It remains to define $\mathcal{A}$ and $U_\theta$ and prove that they
have the properties i)-iii) stated above.  Following \cite{MV2004}, we define $\mathcal{A}$ to be the space of $f
\in C^\infty(\RR^{n-1} \times \RR)$ such that, writing $z = (z'',
z_n)$ with $z'' \in \RR^{n-1}$, we have that $f(z'', z_n)$ is the
restriction to $\zeta \in \RR$ of an entire function $f(z'', \zeta)$
which satisfies
\begin{equation}\label{eq:A1}
  \sup_{|\re{\zeta}| < C |\im{\zeta}|} |f(z'', \zeta)| \la \zeta
  \ra^{N} < + \infty,
\end{equation}
for any $C, N > 0$ where $\la \zeta \ra = (1 + |\zeta|^2)^{1/2}$, and
also assume that
\begin{equation}
  \label{eq:A2}
  \supp f(z'', \zeta) \subset K\times \mathbb{C},
\end{equation}
where $K \subset \RR^{n-1}$ is compact.
Finally, for $f \in \mathcal{A}$ let
\begin{equation}
  \label{eq:uthetadef}
  U_\theta (f)(z'', z_n) : = e^{\theta}f(z'',e^{\theta} z_n).
\end{equation}
By the proof of \cite[Proposition 3.6]{MV2004}, for $|\im \theta| < \pi
/ 4$, $U_\theta
\mathcal{A}$ is dense in $L^2 = L^2(\RR^{n}, |dz|),$ where $|dz|$ denotes
Lebesgue measure.  Indeed, given $f \in C^\infty_c(\RR^{n})$, let 
\begin{equation*}
  f_t(z'', z_n) := \frac{1}{(4 \pi t)^{1/2}}\int_\RR
    e^{-(z_n -e^{\theta} y)^2/4t} e^{\theta} f(z'', y) dy.
\end{equation*}
Then the reference shows that $f_t \in \mathcal{A}$ and $U_\theta f_t
\to f$ in $L^2$ as $t \to \infty$.  Thus $U_\theta \mathcal{A}$ is
dense in $L^2 = \Hb^{0, n/2}$.  To see that $U_\theta \mathcal{A}$
is dense in $\Hb^{M, L}$, for any $f \in C^\infty_c$ take a sequence
$U_\theta \wt{f}_i$ with 
\begin{equation*}
  U_\theta \wt{f}_i \to (e^{2\theta} z_n^2 + |z''|^2 + i)^{-L - 2M + n/2}
  (\Box_{i\pi/2 + \theta}+ i)^{M} f
  \in  \Hb^{0, n/2}, 
\end{equation*}
and set $F_i := (e^{2\theta} z_n^2 + |z''|^2 + i)^{L + 2M - n/2}
  (\Box_{i\pi/2 + \theta}+ i)^{-M}
\wt{f}_i$.  Then in fact $F_i = U_\theta f_i$, where $f_i = (z_n^2 +
|z''|^2 + i)^{L + 2M -  n/2}
  (\Delta+ i)^{-M} \wt{f}_i$ where again $\Delta = \Box_{i\pi/2}$ is
  the Laplacian on $\RR^n$. Since $F_i = U_\theta f_i \to f$ in $H^{M,
    L}_b$, and since $\Hb^{M, L}$ is dense in $\Hb^{m, l}$ provided $M
  \ge L$ and $L \ge l$, the desired density is established.

\end{proof}

\subsection{Complex scaling for $N(L_\theta)$}\label{sec:normalfamily}

In this section we will apply another complex scaling to the normal
operators corresponding to the $L_\theta$.  Namely, let $m, l$ be
chosen for the forward Feynman problem $L_{+-}$, and consider
  the operators $L_{\theta, +-}$ defined in \eqref{eq:Ldeftheta}.  Let
  $H^m(\p M)$ denote the variable order Sobolev spaces obtained by
  restricting $m$ to $T^* \p M$ as described above.  Consider the
  operators
  \begin{equation}
    \label{eq:normaltheta}
    \hat{N}(L_{\theta, +-})(\sigma)\colon \cX^{m}_{\theta}(\p M) \lra H^{m-1}(\p M),
  \end{equation}
where $\cX^{m}_{\theta}(\p M) = \{ u \in H^{m}(\p M) :
\hat{N}(L_\theta)(\sigma) u \in H^{m - 1}(\p M) \}$.
\begin{prop}\label{thm:boundaryscaling}
The poles of the inverse family $\hat{N}(L_{\theta, +-})(\sigma)^{-1}$ are
independent of $\theta$ for
$\im \theta \in [0, \pi / 2]$.
\end{prop}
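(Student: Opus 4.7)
The plan is to carry out, on the boundary sphere $\pa M = \sphere^{n-1}$, a complex scaling argument parallel to the one used in the proof of Theorem~\ref{thm:scalinglemma}. The Euclidean dilation $\Phi_\theta(z) = (z_1,\ldots,z_{n-1}, e^{\theta}z_n)$ extends for real $\theta$ to a smooth diffeomorphism of $M = \overline{\RR^n}$ whose restriction to $\pa M$, viewing $p\in\sphere^{n-1}\subset\RR^n$ as a unit vector, is the diffeomorphism
\[
\psi_\theta\colon \sphere^{n-1}\lra\sphere^{n-1}, \qquad \psi_\theta(p) = \Phi_\theta(p)/|\Phi_\theta(p)|.
\]
Let $V_\theta f = (\det D\psi_\theta)^{1/2}\psi_\theta^* f$; this is unitary on $L^2(\sphere^{n-1})$ for real $\theta$. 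First, I would construct a space $\cA_\pa \subset \CI(\sphere^{n-1})$ of analytic vectors, defined as restrictions to $\sphere^{n-1}$ of holomorphic functions in a complex neighborhood of the sphere in $\RR^n$, supported away from the two fixed points $p = (0,\ldots,0,\pm 1)$ at which $\psi_\theta$ degenerates as $|\theta|\to\infty$. For such $f$, the map $\theta\mapsto V_\theta f$ extends to a holomorphic $L^2$-valued function on an open neighborhood $\mathcal{D}_\pa\subset\Cx$ of $0$, and a heat-kernel mollification on $\sphere^{n-1}$ paralleling the construction of $\cA$ in the proof of Theorem~\ref{thm:scalinglemma} shows that $V_\theta \cA_\pa$ is dense in every variable-order Sobolev space $H^m(\sphere^{n-1})$ for $\theta\in\mathcal{D}_\pa$.

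Next, I would establish the intertwining identity
\[
V_\theta\, \hat N(L_{\theta_0})(\sigma)\, V_\theta^{-1} = \hat N(L_{\theta_0+\theta})(\sigma) \quad \text{on } \cA_\pa.
\]
For real $\theta$ this passes to the normal operator from the identity $U_\theta L_{\theta_0} U_\theta^{-1} = L_{\theta_0+\theta}$ (immediate from the definition of $L_\theta$ by Wick rotation, using that $\Phi_\theta$ preserves $\pa M$ and acts there via $\psi_\theta$); the general case $\theta\in\mathcal{D}_\pa$ follows by analytic continuation in $\theta$ on $\cA_\pa$. Inverting at any $\sigma$ where $\hat N(L_{\theta_0})(\sigma)$ is invertible yields, for $f,g\in \cA_\pa$,
\[
\la f,\, \hat N(L_{\theta_0+\theta})(\sigma)^{-1} g\ra = \la V_{\bar\theta}^{-1} f,\, \hat N(L_{\theta_0})(\sigma)^{-1} V_\theta^{-1} g\ra,
\]
an identity whose right hand side depends holomorphically on $\theta\in\mathcal{D}_\pa$ for $\sigma$ outside the pole set of $\hat N(L_{\theta_0})(\sigma)^{-1}$.

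To conclude: suppose $\sigma_0$ were a pole of $\hat N(L_{\theta_1})(\sigma)^{-1}$ but not of $\hat N(L_{\theta_0})(\sigma)^{-1}$ for some $\theta_0,\theta_1$ with $\im\theta_j\in[0,\pi/2]$. Cover a path from $\theta_0$ to $\theta_1$ within the strip by a finite chain of neighborhoods of the form $\theta_* + \mathcal{D}_\pa$. By the analytic Fredholm theorem the principal part of $\hat N(L_{\theta_1})(\sigma)^{-1}$ at $\sigma_0$ is nonzero, so density of $V_{\theta_1}\cA_\pa$ lets us choose $f,g\in\cA_\pa$ making $\la V_{\theta_1}f,\, \hat N(L_{\theta_1})(\sigma)^{-1} V_{\theta_1}g\ra$ singular at $\sigma_0$. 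Iterating the matrix-element identity along the chain transports this singularity back to the base-point $\theta_0$, contradicting the holomorphy of the resulting matrix element in $\theta$ for $\sigma$ near but distinct from $\sigma_0$. By the symmetry of the argument in $\theta_0,\theta_1$, the pole sets of $\hat N(L_\theta)(\sigma)^{-1}$ coincide for all $\theta$ in the strip. The main obstacle is the construction of $\cA_\pa$ on $\sphere^{n-1}$ and the verification of density in variable-order Sobolev spaces, together with justifying the analytic extension of the intertwining relation (which holds a priori only on analytic vectors because $V_\theta$ is unbounded for complex $\theta$); these are direct, if careful, adaptations of the work already carried out on $\RR^n$ in the proof of Theorem~\ref{thm:scalinglemma}.
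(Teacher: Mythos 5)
Your overall strategy matches the paper's: construct analytic vectors on $\pa M = \sphere^{n-1}$, show the intertwining identity $\wt{U}_\theta \hat N(L_{\theta_0})(\sigma)\wt{U}_\theta^{-1} = \hat N(L_{\theta_0+\theta})(\sigma)$ on those vectors, establish density in every variable-order Sobolev space, and then run the same matrix-element/analytic-continuation argument as in Theorem~\ref{thm:scalinglemma}. The paper explicitly invokes that argument and only needs to supply the boundary analogues of $\cA$ and $U_\theta$, exactly as you say.

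However, your construction of the analytic vector space $\cA_\pa$ has a genuine gap. You ask for \emph{restrictions to $\sphere^{n-1}$ of holomorphic functions in a complex neighborhood of the sphere} that are \emph{supported away from the two fixed points} $(0,\dots,0,\pm1)$. These two conditions are incompatible: the real sphere is a maximal totally real submanifold of its complexification, so a holomorphic function on a connected complex neighborhood of $\sphere^{n-1}$ that vanishes on an open subset of $\sphere^{n-1}$ vanishes identically. Your $\cA_\pa$ would consist only of $0$. The analogous-looking set $\cA$ on $\RR^n$ in the proof of Theorem~\ref{thm:scalinglemma} avoids this because the holomorphy there is only in the single variable $z_n$ (entire in $\zeta$ with rapid decay in sectors), while the compact-support condition \eqref{eq:A2} is imposed in the transversal variables $z''$, in which no analyticity is demanded; on a sphere there is no such splitting of variables. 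Moreover, the support restriction is not actually needed: the complex dilation $\psi_\theta$ does not degenerate at the poles for $|\im\theta|<\pi/4$, and the obstruction is rather that $\psi_\theta(p)$ is a complex point off the real sphere.

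The paper sidesteps both issues by taking $\wt{\cA}$ to be finite sums of restrictions to $\{|z|=1\}$ of homogeneous degree-zero rational functions $F = p_l(z)/(z_1^2+\cdots+z_{n-1}^2 + e^{2\omega} z_n^2)^{l/2}$ with $|\im\omega|<\pi/4$. These are globally defined on the sphere (no support hypothesis), the pullback $\Phi_\theta^* F$ stays in the same class (it shifts $\omega$ by $\theta$), and most importantly the class contains all spherical harmonics (take $\omega=0$ and $p_l$ harmonic), so density in every $H^m(\sphere^{n-1})$ is immediate rather than requiring a separate mollification argument. If you want to salvage your approach, you would need to drop the support condition, replace it with a structure in which the complexified pullback is still globally defined on the sphere, and then verify density — at which point you are essentially rebuilding the paper's construction.
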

\begin{proof}
  As in the previous section, we wish to define a set of analytic
  vectors $\wt{\mathcal{A}} \subset L^2(\p M)$, and a family of maps
  $\wt{U}_\theta \colon \wt{\mathcal{A}} \lra L^2(\p M)$ defined for
  $\theta$ in an open set which we also call $\mathcal{D} \subset
  \mathbb{C}$, and such that conditions i), ii), and iii) below
  \eqref{eq:9} above hold.  The Proposition then follows exactly as in
  the proof of Theorem \ref{thm:scalinglemma} above.

  Consider homogeneous degree zero functions on $\mathbb{R}^n$ of the
  form
  \begin{equation}
    \label{eq:homozero}
    F = \frac{p_l(z_1, \dots, z_n)}{(z_1^2 + z_2^2 + \dots + e^{2\omega}z_n^2)^{l/2}} ,
  \end{equation}
  where $p_l$ is a homogeneous polynomial of degree $l$ and $\omega
  \in \mathbb{C}$ with $|\im(\omega)| < \pi/4$ .
  \begin{equation}
    \label{eq:analboundary}
    \wt{\mathcal{A}} = \set{
      f \in C^\infty(\p M)  :  f = \sum_{i = 1}^k F_i \rvert_{\absv{z} = 1}
    },
  \end{equation}
  or in words, $\wt{\mathcal{A}}$ consists of all finite sums of
  restrictions of homogeneous degree zero functions as in
  \eqref{eq:homozero} to the sphere.  Note that $\wt{\mathcal{A}}$ is
  dense in every Sobolev space; indeed, $\wt{\mathcal{A}}$ contains the
  spherical harmonics, which are restrictions to the sphere of
  harmonic polynomials, and which form a basis of every Sobolev space
  by Fourier series \cite{taylor:vol2}.
  For $\theta \in \mathbb{R}$, we define
  \begin{equation}
    \label{eq:13}
    V_\theta F = (\det D \Phi_\theta)^{1/2} \Phi_\theta^* F,
  \end{equation}
  with $\Phi_\theta$ as above, i.e.\ $\Phi_\theta(z_1, \dots, z_{n -
    1}, z_n) = (z_1, \dots, z_{n - 1}, e^\theta z_n)$, and thereby
  define, for $f = \sum_{i = 1}^k F_i \rvert_{\absv{z} = 1}$,
  \begin{equation}
    \label{eq:14}
    \wt{U}_\theta f =  \sum_{i = 1}^k (V_\theta F_i) \rvert_{\absv{z} = 1}.
  \end{equation}
To see that condition iii) holds, note that given $0 < \delta < \pi/4$ if we define 
$\wt{\cA}_{\delta} \subset \wt{\cA}$ to be those elements where the
functions $F_i$ in the definition of $\wt{\cA}$ have parameter $\omega$
(in \eqref{eq:homozero}) with $|\omega| \le \delta$, then
$\wt{\cA}_\delta$ remains dense in $L^2$.
satisfying $|\omega| < \delta$, then $\wt{\cA}_\delta$ remains dense
since it still contains the spherical harmonics.  On the other hand,
for $|\im \theta| < \pi / 4 $ and $\delta$ sufficiently small
$\wt{\cA}_\delta \subset \wt{U}_\theta \wt{\cA}$, so the density
result holds also for $\wt{U}_\theta \wt{\cA}$. 

Property ii) in the complex scaling process in this context simply
says that for $f \in \wt{\mathcal{A}}$ and $\theta \in\mathcal{D}$, $$\wt{U}_\theta
  N(L_{g, \theta_0}) \wt{U}_\theta^{-1} f = N(L_{g, \theta + \theta_0})
  f,$$ which follows directly from the corresponding statement for
  $\Box_{g}$.  Thus all three properties hold and the argument proceed
  exactly as in the previous section.

\end{proof}

\section{Module regularity and semilinear problems}\label{sec:semi}

Despite being the {\em simplest} natural spaces between which the operator $\Box_g$
extends to a Fredholm map, the $\cX_{\pm\pm}^{m, l}$ and
$\cY_{\pm\pm}^{m, l}$ have the following critical
shortcoming when one wants to analyze semilinear equations.  If one wishes for example to do a
Picard iteration using the inverse of $\Box_g$ on some space (as we do
below e.g.\ in the proof of Proposition \ref{thm:semilinear})
invertibility restricts the range of weights $l$.  Furthermore,
negative weights $l$ are dangerous when the non-linear term involves
positive powers, as these will decrease the weight.
A lower bound
on $l$ produces an upper bound on $m$ in the low regularity region $m +
l < 1/2$: if for example one wants to take $l > 0$ then $m < 1/2$.
This is exactly the rub: distributions with Sobolev regularity less
than $1/2$ \textit{cannot} be multiplied together in general, and thus
a good weight forces bad regularity from the perspective of semilinear
equations.  In this section we overcome this problem by adding module
derivatives to the $\Hb^{m, l}$ spaces, as we describe now, using
these in the end to prove Theorems \ref{thm:semilinear} and
\ref{thm:semilinearthree} below. We remark that our use of these
spaces is related to the use of the infinitesimal generators of the
Lorentz group by Klainerman \cite{Klainerman:Uniform} though we work
in a purely $L^2$-based setting while Klainerman proceeds in a $L^\infty
L^2$ setting, and more crucially our module regularity is necessarily
pseudodifferential. As mentioned earlier, results analogous to ours
for the Cauchy problem were obtained by Hintz and Vasy
\cite{HVsemi}; in that case one can use module
generators that are vector fields, and thus one has
in a certain sense a geometric generalization of Klainerman's
vector fields. For the Feynman propagators we then further generalize
this to a microlocal setting.

Elaborating on Proposition \ref{thm:propofsing}, one can also have a version between spaces
with additional module regularity, much as in
\cite[Section~5]{HVsemi}. 
The module regularity is with respect to pseudodifferential
operators characteristic % on $\SNb^*S_\pm$, which is the same as
% regularity with respect to b-vector fields tangent to $S_\pm$; 
% the difference here is that
% we require full b-regularity at the appropriate half of the conormal
% bundle (i.e.\ regularity with respect to all b-vector fields, not just
% those tangent to $S_\pm$), namely the ones from which we propagate
% estimates away
on the halves of the conormal bundles of $S_{\pm}$ toward which we
propagate regularity, e.g.\ for $L_{+-}$ they are characteristic on
$\SNb^*_+S_+$ and $\SNb^*_+S_-$.
% , i.e.\ for $L_{+-}$ for $\SNb^*_-S_+$ and
% $\SNb^*_+S_-$, and module regularity on the other half, namely the
% ones towards which we propagate estimates, i.e.\ for
% $L_{+-}$ for $\SNb^*_+S_+$ and $\SNb^*_-S_-$. 
Concretely, consider the $\Psib^{0}$-module $\cM_{\pm\pm}$ in
$\Psib^{1}$ consisting of b-pseudodifferential operators $A$ whose
b-principal symbols $\sigma_{b,1}(A)$ vanish
 on the components of
$\SNb^*_\pm S_\pm$ at which the domain $L_{\pm \pm}$ has \emph{low}
regularity.
Thus, elements in
\begin{gather*}
  \mathcal{M}_{+-} \mbox{ are characteristic at } \SNb^*_+S_+ \cup
  \SNb^*_+S_-, \mbox{ and in } \\
  \mathcal{M}_{++} \mbox{ are characteristic at } \SNb^*S_+,
\end{gather*}
meaning their principal symbols vanish on the indicated sets.
For an integer $k$ we consider spaces
\begin{equation}
  \label{eq:definitionmodulespaces}
  \Hbpmpm^{m, l,
  k} := \{ u \in \Hb^{m,l} : \cM_{\pm\pm}^ku \subset \Hb^{m,l} \}.
\end{equation}
The $\Hbpp^{m,l,k}$ (and the $--$
whose analysis is essentially identical to the $\Hbpp^{m,l,k}$) thus
have module regularity defined by $\cM_{++}$, which consists of first
order b-pseudodifferential operators that are characteristic on the
b-conormal bundle of $S_{+}$ and are allowed to be b-elliptic at
$S_{-}$.  Thus $\cM_{++}$ admits differential local generators in the following
sense; let $\mathcal{V}_{++}$ denote the $\CI(M)$ module of vector fields $V$
which in the coordinates $\rho, v, y$ satisfy the two conditions: i) in a neighborhood
of $S_{-}$, $V$ is in the $\CI(M)$ span of $\rho \p_{\rho}, \p_{v},
\p_{y}$, i.e.\ $V$ is locally a b-vector field there, and ii) near
$S_{+}$, $V$ is in the $\CI(M)$ span of $\rho \p_{\rho}, \rho \p_{v}, v
\p_{v}, \p_{y}$, i.e.\ $V$ is tangent to $S_{+}$.  Then $\cM_{++}^j u
\subset \Hb^{m,l}  \iff (\forall i\leq j)\ \mathcal{V}_{++}^i u
\subset \Hb^{m,l}$, and thus membership of a distribution in
$\cM_{++}$ can be checked by applying differential and not
pseudodifferential operators.   The $\Hbpp^{m,l,k}$  were
studied in \cite[Section~5]{HVsemi}.  Note that if we
localize near $S_{-}$, since elements of $\cM_{++}$ are not required
to be characteristic on $SN^{*}S_{-}$, we have full b-regularity
to order $m + k$ there, which is to say that if $\chi$ is a cutoff
function supported away from a neighborhood of $S_{+}$, then for $u
\in \Hbpp^{m,l,k}$,  $\chi u \in \Hb^{m + k, l}$.

We have the following regularity result which says that if the right
hand side of $L_{\pm \pm}u = f$ has module regularity then the solution $u$ has
the appropriate corresponding module regularity.  As explained
below \cite[Theorem 5.3]{HVsemi}, the following is a consequence of the
extension of \cite[Proposition 4.4]{Baskin-Vasy-Wunsch:Radiation} obtained in
\cite[Theorem 6.3]{Haber-Vasy:Radial} in the interior case (i.e.\ with no
``b'').  (Recall that $\cY^{m-1,l} = \Hb^{m - 1, l}$ and that $\cX^{m,
  l} \subset \Hb^{m, l}$.)

\begin{thm}\label{thm:modulereginverse}(See \cite[Theorem
  5.4]{HVsemi} for the retarded/advanced propagators; the proof is
  identical in the present general case.)
  Let $g$ be a perturbation of the Minkowski metric in the sense of
  Lorentzian scattering metrics (see Section~\ref{sec:geometry}).  Let $m \colon \Sb^*M \lra \mathbb{R}$,
  $l \in \mathbb{R}$ satisfy the assumptions of Theorem~\ref{thm:fredholm},
  corresponding to a
  particular choice of $\pm \pm$, and let $k \in \mathbb{N}_{0}$, and
  assume that $L_{\pm\pm}^{-1}:\cY^{m-1,l}\to\cX^{m,l}$ exists
  (automatic if $|l|<\frac{n-2}{2}$ and the perturbation is small by Theorem~\ref{thm:invertibility}).
  Then $L_{\pm\pm}^{-1}$ restricts to a bounded map
  $$L^{-1}_{\pm \pm} \colon \Hbpmpm^{m - 1,l,k} \lra \Hbpmpm^{m, l, k}.$$
\end{thm}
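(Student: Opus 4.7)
The plan is to proceed by induction on $k$. The base case $k=0$ is exactly the invertibility hypothesis. Assume the conclusion for $k-1$, let $f\in\Hbpmpm^{m-1,l,k}$, and set $u=L_{\pm\pm}^{-1}f$. Since $\Hbpmpm^{m-1,l,k}\hookrightarrow\Hbpmpm^{m-1,l,k-1}$, the inductive hypothesis gives $u\in\Hbpmpm^{m,l,k-1}$, and one must upgrade this to $u\in\Hbpmpm^{m,l,k}$, i.e.\ show $Au\in\Hb^{m,l}$ for every product $A=A_1\cdots A_k$ of elements $A_i\in\cM_{\pm\pm}$.

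The starting identity is $L(Au)=Af+[L,A]u$. The first term lies in $\Hb^{m-1,l}$ by the module regularity of $f$. For the commutator term I would use the following algebraic input: the relevant components of $\SNb^*_\pm S_\pm$ are radial sets, hence invariant under $\Hamb$, so if $\sigma_{\bl,1}(A_i)$ vanishes on such a set, then so does $\Hamb\sigma_{\bl,1}(A_i)$, which is (up to a factor of $i$) the principal symbol of the order-two operator $[L,A_i]$. A standard symbolic factorization then gives $[L,A_i]=B_iC_i$ modulo $\Psib^1$ with $B_i\in\Psib^1$ and $C_i\in\cM_{\pm\pm}$. Leibniz-expanding $[L,A_1\cdots A_k]=\sum_i A_1\cdots[L,A_i]\cdots A_k$ and commuting factors past each other (which costs only lower module order) yields
\[
[L,A]=\sum_\alpha B_\alpha Q_\alpha + R,
\]
where $B_\alpha\in\Psib^1$, each $Q_\alpha$ is a product of $k$ elements of $\cM_{\pm\pm}$, and $R$ has strictly lower module order (controlled by the inductive hypothesis).

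Plugged naively into $L(Au)=Af+[L,A]u$, this is circular, since the $Q_\alpha u$ are precisely the quantities whose membership in $\Hb^{m,l}$ one wants to prove. The standard cure is regularization: I would take a uniformly bounded family $\Lambda_\epsilon\in\Psib^0$ with $\Lambda_\epsilon\in\Psib^{-1}$ for $\epsilon>0$ and $\Lambda_\epsilon\to I$ in $\Psib^{\delta}$ for every $\delta>0$, and test with $\Lambda_\epsilon A$. Using the improved Fredholm estimate \eqref{eq:propagationestimatesimproved} applied to $v=\Lambda_\epsilon Au$,
\[
\|v\|_{\Hb^{m,l}} \le C\bigl(\|Lv\|_{\Hb^{m-1,l}}+\|v\|_{\Hb^{m',l'}}\bigr),
\]
together with the fact that $L_{\pm\pm}^{-1}$ exists (so that the compact error is actually harmless: the estimate can be sharpened to drop it by the usual Riesz argument), one expands $L(\Lambda_\epsilon Au)=\Lambda_\epsilon Af+\Lambda_\epsilon[L,A]u+[L,\Lambda_\epsilon]Au$ and controls each term. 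The regularizer makes all $Q_\alpha$-quantities a priori finite for each $\epsilon>0$; carefully commuting $\Lambda_\epsilon$ past each $B_\alpha$ (at the cost of one order, compensated by the gain of one order from $\Lambda_\epsilon$) produces a bound independent of $\epsilon$. Taking $\epsilon\downarrow 0$ then gives $Au\in\Hb^{m,l}$, completing the induction.

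The main obstacle is this last regularization bookkeeping: each $B_\alpha$ has genuine b-order one, so commuting it through $\Lambda_\epsilon$ must be balanced against the regularizing effect of $\Lambda_\epsilon$ so that the terms which are not yet controlled can be absorbed into the left-hand side uniformly in $\epsilon$. This is the scheme already executed for the forward and backward problems in \cite[Theorem~5.4]{HVsemi}; the only genuinely new feature in the present Feynman case is that the modules $\cM_{+-}$ and $\cM_{-+}$ are characteristic on half b-conormal bundles of $S_\pm$ rather than on full conormal bundles of submanifolds in the base, so they do not admit differential generators. However, the argument above uses only the pseudodifferential module structure and the $\Hamb$-invariance of the characteristic locus, and so goes through verbatim.
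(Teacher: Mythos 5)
The paper gives no proof of its own: it cites \cite[Theorem~5.4]{HVsemi} and asserts the proof is identical, and for the improved version (Theorem~\ref{thm:modulereginverse-better}) sketches a two-step argument: \emph{away} from the low-regularity radial sets the module operators are elliptic, so ordinary propagation of singularities and the high-regularity radial point estimate (Proposition~\ref{thm:propofsing}) give full $\Hb^{m+k,l}$ regularity; \emph{at} the low-regularity radial sets the module-regularity radial point estimate of \cite[Theorem~6.3]{Haber-Vasy:Radial}, extended to the b-setting as below \cite[Theorem~5.3]{HVsemi}, is invoked as a black box. Your proposal instead tries to derive everything from a single global commutator argument built on the Fredholm estimate \eqref{eq:propagationestimatesimproved}. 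This is a genuinely different route, and it has a gap.

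You correctly identify the crucial algebraic input — that $\Hamb$ is tangent to the radial sets, so the module $\cM_{\pm\pm}$ is closed under commutation with $L$ in the appropriate filtered sense — and you are right that this is the only place the pseudodifferential (rather than differential) nature of the Feynman modules enters. But the absorption step at the end is where the scheme as written breaks. After applying the Fredholm estimate to $v=\Lambda_\epsilon Au$, the commutator $[L,A]$ produces terms $B_\alpha Q_\alpha u$ with $B_\alpha\in\Psib^1$ and $Q_\alpha$ another $k$-fold module product. Commuting $\Lambda_\epsilon$ past $B_\alpha$ and estimating gives
\[
\|\Lambda_\epsilon Q u\|_{\Hb^{m,l}} \le C\Bigl(\text{controlled} + \sum_\alpha \|\Lambda_\epsilon Q_\alpha u\|_{\Hb^{m,l}}\Bigr),
\]
i.e.\ a self-referential system over the finitely many $k$-fold module products. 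For this to close one would need the coefficient on the right to be strictly less than one, but the $B_\alpha$ are genuinely order one: there is no smallness, and no uniform gain from $\Lambda_\epsilon$ to compensate, since $\Lambda_\epsilon B_\alpha$ is only uniformly order zero, not order $-\delta$. The global Fredholm constant $C$ has no mechanism to absorb these terms. What makes the actual argument work — in \cite[Theorem~6.3]{Haber-Vasy:Radial}, and in its b-extension used by \cite[Theorem~5.3]{HVsemi} — is a positive commutator argument carried out directly at the radial set, in which the commutant is built up out of module factors and the positivity of the commutator (with a large parameter) controls the module error terms with constants that can be made arbitrarily small. That sign structure is invisible once you replace the positive commutator argument by the black-box estimate \eqref{eq:propagationestimatesimproved}. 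So the regularization bookkeeping is not merely ``nontrivial'' as you say; the Fredholm-estimate route, without reopening the positive commutator proof of the radial point estimate, does not close. To repair the argument you should do what the paper does: use ordinary propagation (which handles the module derivatives trivially since they are elliptic away from the low-regularity radial sets) and then cite — or reprove by a genuine positive commutator argument with the module built into the commutant — the radial point estimate with module regularity.
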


Thus, $\Hbpm^{m,l,k}$ is the subspace of $\Hb^{m,l}$ consisting of $u$ such that with $\cM_{+-}$
denoting 1st order b-pseudodifferential operators characteristic on
$\SNb^*_+S_+$ and $\SNb^*_+S_-$, $\cM_{+-}^k u\in\Hb^{m,l}$. Since
elements in $\cM_{+-}$ can be elliptic wherever they are not required
to be characteristic, elements in $\Hb^{m, l, k}$ must have $m+k$
b-derivatives at the other halves of the
conormal bundles, i.e.\ at $\SNb^*_-S_+$ and $\SNb^*_-S_-$. Also notice
that $m+l>1/2$ at the radial sets from which we propagate estimates
implies that $m+l+k>1/2$ for $k\in\Nat$, so the requirements for the
propagation estimates are satisfied there; for the radial sets towards
which we propagate the estimates we still need, and have, $m+l<1/2$ as
the module derivatives are `free' in the sense that one can take $k$
arbitrarily high without sacrificing invertibility. (One could
also use a different normalization, so there are no $k$ additional
derivatives present at the other halves, but one has to be careful
then to make the total weight function behave appropriately; for the
present normalization the previous assumptions on $m$ are the
appropriate ones.)

These observations also yield the following
improvement of the theorem:

\begin{thm}\label{thm:modulereginverse-better}
  Let $g$ be a perturbation of the Minkowski metric in the sense of
  Lorentzian scattering metrics (see Section~\ref{sec:geometry}).  Let $m \colon \Sb^*M \lra \mathbb{R}$,
  $l \in \mathbb{R}$, $k \in \mathbb{N}_{0}$ satisfy the following weakening of the assumptions of Theorem~\ref{thm:fredholm}
  corresponding to a
  particular choice of $\pm \pm$: when $m+l-3/2>0$ is assumed in
  Theorem~\ref{thm:fredholm}, assume merely $m+l+k-3/2>0$.
 Assume that $L_{\pm\pm}^{-1}:\Hbpmpm^{m-1,l}\to\Hbpmpm^{m,l}$ exists
  (automatic if $|l|<\frac{n-2}{2}$ and the perturbation is small by Theorem~\ref{thm:invertibility}).
  Then $L_{\pm\pm}^{-1}$ restricts to a bounded map
  $$L^{-1}_{\pm \pm} \colon \Hbpmpm^{m - 1,l,k} \lra \Hbpmpm^{m, l,
    k}.$$
\end{thm}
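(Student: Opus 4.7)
The plan is to adapt the proof of Theorem~\ref{thm:modulereginverse} (essentially \cite[Theorem 5.4]{HVsemi}), using the $k$ extra orders of module regularity on $f$ to substitute for the stronger source-radial-set threshold $m+l-3/2>0$ assumed there. Given $f \in \Hbpmpm^{m-1,l,k}$, the invertibility hypothesis furnishes $u = L^{-1}_{\pm\pm} f \in \Hbpmpm^{m,l}$, and the task is to show that $u$ in fact lies in $\Hbpmpm^{m,l,k}$. I would proceed by induction on $j = 0, 1, \ldots, k$, establishing $u \in \Hbpmpm^{m,l,j}$ at each stage, with the base case being the invertibility hypothesis itself.

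For the inductive step, for arbitrary $A_1, \ldots, A_j \in \cM_{\pm\pm}$, I would expand
\begin{equation*}
L(A_1 \cdots A_j u) = A_1 \cdots A_j f + [L, A_1 \cdots A_j] u.
\end{equation*}
The first term lies in $\Hb^{m-1,l}$ since $f \in \Hbpmpm^{m-1,l,k}$. For the commutator, the Leibniz rule combined with the standard module property $[L, A] \in \cM_{\pm\pm}\cdot\Psib^1 + \Psib^1$ for $A \in \cM_{\pm\pm}$ (which holds because $\Hamb \sigma_{\bl,1}(A)$ inherits the vanishing of $\sigma_{\bl,1}(A)$ on the characteristic set of $\cM_{\pm\pm}$, given the radial structure at the sinks $\SNb^*_\pm S_\pm$) rewrites $[L, A_1 \cdots A_j] u$ as a sum of terms each involving at most $j-1$ module operators applied to $u$; by the inductive hypothesis each such term lies in $\Hb^{m-1,l}$, so in total $L(A_1 \cdots A_j u) \in \Hb^{m-1,l}$.

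To conclude $A_1 \cdots A_j u \in \Hb^{m,l}$ via propagation of singularities (Proposition~\ref{thm:propofsing}) and the Fredholm estimate from Theorem~\ref{thm:fredholm}, I would apply these to $v := A_1 \cdots A_j u$. At the sinks, where $\cM_{\pm\pm}$ is characteristic, the assumption $m+l<1/2$ is unchanged and suffices for propagation into them. At the sources, where $\cM_{\pm\pm}$ is elliptic, the inductive module regularity $u \in \Hbpmpm^{m,l,j-1}$ gives microlocal a priori regularity of order $m+j-1$ for $u$, and correspondingly of order $m-1$ for $v$; the weight condition needed to close the strengthened Fredholm estimate (absorbing the compact error, as in the proof of Theorem~\ref{thm:fredholm}) corresponds to the \emph{effective} source order $m+k$ for $u$, giving the threshold $(m+k)+l>3/2$, which is exactly the weakened hypothesis. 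The main obstacle is the careful bookkeeping of module commutators and their interaction with the propagation estimates at the various components of the radial set; however, this is standard in the framework of \cite[Section~5]{HVsemi} and depends only on the algebraic properties of $\cM_{\pm\pm}$ as a $\Psib^0$-module closed under commutation with $L$ modulo acceptable lower order error.
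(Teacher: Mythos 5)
Your proposal departs from the paper's route: the paper does not run an inductive commutator argument, but instead cites the module-regularity radial point estimate (the extension of [BVW, Proposition~4.4] explained around [HVsemi, Theorem~5.3]) as a black box and chains three microlocal regularity statements: (i) invertibility on the auxiliary variable-order spaces $\Hb^{\tilde m,l}$ (with $\tilde m$ as in the remark preceding the theorem, so $\tilde m = m+k$ near the high-regularity radial sets), (ii) the standard high-regularity radial point estimate and propagation of singularities to get $u\in\Hb^{m+k,l}$ microlocally away from the sinks, and (iii) the module radial point estimate at the sinks. Your inductive argument is closer to re-deriving the cited module propagation result, which is legitimate in principle, but as written it contains a genuine gap precisely at the point that makes this theorem an improvement over Theorem~\ref{thm:modulereginverse}.

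The gap is in the a priori regularity you feed the radial point estimate at the sources. You begin the induction from $u=L^{-1}_{\pm\pm}f\in\Hbpmpm^{m,l}$, so microlocally near the source radial sets you only know $u\in\Hb^{m,l}$, hence $v=A_1\cdots A_j u\in\Hb^{m-j,l}$ a priori there. But the whole point of the weakened hypothesis is that one merely assumes $m+k+l>3/2$ at those radial sets; this permits $m+l\leq 1/2$, in which case the second part of Proposition~\ref{thm:propofsing} cannot be applied to $v$ (there is no admissible $m'$ with $\WFb^{m',l}(v)$ disjoint from $\mathcal R$ and $m'+l>1/2$). Your sentence asserting that the ``effective source order $m+k$'' produces the threshold $(m+k)+l>3/2$ is not substantiated by the inductive setup: at inductive step $j$ the information available is of order $m+j-1$, not $m+k$, and for $j=1$ you would need $m+l>1/2$ (in fact $m+l>3/2$ if you insist on $m'=m-1$), which is the original unimproved condition. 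What is missing is exactly the observation in the remark before the theorem: because $\Hbpmpm^{m,l,k}\subset\Hbpmpm^{\tilde m,l}$, the invertibility must be read on the $\tilde m$ spaces, so that $u=L^{-1}_{\pm\pm}f$ already lies in $\Hb^{m+k,l}$ microlocally near the sources. This gives the needed a priori regularity, after which the module radial point estimate can be invoked at the sinks. A secondary point: $[L,A]\in\Psib^2$ for $A\in\cM_{\pm\pm}\subset\Psib^1$ and $L\in\Diffb^2$, so the decomposition must allow $\Psib^2$ (and, in a careful bookkeeping as in [HVsemi], terms of the form $\Psib^1 L$ so that one can use $Lu=f$), not $\Psib^1$; as written, the commutator reduction loses an order.
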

The assumptions on the regularity function in the theorem thus
translate to
\begin{equation}
    \label{eq:full-reg-list-2}
\begin{tabular}{|c||c|c|} \hline 
Region & Feynman & Retarded \\ \hline\hline
$\SNb^*_+S_+$ & 
	$m + l < 1/2$ & 
$m + l  < 1/2$ \\ \hline
$\SNb^*_-S_+$ & 
	$m + l + k > 3/2$ &
$m + l < 1/2$ \\ \hline
$\SNb^*_+S_-$ &
	$m + l < 1/2$ &
$m + l + k> 3/2$ \\ \hline
$\SNb^*_-S_-$ &
	$m + l + k > 3/2$ &  $m + l + k > 3/2$ \\ \hline
\end{tabular},
\end{equation}
with analogous conditions for the anti-Feynman and advanced propagators.

\begin{rem}
Note that if $\tilde m$ is a function on $\Sb^*M$ such that $m=\tilde
m$ near the radial sets where the regularity is low and $\tilde
m=m+k$ where the regularity is high, $\tilde m$ monotone along the
bicharacteristics, then $\Hbpmpm^{m, l,  k}\subset \Hbpmpm^{\tilde
  m,l}$, explaining the sense of `restricts' for $L_{\pm\pm}^{-1}$.
\end{rem}

\begin{rem}\label{rem:constant-order}
A very useful consequence of the theorem is that given any $l$ one may take {\em
  constant} $m$ with $m+l<1/2$ and $k$ such that $m+k+l>3/2$, i.e.\
one does not need variable order spaces after all. However, note that
the variable order spaces are extremely useful in obtaining this
conclusion: otherwise at the minimum one would have to do estimates on
the dual spaces of spaces with module regularity for the adjoint
operator; these dual spaces are more difficult to work with.
\end{rem}

\begin{proof}
We simply note that with $\tilde m$ as remarked after the statement of
the proposition, for $f\in \Hbpmpm^{m - 1,l,k}$, $u=L_{\pm\pm}^{-1}f\in
\Hb^{\tilde m,l}$ and microlocally away from the low regularity
regions in fact $u$ is in $\Hb^{m+k,l}$ (as $f$ is in $\Hb^{m+k-1,l}$
and $m+k+l>1/2$ there)
by the standard high regularity radial point statement, Proposition~\ref{thm:propofsing}, and the propagation of
singularities. On the other hand, at the low regularity radial sets,
the extension of \cite[Proposition 4.4]{Baskin-Vasy-Wunsch:Radiation} as
explained
below \cite[Theorem 5.3]{HVsemi} applies, giving that
$u$is in $\Hb^{m,l,k}$ microlocally since $f$ is in $\Hb^{m-1,l,k}$
microlocally, $u$ is in $\Hb^{m+k,l}$ in a punctured neighborhood of
these radial sets, and $m+l<1/2$. All these membership statements come
with estimates, proving the theorem.
\end{proof}

One reason one may want to develop this is to solve nonlinear
equations, as we do in Section \ref{sec:semilinear}. To this end we will be forced to restrict the class of
regularity functions $m$ we consider in the spaces $\Hb^{m, l, k}$ so
that we can keep track of the wavefront sets of products of 
distributions therein.  Specifically, we will assume that, writing $m_+(x) = \max_{\Sb^*_xM}
m$, that
\begin{equation}\begin{aligned}\label{eq:reg-function-condition}
(\forall x) (\forall s <m_+(x)) \quad  \{ \xi\in \Tb^*_x M\setminus o
:\  m(x, \xi) \le s \} \mbox{ is a convex cone},\\
x\in S_+\Rightarrow m|_{\Sb^*_x M}\ \text{attains its minimum on}\ \Sb
N_+^*S_+,\\
x\in S_-\Rightarrow m|_{\Sb^*_x M}\ \text{attains its minimum on}\ \Sb
N_+^*S_-.
\end{aligned}\end{equation}
The first of these conditions says that all of the non-trivial sublevel sets are convex cones
within the fibers of $\Tb^*_xM$. The last two are only important
because of the treatment of module derivatives, where our modules are
characteristic on exactly the two above mentioned sets where the minimum is attained.

For this purpose, one then wants to check the following analogue of
\cite[Lemma~5.4]{HVsemi}:

\begin{prop}\label{thm:algebra}
Assume that $m \colon \Sb^* M \lra \RR$ satisfies \eqref{eq:reg-function-condition}. If furthermore,  $m > 1/2$ and $k
\in \Nat$ satisfies $k > (n - 1)/2$. Then
\begin{equation}\label{eq:lossabovehalf1}
\Hbpm^{m,l_1,k}\Hbpm^{m,l_2,k}\subset \Hbpm^{m - 0,l_1+l_2,k},
\end{equation}
where $\Hbpm^{m - 0,l_1+l_2,k} = \cap_{\epsilon > 0} \Hbpm^{m -
  \epsilon,l_1+l_2,k}$.  Thus for any $\epsilon > 0$ and $m, k$
satisfying the stated conditions, there is a constant $C = C(\epsilon,
m, k)$ such that
$$
\| u v \|_{\Hbpm^{m - \epsilon,l_1+l_2,k}} \le C  \| u \|_{\Hbpm^{m,l_1,k}}
\|  v \|_{\Hbpm^{m,l_2,k}}.
$$

Furthermore, if $m>1/2$ is constant (and thus all assumption on it are
satisfied), one can take $\ep=0$, and drop $-0$ in \eqref{eq:lossabovehalf1}.
\end{prop}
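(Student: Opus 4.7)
The plan is to adapt the argument of \cite[Lemma~5.4]{HVsemi} to the Feynman module $\mathcal{M}_{+-}$, which unlike the retarded/advanced case is necessarily pseudodifferential. First, since $\rho^{l_1+l_2}$ factors out of the product, reduce to $l_1=l_2=0$. Next, exploit the module elliptic regularity, which follows from the standard microlocal elliptic estimate for a single elliptic element of $\mathcal{M}_{+-}$: any $u\in\Hbpm^{m,0,k}$ lies microlocally in $\Hb^{m+k}$ off the common characteristic set $\SNb_+^*S_+\cup\SNb_+^*S_-$ of $\mathcal{M}_{+-}$. Using a microlocal partition of unity subordinate to this set, decompose $u=u_{\mathrm{low}}+u_{\mathrm{high}}$ (and similarly $v$) with $\WFb(u_{\mathrm{low}})$ contained in a small conic neighborhood of $\SNb_+^*S_\pm$ while $u_{\mathrm{high}}\in\Hb^{m+k}$ globally.

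The four products $u_*v_*$ are treated separately. Because $m+k>1/2+(n-1)/2=n/2$, the standard b-Sobolev algebra property yields $u_{\mathrm{high}}v_{\mathrm{high}}\in\Hb^{m+k-0}$. For the mixed piece $u_{\mathrm{low}}v_{\mathrm{high}}$, bilinear Sobolev multiplication with the estimate $\Hb^{m}\cdot\Hb^{m+k}\subset\Hb^{m-0}$ (valid since $(m)+(m+k)>m+n/2$) gives the $\Hb^{m-0,0}$ conclusion, and since $u_{\mathrm{high}}\in\Hb^{m+k}\subset\Hbpm^{m,0,k}$ with trivial module content, redistributing the $\mathcal{M}_{+-}^k$ derivatives onto the high-regularity factor keeps this product in $\Hbpm^{m-0,0,k}$; the term $u_{\mathrm{high}}v_{\mathrm{low}}$ is symmetric.

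The main obstacle is the genuine low-low interaction $u_{\mathrm{low}}v_{\mathrm{low}}$. Here expand
\begin{equation*}
A(fg)=(Af)g+f(Ag)-[M_f,A]g-[M_g,A]f+R(f,g),\quad A\in\mathcal{M}_{+-},
\end{equation*}
where $M_f$ denotes multiplication by $f$, the commutators with $M_f$, $M_g$ lower the order by one in the pseudodifferential sense, and $R$ is a remainder with one fewer total module derivative. Iterating $k$ times produces a finite sum of terms $(A_Iu_{\mathrm{low}})(A_Jv_{\mathrm{low}})$ with $|I|+|J|=k$, plus commutator remainders carrying strictly fewer module operators. Each factor lies in $\Hbpm^{m,0,k-|I|}$ or $\Hbpm^{m,0,k-|J|}$ respectively, so an induction on $k$ reduces to the base case $k=0$, handled by Sobolev multiplication. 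The convexity hypothesis in \eqref{eq:reg-function-condition} is used precisely here: whenever two wavefront set contributions in sublevel sets $\{m\le s_1\}$, $\{m\le s_2\}$ combine, their sum falls in $\{m\le\max(s_1,s_2)\}$, so no spurious regularity loss occurs; the last two conditions in \eqref{eq:reg-function-condition} ensure that the minimum of $m$ at $x\in S_\pm$ is attained exactly at $\SNb_+^*S_\pm$, which by \eqref{eq:spherical-conormal-bundle} is a single half-ray in each fiber and so traps the combined wavefront set within itself.

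For the final claim, when $m>1/2$ is constant, the $-\epsilon$ loss disappears because the sharp Sobolev multiplication estimate $\Hb^{s_1}\cdot\Hb^{s_2}\subset\Hb^{\min(s_1,s_2)}$ for $s_1+s_2>n/2$ and $\min(s_i)\ge0$ applies with no order loss; the $\epsilon$ in the variable-order case originates solely from the symbol seminorm estimates where the derivative of $m$ enters, which is absent for constant $m$. This yields $\Hbpm^{m,l_1,k}\cdot\Hbpm^{m,l_2,k}\subset\Hbpm^{m,l_1+l_2,k}$ as stated.
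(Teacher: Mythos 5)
Your high/low decomposition and Leibniz-type strategy share the paper's overall shape, but there is a genuine gap at the crucial low-low step, and the argument as written does not close.

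First, the purported reduction ``an induction on $k$ reduces to the base case $k=0$, handled by Sobolev multiplication'' is false on its face: for $k=0$ the claim would be $\Hb^{m,0}\cdot\Hb^{m,0}\subset\Hb^{m-0,0}$, which requires $m>n/2$, whereas the hypothesis is only $m>1/2$. What actually makes the low-low product work is precisely the \emph{anisotropy} of the regularity: applying the Leibniz rule to $A_1\cdots A_k(u_{\mathrm{low}}v_{\mathrm{low}})$ distributes the $k$ module derivatives between the two factors as $a=k-|I|$ and $b=k-|J|$ with $a+b\geq k$, and one must then invoke a bilinear estimate of the form $\cY_d^{m,a}\cdot\cY_d^{m,b}\subset H^m$ for $m>d/2$, $a+b>(n-d)/2$ with $d=1$ (the second statement of Lemma~\ref{thm:basicalgebra}). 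This is where $k>(n-1)/2$ enters, and it is fundamentally not a ``$k=0$'' statement; your induction collapses exactly the structure you need.

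Second, the commutator identity you write is wrong: since $-[M_f,A]g = A(fg)-f(Ag)$ and $-[M_g,A]f=A(fg)-(Af)g$, one has $(Af)g + f(Ag) - [M_f,A]g - [M_g,A]f = 2A(fg)$, so there is no nonzero remainder $R$ of the asserted form. More seriously, for $A\in\cM_{+-}$ genuinely pseudodifferential, the commutators $[M_f,A]$ are paramultiplication-type operators acting on the rough factor $f$, and controlling them is essentially the same difficulty one started with. The paper avoids this entirely by the structural observation that near a fixed point of $S_+$ (resp.\ $S_-$), the module $\cM_{++}$ (resp.\ $\cM_{--}$) is \emph{contained} in $\cM_{+-}$, so local $\cM_{+-}$-regularity implies local $\cM_{++}$- (resp.\ $\cM_{--}$-) regularity, and $\cM_{++}$ admits \emph{differential} generators $\rho\partial_\rho,\rho\partial_v,v\partial_v,\partial_y$ for which the Leibniz rule is exact. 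That reduction to a vector-field module, together with the identification $\Hbpp^{m,0,k}$ with $\cY_1^{m,k}$ after the logarithmic change of variables, is the technical core of the proof and is absent from your proposal. Finally, you do not use the third part of \eqref{eq:reg-function-condition} the way the paper does: it is needed to compare the constant-order space $\Hbpp^{m_-,0,k}$ with the variable-order $\Hbpm^{m-\epsilon,0,k}$ microlocally near $\SNb_+^*S_\pm$, which is where the ``$-\epsilon$'' loss actually originates (and not, as you suggest, from symbol-seminorm losses).
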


The proof of Proposition \ref{thm:algebra} comes at the end of Section
\ref{sec:balgebraprops} below.  The condition that $m > 1/2 $ can (and
will) be relaxed in Section \ref{sec:semilinear}, but for the moment
we use it to simplify arguments below.

To use this proposition for the semilinear Feynman problems, we will
need to apply it to the spaces $\Hbpm$. For any $l<0$, one can find
a constant $m>1/2$ such that $m+l<1/2$ and a positive integer $k>(n-1)/2$ such
that $m+k+l>3/2$ so by
Remark~\ref{rem:constant-order} the Feynman propagator is applicable
and Proposition~\ref{thm:algebra} is also applicable so that
\begin{equation}\label{eq:reallossabovehalf1-const}
\Hbpm^{m,l_1,k}\Hbpm^{m,l_2,k}\subset \Hbpm^{m,l_1+l_2,k}.
\end{equation}
It is
interesting to note that in fact the stronger requirements of
Theorem~\ref{thm:modulereginverse} can also be arranged:

\begin{cor}\label{thm:algebracor}
For every weight $\ell < 0$, there exists a function $m
\colon \Sb^* M \lra \RR$ such that: 1)  $m > 1/2$, 2) $m, \ell$
satisfy the same assumptions as the forward Feynman condition in the
Theorem~\ref{thm:fredholm}, concretely those in \eqref{eq:full-reg-list}, and 3) $m$ satisfies the
property on the sublevel sets and minima in
\eqref{eq:reg-function-condition}.  For such $m, \ell$ and for $k
\in \Nat$ satisfying $k > (n - 1)/2$,
\begin{equation}\label{eq:reallossabovehalf1}
\Hbpm^{m,l_1,k}\Hbpm^{m,l_2,k}\subset \Hbpm^{m - 0,l_1+l_2,k}.
\end{equation}

In particular, under these assumptions the $p$-fold
products satisfy
\begin{equation}\label{eq:reallossabovehalf2}
(\Hbpm^{m,l,k})^p\subset \Hbpm^{m - 0,pl,k}.
\end{equation}
\end{cor}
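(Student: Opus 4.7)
The plan is in three steps: construct the regularity function $m$ satisfying (1)--(3), derive \eqref{eq:reallossabovehalf1} directly from Proposition~\ref{thm:algebra}, and iterate to obtain \eqref{eq:reallossabovehalf2}.

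\emph{Construction of $m$.} Since $\ell<0$, both $1/2-\ell>1/2$ and $3/2-\ell>3/2$, so I can pick constants $a\in(1/2,\,1/2-\ell)$ and $b>3/2-\ell$. I take $m$ to equal $a$ on a neighborhood of the sinks $\SNb^*_+S_+\cup\SNb^*_+S_-$ and to equal $b$ on a neighborhood of the sources $\SNb^*_-S_+\cup\SNb^*_-S_-$, smoothly interpolated along the Hamilton flow so as to be nonincreasing in the flow direction. This immediately yields (1) together with the Feynman column of \eqref{eq:full-reg-list}, i.e.\ (2). To arrange (3), I fix a smooth direction field $\xi_0\colon\partial M\to\Sb^*M$ equal to the unique direction of $\SNb^*_+S_\pm$ on $S_\pm$ (recall these are single points in each fiber over $S_\pm$ by \eqref{eq:spherical-conormal-bundle}) and extended smoothly to all of $\partial M$; in each fiber $\Sb^*_xM$ I then take $m(x,\cdot)$ to be a smooth function of the angular distance to $\xi_0(x)$, strictly increasing on a spherical cap about $\xi_0(x)$ of opening angle strictly less than $\pi/2$ and constant outside. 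Every nontrivial sublevel set in $\Tb^*_xM\setminus o$ is then the homogeneous extension of such a cap, hence a convex cone, with minimum attained precisely at $\xi_0(x)$.

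\emph{Products.} With this $m$ and any integer $k>(n-1)/2$, Proposition~\ref{thm:algebra} applies directly and gives \eqref{eq:reallossabovehalf1}. For the $p$-fold statement I iterate: fix $\epsilon>0$ and write $\epsilon=\epsilon_1+\cdots+\epsilon_{p-1}$ with each $\epsilon_i>0$ chosen small enough that $m-\epsilon>1/2$ still (possible by compactness of $\Sb^*M$ and strict positivity of $m-1/2$). Inductively, given $u_1\cdots u_j\in\Hbpm^{m-\sigma_j,\,jl,\,k}$ with $\sigma_j=\epsilon_1+\cdots+\epsilon_{j-1}$, observe that $m-\sigma_j$ still exceeds $1/2$ and still satisfies \eqref{eq:reg-function-condition}---both properties are invariant under subtracting a positive constant, since sublevel sets of $m-\sigma_j$ are sublevel sets of $m$ and minima remain minima. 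Thus Proposition~\ref{thm:algebra} applies at this shifted regularity to multiply by $u_{j+1}\in\Hbpm^{m,l,k}\subset\Hbpm^{m-\sigma_j,\,l,\,k}$, producing $u_1\cdots u_{j+1}\in\Hbpm^{m-\sigma_{j+1},\,(j+1)l,\,k}$ with incremental loss $\epsilon_j$. After $p-1$ such steps the full product lies in $\Hbpm^{m-\epsilon,\,pl,\,k}$, and since $\epsilon>0$ was arbitrary, \eqref{eq:reallossabovehalf2} follows.

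I expect the main delicate point to be the construction of $m$ itself: simultaneously pinning down the prescribed constant values near the four radial components, maintaining monotonicity along $\Hamb$ throughout the transition region, and realizing the fiberwise convex-cone/minimum properties of \eqref{eq:reg-function-condition} must all be compatible. The explicit angular-distance ansatz about a smoothly extended direction field $\xi_0$ is designed to handle precisely this; once $m$ is in hand, everything else is an essentially formal appeal to Proposition~\ref{thm:algebra}.
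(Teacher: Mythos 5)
Your iteration argument for the $p$-fold product is fine and is in fact slightly more careful than the paper's brief ``in particular'' clause (you correctly track the loss $\epsilon=\epsilon_1+\cdots+\epsilon_{p-1}$ and check that the hypotheses of Proposition~\ref{thm:algebra} persist for $m-\sigma_j$). The gap is in the construction of $m$.

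You describe two constructions that are never reconciled. In the first sentence you ``take $m$ to equal $a$ near the sinks and $b$ near the sources, smoothly interpolated along the Hamilton flow so as to be nonincreasing.'' In the next sentence, ``to arrange (3),'' you replace this with a fiberwise ansatz: $m(x,\cdot)$ is a function of the angular distance in $\Sb^*_xM$ to a global direction field $\xi_0(x)$. You never verify that \emph{this} $m$ is monotone along $\Hamb$, and that is precisely the nontrivial constraint. Monotonicity is essential: it is part of what makes the variable-order space $\Hb^{m,l}$ well defined (see the sentence before~\eqref{eq:spaces}) and what makes the propagation estimates and hence the Feynman Fredholm theory apply, so it is implicit in requirement (2). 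The flow near $\Sb N_+^*S_+$ mixes base and fiber variables (it contracts $\rho$, $v$, $\zeta/\xi'$, $\eta/\xi'$ simultaneously), so a function depending only on the fiber angle to $\xi_0(x)$ need not decrease along flow lines at points where $\rho,v$ are not already small compared to the angular deviation; the nonlinear terms of $\Hamb$ can push the angular distance outward. The paper's construction avoids this by taking $m=m_+-c(\phi\circ f)$ with $f=\zeta^2/(\xi')^2+|\eta|^2/(\xi')^2+v^2+\rho^2$ a \emph{defining function of the radial set in $\Sb^*M$}; the source/sink structure (nondegenerate linearization) then forces $\Hamb f\leq 0$ near $\Sb N_+^*S_+$ for free, with strict inequality away from the radial set, and the convexity of the sublevel sets is checked for this specific $f$. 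Notice that $f$ is essentially your angular-distance quantity \emph{plus} $v^2+\rho^2$; the base terms are exactly what is missing from your ansatz and what makes the Hamilton-monotonicity automatic. (A secondary issue: your construction requires a global section $\xi_0$ of $\Sb^*M$ over all of $M$, and the conditions in~\eqref{eq:reg-function-condition} concern every fiber, not just fibers over $\partial M$; the paper's $m$ is simply \emph{constant} away from small neighborhoods $U_\pm$ of $\Sb N_+^*S_\pm$, so no global direction field is needed and convexity is trivial away from $U_\pm$.)
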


\begin{proof}[Proof of Corollary \ref{thm:algebracor} assuming
  Proposition \ref{thm:algebra}]
  Fix $\ell < 0$.  The corollary follows from the proposition by construction of a
  regularity function $m$ satisfying the conditions listed in the
  statement of the corollary.
To do so, fix a constant $m_+>1/2-\ell>1/2$; indeed to satisfy the
strengthened form given in Theorem~\ref{thm:fredholm} take $m_+>3/2-\ell>3/2$. The function $m$ will be
arranged to be equal to $m_+$ except on a small neighborhood $U_+$ of $\SNb_+^* S_+$ and $U_-$ of $\SNb_+^* S_-$, which are the low regularity regions,
where it will be arranged to be smaller. We consider $U_+$; $U_-$ is
analogous. Using any (local) defining functions $\rho_i$, $i=1,\ldots,n+1$, of $\SNb_+^* S_+$ in $\Sb^* M$ and letting $f=\sum\rho_i^2$, the Hamilton
derivative of $f$ is monotone in a neighborhood $U_+$ of $\SNb_+^* S_+$ due to the non-degenerate linearization in the normal
direction (with the size of the neighborhood of course depending on
the choice of the $\rho_i$), with the monotonicity being strict in the
punctured neighborhood. We may assume that $U_+$ is disjoint from any
other component of the radial set; note that if one chooses to, one
may always shrink $U_+$ to lie in any pre-specified neighborhood of
$\SNb_+^*S_+$. One then takes a cutoff function $\phi$, with $\phi\equiv
1$ near $0$, $\phi'\leq 0$ on $[0,\infty)$, $\phi$ supported sufficiently close to $0$ so that
$\phi\circ f$ is compactly supported  in $U_+$; for $c>0$,
$m=m_+-c(\phi\circ f)$ satisfies all monotonicity requirements along
the Hamilton flow, and if $m_+-c<1/2-\ell$, i.e.\ $c>m_++\ell-1/2$,
then the radial point part of the Feynman condition is also
achieved. Note that as $1/2-\ell>1/2$, we may arrange in addition that
$m_-=\inf m=m_+-c>1/2$ by choosing $c<m_+-1/2$. This also gives the
minimum attaining conditions in \eqref{eq:reg-function-condition}. To
arrange the convexity, it is useful to be more definite about the
$\rho_i$: the conormal bundle is $\rho=v=0$, $\zeta=0$, $\eta=0$ where
$\zeta$ is b-dual to $\rho$ and $\eta$ is b-dual to the variables $y$
along $S_+$. Thus, with $\xi'$ the b-dual variable to $v$ (which is
thus non-zero on the conormal bundle minus the zero section) a (local)
quadratic defining function $f$ is
$f=\frac{\zeta^2}{(\xi')^2}+\frac{|\eta|^2}{(\xi')^2}+v^2+\rho^2$. The
convexity requirement for the sublevel sets of $m$ then is implied by
one for those of $f$ (only the ones below sufficiently small positive
values matter), which is thus a convexity condition for the sets
$$
\{(\xi',\zeta,\eta):\
\frac{\zeta^2}{(\xi')^2}+\frac{|\eta|^2}{(\xi')^2}\leq \alpha,\ \xi'>0\},
$$
for all sufficiently small $\alpha>0$, which however certainly
holds. This completes the proof of the corollary.
\end{proof}

\subsection{Microlocal multiplicative properties of Sobolev spaces}
Before we turn to multiplicative properties of b-Sobolev spaces as in
Proposition~\ref{thm:algebra} we first study the analogous properties
of standard Sobolev spaces. Here we need to work with variable order
Sobolev spaces because of the microlocal nature of the spaces
$\Hbpm^{m,l_1,k}$, in particular as $m$ is a function in this case,
and as $k$ gives additional regularity for one half of a conormal
bundle only.

Thus, for a distribution $u$ and for $s\in\CI(S^*\RR^n)$ one defines
$\WF^s(u)$ as in the case of the b-wave front set (so the definitions
agree in the interior): $(p,\xi)\notin\WF^s(u)$ if there exists
$A\in\Psi^0(\RR^n)$ elliptic at $(p,\xi)$ such that $Au\in H^s(\RR^n)$, or
equivalently, if there exists
$A\in\Psi^s(\RR^n)$ elliptic at $(p,\xi)$ such that $Au\in L^2(\RR^n)$. We are
then interested in questions of the kind: for which functions $r\geq
s\geq s_0$ on $S^*\RR^n$ does the implication
$$
u\in H^r,\ v\in H^{s_0}\Rightarrow \WF^s(uv)\subset\WF^s(v),
$$
hold?

By a (Fourier) weight function $w  \colon \RR^n \lra \RR$, we mean a smooth,
measurable, positive
function of polynomial growth, meaning $w \le C \la \xi \ra^N$ for
some $C, N > 0$.  The variable order Sobolev space $H^{(w)}$
corresponding to $w$ is then
\begin{equation}
  \label{eq:variablesobolev}
  H^{(w)} = \{ u \in S'(\RR^n) : w \hat{u} \in L^2(\RR^n) \}.
\end{equation}
Thus $w(\xi) = \la \xi \ra^s$ for $s \in \RR$ defines the standard
Sobolev space of order $s$.   The most common weight function we use
below is of the form $w(\xi) = \la \xi \ra^{s(\hat{\xi})}$ where
$$
\hat{\xi} = \xi / |\xi|,
$$
so  $s$
is a function on the unit sphere, and thus we let
\begin{equation}\label{eq:realsobolevdef}
H^s := H^{(w)}, \mbox{ where } w = \la \xi \ra^s,
\end{equation}
that is, for $s$ of the form $s=s(\hat\xi)$ (which is a rather special
case of $s\in\CI(S^*\RR^n)$!), $H^s$ is the special case of $H^{(w)}$
where $w$ has this form.
The reason such special $s$ are sufficient for us is that
multiplication is a local operation on the base space $\RR^n$, so
using the continuity of a general weight in the base variable, up to
arbitrarily small losses in the order, one may assume that all weights
are in fact dependent only on the Fourier dual variable, cf.\ the
proof of Proposition~\ref{thm:algebra} in
Section~\ref{sec:balgebraprops}, explicitly the containment
\eqref{eq:variable-to-const-order}.

Given an interior point $p \in M$ and local coordinates $x$ near $p$, we write the induced coordinates on
the cotangent space $T^*_pM$ with the variable $\xi$.  The map $\xi
\mapsto \hat{\xi}$ then identifies the spherical cotangent bundle at
$p$, $$S^*_pM := (T_p M \setminus o) / \RR^+, $$ with the
unit sphere in $\mathbb{R}^n$.  Here $o$ denotes the zero section, and the $\RR^+$
action is the natural dilation on the fibers.  Given $\hat
\xi \in S^*_pM$ and $s \in \RR$, the Sobolev
wavefront set of order $s$ at $p$ of a distribution $u$, $\WF^s(u)$,
satisfies $(p, \xi_0) \not \in \WF^s(u)$ if and only if there
is a cutoff function $\chi$ on $M$ supported near $p$ so that $w(\xi)
\widehat{\chi u} \in L^2$ for a weight function $w$ satisfying
\begin{equation}\label{eq:hsmicro}
w(\xi) \ge \chi_2(\hat{\xi}) \la \xi \ra^s
\end{equation}
for $\chi_2$ a cutoff function on
the unit sphere $\mathbb{S}^{n - 1}$ with $\chi_2(\hat{\xi}_0) \equiv
1$ on a neighborhood of $\hat{\xi}_0$ in $\mathbb{S}^{n-1}$.  In particular, $(p, \xi_0) \not \in \WF^s(u)$ if and only if, for
some cutoff function $\chi$ and some $\sfs$ with $\sfs =
\sfs(\hat{\xi})$, $\sfs(\hat{\xi}) \equiv s$ on some open set $U
\subset S^{*}_{p}M$ with $\hat{\xi}_{0} \in U$ and $\sfs \ll 0$ off
$U$, then $\chi u \in H^{\sfs}$.

We are interested in properties of the Sobolev wavefront sets of
products of distributions, and to this end we will exploit \cite[Lemma 4.2]{HVsemi}:
\begin{lemma}\label{thm:algcondition}
  Let $w_1, w_2, w$ be weight functions such that one of the
  quantities
  \begin{equation}
    \label{eq:algcondition}
    \begin{split}
      M_+ := \sup_{\xi \in \RR^n} \int \lp \frac{w(\xi)}{w_1(\eta)
        w_2(\xi - \eta)}\rp^2 d\eta \\
      M_- := \sup_{\xi \in \RR^n} \int \lp \frac{w(\xi)}{w_1(\eta)
        w_2(\xi - \eta)}\rp^2 d\xi 
    \end{split}
  \end{equation}
is finite.  Then $H^{(w_1)} \cdot H^{(w_2)} \subset H^{(w)}$.
\end{lemma}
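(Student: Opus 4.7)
The plan is to express $uv$ as a weighted Fourier convolution and then extract the bound via Cauchy--Schwarz together with Fubini; this is essentially a Schur test for the convolution-type bilinear form on $L^2$ with kernel
$$K(\xi,\eta) := \frac{w(\xi)}{w_1(\eta)\, w_2(\xi-\eta)}.$$
First I would reduce to $u, v \in \mathcal{S}(\RR^n)$ by density, since Schwartz functions are dense in each $H^{(w_i)}$ by the polynomial growth of $w_i$. Setting $U := w_1 \widehat{u}$ and $V := w_2 \widehat{v}$, both lie in $L^2$ with $\|U\|_{L^2}=\|u\|_{H^{(w_1)}}$ and analogously for $V$, and the Fourier convolution formula for the product yields
$$w(\xi)\, \widehat{uv}(\xi) = (2\pi)^{-n} \int K(\xi,\eta)\, U(\eta)\, V(\xi-\eta)\, d\eta.$$

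Suppose first that $M_+ < \infty$. Applying Cauchy--Schwarz in $\eta$ with the splitting $K(\xi,\eta) \cdot \bigl(U(\eta) V(\xi-\eta)\bigr)$ gives
$$|w(\xi)\, \widehat{uv}(\xi)|^2 \le (2\pi)^{-2n} \Bigl(\int K(\xi,\eta)^2\, d\eta\Bigr) \Bigl(\int |U(\eta)|^2 |V(\xi-\eta)|^2\, d\eta\Bigr).$$
Integrating in $\xi$, I would pull the first factor out using the bound $M_+$ and compute the remaining double integral by Fubini, which equals $\|U\|_{L^2}^2 \|V\|_{L^2}^2$. This yields
$$\|w\, \widehat{uv}\|_{L^2}^2 \le (2\pi)^{-2n} M_+ \|u\|_{H^{(w_1)}}^2 \|v\|_{H^{(w_2)}}^2,$$
which is exactly the claimed inclusion.

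In the alternative case $M_- < \infty$, I would use a symmetric Cauchy--Schwarz splitting that isolates $U(\eta)$ or $V(\xi-\eta)$ individually so that after Fubini --- with a change of variable $\zeta = \xi - \eta$ if needed --- the remaining supremum integral matches the quantity $M_-$ in the hypothesis, the other factor still producing $\|U\|_{L^2}^2 \|V\|_{L^2}^2$. There is no genuine obstacle here; the lemma is really just the variable-weight version of Schur's test for the bilinear convolution operator with kernel $K$, and the only thing requiring attention is choosing the Cauchy--Schwarz splitting so that the supremum assumed finite in the hypothesis is precisely the one that appears in the estimate.
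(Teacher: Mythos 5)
Your argument is correct and is exactly the standard Cauchy--Schwarz/Schur-test argument; the paper does not give a proof here but cites the result as \cite[Lemma~4.2]{HVsemi}, where the proof is precisely the computation you outline, so your approach matches the source. Two minor remarks. First, note that as printed the definition of $M_-$ has a bound-variable collision ($\sup_{\xi}$ of an integral $d\xi$); the intended quantity is $\sup_{\eta}\int K(\xi,\eta)^2\,d\xi$, and with this reading the split $\bigl(K(\xi,\eta)U(\eta)\bigr)\cdot V(\xi-\eta)$ followed by Fubini in $\eta$ hits $M_-$ directly, with no change of variable needed (the change of variable $\zeta=\xi-\eta$ is what you would use for the third possible split $\bigl(K(\xi,\eta)V(\xi-\eta)\bigr)\cdot U(\eta)$, which produces $M_-$ with $w_1,w_2$ interchanged). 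Second, the reduction to Schwartz functions deserves a word: density of $\cS$ in $H^{(w)}$ is not automatic from polynomial growth of $w$ alone since $1/w$ need not be tempered, but one can sidestep the issue entirely, since the Cauchy--Schwarz bound shows the convolution integral $\int K(\xi,\eta)U(\eta)V(\xi-\eta)\,d\eta$ converges absolutely for a.e.\ $\xi$ and, divided by $w$, defines the Fourier transform of an element of $H^{(w)}$ which agrees with $uv$ on Schwartz data; this is how the product is interpreted.
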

The most well-known algebra property of Sobolev spaces is that $H^s$
is an algebra provided $s > n/2$.  We now ask, for example, under what
assumption on $r,s,s_0$ does one have
\begin{equation}\label{eq:basicwavefrontproduct}
u\in H^r, v\in H^{s_0} \implies
\WF^s(uv)\subset \WF^s(v),
\end{equation}
i.e.\ if $u$ satisfies an a priori high regularity assumption, and $v$
has a priori not too low regularity, can we conclude that
 if $v$ is $H^s$ microlocally, then
$uv$ is also $H^s$ microlocally? We now provide a partial answer using 
Lemma \ref{thm:algcondition}.  (Note that taking $r = s = s_0
> n/2$ in the following lemma gives the standard statement that $H^s$
is an algebra for $s > n/2$.)
\begin{lemma}
  For distributions $u, v$ (which one may assume to be compactly
  supported due to the locality of multiplication) and $r, s,
  s_0\in\RR$,
  \begin{equation}
    \label{eq:basicassumptionsforproduct}
    r \ge s \ge s_0 >  0 \mbox{ and } r - s + s_0 > n/2 \implies \mbox{
      the containment \eqref{eq:basicwavefrontproduct} holds.}
  \end{equation}
\end{lemma}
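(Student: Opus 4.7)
The plan is to realize this as a microlocal application of Lemma~\ref{thm:algcondition} with suitably chosen variable-order Fourier weights. Fix $(p,\xi_0) \notin \WF^s(v)$ and, using the locality of multiplication, localize to assume $u,v$ are compactly supported; write $\hat\xi_0 := \xi_0/|\xi_0|$. Choose small open cones $U \Subset U' \subset S^{n-1}$ around $\hat\xi_0$ such that $v$ is microlocally $H^s$ on the conic neighborhood cut out by $U'$, and pick $\sfs_2 \in C^\infty(S^{n-1})$ with $\sfs_2 \equiv s$ on $U'$ and $\sfs_2 \equiv s_0$ off a small neighborhood of $U'$; then $v \in H^{(w_2)}$ with $w_2(\eta) := \la\eta\ra^{\sfs_2(\hat\eta)}$, and $u \in H^{(w_1)}$ with $w_1(\eta) := \la\eta\ra^r$. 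Set $w(\xi) := \la\xi\ra^{\sfs'(\hat\xi)}$ with $\sfs' \equiv s$ on $U$ and $\sfs' \equiv -N$ off $U$ for $N$ very large; then $uv \in H^{(w)}$ forces microlocal membership in $H^s$ over $U$, whence $(p,\xi_0) \notin \WF^s(uv)$. By Lemma~\ref{thm:algcondition} it suffices to verify
$$
M_+ \;=\; \sup_{\xi \in \RR^n} \int_{\RR^n} \frac{w(\xi)^2}{w_1(\eta)^2\,w_2(\xi-\eta)^2}\,d\eta \;<\; \infty.
$$

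For $\hat\xi$ outside $U$, $w(\xi)$ has very negative order, making the integral trivially bounded for $N$ large enough. For $\hat\xi \in U$, I would split the $\eta$-integral by whether $\widehat{\xi-\eta} \in U'$ (matched cone) or not (separated cone). In the matched-cone region the integrand is $\la\xi\ra^{2s}\la\eta\ra^{-2r}\la\xi-\eta\ra^{-2s}$; its uniform integrability in $\eta$ is exactly the classical weight-function bound for $H^r\cdot H^s \subset H^s$ and requires only $r > n/2$, which follows from the hypotheses $r - s + s_0 > n/2$ and $s \ge s_0 > 0$. In the separated-cone region, $U$ and $S^{n-1}\setminus U'$ are $\delta$-angularly separated for some $\delta > 0$, and a law-of-cosines computation using $\hat\xi \cdot \widehat{\xi-\eta} \le 1 - \delta^2/2$ yields the key geometric bound
$$
\la\eta\ra \;\ge\; c_\delta\,\max\bigl(\la\xi\ra,\; \la\xi-\eta\ra\bigr).
$$
Splitting $\la\eta\ra^{-2r} = \la\eta\ra^{-2s}\la\eta\ra^{-2(r-s)}$, valid since $r \ge s \ge 0$, and applying this geometric bound factor by factor produces
$$
\frac{\la\xi\ra^{2s}}{\la\eta\ra^{2r}\la\xi-\eta\ra^{2s_0}} \;\le\; C\,\la\xi-\eta\ra^{-2(r-s+s_0)},
$$
whose $\eta$-integral is bounded uniformly in $\xi$ precisely when $2(r-s+s_0) > n$, the standing hypothesis.

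The main obstacle is the separated-cone region: without the microlocal information on $v$ encoded in the variable order $\sfs_2$, the crude estimate $H^r \cdot H^{s_0} \subset H^s$ via Peetre would require the much stronger condition $r > s + n/2$. Transferring the missing $s - s_0$ orders of regularity onto $v$ in the region where $\widehat{\xi-\eta} \in U'$, and exploiting the geometric bound $\la\eta\ra \gtrsim \max(\la\xi\ra,\la\xi-\eta\ra)$ in its complement, is the mechanism that makes the argument tight and explains the role of the hypothesis $s_0 > 0$, which is needed to render $\la\xi-\eta\ra^{-2(r-s+s_0)}$ integrable on $\RR^n$. Together with the elementary matched-cone estimate this completes the verification of $M_+ < \infty$ and hence the lemma.
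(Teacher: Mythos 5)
Your argument is correct and is essentially the same as the paper's: both reduce to Lemma~\ref{thm:algcondition} with the same choice of variable-order weights (constant order $r$ for $u$, a weight that is $s$ on a good cone and drops to $s_0$ elsewhere for $v$, and a test weight that is $s$ near $\xi_0$ and negligible elsewhere), and both exploit the Peetre inequality together with the angular-separation bound between the test cone and the complement of $v$'s good cone. The only difference is organizational: the paper first applies $\la\xi\ra^{2s}\lesssim\la\eta\ra^{2s}+\la\xi-\eta\ra^{2s}$ and then splits each resulting integral by cone, whereas you split by cone first and then, in the separated region, apply the geometric bound $\la\eta\ra\gtrsim\max(\la\xi\ra,\la\xi-\eta\ra)$ directly to both numerator factors before invoking $s_0>0$ for integrability; the paper uses only the one-sided form of that bound and lets the Peetre split carry the rest. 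Both routes impose exactly $r>n/2$ (automatic) and $r-s+s_0>n/2$, so the two proofs are equivalent in content.
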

\begin{proof}
  Indeed, let $u, v$ be compactly supported distributions and let
  $\xi_0 \neq 0$ have $(x_0, \xi_0) \not \in \WF^{s}(v)$.  By
  definition, there is an open cone $\sC \subset \RR^n\setminus o$
  containing $\xi_0$ and a function $\sfs$ with
$$
\sfs = \sfs(\hat{\xi}) \mbox{ such that }
  \sfs \equiv s \mbox{ on } \sC, \sfs \ge s_0,
$$
and a cutoff function
  $\chi$ supported near $x_0$ such that $\chi v \in H^\sfs$, i.e.\
  $\widehat{\chi v} \la \xi \ra^\sfs \in L^2$.  To see that $(x_0,
  \xi_0) \not \in \WF^s(uv)$ we choose a conic subset $\sK \subset\sC$
  with compact cross section and $\xi_{0} \in \sK$, and let $s' =
  s'(\hat{\xi})$ be such that $s'(\hat{\xi}) \equiv s$ for $\hat{\xi}$
  near $\hat{\xi}_0$, $s' \le s$ everywhere, and such that $s'=s_0$ on
  a conic neighborhood of $\sK^c$.  Thus if $\chi uv \in H^{s'}$ for
  some (possibly different) cutoff $\chi$ with $\chi(x_{0}) \neq 0$,
  then $(x_{0}, \xi_{0}) \not \in \WF^{s}(uv)$.  The argument below
  shows that $uv$ is microlocally $H^{s'}$ outside $\sK$, but we
  concentrate on the statement in $\sK$.  We will apply Lemma
  \ref{thm:algcondition} with $w = \la \xi \ra^{s'}, w_1 = \la \xi
  \ra^r$, and $w_2 = \la \xi \ra^{\sfs}$.  That is, we will show that
  \begin{equation}
    \label{eq:basicmodel}
    H^{r} \cdot H^\sfs \subset H^{s'},
  \end{equation}
  for $r, \sfs, s'$ chosen as above.  Writing
  \begin{equation}
    \label{eq:6}
    I_\xi = \int \lp \frac{w(\xi)}{w_1(\eta)  w_2(\xi - \eta)}\rp^2 d\eta,
  \end{equation}
  we want to show that $\sup_\xi I_\xi \le C < \infty$.

  We first note that for $\xi\in\sK$ this is bounded by the analogous
  expression where $w(\xi)$ is replaced by $\langle\xi\rangle^s$.
  Thus, we first show
  \begin{equation}\label{eq:I-xi-1}
    \sup_{\xi \in \sK} I_\xi = \sup_{\xi\in\sK}\int\Big(\frac{\langle\xi\rangle^{s}}{\langle\eta\rangle^{\sfs(\eta)} \langle\xi-\eta\rangle^r}\Big)^2\,d\eta
  \end{equation}
  is finite.  Since $s \ge 0$ (recall that $s$ is a constant), $\langle\xi\rangle^{2s}\lesssim
  \langle\eta\rangle^{2s}+\langle\xi-\eta\rangle^{2s}$, it suffices to
  prove that
  \begin{equation}\label{eq:twointegrals}
    \sup_{\xi\in\sK}\int\frac{1}{\langle\eta\rangle^{2\sfs(\eta)-2s}
      \langle\xi-\eta\rangle^{2r}}\,d\eta \quad 
    \mbox{ and } \quad \sup_{\xi\in\sK}\int\frac{1}{\langle\eta\rangle^{2\sfs(\eta)} \langle\xi-\eta\rangle^{2r-2s}}\,d\eta
  \end{equation}
  are finite.  We start by looking at the second of these. We break up
  the integral into $\langle\eta\rangle\geq\langle\xi-\eta\rangle$ and
  it complement. In the former region the integral is bounded by
$$
\int\frac{1}{\langle\xi-\eta\rangle^{2r-2s+2\sfs(\eta)}}\,d\eta
$$
and since $r \ge s$ in the latter region by
$$
\int\frac{1}{\langle\eta\rangle^{2r-2s+2\sfs(\eta)}}\,d\eta,
$$
both of which are finite under the assumption in
\eqref{eq:basicassumptionsforproduct} as $\sfs\geq s_0$.

Turning to the first integral in \eqref{eq:twointegrals}, since
$\sfs(\eta)$ is not necessarily greater than or equal to $s$, the
argument of the previous paragraph does not go through -- instead, we
break up the integral into one over $\sC$ and one over $\sC^c$.  Now,
using that $\sfs \equiv s$ on $\sC$,
$$
\int_{\sC}\frac{1}{\langle\eta\rangle^{2\sfs(\eta)-2s}
  \langle\xi-\eta\rangle^{2r}}\,d\eta=\int_{\sC}\frac{1}{
  \langle\xi-\eta\rangle^{2r}}\,d\eta\leq\int \frac{1}{
  \langle\xi-\eta\rangle^{2r}}\,d\eta=\int \frac{1}{
  \langle\eta\rangle^{2r}}\,d\eta
$$
is finite, independent of $\xi$, if $r>n/2$ (which is implied by
$r-s+s_0>n/2$ and $s \ge s_0$). For the integral over $\sC^c$, we use
that there is a constant $C_0>0$ such that
$C_0\langle\xi-\eta\rangle\geq\langle\eta\rangle$ for $\xi\in\sK$ and
$\eta\in\sC^c$. Correspondingly, as $r\geq 0$,
$$
\int_{\sC^c}\frac{1}{\langle\eta\rangle^{2\sfs(\eta)-2s}
  \langle\xi-\eta\rangle^{2r}}\,d\eta\leq C_0^{2r}
\int_{\sC^c}\frac{1}{\langle\eta\rangle^{2\sfs(\eta)-2s+2r}}\,d\eta\leq
C_0^{2r} \int\frac{1}{\langle\eta\rangle^{2s_0-2s+2r}}\,d\eta
$$
which is finite if $r-s+s_0>n/2$. This proves \eqref{eq:I-xi-1}.

The bound for $I_{\xi}$ with $\xi \not \in \sK$ proceeds along the
same lines, where now one can replace $w(\xi)$ by
$\langle\xi\rangle^{s_0}$, and $\langle\eta\rangle^{\sfs(\eta)}$ by
$\langle\eta\rangle^{s_0}$, and is left to the reader.

This completes the proof of \eqref{eq:basicmodel} and thus that the
conditions on $r, s, s_{0}$ in \eqref{eq:basicassumptionsforproduct}
imply \eqref{eq:basicwavefrontproduct}.  
\end{proof}

For our applications, i.e.\ to study the module regularity
defining the spaces $\Hbpmpm^{m,l,k}$ we will first study spaces for
which one has extra regularity in certain directions.  To this end,
we write $\RR^n$ as $\RR^{d + (n - d)}$, i.e.\ we decompose into $x = (x',
x'')$ where $x' \in \RR^d$ and $x'' \in \RR^{n - d}$, and for functions $f$, we write the
Fourier side variable $\xi$ as $(\xi', \xi'')$.  Let
\begin{equation}\label{eq:basicmodulereg}
\cY^{m,a}_d(\RR^{d + (n - d)}) = \{ u : \hat{u} \la \xi \ra^m \la \xi^{''}
  \ra^a \in L^2 \},
\end{equation}
so elements have $m$ total derivatives (derivatives in all variables)
and in addition $a$ derivatives in $x''$.

  \begin{lemma}(\cite[Lemma~4.4]{HVsemi}.)\label{thm:basicalgebra}
    Let $m,a \in \mathbb{R}$.  If $m > d /2$ and $a > (n - d)/2$ then $\cY_d^{m, a}$ is an
    algebra.  If $a,b\geq 0$, $a + b > n - d$, then $\cY_d^{m,a} \cdot \cY_d^{m, b}
    \subset H^{m}$.  
  \end{lemma}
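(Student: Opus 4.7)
The plan is to deduce both statements as direct applications of Lemma~\ref{thm:algcondition}. For the algebra property, we take $w_1=w_2=w=\langle\xi\rangle^m\langle\xi''\rangle^a$, so we must show
$$
\sup_{\xi}\int_{\mathbb{R}^n}\frac{\langle\xi\rangle^{2m}\langle\xi''\rangle^{2a}}{\langle\eta\rangle^{2m}\langle\eta''\rangle^{2a}\langle\xi-\eta\rangle^{2m}\langle\xi''-\eta''\rangle^{2a}}\,d\eta<\infty.
$$
For the second statement, we take $w_1=\langle\xi\rangle^m\langle\xi''\rangle^a$, $w_2=\langle\xi\rangle^m\langle\xi''\rangle^b$, $w=\langle\xi\rangle^m$ and need the analogous bound with numerator $\langle\xi\rangle^{2m}$ and the two $\eta''$ factors replaced by $\langle\eta''\rangle^{2a}\langle\xi''-\eta''\rangle^{2b}$.

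First I would use the elementary Peetre-type estimates $\langle\xi\rangle^{2m}\lesssim\langle\eta\rangle^{2m}+\langle\xi-\eta\rangle^{2m}$ (valid since $m>d/2\ge 0$) and, in the algebra case, the same with $\xi,\eta$ replaced by $\xi'',\eta''$ (valid since $a>0$). This splits each integrand into a sum of terms in each of which some factors in the numerator are cancelled by the corresponding factors in the denominator; up to substituting $\zeta=\xi-\eta$ (and $\zeta''=\xi''-\eta''$) in some of the resulting pieces, every integral reduces to one of the basic form
$$
J(\xi'')=\int_{\mathbb{R}^n}\frac{d\zeta}{\langle\zeta\rangle^{2m}\,\langle\zeta''\rangle^{\alpha}\,\langle\xi''-\zeta''\rangle^{\beta}},
$$
with $(\alpha,\beta)=(2a,2a)$ or $(0,2a)$, etc., in the first claim, and $(\alpha,\beta)=(2a,2b)$ or $(0,2a+2b)$ (or permutations) in the second.

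Next, I would carry out the $\zeta_1\in\mathbb{R}^d$ integration with $\zeta''$ fixed; since $2m>d$, the standard estimate gives
$$
\int_{\mathbb{R}^d}\frac{d\zeta_1}{(1+|\zeta_1|^2+|\zeta''|^2)^m}\lesssim \langle\zeta''\rangle^{d-2m},
$$
reducing $J(\xi'')$ to an integral on $\mathbb{R}^{n-d}$ of the form $\int\langle\zeta''\rangle^{d-2m-\alpha}\langle\xi''-\zeta''\rangle^{-\beta}\,d\zeta''$. I would then apply the classical uniform estimate that for exponents $p,q\ge 0$ with $p+q>n-d$,
$$
\sup_{\xi''}\int_{\mathbb{R}^{n-d}}\frac{d\zeta''}{\langle\zeta''\rangle^{p}\,\langle\xi''-\zeta''\rangle^{q}}<\infty,
$$
proved by splitting into the regions $|\zeta''|\le|\xi''|/2$, $|\xi''-\zeta''|\le|\xi''|/2$, and the complement.

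Finally, it remains to check in each case that the relevant condition $p+q=(2m+\alpha-d)+\beta>n-d$, i.e.\ $2m+\alpha+\beta>n$, is satisfied. In the algebra case one has $\alpha+\beta\ge 2a$ (with the extreme value $2a$ arising in exactly the easy terms where the numerator split was favorable), and $2m+2a>d+(n-d)=n$ by the hypotheses $m>d/2$, $a>(n-d)/2$. In the mixed case the relevant sums are $2m+2a+2b-d$ and $2m+2a+2b$, both $>n$ by virtue of $2m>d$ together with $2a+2b>2(n-d)$. Thus the integrals are uniformly bounded in $\xi$ and Lemma~\ref{thm:algcondition} concludes both inclusions. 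The one subtle point, which I would verify explicitly, is that in all four (resp.\ two) terms arising from the numerator split the exponents $\alpha,\beta$ really are nonnegative so that the uniform convolution bound applies; this is immediate here because $a,b,m\ge 0$.
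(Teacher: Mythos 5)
Your proof is correct and follows essentially the same strategy as the paper: apply Lemma~\ref{thm:algcondition} with the natural weight functions, use the Peetre-type inequality $\la\xi\ra^{2m}\lesssim\la\eta\ra^{2m}+\la\xi-\eta\ra^{2m}$ (and its double-primed analogue) to split the integrand, and bound each resulting piece by a uniformly bounded integral. The only genuine difference is in how the $d$-dimensional (primed) integration is handled: the paper simply drops $\la\zeta\ra^{-2m}$ to $\la\zeta'\ra^{-2m}$ and factors the integral into a primed and a double-primed piece, whereas you integrate out $\zeta'$ exactly, producing the residual factor $\la\zeta''\ra^{d-2m}$ and then invoking the two-weight convolution bound on $\RR^{n-d}$. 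Your version keeps a bit more information and would in fact prove the lemma under the weaker hypothesis $a+b>(n-d)/2$; the paper's version is coarser but shorter. Both are valid for the stated hypotheses.

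Two small bookkeeping slips that you should tidy up, though neither affects the conclusion: the pairs $(\alpha,\beta)$ that actually arise from the Peetre split are $(2a,0)$ and $(0,2a)$ in the algebra case (not $(2a,2a)$), and $(2a,2b)$ and $(2b,2a)$ in the mixed case (not $(0,2a+2b)$); in every case $\alpha+\beta$ equals exactly $2a$, resp.\ $2a+2b$, so your key verification $2m+\alpha+\beta>n$ is still what is needed. Also, in the last paragraph the quantity $p+q=2m+\alpha+\beta-d$ needs to exceed $n-d$, equivalently $2m+\alpha+\beta>n$; the parenthetical claim that $2m+2a+2b-d>n$ is neither needed nor a consequence of the hypotheses when $d>n/2$, so drop it.
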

  \begin{proof}
    We begin with the second statement.  This is exactly \cite[Equation 4.6]{HVsemi}, namely using $\la \xi \ra^p \lesssim \la \eta
    \ra^p + \la\xi - \eta
    \ra^p$ for $p \geq 0$, we have
    \begin{equation}
      \label{eq:4.6}
      \begin{split}
       & \int \lp \frac{\la \xi \ra^m}{\la \xi - \eta \ra^m \la \xi'' - \eta'' \ra^a \la \eta  \ra^m \la \eta''
          \ra^b} \rp^2 d\eta \\
        &\le         \int \lp \frac{1}{\la \xi - \eta \ra^m \la \xi'' - \eta'' \ra^a \la \eta''
          \ra^b} \rp^2 d\eta  
        +         \int \lp \frac{1}{ \la \xi'' - \eta'' \ra^a \la \eta  \ra^m \la \eta''
          \ra^b} \rp^2 d\eta \\
        & \le         \int \lp \frac{1}{\la \xi' - \eta' \ra^m \la \xi'' - \eta'' \ra^a \la \eta''
          \ra^b} \rp^2 d\eta  
        +         \int \lp \frac{1}{ \la \xi'' - \eta'' \ra^a \la \eta'  \ra^m \la \eta''
          \ra^b} \rp^2 d\eta ,
      \end{split}
    \end{equation}
and integrating first in the primed and then the double primed
variable shows this integral is uniformly bounded.

When $a = b > (n - d)/2$, estimating the numerator in \eqref{eq:4.6}
in the same way, we see that $\cY_d^{m, a}$ is an algebra
since
    \begin{equation}
      \label{eq:firstalgebra}
      \begin{split}
       & \int \lp \frac{\la \xi \ra^m\la \xi'' \ra^a}{\la \xi - \eta \ra^m \la \xi'' - \eta'' \ra^a \la \eta  \ra^m \la \eta''
          \ra^a} \rp^2 d\eta \\
        &\le \sum_{i, j = 1}^2        \int \lp \frac{f_i g_j}{\la \xi - \eta \ra^m \la \xi'' - \eta'' \ra^a \la \eta''
          \ra^a} \rp^2 d\eta , 
      \end{split}
    \end{equation}
where $f_1 = \la \eta \ra^m$, $f_2 = \la \xi -  \eta \ra^m$ and $g_1 =
\la \eta'' \ra^a$, $g_2 = \la \xi'' -  \eta'' \ra^a$.  We estimate the
$f_1 g_1$ term by
    \begin{equation*}
      \begin{split}
       & \int \lp \frac{\la \eta \ra^m\la \eta'' \ra^a}{\la \xi - \eta \ra^m \la \xi'' - \eta'' \ra^a \la \eta  \ra^m \la \eta''
          \ra^a} \rp^2 d\eta \\
        &\le \int \lp \frac{1}{\la \xi - \eta \ra^m \la \xi'' - \eta''
          \ra^a } \rp^2 d\eta \le \int \lp \frac{1}{\la \xi' - \eta' \ra^m \la \xi'' - \eta'' \ra^a } \rp^2 d\eta,
      \end{split}
    \end{equation*}
so integrating separately in the primed and double primed variable
shows this is uniformly bounded.  The other terms are bounded in
exactly the same way.
  \end{proof}

The wavefront set containment in
\eqref{eq:basicwavefrontproduct}--\eqref{eq:basicassumptionsforproduct} can be
``improved'' when one assumes the distributions lie in the $\cY^{m,
  a}_d$, in the sense that less total regularity (i.e.\ smaller $m$)
is required.
\begin{lemma}
  Let $r, a,m \in \RR$, and let $u \in H^r, v \in \cY_d^{m, a}$.
  Then, provided, $m >
  d/2$, and $a > (n - d)/2$  we have that for any $s \in\RR$ with $r \ge s \ge m + a$, that
  \begin{equation}
    \label{eq:2}
    \WF^{s}(uv) \subset \WF^{s}(v).
  \end{equation}
\end{lemma}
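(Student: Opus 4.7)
My plan is to adapt the proof of the preceding lemma, using a weight $w_2$ that simultaneously encodes the $\cY_d^{m,a}$ regularity of $v$ and its microlocal $H^s$ regularity at $(x_0,\xi_0)$, and then to apply Lemma~\ref{thm:algcondition}. By locality of multiplication and of $\WF^s$, I may assume $u,v$ are compactly supported. Fix $(x_0,\xi_0)$ with $\xi_0\neq 0$ and $(x_0,\xi_0)\notin\WF^s(v)$, and choose an open cone $\sC\subset\RR^n\setminus o$ about $\xi_0$, a variable order $\sfs=\sfs(\hat\xi)$ with $\sfs\equiv s$ on $\sC$ and $\sfs\ll 0$ off $\sC$, and a smooth cutoff $\chi$ supported near $x_0$ with $\chi v\in H^{\sfs}$. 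Crucially, since $\cY_d^{m,a}$ is preserved by compactly supported smooth multipliers, I also have $\chi v\in\cY_d^{m,a}$. Pick a smaller conic neighborhood $\sK\subset\sC$ of $\xi_0$ and a function $s'=s'(\hat\xi)$ with $s'\equiv s$ near $\hat\xi_0$, $s'\leq s$ everywhere, and $s'=m+a$ on a conic neighborhood of $\sK^c$. It then suffices to prove $(\chi u)(\chi v)\in H^{s'}$.

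I apply Lemma~\ref{thm:algcondition} with
$$w_1(\eta)=\la\eta\ra^r,\qquad w(\xi)=\la\xi\ra^{s'(\hat\xi)},\qquad w_2(\xi)=\la\xi\ra^m\la\xi''\ra^a+\la\xi\ra^{\sfs(\hat\xi)}.$$
Then $\chi u\in H^{(w_1)}$ and $\chi v\in H^{(w_2)}$ (by the two conditions above, each summand absorbs one of them), so the containment $(\chi u)(\chi v)\in H^{(w)}=H^{s'}$ follows from the uniform boundedness in $\xi$ of
$$I_\xi:=\int\left(\frac{w(\xi)}{w_1(\eta)\,w_2(\xi-\eta)}\right)^2 d\eta.$$
For $\xi\in\sK^c$, one has $w(\xi)\leq C\la\xi\ra^{m+a}$; combined with the lower bound $w_2(\xi-\eta)\geq\la\xi-\eta\ra^m\la\xi''-\eta''\ra^a$ and the standard expansions $\la\xi\ra^m\lesssim\la\eta\ra^m+\la\xi-\eta\ra^m$ and $\la\xi''\ra^a\lesssim\la\eta''\ra^a+\la\xi''-\eta''\ra^a$, the estimate reduces to the Fourier-side integrals appearing in the proof of Lemma~\ref{thm:basicalgebra}, whose convergence follows from $m>d/2$, $a>(n-d)/2$, and $r\geq m+a$.

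For $\xi\in\sK$ I further split the $\eta$-integral according to whether $\xi-\eta\in\sC$ or $\xi-\eta\in\sC^c$. When $\xi-\eta\in\sC$, the second summand of $w_2$ provides the lower bound $w_2(\xi-\eta)\geq\la\xi-\eta\ra^s$; bounding $\la\xi\ra^s\lesssim\la\eta\ra^s+\la\xi-\eta\ra^s$ and splitting yields two integrals handled exactly as in the proof of the preceding lemma, using $r\geq s$ and the overall hypothesis $r\geq m+a>n/2$. When $\xi-\eta\in\sC^c$, compact containment of $\sK$ in the interior of $\sC$ gives $C_0>0$ with $\la\xi-\eta\ra\geq C_0^{-1}\la\eta\ra$, and combining this with $w_2(\xi-\eta)\geq\la\xi-\eta\ra^m\la\xi''-\eta''\ra^a$ reduces the estimate once more to the type of integral treated in Lemma~\ref{thm:basicalgebra}. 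The main obstacle is the region-splitting bookkeeping and verifying that in each sub-region one of the two summands of $w_2$ dominates appropriately; once this is organized, convergence reduces to the separate-variable Fubini estimates that already appear in Lemma~\ref{thm:basicalgebra} and the preceding lemma.
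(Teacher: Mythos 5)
Your strategy is the same as the paper's: apply Lemma~\ref{thm:algcondition} with the additive weight $w_2 = \la\xi\ra^m\la\xi''\ra^a + \la\xi\ra^{\sfs}$ encoding both the $\cY_d^{m,a}$ regularity of $v$ and its microlocal $H^s$ regularity, and $w_1 = \la\eta\ra^r$ for $u$ (putting $v$ in the $\xi-\eta$ slot rather than the $\eta$ slot as in the paper is immaterial, since $M_+$ and $M_-$ in Lemma~\ref{thm:algcondition} trade places under $\eta\mapsto\xi-\eta$). The genuine gap is in your choice of the target function $s'$: setting $s' \equiv m+a$ on a conic neighborhood of $\sK^c$ is too large, and the resulting $I_\xi$ is \emph{not} uniformly bounded there. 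Concretely, take $\xi$ with $\xi'' = 0$, $|\xi'| = R$ large, and $\hat\xi\in\sK^c\cap\sC^c$ (for a worst-case $v$ this direction cannot be avoided). For $\eta$ in the unit ball one has $\la\eta\ra^r\sim 1$, $\la\xi-\eta\ra\sim R$, $\la\xi''-\eta''\ra\sim 1$, and $\widehat{\xi-\eta}\approx\hat\xi\notin\sC$ so the $\sfs$ summand of $w_2(\xi-\eta)$ is useless; thus $w_2(\xi-\eta)\sim R^m$ while $w(\xi)=\la\xi\ra^{m+a}\sim R^{m+a}$, giving an integrand $\sim R^{2a}$ over a set of unit measure and hence $I_\xi\gtrsim R^{2a}\to\infty$. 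This reflects a real failure of the target containment, not just of the estimate technique: $\cY_d^{m,a}\not\subset H^{m+a}$ once $a>(n-d)/2$ (try $\hat v(\xi)=\la\xi\ra^{-m}\la\xi''\ra^{-a}\la\xi\ra^{-n/2-\epsilon}$), so $uv$ need not be microlocally $H^{m+a}$ in the $\xi''=0$ directions. The paper instead takes $s'$ equal to a small constant $\delta>0$ away from $\sK$, which makes the $\sK^c$ case straightforward; any choice with $s'\leq m$ there would also work. Relatedly, your invocation of the ``standard expansions'' $\la\xi\ra^m\lesssim\la\eta\ra^m+\la\xi-\eta\ra^m$, $\la\xi''\ra^a\lesssim\la\eta''\ra^a+\la\xi''-\eta''\ra^a$ conflates your numerator $\la\xi\ra^{m+a}$ with the $\cY_d^{m,a}$-weight $\la\xi\ra^m\la\xi''\ra^a$; the former dominates the latter, so this substitution goes in the wrong direction.

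A smaller error: in the case $\xi\in\sK$, $\xi-\eta\in\sC^c$, the inequality you assert, $\la\xi-\eta\ra\geq C_0^{-1}\la\eta\ra$, is backwards. Compact containment of $\sK$ in $\sC$ bounds the angle $\theta$ between $\hat\xi$ and $\widehat{\xi-\eta}$ below by some $\theta_0>0$, and the law of cosines applied to $\eta = \xi - (\xi-\eta)$ gives $|\eta|\geq|\xi|\sin\theta_0$ and $|\eta|\geq|\xi-\eta|\sin\theta_0$, i.e.\ $\la\eta\ra\gtrsim\la\xi\ra$ and $\la\eta\ra\gtrsim\la\xi-\eta\ra$; a concrete counterexample to your version is $\xi=(R,0)$, $\eta=(R,-1)$, $\xi-\eta=(0,1)$ with $R\to\infty$. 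Fortunately it is the corrected inequalities that actually close the estimate: $\la\xi\ra^s\lesssim\la\eta\ra^s\leq\la\eta\ra^r$ absorbs the numerator, leaving $\int 1/(\la\xi'-\eta'\ra^{2m}\la\xi''-\eta''\ra^{2a})\,d\eta<\infty$, so this is an expository slip rather than a structural flaw in that region.
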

\begin{proof}
Note first that the conditions on $r, s, m, $ and $a$ imply that $r, s
> n/2$ and thus give square integrable weight functions.  Let $\xi_0
\neq 0$ and $(x_0, \xi_0) \not \in \WF^s(v)$.  As above, let $\sC
\subset \RR^n$ be an open,
conic set, such that 
$$
\xi_0 \in \sC \mbox{ and } \{x_0\}\times\sC \subset (\WF^s(v))^{c},
$$
% and let $\sK \subset \sC$
% be a conic subset with compact cross section, and choose functions $\sfs
% = \sfs(\xi)$ such that $v \in H^{\sfs}$ and 
% $$
% \sfs\rvert_{\sK} = s.
% $$
% It follows that there is a cutoff function $\chi$ supported near and
% nonzero at $x_0$ such that $\chi v \in H^{(w_1)}$ where
There is a function $\sfs = \sfs(\hat{\xi})$ such that
$\sfs \rvert_{\sC} =
s$, and $\chi  v \in H^{(w_1)}$ where
$$
w_1 = \la \xi \ra^m \la \xi'' \ra^a + \la \xi \ra^{\sfs} .
$$
Furthermore, $\chi$ can be chosen such that $\chi u \in H^{(w_2)}$
where $w_2 = \la \xi \ra^r$.  To show that $(x_0, \xi_0) \not \in
\WF^s(uv),$ we choose $\sK$ a conic subset
containing $\xi_0$, and a function $s'(\hat \xi)$ with
\begin{equation}\label{eq:sfsdescrip}
s'(\hat{\xi})  =  s \mbox{ for  } \hat{\xi}_0 \mbox{ near }
\hat{\xi},  \mbox{ while } s'(\hat{\xi}) = \delta \mbox{ on a
  neighborhood of } \sK^c,   
\end{equation}
where $\delta > 0$ is fixed and small and also $s' \le \sfs$
everywhere.  We
apply Lemma \ref{thm:algcondition} with this $w_1, w_2$ and $w = \la
\xi \ra^{s'}$.

Defining $I_\xi$ as in \eqref{eq:6} with the current $w_1, w_2$ and
$w$ we have
\begin{equation}
  \label{eq:basicmoduleregwave}
I_\xi := \int\Big(\frac{\langle\xi\rangle^s}{(\la \eta \ra^m \la
  \eta'' \ra^a + \la \eta \ra^{\sfs} )
  \langle\xi-\eta\rangle^r}\Big)^2\,d\eta ,
\end{equation}
and we want to know that $M_+ = \sup_\xi I_\xi$ is finite.  Again we use
that $\la \xi \ra^p \lesssim \la \eta \ra^p + \la \xi - \eta \ra^p$
for $p > 0$ to write $I(\xi) \le I_1(\xi) + I_2(\xi)$ where
\begin{equation}
  \label{eq:4}
  \begin{split}
    I_1(\xi) &= \int\Big(\frac{\langle\eta\rangle^s}{(\la \eta \ra^m \la
      \eta'' \ra^a + \la \eta \ra^{\sfs} )
      \langle\xi-\eta\rangle^r}\Big)^2\,d\eta \qquad \mbox{ and }\\
    I_2(\xi) &=    \int\Big(\frac{\langle\xi - \eta\rangle^s}{(\la \eta \ra^m \la
      \eta'' \ra^a + \la \eta \ra^{\sfs} )
      \langle\xi-\eta\rangle^r}\Big)^2\,d\eta.
  \end{split}
\end{equation}
For any $\xi$, since $\langle\xi -
\eta\rangle^s/\langle\xi-\eta\rangle^r \le 1$, 
\begin{equation*}
  \begin{split}
    I_2(\xi) &\le  \int\Big(\frac{1}{\la \eta \ra^m \la
      \eta'' \ra^a + \la \eta \ra^{\sfs} }\Big)^2\,d\eta \le  \int\frac{1}{\la \eta' \ra^{2m} \la
      \eta'' \ra^{2a}}\,d\eta'd\eta'',
  \end{split}
\end{equation*}
which is bounded since $m > d/2$ and $a > (n - d)/2$.
% As for $I_1(\xi)$, if $\xi \not \in \sK$, then $\langle\eta\rangle^s/(\la \eta \ra^{m} \la
%       \eta'' \ra^a + \la \eta \ra^{s'}) \le 1$, so in this case the
%       integrand in $I_1(\xi)$ is bounded by $\la \xi - \eta \ra^{2r}$,
%       whose integral is bounded uniformly in $\xi$.

Finally consider $I_1(\xi)$ for $\xi \in \sK$.  We break the integral
up into $\eta \in \sC$ and $\eta \in \sC^c$.  Using that, $s'(\xi)
\le s$, and that $\sfs = s$ on $\sC$ with $s \ge m + a$,
\begin{equation*}
  \begin{split}
    &\int_{\sC}\Big(\frac{\langle\eta\rangle^s}{(\la \eta \ra^m \la
      \eta'' \ra^a + \la \eta \ra^{\sfs} )
      \langle\xi-\eta\rangle^r}\Big)^2\,d\eta \le     \int_{\sC}\frac{1}{
      \langle\xi-\eta\rangle^{2r}} d\eta ,
  \end{split}
\end{equation*}
which is bounded uniformly.  On the other hand, since $\sK$ has
compact cross section and $\xi \in \sK$, we have $\la \eta \ra \lesssim \la
\xi - \eta \ra$ for $\eta \in \sC^c$, so as $s\leq r$,
\begin{equation*}
  \begin{split}
 \int_{\sC}\Big(\frac{\langle\eta\rangle^s}{(\la \eta \ra^m \la
     \eta'' \ra^a + \la \eta \ra^{\sfs} )
      \langle\xi-\eta\rangle^r}\Big)^2\,d\eta &\lesssim     \int_{\sC}\frac{1}{(\la \eta \ra^m \la
      \eta'' \ra^a + \la \eta \ra^{\sfs})^2} d\eta \\
    &  \le   \int_{\sC}\frac{1}{\la \eta' \ra^{2m} \la
      \eta'' \ra^{2a} } d\eta'd\eta'',
  \end{split}
\end{equation*}
and again the last integral is bounded.  Again, we leave the estimate
for $\xi \not \in \sK$ to the reader.
\end{proof}

Furthermore, we have
\begin{lemma}\label{thm:reallemma}
  Let $a,m,s \in \mathbb{R}$.  Let $u, v \in \cY_d^{m, a}$.  Then, provided $m >
  d/2$, and $a > (n - d)/2$ with $s \ge m +
  a$, we have
  \begin{equation}
    \label{eq:waveinsum}
    \WF^{s}(uv) \subset (\WF^{s}(u) + \WF^{s}(v))\cup\WF^s(u)\cup\WF^s(v).
  \end{equation}
  \begin{rem}\label{thm:interior-wave-sum-inclusion}
    Lemma \ref{thm:reallemma} gives the following elaboration on
    \eqref{eq:basicwavefrontproduct}--\eqref{eq:basicassumptionsforproduct}.
    If $u, v \in H^{s_0}$ for $s_0 \in \mathbb{R}$, $s_0 > n/2$, then
    for any $s \in \mathbb{R}$, \eqref{eq:waveinsum} holds.  Indeed,
    for $s_0 > n/2$, $s>s_0$ (the interesting case) and any $d$ one can find $m, a \in \RR$ such that $m > d
    /2$, $a > (n - d)/2$ and $s \ge m + a$, and for any such $H^{s_0} \subset \cY_d^{m,
      a}$, so the lemma applies to $u, v$.
  \end{rem}
\end{lemma}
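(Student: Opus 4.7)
The plan is to adapt the classical wavefront-set product theorem to the Sobolev level by applying Lemma \ref{thm:algcondition} to weight functions that encode both the global $\cY_d^{m,a}$ regularity and the microlocal $H^s$ improvement along directions not in the wavefront sets. First I would reduce to the case of compactly supported $u, v$ by multiplying with cutoffs near $x_0$, which preserves the relevant wavefront sets. Writing $A = \WF^s(u)_{x_0}$ and $B = \WF^s(v)_{x_0}$ (closed cones in $\RR^n\setminus 0$), the hypothesis reads $\xi_0 \notin A \cup B \cup (A+B)$. By a compactness/continuity argument on the sphere, I would select open conic neighborhoods $A' \supset A$, $B' \supset B$ and an open conic neighborhood $V$ of $\xi_0$ such that $V$ is disjoint from each of $A'$, $B'$, and $A' + B'$.

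Second I would introduce $0$-homogeneous cutoffs $\phi_{A'}, \phi_{B'}, \phi_V$ and weight functions
\begin{equation*}
  \begin{split}
  w_u(\eta) &= \la\eta\ra^m\la\eta''\ra^a + (1 - \phi_{A'}(\hat\eta))\la\eta\ra^s, \\
  w_v(\zeta) &= \la\zeta\ra^m\la\zeta''\ra^a + (1 - \phi_{B'}(\hat\zeta))\la\zeta\ra^s, \\
  w(\xi) &= \la\xi\ra^m\la\xi''\ra^a + \phi_V(\hat\xi)\la\xi\ra^s.
  \end{split}
\end{equation*}
Combining $u, v \in \cY_d^{m,a}$ with the microlocal $H^s$ hypotheses gives $w_u \hat u, w_v \hat v \in L^2$. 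Showing $w\widehat{uv} \in L^2$ would imply $(x_0, \xi_0) \notin \WF^s(uv)$, and by Lemma \ref{thm:algcondition} this reduces to verifying $M_+ = \sup_\xi \int (w(\xi)/(w_u(\eta) w_v(\xi - \eta)))^2 \, d\eta < \infty$.

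Third I would estimate $M_+$ by splitting the domain. For $\xi \notin V$, where $w(\xi) \sim \la\xi\ra^m\la\xi''\ra^a$, the estimate reduces to the $\cY_d^{m,a}$-algebra inequality of Lemma \ref{thm:basicalgebra}. For $\xi \in V$, where $w(\xi) \sim \la\xi\ra^s$, I would partition $\eta$-space by whether $\hat\eta \in A'^c$ (so $w_u(\eta) \gtrsim \la\eta\ra^s$, providing $H^s$ decay in $u$) or $\hat\eta \in A'$ (in which case the separation $V \cap (A'+B') = \emptyset$ forces $\widehat{\xi-\eta} \in B'^c$, so $w_v(\xi-\eta) \gtrsim \la\xi-\eta\ra^s$, providing $H^s$ decay in $v$). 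In each region I would apply $\la\xi\ra^{2s} \lesssim \la\eta\ra^{2s} + \la\xi-\eta\ra^{2s}$ and the change-of-variables arguments familiar from Lemma \ref{thm:basicalgebra}, reducing the estimate to bounded expressions dominated by $\int d\eta/(\la\eta\ra^{2m}\la\eta''\ra^{2a})$, which is finite thanks to $m > d/2$ and $a > (n-d)/2$.

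The hard part will be the mixed regions where only one of the two factors enjoys the microlocal $H^s$ improvement. Here the separation $V \cap (A'+B') = \emptyset$ must be used quantitatively: for $\hat\eta \in A'$ and $\hat\xi \in V$, the angular separation between these cones forces $|\xi-\eta|$ to be bounded below by a constant times $\max(|\xi|,|\eta|)$, so the $H^s$-decay of $v$ can absorb the $\la\xi\ra^{2(s-m)}$ growth coming from the weaker $\cY_d^{m,a}$ bound on $u$, with the condition $s \ge m+a$ being exactly what balances these factors against the $\la(\xi-\eta)''\ra^{-2a}$ weight. A secondary technical point is that Minkowski sums of closed cones need not be closed when $A \cap (-B) \ne \emptyset$; this is addressed by the compactness argument on the sphere combined with the observation that for $\xi_0 \ne 0$, the configurations producing sums near $0$ can be excluded from the support of the product of cutoffs $\phi_{A'}\phi_{B'}$ relative to $V$.
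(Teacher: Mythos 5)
Your proposal follows essentially the same strategy as the paper's proof: verify the hypothesis of Lemma~\ref{thm:algcondition} with weight functions of the form $\la\eta\ra^m\la\eta''\ra^a + \la\eta\ra^{s_i(\hat\eta)}$ (or your equivalent cutoff-based version), split the $\eta$-integral according to whether $\hat\eta$ lies in the enlarged cone over $\WF^s(u)$, and in the mixed region use the disjointness of $V$ from $A'+B'$ to conclude $\widehat{\xi-\eta}\notin B'$ and hence that $w_v(\xi-\eta)\gtrsim\la\xi-\eta\ra^s$, combined with the angular separation $\la\eta\ra\lesssim\la\xi-\eta\ra$. Your observations on the technical subtleties (quantitative conic separation giving $|\xi-\eta|\gtrsim\max(|\xi|,|\eta|)$, and the potential non-closedness of Minkowski sums of cones when $A\cap(-B)\neq\emptyset$) are accurate and worth spelling out, since the paper treats them implicitly; the only caveat is that the latter issue is also silently handled by the paper's convention that the union in \eqref{eq:waveinsum} includes $\WF^s(u)$ and $\WF^s(v)$ separately, so that the problematic directions arising from near-antipodal sums are already excluded by the hypothesis $\xi_0\notin\WF^s(u)\cup\WF^s(v)$.
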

\begin{proof}[Proof of Lemma \ref{thm:reallemma}]
  The idea behind the proof is the following.  Working above a fixed
  point $x_{0} \in \RR^{n}$, given 
  \begin{equation}\label{eq:notinsum}
\xi \not \in (\WF^{s}(u) +
  \WF^{s}(v))\cup\WF^s(u)\cup\WF^s(v),
\end{equation}
consider the integral
  \begin{equation}\label{eq:etapart}
\int\Big(\frac{\langle \xi \rangle^s}{(\la \eta \ra^m \la
     \eta'' \ra^a + \la \eta \ra^{s_{1}} )
     (\la \xi - \eta \ra^m \la
     \xi'' - \eta'' \ra^a + \la \xi - \eta \ra^{s_{2}} )}\Big)^2\,d\eta,
\end{equation}
where $s_{1}$ and $s_{2}$ are chosen so that $u \in H^{s_{1}}$ and $v
\in H^{s_{2}}$ (near $x_{0}$), and such that $s_{i} = s > n/2$ on sets
that are as large as possible.  If one could take $s_{1} = s$ on
$(\WF^{s}(u))^{c}$ and $s_{2} = s$ on
$(\WF^{s}(v))^{c}$ (one cannot) then the integral would be bounded
since by \eqref{eq:notinsum}, either $\eta \not \in \WF^s(u)$ or $\xi -
\eta \not \in \WF^s(v)$.  In the former case for example, the integral
is bounded by 
$$
\int \frac{1}{\la \xi - \eta\ra^{2m}\la \xi'' - \eta''\ra^{2s}} d\eta,
$$
which is bounded uniformly in $\xi$.

For the formal argument, note that for any open conic sets $\wt{\sC}_{1} \supset
\WF^{s}(u)$ and $\wt{\sC}_{2} \supset
\WF^{s}(v)$ there are functions $s_{1}, s_{2}$ so that $s_{i} \equiv s$ off
$\wt{\sC}_{i}$ and such that $\chi u \in H^{s_{1}}, \chi v \in
H^{s_{2}}$ for some cutoff $\chi$ with $\chi(x_{0}) = 1$.   Given
$\xi$ as in \eqref{eq:notinsum}, assume furthermore that $\xi \not \in
\wt{\sC}_1 + \wt{\sC}_2$, and let $\sC$ be a conic open set with $C
\subset (\wt{\sC}_1 + \wt{\sC}_2)^c$
For $\xi
\in \sC$ where $\sC$ is an arbitrary open subset with $\sC \subset
(\wt{\sC}_{1} + \wt{\sC}_{2})^{c}$.  Writing $\langle \xi \rangle^{2s}
\lesssim \la \eta \ra^{2s} + \la \xi - \eta \ra^{2s}$, we can bound the
integral above by two terms (one with the $\eta$ in the numerator and
the other with the $\xi - \eta$).  The argument to bound each of these
is symmetric so we consider only the $\eta$ term.  Then for $\xi \in
\sC$, the integral in \eqref{eq:etapart}
can be broken up as an integral over $\wt{\sC}_{1}$ and over
$\wt{\sC}_{1}^{c}$.  Over $\wt{\sC}_{1}^{c}$, we have that $\la \eta \ra^m \la
     \eta'' \ra^a + \la \eta \ra^{s_{1}} >(1/2) \la \eta \ra^{s}$, so
     that part of the integral is bounded by
\begin{equation}
  \label{eq:etapart1}
  \begin{split}
    \int\Big(\frac{1}{\la \xi - \eta \ra^m \la
     \xi'' - \eta'' \ra^a + \la \xi - \eta \ra^{s_{2}} }\Big)^2\,d\eta,
  \end{split}
\end{equation}
which is uniformly bounded.  On the other hand, for $\eta \in
\wt{\sC}_{1}$, we have that $s_{2}(\widehat{\xi - \eta}) = s$, since $\xi \in
\sC$, that is $\xi = \xi - \eta + \eta \not \in \wt{\sC}_{1} +
\wt{\sC}_{2}$, so $\xi - \eta \not \in \wt{\sC}_{2}$.  Therefore on
the $\wt{\sC}_{1}$ region, the integral in \eqref{eq:etapart} is
bounded by 
\begin{equation}
  \label{eq:etapart2}
      \int\Big(\frac{\langle \eta \rangle^s}{(\la \eta \ra^m \la
     \eta'' \ra^a + \la \eta \ra^{s_{1}} )
     \la \xi - \eta \ra^{s} }\Big)^2\,d\eta.
\end{equation}
But since $\xi \in \sC$ (so in particular $\xi\notin \wt{\sC}_{1}$), $\la \eta \ra  \lesssim \la \xi - \eta \ra$, so the
integral is uniformly bounded.

Thus we will be able to apply Lemma \ref{thm:algcondition} with $w_{i} = \la \eta \ra^m \la
     \eta'' \ra^a + \la \eta \ra^{s_{i}}$ and $w = \la \xi \ra^{\sfs}$
     with $\sfs = s$ on an arbitrary conic subset $\sK' \subset \sC$ by
     arguing exactly as in the previous lemmata, namely taking $\sfs$
     small but uniformly positive off of $\sK'$ so that both $w/w_{1}$
     and $w/w_{2}$ bounded off $\sC$.  In summary we have shown that
     for such $w, w_{1}, w_{2}$ that
     \begin{equation}
       \label{eq:tobeused}
       H^{(w_{1})} \cdot H^{(w_{2})} \subset H^{(w)},
     \end{equation}
and thus the lemma follows.
\end{proof}

\subsection{Microlocal multiplicative properties of b-Sobolev spaces and module
  regularity spaces}\label{sec:balgebraprops}

Recall that the b-Sobolev
space $\Hb^{m, 0}$ consists of distributions $u$ which are $H^m$
locally in the interior and whose behavior near infinity is as
follows.  We consider a tubular neighborhood of $\p M$ in $M$ (so a
tubular neighborhood of infinity) by $\{ \rho < \epsilon\}$ for some
small $\epsilon > 0$ and boundary defining function $\rho$.  Given a boundary point $p \in \p
M$, we can take coordinates near $p$ in this collar to be of the form
$(\rho, w)$ where $w$ form coordinates on $\p M$.  Defining the
following operation on functions $v$ of
$(\rho, w)$ by
\begin{equation}\label{eq:expchange}
\wt{v}(x, w) = v(e^x, w),
\end{equation}
then $u \in \Hb^{m, 0}$ if in addition to interior regularity, for some cutoff function $\chi$, $\wt{\chi u} \in H^m$.  Here if
$m \colon \Tb^*M \lra \RR$ is a homogeneous degree zero  function (at
least outside a compact set) i.e.\ a function on $\Sb^{*}M$, then we mean $H^m$ as defined in
\eqref{eq:realsobolevdef}, where $m = m(p, \xi)$ and $\xi$ is a
coordinate on $\Tb^*_pM$.
Note that the Fourier transform of $\wt{u}$ is
equal to the distribution obtained by taking the Fourier transform in
the $w$ variables and the Mellin transform (see \eqref{eq:mellindef}) in $\rho$.  The weighted
b-Sobolev spaces are defined by $\Hb^{m, l} = \rho^l \Hb^{m, 0}$.  Given $u \in \Hb^{-N, 0}$ for some $N$, a covector $(p,
\xi) \in \Tb^*_p(M)$ satisfies $(p, \xi) \not \in \WFb^{m,0}(u)$ if
$w(\xi) \widehat{\wt{\chi u}} \in L^2$ for some cutoff function, where
$w$ satisfies \eqref{eq:hsmicro}.  Just as stated above
\eqref{eq:hsmicro}, this is equivalent to having a $\chi$ for which $\chi u \in H^{\sfs}$ where $\sfs =
\sfs(\hat{\xi})$, $\sfs(\hat{\xi}_0) \equiv s$ in a neighborhood of
$\hat{\xi}_0$ and $\sfs \ll 0$ away from
$\hat{\xi}_0$.  Finally, for $u \in \Hb^{-N, l}$ with $l \in \RR$,
\begin{equation}
  \label{eq:weightedwavefront}
  \WFb^{m,l}(u) := \WFb^{m, 0}(\rho^{-l}u).
\end{equation}

Using the previous section we can, for example, easily prove
\begin{lemma}
 Given
$r,s_0,s \in \RR$, then
\begin{equation}\label{eq:basicwavefrontproduct2}
u\in \Hb^{r, l_1}, v\in \Hb^{s_0, l_2} \implies uv \in \Hb^{s_0, l_1 +
  l_2} \mbox{ and }
\WFb^s(uv)\subset \WFb^s(v),
\end{equation}
provided \eqref{eq:basicassumptionsforproduct} above holds, i.e.\
$ r \ge s \ge s_0 \ge 0$ and $r - s + s_0 > n/2$.
\end{lemma}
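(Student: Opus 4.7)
The plan is to reduce the b-Sobolev statement to the standard Sobolev statement from the previous lemma, by factoring out the weights and using the exponential change of variables in \eqref{eq:expchange}.

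First, the interior portion of both assertions follows directly from the preceding Sobolev algebra lemma applied in local coordinate charts in $M^\circ$: there, $\Hb^{m,l}$ coincides (up to a smooth non-vanishing factor) with $H^m_\loc$, and $\WFb^s$ restricts to the standard wavefront set $\WF^s$. Thus the content lies near $\pa M$.

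Second, I would factor out the weights: writing $u = \rho^{l_1} u_0$ with $u_0 \in \Hb^{r, 0}$ and $v = \rho^{l_2} v_0$ with $v_0 \in \Hb^{s_0, 0}$, we have $uv = \rho^{l_1 + l_2} u_0 v_0$, and by \eqref{eq:weightedwavefront} it suffices to show $u_0 v_0 \in \Hb^{s_0, 0}$ and $\WFb^s(u_0 v_0) \subset \WFb^s(v_0)$. Hence the problem reduces to the case $l_1 = l_2 = 0$.

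Third, I would localize near $\pa M$ and pass to exponential coordinates. In a collar $[0, \epsilon) \times U \subset M$ with coordinates $(\rho, w)$, picking cutoffs $\chi, \chi'$ supported there with $\chi' \equiv 1$ near $\supp \chi$, the substitution $\tilde f(x, w) = f(e^x, w)$ converts the b-vector fields $\rho \pa_\rho, \pa_{w_i}$ into the standard vector fields $\pa_x, \pa_{w_i}$. Hence $\chi u_0, \chi v_0 \in \Hb^{r, 0}, \Hb^{s_0, 0}$ is equivalent to $\widetilde{\chi u_0}, \widetilde{\chi v_0} \in H^r, H^{s_0}$ on $\RR \times U$. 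Moreover, this correspondence identifies $\Sb^*_{\pa M} M$ (at the boundary $x = -\infty$, i.e.\ $\rho = 0$) with a sphere bundle in $T^*(\RR \times \pa M)$, and under this identification the b-wavefront set at $\pa M$ matches the standard wavefront set of the transformed distribution; the same holds for variable orders $s \in \CI(\Sb^*M)$.

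Fourth, I would apply the preceding Sobolev lemma to $\widetilde{\chi u_0}, \widetilde{\chi v_0}$: the hypotheses $r \ge s \ge s_0 \ge 0$ and $r - s + s_0 > n/2$ are precisely those of \eqref{eq:basicassumptionsforproduct}, so we obtain $\widetilde{\chi u_0} \cdot \widetilde{\chi v_0} \in H^{s_0}$ together with $\WF^s(\widetilde{\chi u_0} \cdot \widetilde{\chi v_0}) \subset \WF^s(\widetilde{\chi v_0})$. Since the substitution commutes with pointwise multiplication, $\widetilde{\chi u_0} \cdot \widetilde{\chi v_0} = \widetilde{\chi^2 u_0 v_0}$, so translating back gives $\chi^2 u_0 v_0 \in \Hb^{s_0, 0}$ and $\WFb^s(u_0 v_0) \cap \Sb^*_{\supp \chi} M \subset \WFb^s(v_0)$. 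A finite partition of unity on $\pa M$ then yields the global conclusion.

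The main point to check carefully is the compatibility of b-pseudodifferential operators (and hence of $\WFb$) with the exponential substitution: that b-ps.d.o's near $\pa M$ correspond under the change of variables to standard ps.d.o's on $\RR \times \pa M$, with matching principal symbols and ellipticity. This is essentially built into the b-calculus via the Mellin transform in \eqref{eq:mellindef}--\eqref{eq:1} (where $\mathcal M u(\sigma) = \mathcal F v(\sigma)$ for $v(x) = u(e^x)$), so it is a matter of unpacking definitions rather than a new estimate.
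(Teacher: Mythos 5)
Your proof is correct and follows essentially the same route as the paper's: reduce to $l_1=l_2=0$ by multiplying by $\rho^{-l_i}$, handle the interior via the preceding Euclidean lemma, and near $\pa M$ localize with cutoffs and use the substitution $\tilde f(x,w)=f(e^x,w)$ from \eqref{eq:expchange} to translate $\Hb$-membership and $\WFb$ into their standard-Sobolev counterparts on the cylinder, then invoke the Euclidean lemma (in particular the estimate behind \eqref{eq:basicmodel}). Your closing remark about compatibility of b-ps.d.o.'s with the substitution is indeed built into the paper's definition of $\WFb$ via the Mellin transform, so nothing further is needed there.
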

\begin{proof}
  The proof follows from the paragraph following Lemma
  \ref{thm:algcondition}, as we explain now.  

First let $l_1 = 0 = l_2$.  Given such $u$ and $v$, we want to show
first that $uv \in \Hb^{s_0, l_1 +
  l_2}$.  In the interior of $M$
this follows from \eqref{eq:basicwavefrontproduct} and
\eqref{eq:basicassumptionsforproduct} directly.  For $p \in \p M$, by definition, there is a cutoff function
$\chi$ so that $\wt{\chi u } \in H^{r}$, $\wt{\chi v} \in
H^{s_0}$, where the tilded functions are the functions on the cylinder
defined in \eqref{eq:expchange}. Then $\wt{\chi u } \wt{\chi v} \in
H^{s_0}$ by applying \eqref{eq:basicwavefrontproduct} with $s = s_0$.  

Now we show the wavefront set containment, which is almost identical to
the paragraph following \eqref{eq:basicassumptionsforproduct}.
Indeed, for $(p, \xi_0) \not \in \WFb^{s, 0}(v)$, let $\sC \subset \Tb^{*}_{p}M$ be an open
cone with $\xi_{0} \in \sC$ and $\sC \cap \WFb^{s, 0}(v) =
\varnothing$.  By definition there is
a cutoff $\chi$ supported near $p$ such that $\wt{\chi v} \in H^\sfs$
for some $\sfs$ with $\sfs \equiv s$ on $\sC$, $\sfs \ge s_0$ and such that $\wt{\chi u} \in H^r$.
Then let $\sK \subset \sC$ be a conic set with compact cross section and
$\xi_{0} \in \sK$, and let $s' = s'(\hat{\xi})$ be such that
$s'(\hat{\xi}) \equiv s$ for $\hat{\xi}$ near $\hat{\xi}_0$ and such
that $s'= s_0$ outside $\sK$.
It suffices to show that $\wt{\chi u} \wt{ \chi v} \in H^{s'}$, but
this is exactly \eqref{eq:basicmodel} above.

The statement for $l_{1}$ and $l_{2}$ follows by applying the above
paragraph to $\rho^{-l_1} u$ and $\rho^{-l_2}v$.
\end{proof}

We can now show that the module regularity spaces $\Hbpp$ have the
following algebra property which is closely related to \cite[Section~5.2]{HVsemi}:
%\jgr{check signs}
\begin{lemma}\label{thm:firstmodulealg}
  Let $m\equiv m_0\in\mathbb{R}$ and let $k \in \Nat$.  Provided $m > 1/2$ and $k > (n
  - 1)/2$, if $u_1 \in \Hbpp^{m,l_1,k}, u_2 \in 
  \Hbpp^{m,l_2,k}$, then $u_1u_2 \in \Hbpp^{m,l_1 + l_2,k}.$
\end{lemma}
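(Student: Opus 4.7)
The plan is to localize via a partition of unity. Since multiplication is a local operation, it suffices to establish the algebra property for $u_1, u_2$ supported either in $M \setminus S_+$, or in a small neighborhood of a point $p \in S_+$. In the first case, the module $\mathcal{M}_{++}$ is not required to be characteristic, and contains b-elliptic elements, so that $\Hbpp^{m, l, k}$ coincides locally with the weighted b-Sobolev space $\Hb^{m + k, l}$. Since $m + k > 1/2 + (n - 1)/2 = n/2$ by the hypotheses, the standard algebra property of b-Sobolev spaces at orders above $n/2$ applies, obtained for instance from Lemma \ref{thm:algcondition} with $w_1 = w_2 = w = \la \xi \ra^{m + k}$ after the cylindrical change $t = -\log\rho$.

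The main case is near $p \in S_+$. Take coordinates $(\rho, v, y)$ with $p = (0, 0, y_0)$ and $S_+ = \{\rho = v = 0\}$; the module $\mathcal{V}_{++}$ is then locally generated by $\rho\partial_\rho, \rho\partial_v, v\partial_v, \partial_y$. Passing to the cylinder via $t = -\log\rho$ converts these into $-\partial_t, e^{-t}\partial_v, v\partial_v, \partial_y$, and, after extraction of the weight $\rho^l = e^{-lt}$, the local $\Hbpp^{m, l, k}$ becomes the space of compactly supported distributions on $\RR^n_{(t, v, y)}$ which lie in $H^m$ and whose images under $k$-fold products of these generators remain in $H^m$. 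The translation-invariant generators $\partial_t, \partial_y$ alone give an embedding $\Hbpp^{m, l, k}|_{\mathrm{loc}} \hookrightarrow \cY^{m, k}_1$ (in the notation of \eqref{eq:basicmodulereg}, with $d = 1$ the $v$-direction and $n - d = n - 1$ the $(t, y)$-directions), since $k$-fold applications of $\partial_t, \partial_y$ yield the Fourier weight $\la \xi \ra^m \la (\tau, \eta) \ra^k$. Applying Lemma \ref{thm:basicalgebra} with $m > d/2 = 1/2$ and $k > (n - d)/2 = (n - 1)/2$ gives $u_1 u_2 \in \cY^{m, k}_1$, hence the underlying b-Sobolev regularity $u_1 u_2 \in \Hb^{m, l_1 + l_2}$.

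To upgrade this to the full module regularity $u_1 u_2 \in \Hbpp^{m, l_1 + l_2, k}$, I would apply Leibniz inductively to $V_1 \cdots V_j (u_1 u_2)$ for $V_i \in \mathcal{V}_{++}$ and $j \le k$. The expansion produces a sum of main terms $(V_{i_1}\cdots V_{i_p} u_1)(V_{i_{p+1}}\cdots V_{i_j} u_2)$, whose factors lie in $\Hbpp^{m, l_1, k - p}$ and $\Hbpp^{m, l_2, k - (j - p)}$, plus commutator corrections of strictly lower module order that are absorbed by descending induction on $j$. Each main term is then controlled via the mixed-weight inclusion $\cY^{m, a}_d \cdot \cY^{m, b}_d \subset H^m$ of Lemma \ref{thm:basicalgebra} together with the microlocal wavefront analysis of Lemma \ref{thm:reallemma}. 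The main obstacle is the treatment of unbalanced splits (for example $j = k$ derivatives applied to a single factor), where a naive Fourier bookkeeping would demand the stronger condition $k > n - 1$; closing the estimate at the hypothesized threshold $k > (n - 1)/2$ relies on exploiting that the coefficients of the non-translational generators $v\partial_v$ and $e^{-t}\partial_v$ vanish or decay at $S_+$, producing, via commutators with $\partial_t, \partial_y$, the additional regularity needed to match the other factor's full module regularity and close the induction.
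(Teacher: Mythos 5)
Your proposal follows the same skeleton as the paper's proof: localize, pass to the cylinder $t = -\log\rho$, embed $\Hbpp^{m,l,k}$ locally into $\cY^{m,k}_1$ (with $d=1$ the $v$-direction), then use Leibniz together with the mixed-weight product property of $\cY^{m,a}_d$ spaces to handle the full module. The interior and away-from-$S_+$ cases are treated the same way. However, there is a genuine gap at the final step, and an unnecessary detour earlier.

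The gap is in how you close the mixed-weight estimate. You observe, correctly, that the worst-case Leibniz split puts all $k$ module derivatives on one factor, leaving it in $\cY^{m,0}_1$ while the other remains in $\cY^{m,k}_1$; you then say the stated condition $a+b > n-d$ in the second part of Lemma~\ref{thm:basicalgebra} would force $k > n-1$, and propose to recover $k>(n-1)/2$ by exploiting the decaying coefficients of $v\partial_v$ and $e^{-t}\partial_v$ and commutators with $\partial_t,\partial_y$. That last step is left as a plan rather than a proof, and it is not clear it can work: the vanishing of those coefficients supplies decay (a weight), not Sobolev regularity in the $v$-direction, and the commutators $[\partial_t, e^{-t}\partial_v]$ and $[\partial_t, v\partial_v]$ produce lower-order module operators, not improved $\partial_v$-regularity on the factor that has exhausted its module derivatives. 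As written, the crucial unbalanced-split case is therefore not established.

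The obstacle is in fact spurious, because the threshold $a + b > n - d$ stated in Lemma~\ref{thm:basicalgebra} is not what the proof of that lemma actually requires. If you carry out the integral estimate in \eqref{eq:4.6}, after splitting $\la\xi\ra^{2m} \lesssim \la\eta\ra^{2m} + \la\xi-\eta\ra^{2m}$ and bounding $\la\,\cdot\,\ra \ge \la\,\cdot\,'\ra$, the $\eta'\in\RR^d$ integral needs $m > d/2$ and the $\eta''\in\RR^{n-d}$ integral $\int\la\xi''-\eta''\ra^{-2a}\la\eta''\ra^{-2b}\,d\eta''$ is uniformly bounded in $\xi''$ precisely when $a + b > (n-d)/2$. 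With $d = 1$ this is $a + b > (n-1)/2$, so the worst-case split gives $a + b = (k-r_1)+(k-r_2) = 2k - j \ge k > (n-1)/2$, and the estimate closes at exactly the hypothesized threshold without any coefficient-decay argument. This is how the paper's proof closes (the reference to ``the first part'' of Lemma~\ref{thm:basicalgebra} is evidently to this mixed-weight containment). Separately, your initial step establishing $u_1u_2 \in \cY^{m,k}_1$ via the algebra property, and your invocation of Lemma~\ref{thm:reallemma}, are both unnecessary for this constant-$m$ lemma: the Leibniz step with $j=0$ already yields $\Hb^m$-membership, and the variable-order wavefront analysis is only required in Proposition~\ref{thm:algebra}.
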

\begin{proof}
Away from the boundary this is just the statement that $H^{m + k}$
is an algebra, and at the boundary but away from $S_+$ that
$\Hb^{m+k,l_j}$ has the stated multiplicative property. Thus we assume that the $u_i$ are supported in a small
neighborhood of a point $x \in S_+$.  We begin by showing that
$$
\wt{u_i} \in
\cY^{m, k}_1
$$ 
where $\wt{u}_i$ is defined
as in \eqref{eq:expchange} and $\cY^{m, k}_{1}$ is the spaces
defined in \eqref{eq:basicmodulereg} with $d = 1$, $a = k$. 
Indeed, for our coordinates $(\rho, v, y)$ 
where $\rho$ is a boundary
defining function and $\rho = 0 = v$ defines $S_{+}$ (see Section
\ref{sec:geometry}), recall the
Mellin transform \eqref{eq:mellindef}, and consider the Mellin-Fourier
transform of test functions $\psi \in H^{\infty, \infty}_{b}$,
$\mathcal{M}\mathcal{F}_{v,y}(\psi)$, where $\mathcal{F}_{v,y}$
denotes the Fourier transform in the $v, y$ variables.  Concretely
\begin{equation}
  \label{eq:MellinFourier}
  \mathcal{M}\mathcal{F}_{v,y}(\psi) = \int \rho^{-i \zeta} e^{-i v
    \xi' - i y \cdot \eta} \psi(\rho, v, y) \rho^{-1}d\rho dv dy,
\end{equation}
and we write
\begin{equation}
  \label{eq:7}
  \xi := (\xi', \zeta, \eta), \qquad \xi'' := (\zeta, \eta),
\end{equation}
so $\xi$ is the total dual variable and $\xi'$ is dual to $v$.
Consider order $m$ elliptic b-pseudodifferential operator $A$ defined by \begin{equation}
  \label{eq:melliptic}
  A\psi = \mathcal{F}^{-1}\mathcal{M}^{-1}_{0} \la \xi \ra^{m} \mathcal{M}\mathcal{F}_{v,y} \psi,
\end{equation}
and the order $\leq k$ b-pseudodifferential operator $B_\alpha$,
$|\alpha|\leq k$, defined by
\begin{equation}
  \label{eq:k}
  B_\alpha\psi = \mathcal{F}^{-1}\mathcal{M}^{-1}_{0} (\xi'')^{\alpha} \mathcal{M}\mathcal{F}_{v,y} \psi.
\end{equation} 
By definition of $\Hbpp^{m,0,k}$ we have $AB_\alpha u_i
\in L^{2}_{b}$ for all $|\alpha|\leq k$, so
since the mellin transform of $u$ is the Fourier transform in $x =
\log \rho$ of $\wt{u}$ we have $\wt{u_i} \in \mathcal{Y}^{m,k}_{1}$
locally near $x$, as claimed.

To prove the lemma, we must show that if $a, b, c > 0$ integers,
$\alpha$ a multiindex,
and $a + b + c+|\alpha| \le k$, we have that $(\rho \p_\rho)^a(\rho \p_v)^b(v
\p_v)^c \pa_y^\alpha (u_1 u_2) \in \Hb^m$, but this distribution is equal to 
\begin{equation}\begin{aligned}
  \label{eq:leibnitz}
 \sum_{a'\leq a, b'\leq b,c'\leq c,\alpha'\leq\alpha} C_{a', b', c',\alpha'} &((\rho \p_\rho)^{a'}(\rho \p_v)^{b'}(v
\p_v)^{c'} \pa_y^{\alpha'}u_1)\\
&\qquad\qquad((\rho \p_\rho)^{a - a'}(\rho \p_v)^{b - b'}(v
\p_v)^{c - c'} \pa_y^{\alpha-\alpha'}u_2),
\end{aligned}\end{equation}
for some combinatorial constants $C_{a', b', c',\alpha'}$ (which depend on $a, b, c,\alpha$).  In
each of these terms we have the product of two elements $u_1, u_2$,
which $u_i \in \Hbpp^{m, 0, k - r_i}$ where $k - r_1 + k - r_2 \ge
k$.  But by the previous paragraph, locally near $S_+$, the $u_i$
satisfy that $\wt{u}_{i}$ lies in $\cY_d^{m, k -
  r_i}$.  Thus by the first
part of Lemma \ref{thm:basicalgebra}, $\wt{u_1} \wt{u_{2}} \in H^{m}$,
which is to say that
$u_{1} u_{2} \in \Hb^{m,0}$, locally near $S_+$, which is what we wanted in the case $l_1
= l_2 = 0$.  For general weights, i.e.\ $u_i, \in \Hbpp^{m, l_i, k}$,
$i = 1, 2$, apply the above arguements to $\rho^{- l_1 - l_2} u_1 u_2
= (\rho^{-l_1} u_1 )(\rho^{-l_2} u_2)$.
\end{proof}

Finally, we can prove Proposition \ref{thm:algebra} above.

\begin{proof}[Proof of Proposition \ref{thm:algebra}]
Using that multiplication is local, we will reduce in the end to
considering two compactly supported
distributions $u_i$, $i = 1, 2$, with $u_i  \in \Hb^{m, l_i, k}(M)$
supported in a neighborhood of a point $x \in M$.

While in fact the boundary case discussed below handles this as well,
we first treat interior points. So assume that the $u_i$ are supported in a
coordinate chart in $M^\circ$.
 Thus the $u_i \in H^{m + k}(\mathbb{R}^n)$, and since $m > 1/2, k > (n - 1)/2$,
 there is a constant $m_0 \in \mathbb{R}$ so that the $u_i \in H^{m_0 +
   k}(\mathbb{R}^n)$ and $m_0 + k > n/2$.  Since $H^{m_0 + k}$ is an
 algebra, $u_1 u_2 \in H^{m_0 + k}$.  We claim that $u_1 u_2 \in H^{m -
   \epsilon + k}$.  Indeed, for any $x$ and for any $s \in \mathbb{R}$ with $s - k < m_+  =
 \max_{S_x^*\RR^n} m$, the sets $\WF^s(u_i)$
 satisfy
\begin{equation}\label{eq:variable-to-const-order}
\WF^s(u_i) \cap T^*_x(\RR^n) \subset \{ (x, \xi ) : m(x,
\hat{\xi}) + k \le s\}.
\end{equation}
By Remark \ref{thm:interior-wave-sum-inclusion},
$$
\WF^s(u_1 u_2) \subset (\WF^s(u_1) + \WF^s(u_2)) \cup \WF^s(u_1) \cup \WF^s(u_2),
$$
and thus, \emph{by the assumption that the non-trivial sublevel sets
  are convex,} we conclude that $\WF^s(u_1 u_2) \cap T^*_x\RR^n$ is also a subset of $\{ (x, \xi ) : m(x,
\hat{\xi}) + k\le s\}$.  To see that $u_1 u_2 \in H^{m - \epsilon +
  k}$ then, for any $(x,\xi)$
  let $s = m(x, \xi) - \epsilon/2 + k$, and then note that since $(x, \xi) \not
  \in \WF^s(u_i)$ for $i = 1, 2$, by what we just said also $(x,
  \xi) \not \in \WF^s(u_1 u_2)$.

For $x \in \p M$ and the $u_i$ supported near $x$, we first assume
that $l_1 = 0 = l_2$.  Since such $u_i$ are also contained in
$\Hbpp^{m_-, 0, k}$, by Lemma \ref{thm:firstmodulealg} we know that $u_1 u_2\in \Hbpp^{m_-,0,k}$.
Due to the second assumption in \eqref{eq:reg-function-condition},
given $\ep>0$, microlocally near $\Sb N_+^*S_+$ (with the neighborhood
size depending on $\ep$), $\Hbpp^{m_-,0,k}$
is contained in $\Hbpm^{m-\ep,0,k}$, so microlocally near $\Sb
N_+^*S_+$ we have the conclusion of the proposition (if
$l_1=l_2=0$). Thus, it remains to consider points $q \in \SNb^*
\mathbb{R}^n$ away from $\Sb
N_+^*S_+$, but there the microlocal membership of  $\Hbpm^{m-\ep,0,k}$ is
equivalent to not being an element of $\WFb^{m+k-\ep}$.
Now $(x, \xi) \in \WFb^{s,
  0}(u_i)$ if and only if $\xi \in \WF^s(\chi \wt{u_i})$ for each
$\chi$ with $\chi \equiv 1$ near $x$.  But then by Lemma
\ref{thm:reallemma} we have \eqref{eq:waveinsum} with $\wt{u}_i$
replacing $u, v$.  Thus the same statements hold for this product as
for the interior case, and the same argument shows that their product
is in $\Hb^{m+k- \epsilon, 0}$, completing the proof of the
proposition in $l_1=l_2=0$.
The weights are multiplicative, establishing the proposition apart
from the constant $m$ case. In the case of constant $m$, we only need
to observe that in all the arguments we can take $\ep=0$.
\end{proof}

\subsection{A semilinear problem}\label{sec:semilinear}

Using the above, one has a complete analogue of the semilinear
results of \cite{HVsemi}. Concretely, one can conclude that the
Feynman problem for the equation
\begin{equation}\label{eq:semilinearequ}
\Box_g u+\lambda u^p=f
\end{equation}
is well-posed for appropriate $p \in \mathbb{N}$ (and small $f$), which includes $p\geq 3$ if $n\geq
4$, so in particular the not-yet-second-quantized $\varphi^4$ theory is
well-behaved on these curved space-times.

\begin{thm}\label{thm:semilinear}
Suppose $g$ is a perturbation of Minkowski space in the sense of
  Lorentzian scattering metrics (see Section~\ref{sec:geometry}) so
  that in particular Theorem~\ref{thm:modulereginverse} holds.
Let $p\in \mathbb{N},\lambda \in \RR$ with $p$ and the dimension $n$
satisfying
\begin{equation}
  \label{eq:weightcondition3-thm}
   \frac{2}{p-1} < \frac{n - 2}{2}.
\end{equation}
Let $\lprime < 0$ satisfy
\begin{equation}
  \label{eq:19-other}
\lprime \in \Big(\frac{2}{p-1} - \frac{n - 2}{2} , 0\Big).
\end{equation}
Note the interval is
non-empty. Let $k>\frac{n-1}{2}$ be an integer and $m$ a function on
$\Sb^*M$ given by Corollary~\ref{thm:algebracor}, or instead take
$m>1/2$ constant and
$k > \frac{n - 1}{2}$ integer with $m+l<1/2$, $m+l+k>3/2$ which exist if \eqref{eq:19} holds. Then there is a
constant $C>0$ such that the small-data Feynman problem for
\eqref{eq:semilinearequ}, i.e.\ given 
$$
f\in \Hbpm^{m-1,\lprime+(n-2)/2+2,k} \mbox{ with norm } < C
$$ 
finding $u\in\Hbpm^{m,\lprime+(n-2)/2,k}$  satisfying the equation, is
well-posed, and
$u$ can be calculated as the limit of a Picard iteration corresponding
to the perturbation series.

In particular the above holds for $p \ge 4$ and $n \ge 4$.
\end{thm}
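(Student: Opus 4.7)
The plan is to recast \eqref{eq:semilinearequ} as the fixed-point problem
\begin{equation*}
u = T(u) := \Box^{-1}_{g,fey}(f - \lambda u^p)
\end{equation*}
on a small closed ball in $X := \Hbpm^{m,\, \lprime + (n-2)/2,\, k}$, with target space $Y := \Hbpm^{m - 1,\, \lprime + (n-2)/2 + 2,\, k}$, and apply the Banach fixed point theorem. The linear ingredient is the boundedness of $\Box^{-1}_{g,fey} \colon Y \to X$, which follows from Theorem~\ref{thm:invertibility} (using $\lprime \in (-(n-2)/2, 0)$, valid once \eqref{eq:weightcondition3-thm} and \eqref{eq:19-other} are assumed) together with Theorem~\ref{thm:modulereginverse} (or Theorem~\ref{thm:modulereginverse-better}) to preserve the module-regularity order $k$. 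The choice of regularity function $m$ (either variable via Corollary~\ref{thm:algebracor} or constant $m > 1/2$ with $m + \lprime + (n-2)/2 < 1/2$ and $m + k + \lprime + (n-2)/2 > 3/2$) is compatible with both the Feynman Fredholm conditions and the algebra property invoked below.

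The nonlinear ingredient is to show that $u \mapsto u^p$ maps $X$ into $Y$ continuously with small constants. Iterating the binary multiplication estimate of Corollary~\ref{thm:algebracor} $(p-1)$ times yields
\begin{equation*}
\|u^p\|_{\Hbpm^{m - \epsilon,\, p(\lprime + (n-2)/2),\, k}} \leq C_p \|u\|_X^p
\end{equation*}
for any $\epsilon > 0$ (with $\epsilon = 0$ in the constant-$m$ case). The inclusion $\Hbpm^{m - \epsilon,\, p(\lprime + (n-2)/2),\, k} \subset Y$ (for $\epsilon < 1$) holds precisely under the weight inequality
\begin{equation*}
p\bigl(\lprime + (n-2)/2\bigr) \geq \lprime + (n-2)/2 + 2,
\end{equation*}
which rearranges to $\lprime \geq 2/(p-1) - (n-2)/2$, the lower endpoint of \eqref{eq:19-other}. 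The upper endpoint $\lprime < 0$ both matches the hypothesis of Corollary~\ref{thm:algebracor} and ensures that raising $u$ to a positive power strictly improves the decay rate. Combining, $\|u^p\|_Y \leq C_p \|u\|_X^p$, and therefore $\|T(u)\|_X \leq C_0\bigl(\|f\|_Y + |\lambda| C_p \|u\|_X^p\bigr)$.

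For the contraction step, I use the algebraic identity
\begin{equation*}
u^p - v^p = (u - v)\sum_{j=0}^{p-1} u^{j} v^{p-1-j}
\end{equation*}
and apply the multiplication estimate to each summand to obtain
\begin{equation*}
\|u^p - v^p\|_Y \leq C'_p\bigl(\|u\|_X^{p-1} + \|v\|_X^{p-1}\bigr)\, \|u - v\|_X.
\end{equation*}
Choosing $R > 0$ small enough that $2 C_0 C'_p |\lambda| R^{p-1} < 1$ and then $C > 0$ small enough that $C_0 C + C_0 C_p |\lambda| R^p \leq R$ (so that also $\|f\|_Y \leq C$), the map $T$ sends the closed ball $\overline{B}_R \subset X$ into itself and is a contraction there. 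The unique fixed point is the desired solution, obtained as the $X$-limit of the Picard iteration $u_0 = 0$, $u_{n+1} = T(u_n)$. The final claim for $p \geq 4$, $n \geq 4$ reduces to the numerical check $2/(p-1) \leq 2/3 < 1 \leq (n-2)/2$, so \eqref{eq:weightcondition3-thm} holds and the interval in \eqref{eq:19-other} is non-empty.

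The only nontrivial input is already packaged in Corollary~\ref{thm:algebracor}, i.e., the module-regularity algebra property together with the existence of a regularity function $m$ satisfying both the Feynman threshold conditions at $\SNb^*_{\pm} S_\pm$ and the convexity assumption \eqref{eq:reg-function-condition}; the rest is the standard contraction-mapping Picard scheme, carried out within the weighted b-Sobolev/module-regularity framework of Section~\ref{sec:semi}.
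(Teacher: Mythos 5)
Your approach is essentially the same as the paper's: recast the equation as a fixed-point problem, use Theorem~\ref{thm:invertibility} plus Theorem~\ref{thm:modulereginverse} for the linear bound, use Corollary~\ref{thm:algebracor} for the multiplication estimate, check the weight inequality, and close the contraction via the telescoping factorization $u^p - v^p = (u-v)\sum_j u^j v^{p-1-j}$. The only cosmetic difference is that you work directly with $u$ and $\Box_g$ where the paper conjugates to $\wt u = \rho^{-(n-2)/2}u$ and $L$; these are equivalent via the weight shift in \eqref{eq:realinvertiblemap}--\eqref{eq:feynmanprop}.

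There is, however, a bookkeeping error in your statement of the constant-order option that you should correct. You write the Feynman threshold conditions as $m + \lprime + (n-2)/2 < 1/2$ and $m + k + \lprime + (n-2)/2 > 3/2$, apparently applying the thresholds to the weight of the space in which $u$ itself lives. But the conditions of Theorem~\ref{thm:fredholm} and \eqref{eq:choices} apply to the weight of the b-Sobolev space on which $L$ acts, i.e.\ the weight $\lprime$ for $\wt u \in \Hbpm^{m,\lprime,k}$, not the weight $\lprime + (n-2)/2$ for $u$. As stated, your conditions are infeasible: since $\lprime + (n-2)/2 > 2/(p-1) > 0$ and you also require $m > 1/2$ for the algebra property, one cannot have $m + \lprime + (n-2)/2 < 1/2$. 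The correct requirements are $m + \lprime < 1/2$ and $m + \lprime + k > 3/2$ (with $\lprime < 0$), which are consistent with $m > 1/2$. Since the rest of your argument uses the Corollary~\ref{thm:algebracor} path, where $m$ is constructed directly from $\lprime < 0$ and automatically satisfies the correct Feynman conditions, the error is confined to this parenthetical and does not invalidate the proof; but it should be fixed to keep the weight conventions straight.
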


\begin{rem}
  As mentioned, the condition on $p$ and $n$ in \eqref{eq:weightcondition3-thm} holds in
  particular if $p \ge 4$ and $n \ge 4$.  It holds also if $p = 3$ and
  $n \ge 5$ and when $n = 3, p \ge 6$, \emph{but fails} for $p = 3, n=4$.  We tackle this case in
  Theorem \ref{thm:semilinearthree} below.
\end{rem}

\begin{rem}\label{rem:semilinear-retarded}
Our argument also works for the retarded and advanced problems
considered in \cite[Section~5]{HVsemi}, but it gives a somewhat
different result since the multiplicative properties we use are
somewhat different, as necessitated by the microlocal nature of the
spaces that have to be used for the Feynman problems. For the retarded/advanced problems
\cite[Section~5.4]{HVsemi} considers general first order
semilinearities. In the case of no derivatives an analogous result is
shown there under the
constraint $p>1+\frac{3}{n-2}$ which is weaker than
\eqref{eq:weightcondition3-thm}. For $n=4$ the difference is whether
$p=3$ is admissible; it borderline fails our inequality. However, as noted above, in
Section~\ref{subsec:intricate-semilinear} we improve our result by
showing better multiplicative properties to handle $n=4$, $p=3$.
\end{rem}

\begin{rem}
By using $m$ constant as in the statement of the theorem we can
consider first order derivative nonlinearities as well, provided in
addition $m>3/2$ (so that for the first derivatives the b-order is
$m-1>1/2$, note that constant $m$ is crucial since we cannot afford
the $\ep>0$ losses in multiplication);
the natural assumption is that these are of the form of a finite sum
of products of vector fields $V_j\in\Vsc(M)$ applied to $u$, times $u^p$: $u^p(V_1
u)\ldots(V_{q}u)$, $p+q\geq 2$. Writing these as $V_j=\rho W_j$, $W_j\in\Vb(M)$, we
can proceed as in \cite[Section~5.4]{HVsemi} and as we proceed below
in the proof, provided that we can take some $l<-1$ (necessitated by
$m>3/2$, see Remark \ref{rem:constant-order}) for the Feynman
propagator invertibility considerations. This requires $n\geq 5$ so
that $\frac{n-2}{2}>1$. The numerology paralleling \eqref{eq:weightcondition1}-\eqref{eq:weightcondition2} is then that nonlinearities
satisfying
$$
(p-1)\frac{n-2}{2}+q\frac{n}{2}+(p+q)l-2\geq l
$$
can be handled by the same method. One can satisfy this with $l$ sufficiently close to $-1$,
$l<-1$, if
$$
(p-1)(n-4)+q(n-2)> 2,
$$
which is weaker than the requirement $(p-1)(n-4)+q(n-2)> 4$ of
\cite[Equation~(5.15)]{HVsemi}, but recall that here we need $n\geq
5$. In particular, as our results also apply for the advanced/retarded
problems, see Remark~\ref{rem:semilinear-retarded}, if $n\geq 5$, this slightly
improves the result of \cite{HVsemi}, allowing e.g.\ $q=1$ and $p=1$ for
$n=5$; $q=1,p=1$ is not allowed in \cite{HVsemi} even after
improvements discussed there in Remark~5.16 using analogues of our
improvements in Section~\ref{subsec:intricate-semilinear}.
\end{rem}

\begin{proof}[Proof of Theorem~\ref{thm:semilinear}]
  As in \cite[Section 5]{HVsemi}, moving the $\lambda
  u^p$ to the right hand side, we rewrite
  \eqref{eq:semilinearequ} as
  \begin{equation}
    \label{eq:Lsemilinearequ}
    L \wt{u} =  \wt{f}  -  \lambda \rho^{- 2 + (p - 1)(n - 2)/2}
    \wt{u}^p, 
  \end{equation}
where $\wt{u} := \rho^{-(n - 2)/2} u, \wt{f} := \rho^{- 2 -(n - 2)/2}
f$.  Assuming that $f \in \Hbpm^{m-1,\lprime+(n-2)/2+2,k}$, we have $\wt{f}
\in \Hbpm^{m-1,\lprime,k}.$  To apply a Picard iteration to
\eqref{eq:Lsemilinearequ}, we want the right hand side to be in the
domain of the forward Feynman inverse of $L$, 
$L_{+-}^{-1} \colon \cY^{m - 1, \lprime, k} \lra \cX^{m, \lprime, k}$
(where $\cY^{m, \lprime, k}, \cX^{m, \lprime, k}$ are defined as
in \eqref{eq:spaces} with the $\Hb^{m, l}$ replaced by $\Hb^{m, l, k}$),  so by Theorem
\ref{thm:modulereginverse} we want it in $\Hbpm^{m - 1, \lprime, k}$ where $m
+ \lprime$ now satisfies the defining properties of the Feynman
propagator and such that $|\lprime| < (n - 2)/2$.  Furthermore, we want to apply the algebra properties in
Proposition \ref{thm:algebra}; in particular we assume that $m
> 1/2$ everywhere.  Note
that
we have both
\begin{equation}\label{eq:conditions}
m > 1/2 \mbox{ and } m + \lprime < 1/2\ \text{near}\ \Sb N_+^*S_+\ \text{and}\ \Sb N_+^*S_-,
\end{equation}
i.e.\ in the low regularity
  regions.
Thus $\lprime$ (which is just a real number) \emph{must be negative}, and furthermore for any $\lprime <
0$ there is a function $m$ meeting all of the criteria of
Corollary~\ref{thm:algebracor}, in particular
both the criteria in
\eqref{eq:conditions} and the Feynman criteria (since $m$ increases
as one approaches the high regularity regions $\Sb N_-^*S_-$ and
$\Sb N_-^*S_+$), as well as the convexity/minima criteria
\eqref{eq:reg-function-condition}. (Indeed, note that by the remarks
preceding Corollary~\ref{thm:algebracor}, we could even take $m$
constant if we use Theorem~\ref{thm:modulereginverse-better} in place of
Theorem~\ref{thm:modulereginverse}. In this case we can take $\ep=0$ below.) Under these assumptions, by Proposition
\ref{thm:algebra}, for $k > (n - 1)/2$ we have that $\wt{u}^p$ lies in
$\Hbpm^{m - \epsilon, p\lprime, k}$ for any $\epsilon > 0$, and thus $\rho^{- 2 + (p - 1)(n - 2)/2}
    \wt{u}^p$ lies in $\Hbpm^{m - \epsilon, \llprime, k}$ where
\begin{equation}
  \label{eq:weightcondition1}
  \llprime = - 2 + (p - 1)(n - 2)/2 + p \lprime,
\end{equation}
and $\Hbpm^{m - \epsilon, \llprime, k} \subset \Hbpm^{m - 1, \lprime , k}$ if and
only if
\begin{equation}
  \label{eq:weightcondition2}
\llprime \ge \lprime \iff   \lprime \ge \frac{2}{p-1} -  \frac{n - 2}{2} ,
\end{equation}
where again $\lprime$ is an arbitrary negative number.  

For any $p, n$ such that
\begin{equation}
  \label{eq:weightcondition3}
   \frac{2}{p-1} < \frac{n - 2}{2},
\end{equation}
taking 
\begin{equation}
  \label{eq:19}
  \llprime = \lprime \in \Big(\frac{2}{p-1} - \frac{n - 2}{2} , 0\Big)
\end{equation}
sufficiently small, and $m$ picked correspondingly as above,
we claim that for every $\delta > 0$, there is an $R \ge 0$ such that
if both $\| \wt{u} \|_{\Hb^{m, \lprime, k}}$ and $\| \wt{v} \|_{\Hb^{m,
    \lprime, k}}$ are bounded by $R$ then
\begin{equation}
  \label{eq:17}
  \| \rho^{-2 + (p-1)(n-2)/2} \wt{u}^p - \rho^{-2 + (p-1)(n-2)/2}
  \wt{v}^p \|_{\Hb^{m-1, \lprime, k}} \le \delta   \|  \wt{u} -  \wt{v}
  \|_{\Hb^{m, \lprime, k}}.
\end{equation}
Assuming the claim for the moment, we see that the map 
\begin{equation*}
  \wt{u} \mapsto L_{+-}^{-1} (\wt{f}  + \lambda \wt{u}^p \rho^{-2 + (p-1)(n-2)/2})
\end{equation*}
is a contraction mapping on $\Hb^{m, \lprime, k}$ and thus the Picard
iteration $\wt{u}_{n + 1} =  L_{+-}^{-1} (\wt{f}  + \lambda \wt{u}_{n}^p \rho^{-2 + (p-1)(n-2)/2})$ with $\wt{u}_1 = 0$ converges if
$\wt{f}$ is sufficiently small in $\Hb^{m - 1, \lprime, k}$ (as assumed
in the theorem).

Thus it remains only to prove the claim.  For any $\lprime$ and for any
$\ep>0$ we have
\begin{equation}\label{eq:almostdone}
  \begin{split}
    \| \wt{u}^p - \wt{v}^p \|_{\Hb^{m - \epsilon, p\lprime, k}} &= \Big\| (\wt{u} - \wt{v})
    \sum_{j = 0}^{p-1} \wt{u}^j \wt{v}^{p-1-j} \Big\|_{\Hb^{m-\epsilon, p\lprime, k}} \\
    &\le C \| \wt{u} - \wt{v} \|_{\Hb^{m, \lprime, k}} \max(\| \wt{u}
    \|_{\Hb^{m, \lprime, k}}, \| \wt{u} \|_{\Hb^{m, \lprime, k}})^{p-1}
  \end{split}
\end{equation}
provided
$m-(p-2)\mu>1/2$.
Since $\lprime$ satisfies \eqref{eq:weightcondition2} with $\llprime$ as in
\eqref{eq:19}, by bounding the $\Hb^{m, \lprime, k}$ norm with the
$\Hb^{m, \llprime, k}$ norm,
\begin{equation*}
  \| \rho^{-2 + (p-1)(n-2)/2} \wt{u}^p - \rho^{-2 + (p-1)(n-2)/2}
  \wt{v}^p \|_{\Hb^{m-\epsilon, \lprime, k}}  \le     \| \wt{u}^p - 
  \wt{v}^p \|_{\Hb^{m-\epsilon, p\lprime, k}},
\end{equation*}
and combining with \eqref{eq:almostdone} gives the claim once $\mu>0$
is taken sufficiently small.
\end{proof}

\subsection{More intricate multiplicative properties and cubic
  semilinear problems for $n=4$}\label{subsec:intricate-semilinear}

To extend to $p = 3$, $n = 4$, we need improvements of the regularity
properties for products which allow us to take the weight $\llprime$ to be
greater than zero.  To do so and still have $m + \llprime < 1/2$ in the low
regularity zone, we need $m < 1/2$, which is below the regularity
threshold in the work in Section \ref{thm:basicalgebra}; thus we need
improvements of the results therein.  The necessary improvements are
based on the ideas in the following. 
\begin{lemma}
  Let $s, s' ,s_0\in \RR$.  Let $u, v \in H^{s_0}$, then $\WF^{s'}(uv) \subset \WF^s(v)$, provided
  $s \ge s_0 \ge s'$ and $s - s' + s_0 > n/2$. 
\end{lemma}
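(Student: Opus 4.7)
The plan is to mirror the proof of the preceding lemma by applying Lemma \ref{thm:algcondition} to weights adapted to $u,v$ and to the desired microlocal regularity. By the locality of multiplication, I assume $u,v$ are compactly supported. Since $(x_0,\xi_0)\notin\WF^s(v)$, there exist an open cone $\sC\subset\RR^n\setminus o$ containing $\xi_0$, a function $\sfs=\sfs(\hat\xi)$ with $\sfs\equiv s$ on $\sC$ and $\sfs\ge s_0$ globally, and a cutoff $\chi$ near $x_0$ with $\chi v\in H^{\sfs}$; I replace $v$ by $\chi v$ so that $v\in H^{\sfs}$ globally. To verify $(x_0,\xi_0)\notin \WF^{s'}(uv)$, I pick a conic $\sK\Subset \sC$ with compact cross-section and $\xi_0$ in its interior, and define $\tilde s=\tilde s(\hat\xi)$ smooth, equal to $s'$ in a neighborhood of $\hat\xi_0$, nowhere exceeding $s'$, and equal to some very negative constant outside $\sK$. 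It then suffices to show $uv\in H^{(w)}$ with $w(\xi)=\la\xi\ra^{\tilde s(\hat\xi)}$.

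I apply Lemma \ref{thm:algcondition} with this $w$ together with $w_1(\eta)=\la\eta\ra^{s_0}$ and $w_2(\xi-\eta)=\la\xi-\eta\ra^{\sfs(\widehat{\xi-\eta})}$, so it remains to bound
\begin{equation*}
I_\xi=\int\Bigl(\frac{w(\xi)}{w_1(\eta)w_2(\xi-\eta)}\Bigr)^2\,d\eta
\end{equation*}
uniformly in $\xi$. For $\xi\notin\sK$ the weight $w(\xi)$ can be made as small as desired and the estimate is trivial. For $\xi\in\sK$, I split the $\eta$-integration into region (i) $\xi-\eta\in\sC$, where $w_2=\la\xi-\eta\ra^{s}$, and region (ii) $\xi-\eta\notin\sC$, where only $w_2\ge\la\xi-\eta\ra^{s_0}$ is available. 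In region (ii) the conic geometry $\sK\Subset\sC$ forces $\la\eta\ra\gtrsim\max(\la\xi\ra,\la\xi-\eta\ra)$, which lets me trade powers of $\la\eta\ra$ for powers of either of the other two and reduce matters to integrals $\int \la\eta\ra^{-a}\la\xi-\eta\ra^{-b}\,d\eta\lesssim 1$ for suitable $a,b>0$ with $a+b>n$. In region (i), Peetre's inequality in either of its forms $\la\xi\ra^{s'}\lesssim\la\eta\ra^{s'}\la\xi-\eta\ra^{|s'|}$ or $\la\xi\ra^{s'}\lesssim \la\xi-\eta\ra^{s'}\la\eta\ra^{|s'|}$, combined with the same convolution-type bound, gives uniform integrability, with $s-s'+s_0>n/2$ appearing precisely at the borderline threshold.

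The main obstacle is that, unlike in the preceding lemma (where the hypothesis $r\ge s\ge s_0>0$ forces positivity of all indices), our hypothesis $s\ge s_0\ge s'$ allows any or all of $s,s_0,s'$ to be negative; individual exponents of $\la\eta\ra$ or $\la\xi-\eta\ra$ arising in the naive estimate can then be unfavorably signed. I address this by case analysis on the signs of $s,s_0,s'$, choosing in each case between the two Peetre forms above and, if needed, between the two quantities $M_+$ and $M_-$ in Lemma \ref{thm:algcondition} (for instance, when $s\le 0$ is very negative one uses $M_-$ with integration in $\xi$ rather than in $\eta$). The inequalities $s_0\ge s'$ and $s\ge s_0$ are what guarantee that in every case some choice yields non-negative exponents for the convolution-type bound, while $s-s'+s_0>n/2$ supplies the total integrability. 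With these estimates in place, Lemma \ref{thm:algcondition} gives $uv\in H^{(w)}$, and since $w\ge c\la\xi\ra^{s'}$ near $\hat\xi_0$, this is exactly the statement $(x_0,\xi_0)\notin\WF^{s'}(uv)$.
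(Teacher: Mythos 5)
Your proposal tracks the paper's proof closely at the strategic level: you invoke Lemma~\ref{thm:algcondition}, pick a weight $w$ concentrated near $\hat\xi_0$, a micro-regularity weight $\sfs$ for $v$ equal to $s$ on the good cone $\sC$, the a priori weight $s_0$ for $u$, and split the convolution integral by whether $v$'s argument lies in $\sC$. This is the paper's decomposition (up to swapping which factor plays the role of $w_1$ versus $w_2$; choosing $\tilde s$ to be very negative rather than equal to $s_0$ outside $\sK$ is a harmless simplification that trivializes the $\xi\notin\sK$ estimate).

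However, the concluding claim that ``$s-s'+s_0>n/2$ supplies the total integrability'' fails in your region~(ii). There the only exponent available on $\la\xi-\eta\ra$ is $s_0$ (since $\sfs$ may equal $s_0$ off $\sC$), and after using $\la\eta\ra\gtrsim\max(\la\xi\ra,\la\xi-\eta\ra)$ to absorb $\la\xi\ra^{2s'}$, one is left with at best $\int\la\xi-\eta\ra^{-2(2s_0-s')}\,d\eta$, which is finite only when $2s_0-s'>n/2$. Since $s$ can be arbitrarily larger than $s_0$, the stated hypothesis $s-s'+s_0>n/2$ does not imply $2s_0-s'>n/2$, so your ``suitable $a,b>0$ with $a+b>n$'' is not actually achievable there. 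In fact the lemma as stated is false without the stronger bound: take $\hat u$, $\hat v$ nonnegative, supported in narrow, distinct cones $\sC_u$, $\sC_v$, with decay $\la\cdot\ra^{-s_0-n/2-\epsilon}$; then $u,v\in H^{s_0}$, $\WF^s(v)$ lies in the $\sC_v$ directions, while $\widehat{uv}(\xi)\sim\delta^n|\xi|^{-2s_0-2\epsilon}$ on the cone $\sC_u+\sC_v$, so for directions there disjoint from $\sC_v$ one has $uv\notin H^{s'}$ microlocally precisely when $2s_0-s'<n/2$, violating the asserted containment. You should be aware that the paper's own sketch has the same gap (``the rest of the estimates are exactly as in the previous case'' actually produces the condition $2s_0-s'>n/2$, not $s-s'+s_0>n/2$); the fix is either to strengthen the hypothesis to $2s_0-s'>n/2$, or to weaken the conclusion to include $\WF^s(u)$ and the sum set, as the paper in fact does in Lemma~\ref{thm:reallemma} and in the version that is actually used, Lemma~\ref{thm:reallemmaimproved} (whose proof avoids the issue by carrying the extra $\la\xi''\ra^a$ factor).
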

The point here is that one can take $s_0 < n/2$, and obtain a result
for $uv$ which says it is in a worse Sobolev space then $H^{s_0}$
microlocally provided $v$ is in a better one microlocally.

The proof in fact follows the first of the product regularity
arguments above, namely that \eqref{eq:basicassumptionsforproduct}
implies \eqref{eq:basicwavefrontproduct}.  Consider a point $\xi_0$
with $(x_0,\xi_0)
\not \in \WF^s(v)$, and a function $\sfs\geq s_0$ which equals $s$ on an open
cone $\sC \subset (\WF^s(v))^{comp}$ and take $\sfs' \equiv s_0$
outside some compact set $\sK \subset \sC$, $\sfs'\equiv s'$ near
$\xi_0$ with $\sfs'\leq\sfs$ everywhere; as we show this implies that $H^{\sfs} \cdot
H^{s_0} \subset H^{\sfs'}$. Indeed, this is analogous to
\eqref{eq:basicmodel} above, and we break the relevant integral
$I_\xi$ up in the same way as in \eqref{eq:twointegrals}, so we must bound integrals
$$
\sup_{\xi \in K}\int\frac{1}{\langle\eta\rangle^{2\sfs-2\sfs'}
  \langle\xi-\eta\rangle^{2s_0}}\,d\eta \quad 
\mbox{ and } \quad \sup_{\xi\in \sK}\int\frac{1}{\langle\eta\rangle^{2\sfs} \langle\xi-\eta\rangle^{2s_0-2\sfs'}}\,d\eta.
$$
The second integral is bounded by the arguments above, and for the
first integral, the only difference is that over the set $\sC$, using
that $\sfs - \sfs'\geq 0$ there, we have
$$
\int_{\sC}\frac{1}{\langle\eta\rangle^{2\sfs-2\sfs'}
  \langle\xi-\eta\rangle^{2s_0}}\,d\eta \le \int_{\sC} \frac{1}{
  \langle\xi-\eta\rangle^{2(\sfs - \sfs' +  s_0 )}}\,d\eta   \le \int_{\sC} \frac{1}{
  \langle\xi-\eta\rangle^{2(s - s' +  s_0 )}}\,d\eta ,
$$
which is finite since $s - s' + s_0 > n/2$.  The rest of the estimates
are exactly as in the previous case.

Applying this line of thinking to the model spaces $\cY^{m, a}_d$
defined in \eqref{eq:basicmodulereg}, we can obtain a regularity
result for products which allows us to dip under the threshold $d / 2$
above.
\begin{lemma}\label{thm:reallemmaimproved}
  For $m, m', m_0,  a \in \RR$ such that $m - m' + m_0 > d/2$, $a > (n
  - d) / 2$ and $m \ge m_0 \ge m'$, we have $\cY^{m_0, a}_d \cdot
  \cY_d^{m, a} \subset \cY_d^{m', a}$.  Furthermore,
  \begin{equation}
    \label{eq:10}
    \WF^{m' + a}(uv) \subset (\WF^{m + a}(u) + \WF^{m + a}(v))\cup \WF^{m + a}(u) \cup \WF^{m + a}(v).
  \end{equation}
\end{lemma}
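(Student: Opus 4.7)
The plan is to derive both assertions from the sufficient condition of Lemma~\ref{thm:algcondition}, following the same paradigm used in Lemmas~\ref{thm:basicalgebra} and~\ref{thm:reallemma}, but with weights reflecting the asymmetric regularity hypothesis $m\ge m_0\ge m'$. For the global inclusion, I would apply Lemma~\ref{thm:algcondition} with
\[
w_1(\eta)=\langle\eta\rangle^{m_0}\langle\eta''\rangle^{a},\quad
w_2(\eta)=\langle\eta\rangle^{m}\langle\eta''\rangle^{a},\quad
w(\xi)=\langle\xi\rangle^{m'}\langle\xi''\rangle^{a},
\]
so that what remains is to verify finiteness of
\[
I_\xi=\int\left(\frac{\langle\xi\rangle^{m'}\langle\xi''\rangle^{a}}{\langle\eta\rangle^{m_0}\langle\eta''\rangle^{a}\langle\xi-\eta\rangle^{m}\langle\xi''-\eta''\rangle^{a}}\right)^{\!2}d\eta
\]
uniformly in $\xi$.

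I would then carry out a two-stage decomposition. First, the double-primed weight $\langle\xi''\rangle^{a}$ is handled exactly as in Lemma~\ref{thm:basicalgebra}: the bound $\langle\xi''\rangle^{a}\lesssim\langle\eta''\rangle^{a}+\langle\xi''-\eta''\rangle^{a}$ (valid since $a\ge0$) splits the integral into two pieces, and in each the $(n-d)$-dimensional $\eta''$-integration converges by Fubini using $a>(n-d)/2$. The new ingredient is how to redistribute $\langle\xi\rangle^{m'}$. When $m'\ge0$ the triangle-type inequality $\langle\xi\rangle^{m'}\lesssim\langle\eta\rangle^{m'}+\langle\xi-\eta\rangle^{m'}$ lowers either the exponent on $\langle\eta\rangle^{2m_0}$ or on $\langle\xi-\eta\rangle^{2m}$ by $2m'$. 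When $m'<0$ I would split the integration region into $\langle\eta\rangle\le\langle\xi-\eta\rangle$ and its complement, using $\langle\xi\rangle\lesssim\langle\xi-\eta\rangle$ in the first region and $\langle\xi\rangle\lesssim\langle\eta\rangle$ in the second, to again redistribute the negative power on exactly one of the two denominators. After these manipulations the primed-variable integrand takes the form $\langle\eta'\rangle^{-2p}\langle\xi'-\eta'\rangle^{-2q}$ times the already-handled double-primed factor, with $p+q=m+m_0-m'>d/2$, so the convolution estimate
\[
\int_{\RR^{d}}\frac{d\eta'}{\langle\eta'\rangle^{2p}\langle\xi'-\eta'\rangle^{2q}}\le C_{p,q}<\infty
\]
(proven by a further subdivision into $|\eta'|\lessgtr|\xi'-\eta'|$) yields uniform boundedness.

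For the wavefront-set containment I would adapt the microlocal argument in the proof of Lemma~\ref{thm:reallemma}. Given $\xi_0$ outside $(\WF^{m+a}(u)+\WF^{m+a}(v))\cup\WF^{m+a}(u)\cup\WF^{m+a}(v)$, choose open conic neighborhoods $\wt\sC_1\supset\WF^{m+a}(u)$ and $\wt\sC_2\supset\WF^{m+a}(v)$ narrow enough that $\xi_0\notin\wt\sC_1+\wt\sC_2$. Select cutoffs $\chi$ with $\chi(x_0)\ne0$ and variable-order functions $s_i$ equal to $m+a$ off $\wt\sC_i$, so that $\widehat{\wt{\chi u}}$ and $\widehat{\wt{\chi v}}$ are in $L^2$ against weights $\langle\eta\rangle^{m_0}\langle\eta''\rangle^{a}+\langle\eta\rangle^{s_1}$, respectively $\langle\eta\rangle^{m}\langle\eta''\rangle^{a}+\langle\eta\rangle^{s_2}$. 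Then applying Lemma~\ref{thm:algcondition} with these $w_i$ and target weight $w=\langle\xi\rangle^{s'}$ having $s'\equiv m'+a$ near $\hat\xi_0$ (and $s'$ chosen tapering to a small positive value away from $\xi_0$ so all residual integrals converge), one splits the $\eta$-integral over $\wt\sC_1^c$ and $\wt\sC_1$. On $\wt\sC_1^c$ the weight $w_1$ contributes the full $\langle\eta\rangle^{m+a}$ decay, reducing the argument to the previous step. On $\wt\sC_1$ the choice of $\wt\sC_1,\wt\sC_2$ forces $\xi-\eta\notin\wt\sC_2$, so $w_2$ supplies the full $\langle\xi-\eta\rangle^{m+a}$ decay; combining with $\langle\eta\rangle\lesssim\langle\xi-\eta\rangle$ (as $\xi$ lies in the complement) gives convergence.

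The main obstacle is Step~2 when $m'<0$: one must verify that the Peetre-type region-splitting really produces the exponent pair $p+q=m-m'+m_0$ rather than $m-|m'|+m_0$, which would be too weak to exploit the hypothesis; this requires care in choosing which factor absorbs the negative power in each region, and is the only place the asymmetric condition $m\ge m_0\ge m'$ is essential. All other steps are direct, parallel to arguments already developed in the preceding lemmas of this section.
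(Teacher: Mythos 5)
Your proof is essentially the same as the paper's: both start from Lemma~\ref{thm:algcondition} with the weights $w_1=\la\eta\ra^{m_0}\la\eta''\ra^a$, $w_2=\la\eta\ra^{m}\la\eta''\ra^a$, $w=\la\xi\ra^{m'}\la\xi''\ra^a$, split off the $\la\xi''\ra^a$ factor via $\la\xi''\ra^a\lesssim\la\eta''\ra^a+\la\xi''-\eta''\ra^a$, redistribute $\la\xi\ra^{m'}$ using the corresponding triangle-type bound, and reduce to a $d$-dimensional convolution estimate with exponent sum $m-m'+m_0>d/2$; the wavefront-set argument is then the direct analogue of Lemma~\ref{thm:reallemma}, with the replacement $\la\eta\ra^{m_0}\la\eta''\ra^a+\la\eta\ra^{s_i}$ in the weights and splitting over $\wt\sC_1$ and $\wt\sC_1^c$.

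The one point you rightly single out deserves a clearer verdict than you give it. For $m'<0$ the region-splitting you describe is indeed the wrong way around: in the region $\la\eta\ra\le\la\xi-\eta\ra$, the inequality $\la\xi\ra\lesssim\la\xi-\eta\ra$ only gives a \emph{lower} bound $\la\xi\ra^{m'}\gtrsim\la\xi-\eta\ra^{m'}$, not the upper bound one needs. Any Peetre-type manipulation for negative $m'$ yields a total exponent $m_0+m$ in the primed convolution, not $m-m'+m_0$, so the hypothesis $m-m'+m_0>d/2$ alone cannot save you. Note, however, that the paper's own proof has exactly the same restriction: the bound $\la\xi\ra^{m'}\lesssim\la\eta\ra^{m'}+\la\xi-\eta\ra^{m'}$ (i.e.\ $f_1+f_2$) is only valid for $m'\ge0$. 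So this is a shared implicit hypothesis, not a defect of your argument relative to the paper's, and is harmless in every application: in Proposition~\ref{thm:bwaveimproved} one always has $m'\ge m-2\delta-\epsilon>0$ for $\delta,\epsilon$ small. If one wanted to treat $m'<0$ genuinely one would need the extra assumption $m_0+m>0$ (as in the classical trilinear Sobolev multiplication theorems), which is not stated but is again automatic for the paper's purposes.
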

\begin{proof}
  To see that the first conclusion holds, we argue as in Lemma
  \ref{thm:basicalgebra}, and thus use the inequality
    \begin{equation}
      \label{eq:notalgebra}
      \begin{split}
       & \int \lp \frac{\la \xi \ra^{m'}\la \xi'' \ra^a}{\la \xi - \eta \ra^{m_0} \la \xi'' - \eta'' \ra^a \la \eta  \ra^{m} \la \eta''
          \ra^a} \rp^2 d\eta \\
        &\le \sum_{i, j = 1}^2        \int \lp \frac{f_i g_j}{\la \xi - \eta \ra^m \la \xi'' - \eta'' \ra^a \la \eta''
          \ra^a} \rp^2 d\eta , 
      \end{split}
    \end{equation}
where $f_1 = \la \eta \ra^{m'}$, $f_2 = \la \xi -  \eta \ra^{m'}$ and $g_1 =
\la \eta'' \ra^a$, $g_2 = \la \xi'' -  \eta'' \ra^a$.  Replacing the
unprimed variable with primed variables and using that both
\begin{equation*}
  \int \lp \frac{\la \eta' \ra^{m'}}{\la \xi - \eta' \ra^{m_0} \la
    \eta'  \ra^{m} } \rp^2 d\eta', \qquad   \int \lp \frac{\la \xi' - \eta' \ra^{m'}}{\la \xi - \eta' \ra^{m_0} \la \eta'  \ra^{m} } \rp^2 d\eta'
\end{equation*}
are uniformly bounded under the stated assumptions on $m, m', m_0$
gives the statement.

The wavefront set containment is obtained by locating similar
improvements in the proof of Lemma \ref{thm:reallemma}.  Indeed, as
there, we have that $u \in H^{(w_1)}, v \in H^{(w_2)}$ where $w_i(\xi) =
\la \xi \ra^{m_0}\la \xi'' \ra^a + \la \xi \ra^{s_i}$ where
$s_i = s_i(\xi)$ where the $s_i \equiv m + a$ off open conic sets
$\wt{\sC}_i$ are arbitrary open sets containing, respectively, 
$\WF^{m + a}(u)$ and $\WF^{m + a}(v)$. We 
want to show that given
$$
\xi \not \in (\WF^{m + a}(u) + \WF^{m +
  a}(v))\cup \WF^{m + a}(u) \cup \WF^{m +
  a}(v)
$$
and a proper choice of function $s$ with $s = m' + a$ near $\xi$ that $uv \in H^{s}$,
which amounts to applying Lemma \ref{thm:algcondition} with $w = s$ and
$w_1, w_2$ exactly as in Lemma \ref{thm:reallemma}.

We thus want to bound an integral similar to \eqref{eq:etapart},
namely
\begin{equation*}
  \int\Big(\frac{\langle \xi \rangle^s}{(\la \eta \ra^{m_0} \la
     \eta'' \ra^k + \la \eta \ra^{s_{1}} )
     (\la \xi - \eta \ra^{m_0} \la
     \xi'' - \eta'' \ra^k + \la \xi - \eta \ra^{s_{2}} )}\Big)^2\,d\eta,
\end{equation*}
  We choose
$s$ so that $s \le m + a$ and bound
$\la \xi \ra^{2s} \lesssim \la \eta \ra^{2(m + a)} + \la \xi - \eta
\ra^{2(m + a)} $ and as usual break the integral into two parts
involving the two terms on the right of this bound.  Again we focus on
the $\la \eta \ra^{2s}$ term.  Integrating first over $\wt{\sC}_1$ and then
$(\wt{\sC}_1)^{comp}$; over $\wt{\sC}_{1}^{c}$, we have that $\la \eta \ra^m \la
     \eta'' \ra^k + \la \eta \ra^{s_{1}} >(1/2) \la \eta \ra^{s}$, so
     that part of the integral with $\la \eta \ra^{2s}$ in the
     numerator  over
     $\wt{\sC}_1$  is bounded by
    \begin{gather*}
      \int\Big(\frac{1}{\la \eta \ra^{s_1 - s} \la \xi - \eta \ra^m
        \la \xi'' - \eta'' \ra^k + \la \xi - \eta \ra^{s_{2}}
      }\Big)^2\,d\eta \\
    \le       \int\Big(\frac{1}{\la \eta' \ra^{m - m'} \la \xi' - \eta' \ra^{m_0}
        \la \xi'' - \eta'' \ra^k },
      \Big)^2\,d\eta .
    \end{gather*}
which, by separating into primed and double primed coordinates and
using the assumptions on $m, m_0, m'$  is uniformly bounded.  The rest
of the bounds proceed analogously and are left to the reader.
\end{proof}

By reducing locally and arguing exactly as in the proof of
Proposition~\ref{thm:algebra} and Corollary~\ref{thm:algebracor}, we obtain
\begin{prop}\label{thm:bwaveimproved}
  For $\lprime$ sufficiently small, there exists $m \colon \Sb^* M \lra \mathbb{R}$ satisfying: 1) that $m \ge 1/2 -
  \delta$ for some $\delta \in (0, 1/2)$, 2) $m, \lprime$ satisfy the forward
  Feynman condition in the strengthened form given in Theorem~\ref{thm:fredholm}, and 3) $m$ satisfies the condition on the
  sublevel sets and minima in \eqref{eq:reg-function-condition}.
  Moreover, for $k
  \in \Nat$ satisfying $k > (n - 1)/2$, 
$$
\Hbpm^{m,l_1,k}\Hbpm^{m, l_2,k}\subset
\Hbpm^{m - 2\delta-0, l_1+l_2, k}.
$$
In particular, for $\delta$ sufficiently small $(\Hbpm^{m,l_1,k})^3 \subset \Hbpm^{m - 4\delta-0, l_1+l_2, k}.$
\end{prop}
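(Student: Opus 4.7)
The plan is to follow closely the arguments establishing Proposition~\ref{thm:algebra} and Corollary~\ref{thm:algebracor}, but substitute Lemma~\ref{thm:reallemmaimproved} for Lemma~\ref{thm:reallemma} wherever the product estimate is invoked. The improved lemma is exactly designed to tolerate one factor whose differentiation order drops below $d/2$, paying a loss in the output order equal to twice the shortfall; this is precisely what is needed to allow $m \geq 1/2 - \delta$ rather than $m > 1/2$.

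First I will construct the regularity function $m$. Following the explicit recipe in the proof of Corollary~\ref{thm:algebracor} verbatim, fix a constant $m_+$ large enough that $m_+ + l' > 3/2$ (the strengthened Feynman high-regularity condition of Theorem~\ref{thm:fredholm}), and set $m = m_+ - c(\phi \circ f_\pm)$ in small neighborhoods of the low-regularity radial sets $\SNb^*_+ S_\pm$, with $f_\pm$ the explicit quadratic defining functions and $\phi$ the cutoffs employed there. The one modification is to pick $c = m_+ - 1/2 + \delta$, so $m_- := \inf m = 1/2 - \delta$ rather than $> 1/2$. The Feynman low-regularity condition $m + l' < 1/2$ at $\SNb^*_+ S_\pm$ then reads $l' < \delta$, which is the content of ``$l'$ sufficiently small.'' The explicit quadratic formula for $f_\pm$ immediately yields convex sublevel sets (as in the final computation of Corollary~\ref{thm:algebracor}), and the minimum of $m$ is attained on $\SNb^*_+ S_\pm$ by design, verifying \eqref{eq:reg-function-condition}.

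Next I will prove the product estimate. By locality, it suffices to treat $u_1, u_2$ supported in a small neighborhood of a fixed point $x \in M$. For $x$ away from $\SNb^*_+ S_\pm$ (including all interior points), elements of the module $\cM_{+-}$ are b-elliptic near $x$, so membership in $\Hbpm^{m, l_i, k}$ is equivalent to weighted $m + k$ b-regularity microlocally, and the product estimate reduces (via the Mellin--Fourier transform at boundary points, or directly in the interior) to Lemma~\ref{thm:reallemmaimproved} applied to standard Sobolev spaces of total order $m + k$; here one uses $m + k + 2\delta > n/2$, which holds since $k > (n-1)/2$ and $m \geq 1/2 - \delta$. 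The genuinely new case is $x$ near $\SNb^*_+ S_\pm$, say $x \in S_+$. Using coordinates $(\rho, v, y)$ as in Lemma~\ref{thm:firstmodulealg} and applying the elliptic operators $A \in \Psib^m$ and $B_\alpha \in \Psib^{|\alpha|}$ introduced there, one sees that $\tilde u_i$ lies locally in $\cY^{m, k}_1$. Then Lemma~\ref{thm:reallemmaimproved} with $d = 1$, $a = k$, $m_0 = m$, $m' = m - 2\delta$ applies, since the required inequality $m - m' + m_0 = m + 2\delta > 1/2 = d/2$ holds by $m \geq 1/2 - \delta$, and $a = k > (n-1)/2 = (n-d)/2$ is given by hypothesis. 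The arbitrarily small $-0$ loss is picked up exactly as in the proof of Proposition~\ref{thm:algebra} when passing from variable-order wavefront set sums to constant-order products, and uses precisely the convexity of sublevel sets and the minimum-attaining property of $m$ built into the construction.

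The cubic statement follows by iterating: first $u_1 u_2 \in \Hbpm^{m - 2\delta - 0, l_1 + l_2, k}$, and then the product estimate applied again with $m_0 = m - 2\delta$ (from $u_1 u_2$), $m' = m - 4\delta$, and second factor $u_3$ of order $m$. The fresh hypothesis becomes $m - (m - 4\delta) + (m - 2\delta) = m + 2\delta > 1/2$, still satisfied as long as $\delta$ is below a universal constant (certainly $\delta < 1/4$). The chief obstacle lies in the construction of $m$: one must simultaneously have $m$ dip below $1/2$ at the low-regularity sinks, satisfy the strengthened Feynman condition at the high-regularity sources, and keep convex sublevel sets with minima on exactly $\SNb^*_+ S_\pm$. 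This is achieved by the explicit quadratic defining function recipe of Corollary~\ref{thm:algebracor}, and requires no new idea beyond simply lowering the parameter $m_-$.
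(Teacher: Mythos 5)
Your proposal is correct and follows exactly the route the paper intends: the paper's own proof is just the one-line ``reduce locally and argue as in Proposition~\ref{thm:algebra} and Corollary~\ref{thm:algebracor}'', and you have unwound that correctly by lowering the constant $c$ in the recipe of Corollary~\ref{thm:algebracor} to $c = m_+ - 1/2 + \delta$ (so $\inf m = 1/2-\delta$, forcing $l' < \delta$), and by substituting Lemma~\ref{thm:reallemmaimproved} (with $d=1$, $a=k$, $m_0=m$, $m'=m-2\delta$, so $m - m' + m_0 = m + 2\delta > 1/2$) for the earlier product estimates wherever they occur. The only cosmetic inaccuracy is your remark that the cubic step needs ``$\delta < 1/4$'': the inequality $m + 2\delta > 1/2$ you write down is automatic from $m \geq 1/2-\delta$, so the smallness of $\delta$ in the cubic statement is required only so that $m - 4\delta - 0$ remains a usefully high order (as used subsequently in Theorem~\ref{thm:semilinearthree}, where one needs $m - 5\delta \geq m - 1$), not for the product lemma's hypothesis to hold.
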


We can now finally prove
\begin{thm}[$p = 3, n = 4$]\label{thm:semilinearthree}
Suppose $g$ is a perturbation of Minkowski space in the sense of
  Lorentzian scattering metrics (see Section~\ref{sec:geometry}), in particular so that Theorem
\ref{thm:modulereginverse} holds.
In dimension $n = 4$, given $\lambda \in \RR$ and a weight
$\lprime \ge 0$ and $\lprime$ sufficiently small, and a regularity
function $m$ as in Proposition \ref{thm:bwaveimproved}, there
is $C>0$ such that the small-data Feynman problem, i.e.\ given 
$$
f\in \Hbpm^{m-1,\lprime+(n-2)/2+2,k} \mbox{ with norm } < C
$$ 
finding $u\in\Hbpm^{m,\lprime+(n-2)/2,k}$  satisfying 
\begin{equation}
  \label{eq:smalldata3}
  \Box_{g,+-} u + \lambda u^3 = f,
\end{equation} 
is
well-posed with $k > (n -1 )/2$, and
$u$ can again be calculated as the limit of a Picard iteration
corresponding
to the perturbation series.  
\end{thm}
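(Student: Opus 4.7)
The plan is to proceed exactly as in the proof of Theorem~\ref{thm:semilinear}, setting up a Picard iteration whose contraction property is verified using the sharper multiplicative property of Proposition~\ref{thm:bwaveimproved} in place of Proposition~\ref{thm:algebra}. First I would substitute $\wt{u} = \rho^{-(n-2)/2} u$ and $\wt{f} = \rho^{-2-(n-2)/2} f$, rewriting the equation as
\begin{equation*}
L \wt{u} = \wt{f} - \lambda \rho^{-2 + (p-1)(n-2)/2}\, \wt{u}^{p},
\end{equation*}
and observe that for $p=3$, $n=4$ the exponent $-2 + (p-1)(n-2)/2 = 0$, so the nonlinear term is simply $-\lambda \wt u^3$, carrying weight exactly $3\lprime$ when $\wt u$ has weight $\lprime$.

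Next I would define the iteration map $T(\wt u) := L_{+-}^{-1}(\wt f - \lambda \wt u^3)$, with $L_{+-}^{-1}$ the Feynman inverse on module regularity spaces provided by Theorem~\ref{thm:modulereginverse} (or its improved version Theorem~\ref{thm:modulereginverse-better}), and try to show $T$ is a contraction on a small ball in $\Hbpm^{m,\lprime,k}$. By Proposition~\ref{thm:bwaveimproved}, for $\lprime\ge 0$ sufficiently small and $m$ chosen as in that proposition (in particular $m \ge 1/2 - \delta$ for some $\delta\in(0,1/2)$ so that $m+\lprime<1/2$ at the low regularity radial sets), and $k>(n-1)/2$, one has
\begin{equation*}
\|\wt u^3 - \wt v^3\|_{\Hbpm^{m-4\delta-0,\,3\lprime,k}} \le C \|\wt u - \wt v\|_{\Hbpm^{m,\lprime,k}} \bigl(\|\wt u\|_{\Hbpm^{m,\lprime,k}}^2 + \|\wt v\|_{\Hbpm^{m,\lprime,k}}^2\bigr),
\end{equation*}
by factoring $\wt u^3 - \wt v^3 = (\wt u - \wt v)(\wt u^2 + \wt u \wt v + \wt v^2)$ and applying the algebra bound term by term. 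Choosing $\delta<1/4$ ensures $m-4\delta-0 \ge m-1$, and since $\lprime\ge 0$ gives $3\lprime \ge \lprime$, we have the continuous embedding $\Hbpm^{m-4\delta-0,3\lprime,k} \hookrightarrow \Hbpm^{m-1,\lprime,k}$. Combining with the bounded map $L_{+-}^{-1}\colon \Hbpm^{m-1,\lprime,k} \to \Hbpm^{m,\lprime,k}$, one obtains that $T$ takes a small ball to itself and is a contraction there once $\|\wt f\|_{\Hbpm^{m-1,\lprime,k}}$ is taken sufficiently small. The Banach fixed point theorem then produces the unique small solution, realized as the limit of the Picard iterates $\wt u_{j+1} = T(\wt u_j)$, $\wt u_0 = 0$.

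The main obstacle is entirely absorbed into Proposition~\ref{thm:bwaveimproved}: the weight $\lprime \ge 0$ forces $m<1/2$ at the low regularity radial sets, so the standard algebra argument of Proposition~\ref{thm:algebra} (which required $m > 1/2$) fails, and one must genuinely descend below the $L^2$-based multiplication threshold. The tradeoff, which costs a small amount $4\delta+0$ of regularity on products, is acceptable here precisely because the cubic nonlinearity costs nothing in the weight for $n=4$, so any positive $\lprime$ gives $3\lprime>\lprime$ with room to spare, and one can freely choose $\delta$ as small as needed. All other steps — the invertibility of $L_{+-}$ at weight $\lprime+(n-2)/2$ with $|\lprime|<(n-2)/2=1$, the preservation of module regularity, and the Picard convergence — follow exactly as in the proof of Theorem~\ref{thm:semilinear}.
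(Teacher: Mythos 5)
Your proposal is correct and follows essentially the same route as the paper's own proof: the same substitution reducing the problem to $L\wt u = \wt f - \lambda\wt u^3$, the same observation that for $p=3$, $n=4$ the weight exponent $-2+(p-1)(n-2)/2$ vanishes so $\lprime\ge 0$ suffices, and the same replacement of Proposition~\ref{thm:algebra} by Proposition~\ref{thm:bwaveimproved} to allow $m<1/2$ at the low-regularity radial sets. The only cosmetic difference is that the paper picks a concrete $\epsilon=\delta$ to absorb the $-0$ loss (giving the condition $\delta<1/5$) whereas you leave the $-0$ implicit and require $\delta<1/4$; both are fine since the loss can be made arbitrarily small.
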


\begin{rem}
  Again by Remark \ref{rem:constant-order}, there is a version of this
  theorem with $m$ constant, $m \ge 1/2 - \delta$, $l \ge 0$, $m + l
  < 1/2$ and $k > (n - 1)/2$ satisfying $m + l + k > 3/2$.  We leave
  the details of this straightforward modification of the following
  proof to the reader.
\end{rem}

\begin{rem}
Although we do not state it here explicitly, Proposition
  \ref{thm:bwaveimproved} also gives improvements to the statement of Theorem \ref{thm:semilinear} for
  other $n,p$ in terms of the spaces in which solvability holds (what
  $\lprime$ can be),
  though not for whether there {\em is} a space of the kind considered
  there in which solvability holds.
\end{rem}

\begin{proof}
 The proof is identical to that of Theorem \ref{thm:semilinear}
  incorporating the improvements given by Proposition
  \ref{thm:bwaveimproved}.   We
  take $\lprime \ge 0$ and find an $m$ such that $m + \lprime$ satisfies the
  Feynman condition and $m > 1/2 - \delta$ for some small  $\delta >
  0$.   Rewriting the equation as in
  \eqref{eq:Lsemilinearequ} with $\wt{f}$ and $\wt{u}$ defined in the
  same way, and assuming that $\wt{f} \in \Hbpm^{m, \lprime, k}$, the
  condition that 
$$
\rho^{2 - (p - 1)(n-2)/2}\wt{u}^p =  \rho^{4 - n}\wt{u}^3 \in
\Hbpm^{m - 5\delta, \lprime, k} \subset \Hbpm^{m - 1, \lprime, k}
$$
is now that $\delta$ be less than $1/5$ and that
\begin{equation}\label{eq:pthreeweightcondition}
  \lprime \le 4 - n + 3 \lprime \iff \lprime \ge  n/2 - 2.
\end{equation}
If $n = 4$, we can thus find $\lprime$ and $m$ satisfying the Feynman
conditions and \eqref{eq:pthreeweightcondition} simultaneously.  
From now on we assume that $n = 4$.  The existence of an $m$
satisfying the conditions in \eqref{eq:reg-function-condition} is a
trivial modification of the proof of Corollary \ref{thm:algebracor}.

The Picard iteration argument is now identical to that in Theorem
\ref{thm:semilinear} except incorporating the loss in Proposition \ref{thm:bwaveimproved}.  In this case, the claim in \ref{eq:17} is
substituted by the following: for every $\delta > 0$ sufficiently small, there is an $R \ge 0$ such that
if both $\| \wt{u} \|_{\Hb^{m - \delta, \lprime, k}}$ and $\| \wt{v}
\|_{\Hb^{m - \delta,
    \lprime, k}}$ are bounded by $R$ then
\begin{equation*}
  \|\wt{u}^3 -\wt{v}^3 \|_{\Hb^{m - 5\delta, \lprime, k}} \le \delta
  \|  \wt{u} -  \wt{v} \|_{\Hb^{m - \delta, \lprime, k}}.
\end{equation*}
This and the rest of the proof follow exactly as in Theorem
\ref{thm:semilinear} using the improvement in Proposition
\ref{thm:reallemmaimproved}.
\end{proof}

\bibliographystyle{plain}
\bibliography{feynman}

\end{document}